\documentclass[10pt]{amsart}
\usepackage{geometry}
\usepackage[dvips]{graphicx}
%\usepackage[pdftex]{graphicx}
%\usepackage{mathrsfs}

%%%%%%%%%%%%%%%%%%%%%%%%%%%%%%%%%%55
\usepackage{amscd,amsfonts,amssymb,amsmath,amsthm,latexsym}

\usepackage{graphics}
\usepackage[all]{xy}
\xyoption{curve}
\xyoption{import}
\xyoption{arc}
\xyoption{ps}
%\UsePSspecials{dvips}
%\addto\captionsspanish{\renewcommand{\figurename}{\footnotesize{Figura}}}
%%%%%%%%%%%%%%%%%%%%%%%%%%%%%%%%%%%%%%

\numberwithin{equation}{section}

\theoremstyle{plain}
    \newtheorem{thm}{Theorem}[section]
    \newtheorem{lemma}[thm]{Lemma}
    \newtheorem{coro}[thm]{Corollary}
    \newtheorem{prop}[thm]{Proposition}
    \newtheorem{problem}[thm]{Problem}
    
\theoremstyle{definition}
    \newtheorem{defi}[thm]{Definition}
    \newtheorem{ex}[thm]{Example}
    \newtheorem{remark}[thm]{Remark}
\theoremstyle{remark}
    \newtheorem{case}{Case}
    \newtheorem{casea}{Case}
    \newtheorem{subcase}{Subcase}
    \newtheorem{subcasea}{Subcase}

%%%%%%%%%%%%%%%%%%%%%%%%%%%%%%%%%%%%%%%%%%%%%%
\newcommand{\suchthat}{\ | \ }
\newcommand{\field}{K}
\newcommand{\A}{\mathbb{A}}
\newcommand{\Z}{\mathbb{Z}}
\newcommand{\e}{\mathbf{e}}
\newcommand{\g}{\mathbf{g}}
\newcommand{\s}{\operatorname{s}}
\newcommand{\rr}{\operatorname{r}}
\newcommand{\ttt}{\operatorname{t}}
\newcommand{\z}{\operatorname{z}}

\newcommand{\vv}{\operatorname{v}}
\newcommand{\ra}{R\langle\langle Q\rangle\rangle}
\newcommand{\rap}{R\langle\langle Q'\rangle\rangle}
\newcommand{\usualra}{R\langle Q\rangle}
\newcommand{\idealM}{\mathfrak{m}}

\newcommand{\jacobas}{\mathcal{P}(Q,S)}
\newcommand{\jacobapsp}{\mathcal{P}(Q',S')}
\newcommand{\jacobqstau}{\mathcal{P}(Q(\tau),S(\tau))}
\newcommand{\astrivial}{(Q_{\operatorname{triv}},S_{\operatorname{triv}})}
\newcommand{\asreduced}{(Q_{\operatorname{red}},S_{\operatorname{red}})}

\newcommand{\surf}{(\Sigma, M)}
\newcommand{\punct}{P}
\newcommand{\marked}{M}
\newcommand{\arcsinsurf}{\mathbf{A}^\circ\surf}

\newcommand{\qtau}{Q(\tau)}
\newcommand{\hatqtau}{\widehat{Q}(\tau)}
\newcommand{\qsigma}{Q(\sigma)}
\newcommand{\atau}{Q(\tau)}
\newcommand{\asigma}{Q(\sigma)}
\newcommand{\ratau}{R\langle\langle Q(\tau)\rangle\rangle}
\newcommand{\stau}{S(\tau)}

\newcommand{\qstau}{(Q(\tau),S(\tau))}
\newcommand{\qssigma}{(Q(\sigma),S(\sigma))}

\newcommand{\ssigma}{S(\sigma)}
\newcommand{\sptau}{S'(\tau)}

\newcommand{\surfnoM}{\Sigma}
\newcommand{\premutj}{\widetilde{\mu}_j}
\newcommand{\mutj}{\mu_j}
\newcommand{\premuti}{\widetilde{\mu}_j}
\newcommand{\muti}{\mu_j}

\newcommand{\tildestau}{\widetilde{S(\tau)}}

\newcommand{\tildeqtau}{\widetilde{Q(\tau)}}

\newcommand{\unredqtau}{\widehat{Q}(\tau)}
\newcommand{\unredstau}{\widehat{S}(\tau)}
\newcommand{\unredqsigma}{\widehat{Q}(\sigma)}

\newcommand{\runredatau}{R\langle\langle\widehat{Q}(\tau)\rangle\rangle}
\newcommand{\unredastau}{(\widehat{Q}(\tau),\widehat{S}(\tau))}

%\newcommand{\unredrataunot}{R\langle\langle\widehat{A}(\tau^\circ)\rangle\rangle}

%%%%%%%%%%%%%%%%%%%%%%%%%%%%%%%%%%%%% COMANDOS SEGUNDO PAPER %%%%%%%%%%%%%%%%%%%%%%%%%%%%%%%%%%%%%

\newcommand{\qpmod}{\mathcal{M}=(Q,S,M,V)}
\newcommand{\qpmodp}{\mathcal{M}'=(Q',S',M',V')}
\newcommand{\qprep}{(Q,S,(M_i)_{i\in Q_0},(a_M)_{a\in Q_1},(V_i)_{i\in Q_0})}
\newcommand{\ored}{\operatorname{red}}
\newcommand{\oin}{\operatorname{in}}
\newcommand{\oout}{\operatorname{out}}
\newcommand{\al}{\mathfrak{a}}
\newcommand{\be}{\mathfrak{b}}
\newcommand{\ga}{\mathfrak{c}}
\newcommand{\image}{\operatorname{im}}

\newcommand{\id}{\mathbf{1}}
\newcommand{\zero}{\mathbf{0}}
\newcommand{\rh}{\mathfrak{r}}
\newcommand{\si}{\mathfrak{s}}
\newcommand{\io}{\mathfrak{i}}
\newcommand{\pipi}{\mathfrak{p}}
\newcommand{\rtildea}{R\langle\langle\widetilde{Q}\rangle\rangle}

\newcommand{\idM}{\mathbf{1}_M}
\newcommand{\idV}{\mathbf{1}_V}
\newcommand{\idK}{\mathbf{1}_K}
\newcommand{\badintersection}{\mathfrak{B}}

\newcommand{\arc}{i}
\newcommand{\calMsigmaarc}{\mathcal{M}(\sigma,\arc)}
\newcommand{\arcone}{j}
\newcommand{\arctwo}{k}
\newcommand{\calMtauarc}{\mathcal{M}(\tau,\arc)}
\newcommand{\Mtauarc}{M(\tau,\arc)}
\newcommand{\Msigmaarc}{M(\sigma,\arc)}
\newcommand{\mtauarc}{m(\tau,\arc)}
\newcommand{\msigmaarc}{m(\sigma,\arc)}
\newcommand{\Vtauarc}{V(\tau,\arc)}
\newcommand{\triv}{\operatorname{triv}}

%%%%%%%%%%%%%%%%%%%%%% PAGE LAYOUT
\setlength{\textheight}{595pt}
%\addtolength{\hoffset}{10pt}
\addtolength{\voffset}{-10pt}
\addtolength{\textheight}{55pt}
\addtolength{\textwidth}{35pt}
\addtolength{\evensidemargin}{-40pt}
\addtolength{\headsep}{10pt}
%%%%%%%%%%%%%%%%%%%%%%%%%%%%%%%%%%%%%%%%%%%%%%%%%%%%%%%%%%%%%%%%%%%%%%%%%%%%%%%%%%%%%%%%%%%

% MACHOTE PARA INSERTAR FIGURAS
%        \begin{figure}[!h]
%                \caption{}\label{}
%                \centering
%                \includegraphics[scale=.5]{.EPS}
%        \end{figure}\\

%%%%%%%%%%%%%%%%%%%%%%%%%%%%%%%%%%%%%%%%%%%%%%%%%%%%%%%%%%%%%%%%%%%%%%%%%%%%%%%%%%%%%%%%%%%%%%%

\begin{document}

\title[Arc representations associated to triangulated surfaces]{Quivers with potentials associated to triangulated surfaces, Part II: Arc representations}
\author{Daniel Labardini-Fragoso}
\address{Department of Mathematics, Northeastern University, Boston, MA 02115}
\email{labardini-fra.d@neu.edu}
\date{\today}
%\subjclass[2000]{16G10, 16S99}
\keywords{Arc, triangulated surface, ideal triangulation, flip, quiver, quiver mutation, potential, quiver with potential, path algebra, Jacobian algebra, QP-mutation, QP-representation}
\thanks{This work was partially supported by Prof. Andrei Zelevinsky's NSF grant and Prof. Martha Takane's PAPIIT-UNAM grant IN103508.}
\dedicatory{To the memory of Jos\'{e} Guadalupe Ram\'irez-Rocha.}
\maketitle

\begin{abstract} This paper is a representation-theoretic extension of Part I. It has
been inspired by three recent developments: surface cluster algebras
studied by Fomin-Shapiro-Thurston, the mutation theory of quivers with
potentials initiated by Derksen-Weyman-Zelevinsky, and string modules
associated to arcs on unpunctured surfaces by
Assem-Br\"ustle-Charbonneau-Plamondon.
Modifying the latter construction, to each arc and each ideal
triangulation of a bordered marked surface we associate in an explicit
way a representation of the quiver with potential constructed in Part
I, so  that whenever two ideal triangulations are related by a flip,
the associated representations are related by the corresponding
mutation.
\end{abstract}

\tableofcontents

%main text:
\section{Introduction}

This paper is a representation-theoretic extension of its predecessor \cite{Lqps}. The aim remains the same: to study the relation between the representation-theoretic approach to cluster algebras developed by H. Derksen, J. Weyman and A. Zelevinsky using mutations of quivers with potentials, and the framework for surface cluster algebras established by S. Fomin, M. Shapiro and D. Thurston. In \cite{DWZ}, \cite{DWZ2}, the authors defined certain QP-representations whose geometric/topological data determines the Laurent expansions of cluster variables in terms of a given cluster. In this work we explicitly compute many of these representations when the cluster algebra is associated to a bordered surface with marked points.

In \cite{Lqps}, the author defined a quiver with potential for each ideal triangulation of a bordered surface with marked points, and proved the compatibility between flips on ideal triangulations and mutations of QPs, thus partially extending the compatibility flip $\leftrightarrow$ mutation shown in \cite{FG}, \cite{FST} and \cite{GSV}. Moreover, it was proved that, as long as the boundary of the surface is non-empty, the associated QPs are rigid and hence non-degenerate, and that the corresponding Jacobian algebra is finite-dimensional (thus making possible to apply C. Amiot's categorification \cite{Amiot-gldim2}).

Here we associate to arc $\arc$ and each ideal triangulation $\tau$ on the surface, a representation $\Mtauarc$ of the QP $\qstau$ defined in \cite{Lqps}, in such a way that, if the arc $\arc$ is kept fixed, ideal triangulations related by a flip give rise to representations related by the corresponding QP-mutation (cf. Theorem \ref{thm:flip<->mut} below). Our construction generalizes that of \emph{string} modules for curves in unpunctured surfaces that has been given in \cite{ABCP} by I. Assem, T. Br\"{u}stle, G. Charbonneau and P-G. Plamondon. It is worth mentioning that Assem-Br\"{u}stle-Charbonneau-Plamondon's string modules are already a generalization of a construction given by P. Caldero, F. Chapoton and R. Schiffler in \cite{CCS04} for unpunctured polygons. When the bordered marked surface has no punctures, ABCP/CCS's construction could be (very roughly) described as ``traversing curves with the identity map of the field $\field$". In the presence of punctures the situation becomes more complicated, but \emph{strings} still function as strong combinatorial parameters for representations that are mutation-equivalent to negative simples.

The representations $\Mtauarc$ turn out to be those representations whose geometric data gives Laurent expansions of cluster variables. Let us be more precise: On the one hand, in \cite{DWZ2} it is shown that given a quiver $Q$ and a non-degenerate potential $S$ on it, the $(Q,S)$-representations mutation equivalent to negative simple ones are naturally associated to the cluster variables of any cluster algebra having the quiver $Q$ at one of its seeds. Furthermore, it is proved that the Euler characteristics of the quiver Grassmannians of these $(Q,S)$-representations are the coefficients of the $F$-polynomials associated in \cite{FZ4} to the corresponding cluster variables. On the other hand, in \cite{FST}, \cite{FT}, a geometric-combinatorial model has been given for the cluster algebras having a quiver of the form $\qtau$ at one of its seeds, where $\tau$ is an ideal (or tagged) triangulation of a bordered surface with marked point. In this model, cluster variables are (tagged) arcs on the surface, and mutation of seeds corresponds to flips of (tagged) arcs. By results of \cite{Lqps}, $S(\tau)$ is a non-degenerate potential on $Q(\tau)$ whenever the underlying surface has non-empty boundary or exactly one puncture, and by Theorem \ref{thm:flip<->mut} below, the representations $\Mtauarc$ are mutation-equivalent to negative simple ones. Therefore, these representations $\Mtauarc$ are \emph{the} representations that can be used to calculate the $F$-polynomials of the cluster variables (in the \emph{positive stratum}) of (any) cluster algebra associated to $\qtau$. (By results of \cite{FZ4}, cluster dynamics is governed to a great extent by $\g$-vectors and $F$-polynomials.)

Let us proceed to describe the contents of the paper in more detail. Section \ref{Section:background} is divided into three parts, the first two of which are devoted to establish some notation and terminology, taken from \cite{DWZ}, about QPs and their representations. In Subsection \ref{subsec:restriction} we recall the operation of restriction of QPs and its behavior with respect to mutation, then we define the operation of restriction of a QP-representation to a subset of the vertex set of the underlying quiver in the obvious way, and prove that it commutes with the operations of reduction, premutation and mutation of representations as long as the vertex subset $I\subseteq Q_0$ satisfies certain vanishing condition. This will help us to reduce the proof of our main result to the situation where the surface has empty boundary.

In Section \ref{Section:QPoftriangulation} we recall the basic properties of flips of triangulations, the definition of the potential $\stau$ associated to an ideal triangulation $\tau$, and the compatibility between flips of triangulations and mutations of QPs. In Section \ref{repsforarcsspecial} we present the main constructions of this paper: to start, we draw certain short oriented curves on the surface, which we call the \emph{detours} of $(\tau,\arc)$, and arrange some information extracted from these curves into a family of \emph{detour matrices} $D^{\triangle}_{\arc,\arcone}$, $\arcone\in\tau$; then we define the \emph{segment representations} $\mtauarc$ following Assem-Br\"ustle-Charbonneau-Plamondon/Caldero-Chapoton-Schiffler, and modify them using detour matrices, thus obtaining the \emph{arc representations} $\Mtauarc$, which are the main objects of study of the current note. Section \ref{repsforarcsspecial} is divided into two subsections since there is a specific situation where the arc $\arc$ needs to be cut in order to reach the flip $\leftrightarrow$ mutation compatibility of its associated representations. In Subsection \ref{case1.1} we present the case where $\arc$ does not need to be cut;
and the case where $\arc$ needs to be cut, namely, when it is a loop cutting out a once-punctured monogon, is dealt with in Subsection \ref{subsec:cutsout}.

Section \ref{Section:Jacobianidealsatisfied} consists of two parts, the first of which, Subsection \ref{subsec:localdecompositions}, is devoted to show that mutations of representations preserve not only direct sums, but also \emph{local} direct sums, and to \emph{locally decompose} the representations $\Mtauarc$ into \emph{simpler} representations, where it is easier to carry out the several checks of the main results. In Subsection \ref{subsec:mtauarcsatisfies} we prove that the arc representations satisfy the relations imposed on $\qtau$ by the Jacobian ideal $J(S(\tau))$.

In Section \ref{Section:Flip-mutationcompatibility} we present Theorem \ref{thm:flip<->mut}, which is the main result of this paper: If the arc $\arc$ is fixed and we have two ideal triangulations (without self-folded triangles) $\tau$ and $\sigma$ related by a flip, then the arc representations $\Mtauarc$ and $\Msigmaarc$ are related by the corresponding mutation of representations. The section starts with Subsection \ref{subsec:effectonmatrices}, where we verify that the linear maps attached by an arc representation $\Mtauarc$ to an arrow of $\qtau$ not incident at the arc $\arcone$ to be flipped do not change when we perform the flip $f_\arcone$ (we know that they should not change by definition of the mutation $\mu_{\arcone}$). In Subsection \ref{subsec:proof}, we analyze the behavior of the linear maps attached to the arrows that are incident to the arc $\arcone$ in the configurations obtained in Subsection \ref{subsec:localdecompositions}.
We close Section \ref{Section:Flip-mutationcompatibility} with a quite informal overview of the constructions and results of the Section.

An application of arc representations is given in Section \ref{Section:gvectors}, where we give a very simple formula to calculate the $\g$-vector of an (ordinary) arc with respect to an ideal triangulation $\surf$.

In section \ref{Section:problems} we mention some problems that remain open and whose solution the author believes would help to have a complete explicit exemplification of Derksen-Weyman-Zelevinsky's QP-mutation theory in the context of surface cluster algebras.

The context in which this paper takes place deserves some comments. Triangulations and flips have been present in models of cluster algebras since the beginning of the theory (see, e.g., Subsection 12.1 of \cite{FZ2}), and also in the effort of categorifying these algebras (cf. \cite{CCS04}, \cite{S}). This had been done in a somewhat restricted set-up, until signed adjacency quivers for ideal triangulations of arbitrary bordered surfaces with marked points, and the compatibility between the operations of flip on ideal triangulations and ordinary quiver mutation, appeared in works by V. Fock and A Goncharov \cite{FG}; S. Fomin, M. Shapiro and D. Thurston \cite{FST}; and M. Gekhtman, M. Shapiro and A. Vainshtein \cite{GSV}. This yielded a general realization of ideal triangulations as clusters in the cluster algebras whose exchange matrices are determined by the signed adjacency quivers of ideal triangulations. However, not all clusters in such a cluster algebra could be interpreted as ideal triangulations. In \cite{FST}, S. Fomin, M. Shapiro and D. Thurston introduced the notions of tagged triangulations and their signed adjacency quivers, proving that all arcs in a tagged triangulation are flippable and the corresponding compatibility between flips and ordinary mutations, thus realizing all clusters as tagged triangulations, and seed mutations as flips on tagged triangulations. In \cite{FT}, S. Fomin and D. Thurston have deepened this realization further to interpret cluster variables in terms of R. Penner's lambda-lengths and coefficients in terms of W. Thurston's unbounded measured integral laminations.

A similar story can be said about the mutation properties of cluster-tilted algebras. In \cite{DWZ}, H. Derksen, J. Weyman and A. Zelevinsky defined mutations of quivers with potentials and their representations, thus providing a new representation-theoretic interpretation for quiver mutations originated in the theory of cluster algebras, in a way that generalizes the classical Bernstein-Gelfand-Ponomarev reflection functors. The depth and importance of Derksen-Weyman-Zelevinsky's QP-mutation theory in both Representation theory and Cluster algebra theory has manifested in several recent works, one of which is \cite{DWZ2}, where the same group of authors makes a heavy use of non-degenerate potentials to prove several conjectures from \cite{FZ4}.

After the foundational papers \cite{FG}, \cite{FST} and \cite{GSV} on the one hand, and \cite{DWZ} on the other, both surface cluster algebras and quivers with potentials have attracted the attention of several authors (e.g., \cite{Amiot-gldim2}, \cite{ABCP}, \cite{FT}, \cite{Lqps}, \cite{MSmatchings}, \cite{MSW}, \cite{PMaster}, \cite{Sunpunct2}, \cite{STunpunct1}) for many different reasons. Some of their works are directly related to \cite{Lqps} and the present note. For instance, in \cite{ABCP}, Assem-Br\"{u}stle-Charbonneau-Plamondon define, for each triangulation $\tau$ of an unpunctured surface, a gentle quotient of $\qtau$ which is precisely the Jacobian algebra of the potential $S(\tau)$ defined independently in \cite{Lqps}; they also show that there is a one-to-one correspondence between the string modules, and the isotopy classes of arcs on the surface. These modules turn out to be our arc representations in the unpunctured set-up.

Another related paper is \cite{MSW}, where, working in the full generality of \emph{tagged triangulations} of punctured surfaces, G. Musiker, R. Schiffler and L. Williams give explicit formulas for the expansion of an arbitrary cluster variable (that is, \emph{tagged arc} on the surface) in terms of an arbitrary initial cluster (that is, tagged triangulation), thus establishing, for example, the \emph{positivity conjecture} of S. Fomin and A. Zelevinsky \cite{FZ4}, independently of the much more general approach of \cite{DWZ2}.

Finally, let us mention that in C. Amiot's categorification context \cite{Amiot-gldim2}, each arc on $\surf$ represents an object of the \emph{cluster category} $\mathcal{C}$, and each triangulation $\tau$ represents a \emph{cluster-tilting object} whose endomorphism algebra is precisely the Jacobian algebra $\mathcal{P}(Q(\tau),S(\tau))$; moreover, for each fixed triangulation there is a functor from $\mathcal{C}$ to the module category of the Jacobian algebra of the triangulation. As a consequence of Theorem \ref{thm:flip<->mut} below, the arc representation $\Mtauarc$ gives an explicit calculation of the image of $\arc$ under the functor $\mathcal{C} \rightarrow \operatorname{mod}\mathcal{P}(Q(\tau),S(\tau))$.

\section{Background on QP-representations and their mutations}\label{Section:background}

\subsection{Quivers with potentials and their mutations}

With the purpose of recalling some notation and terminology, in this subsection we briefly review the basics of the mutation theory of quivers with potentials initiated in \cite{DWZ}. For a way more detailed and elegant treatment of the subject, we refer the reader to that paper. A short survey of these topics can be found in \cite{Z}.

A \emph{quiver} is a finite directed graph, that is, a quadruple $Q=(Q_0,Q_1,h,t)$, where $Q_0$ is the (finite) set of
\emph{vertices} of $Q$, $Q_1$ is the (finite) set of \emph{arrows}, and $h:Q_1\rightarrow Q_0$ and $t:Q_1\rightarrow Q_0$ are the \emph{head} and \emph{tail} functions. A common notation to indicate that $a$ is an arrow of $Q$ with $t(a)=i$, $h(a)=j$, is $a:i\rightarrow j$ .

A \emph{path of length} $d>0$, or \emph{$d$-path}, in $Q$ is a sequence $a_1a_2\ldots a_d$ of arrows with $t(a_j)=h(a_{j+1})$ for $j=1,\ldots,d-1$. A path
$a_1a_2\ldots a_d$ of length $d>0$ is a $d$\emph{-cycle} if $h(a_1)=t(a_d)$. A quiver is \emph{2-acyclic} if it has no 2-cycles.

Paths are composed as functions, that is, if $a=a_1\ldots a_d$ and $b=b_1\ldots b_{d'}$ are paths with $h(b)=t(a)$, then the product $ab$ is defined as the path $a_1,\ldots a_db_1\ldots b_{d'}$, which starts at $t(b_{d'})$ and ends at $h(a_1)$. See Figure \ref{prodofpaths}.

 \begin{figure}[!h]
                \caption{Paths are composed as functions}\label{prodofpaths}
                \centering
$$
\bullet\overset{b_{d'}}{\longrightarrow}\ldots\overset{b_1}{\longrightarrow}\bullet\overset{a_d}{\longrightarrow}\ldots\overset{a_1} {\longrightarrow}\bullet
$$
 \end{figure}

\begin{defi}\label{threesteps} Given a quiver $Q$ and a vertex $j\in Q_0$ such that $Q$ has no $2$-cycles incident at $j$, we define the
\emph{mutation} of $Q$ in direction $j$ as the quiver $\muti(Q)$ with vertex set $Q_0$ that results after applying the following three-step
procedure:
\begin{itemize}
\item[(Step 1)] For each $j$-hook $ab$ introduce an arrow $[ab]:t(b)\rightarrow h(a)$ (a \emph{$j$-hook} is a 2-path whose middle vertex is $j$).
\item[(Step 2)] Replace each arrow $a:j\rightarrow h(a)$ of $Q$ with an arrow $a^*:h(a)\rightarrow j$, and each arrow $b:t(b)\rightarrow j$
of $Q$ with an arrow $b^*:j\rightarrow t(b)$.
\item[(Step 3)] Choose a maximal collection of disjoint 2-cycles and remove them.
\end{itemize}
We call the quiver obtained after the $1^{\operatorname{st}}$ and $2^{\operatorname{nd}}$ steps the \emph{premutation} $\premuti(Q)$.
\end{defi}

Given a quiver $Q$, the \emph{complete path algebra} $R\langle\langle Q\rangle\rangle$ is the $\field$-vector space consisting of all possibly infinite linear combinations of paths in $Q$, with multiplication induced by concatenation of paths (cf. \cite{DWZ}, Definition 2.2). Note that $\ra$ is a $\field$-algebra and has the usual \emph{path
algebra} $\usualra$ as a dense subalgebra under the $\idealM$-adic topology, whose fundamental system
of open neighborhoods around $0$ is given by the powers of the ideal $\idealM$ of $\ra$ generated by the arrows.

A \emph{potential} on $Q$ is any element of $\ra$ all of whose terms are cyclic paths of positive length (cf. \cite{DWZ}, Definition 3.1). Two potentials $S,S'\in\ra$ are \emph{cyclically equivalent} if $S-S'$
lies in the topological closure of the vector subspace of $\ra$ spanned by all the elements of the form $a_1\ldots a_d-a_2\ldots a_da_1$ with $a_1\ldots a_d$ a cyclic path of positive length (cf. \cite{DWZ}, Definition 3.2).

A \emph{quiver with potential} is a pair $(Q,S)$, where $S$ is a potential on $Q$ such that no two different cyclic paths appearing in the expression of $S$ are cyclically equivalent (cf. \cite{DWZ}, Definition 4.1). Following \cite{DWZ}, we use the shorthand \emph{QP} to abbreviate ``quiver with potential".

If $(Q,S)$ and $(Q',S')$ are QPs on the same set of vertices, we say that $(Q,S)$ is \emph{right-equivalent} to $(Q',S')$ if there exists a \emph{right-equivalence} between them, that is, an $K$-algebra isomorphism $\varphi:\ra\rightarrow\rap$ that fixes the idempotents associated to the vertices and such that $\varphi(S)$ is cyclically
equivalent to $S'$ (cf. \cite{DWZ}, Definition 4.2).

For each arrow $a\in Q_1$ and each cyclic path $a_1\ldots a_d$ in $Q$ we define the \emph{cyclic derivative}
\begin{equation}
\partial_a(a_1\ldots a_d)=\underset{i=1}{\overset{d}{\sum}}\delta_{a,a_i}a_{i+1}\ldots a_da_1\ldots a_{i-1}
\end{equation}
(where $\delta_{a,a_i}$ is the \emph{Kronecker delta}) and extend $\partial_a$ by linearity and continuity to the space of all potentials (cf. \cite{DWZ}, Definition 3.1). Note that
$\partial_a(S)=\partial_a(S')$ whenever the potentials $S$ and $S'$ are cyclically equivalent.

For a potential $S$, the \emph{Jacobian ideal} $J(S)$ is the closure of the two-sided ideal of $\ra$ generated by $\{\partial_a(S)\suchthat a\in Q_1\}$, and the \emph{Jacobian algebra} $\jacobas$ is the quotient algebra $\ra/J(S)$ (cf. \cite{DWZ}, Definition 3.1). Jacobian ideals and Jacobian algebras are invariant under right-equivalences, in the sense that if $\varphi:\ra\rightarrow\rap$ is a right-equivalence between $(Q,S)$ and $(Q',S')$, then $\varphi$
sends $J(S)$ onto $J(S')$ and therefore induces an isomorphism $\jacobas\rightarrow\jacobapsp$ (cf. \cite{DWZ}, Proposition 3.7).

A QP $(Q,S)$ is \emph{trivial} if all the summands of $S$ are 2-cycles and $\{\partial_a(S)\suchthat a\in Q_1\}$ spans the vector subspace of $\ra$ generated by the arrows of $Q$ (cf. \cite{DWZ}, Definition 4.3, see also Proposition 4.4 therein). We say that a QP $(Q,S)$ is \emph{reduced} if the degree-$2$ component of $S$ is $0$, that is,
if the expression of $S$ involves no $2$-cycles (cf. \cite{DWZ}, the paragraph preceding Theorem 4.6). Note that the underlying quiver of a reduced QP may have 2-cycles. We say that a quiver $Q$ (or any QP on it) is \emph{2-acyclic} if it has no $2$-cycles.

The \emph{direct sum} of two QPs $(Q,S)$ and $(Q',S')$ on the same set of vertices is the QP $(Q,S)\oplus(Q',S')=(Q\oplus Q',S+S')$, where $Q\oplus Q'$ is the quiver on the vertex set $Q_0=Q'_0$ whose set of arrows is the disjoint union of $Q_1$ and $Q'_1$ (cf. \cite{DWZ}, Section 4).

\begin{thm}[Splitting Theorem, \cite{DWZ}, Theorem 4.6]\label{splittingthm} For every QP $(Q,S)$ there exist a trivial QP
$\astrivial$ and a reduced QP $\asreduced$ such that $(Q,S)$ is right-equivalent to the direct sum $\astrivial\oplus\asreduced$. Furthermore,
the right-equivalence class of each of the QPs $\astrivial$ and $\asreduced$ is determined by the right-equivalence class of $(Q,S)$.
\end{thm}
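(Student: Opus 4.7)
The plan is to establish existence via an $\idealM$-adic convergent construction of a right-equivalence, and then derive uniqueness from a careful comparison of degree-$2$ data. I would begin the existence proof by decomposing $S = S^{(2)} + W$ with $W\in\idealM^3$, and applying a preliminary linear automorphism of $\ra$ (fixing vertex idempotents and preserving the length filtration) that diagonalizes, for each ordered pair of vertices $(i,j)$, the bilinear pairing between arrows $i\to j$ and arrows $j\to i$ encoded by $S^{(2)}$, bringing it to the canonical form $\sum_{k=1}^{N} a_k b_k$ with $a_1,\ldots,a_N,b_1,\ldots,b_N$ pairwise distinct. These $2N$ arrows span the subquiver $Q_{\triv}$, while the remaining arrows span $Q_{\ored}$ (both with vertex set $Q_0$); setting $S_{\triv}:=S^{(2)}$, the goal reduces to producing a $\field$-algebra automorphism $\varphi$ of $\ra$, fixing idempotents, with $\varphi(S)$ cyclically equivalent to $S_{\triv}+S_{\ored}$ for some $S_{\ored}\in R\langle\langle Q_{\ored}\rangle\rangle$.

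I would construct $\varphi$ as the $\idealM$-adic limit of a composition $\cdots\circ\varphi_3\circ\varphi_2$ of unipotent automorphisms satisfying $\varphi_n(x)-x\in\idealM^{n}$ for every arrow $x$. At the inductive step, assume the current potential reads $S_{\triv}+S_{\ored}^{(\leq n+1)}+\Omega$, where $S_{\ored}^{(\leq n+1)}\in R\langle\langle Q_{\ored}\rangle\rangle$ collects the pure reduced contributions of degree $\leq n+1$ and $\Omega\in\idealM^{n+2}$ gathers everything else. For each $k$, set $\varphi_{n+1}(a_k):=a_k-\bigl(\partial_{b_k}\Omega\bigr)^{(n+1)}$ and $\varphi_{n+1}(b_k):=b_k-\bigl(\partial_{a_k}\Omega\bigr)^{(n+1)}$, where the superscript denotes degree-$(n+1)$ truncation; a direct computation using the variation formula for cyclic derivatives shows that these substitutions produce, up to cyclic equivalence, exactly the negatives of the degree-$(n+2)$ mixed components of $\Omega$ (those involving at least one trivial arrow), while pure reduced degree-$(n+2)$ terms are absorbed into $S_{\ored}^{(\leq n+2)}$. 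The cancellation works because $\partial_{a_k}(S_{\triv})=b_k$ and $\partial_{b_k}(S_{\triv})=a_k$ together span the space of trivial arrows, which is the defining property of a trivial QP. Convergence in the $\idealM$-adic topology is automatic since the corrections live in strictly increasing powers of $\idealM$.

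For uniqueness, suppose $(Q,S)\sim\astrivial\oplus\asreduced\sim(Q_{\triv}',S_{\triv}')\oplus(Q_{\ored}',S_{\ored}')$. Composing the two right-equivalences reduces the problem to showing that any right-equivalence between two direct sums of the form (trivial) $\oplus$ (reduced) respects the decomposition up to further right-equivalence on each summand: the trivial summand is determined by $S^{(2)}$ modulo $\idealM^3$ and cyclic equivalence, which in turn determines the bilinear pairing on arrows up to linear change of variables, so the two trivial QPs are right-equivalent; passing to the quotient of $\ra$ by the ideal generated by the matched trivial arrows then identifies the reduced parts. The main obstacle is the degree-by-degree bookkeeping in the inductive existence step: one must verify that each $\varphi_{n+1}$ does not disturb $S_{\triv}$ beyond the targeted degree and does not reintroduce mixed terms already cancelled at earlier stages. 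Both are guaranteed by the $\idealM^{n}$ support of the corrections, but the verification requires patient tracking of how cyclic derivatives interact with the substitutions across degrees.
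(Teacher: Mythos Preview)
The paper does not supply its own proof of this statement: it is quoted verbatim as Theorem 4.6 of \cite{DWZ} and used as a black box. So there is no ``paper's proof'' to compare against; the relevant benchmark is the original argument in \cite{DWZ}.

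Your existence outline is essentially the \cite{DWZ} argument (their Lemma 4.7 feeding into Theorem 4.6): bring $S^{(2)}$ to canonical form $\sum a_kb_k$, then iteratively substitute into the trivial arrows to push mixed terms to higher $\idealM$-degree, and take the $\idealM$-adic limit. Your substitution formula $\varphi_{n+1}(b_k)=b_k-(\partial_{a_k}\Omega)^{(n+1)}$ is the right shape; the only thing to be careful about is that when a degree-$(n+2)$ term of $\Omega$ contains several trivial arrows, the cyclic derivatives overcount and you must correct for this (in \cite{DWZ} this is handled by treating one trivial $2$-cycle at a time rather than all at once). That is a bookkeeping issue, not a gap.

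Your uniqueness sketch, however, is where the real work hides and where your argument is thin. Saying ``the trivial summand is determined by $S^{(2)}$'' is fine, but the assertion that ``passing to the quotient by the ideal generated by the matched trivial arrows identifies the reduced parts'' skips the substantive point: a right-equivalence $\varphi$ between two direct sums (trivial)$\oplus$(reduced) need not send trivial arrows to trivial arrows, so it does not obviously descend to the quotients. In \cite{DWZ} this is handled by Proposition 4.5, which shows (nontrivially) that any right-equivalence of this shape can be adjusted to one that restricts to an isomorphism between the reduced complete path algebras; the proof again uses an $\idealM$-adic limit argument. You should either invoke that proposition explicitly or indicate how you would prove it.
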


In the situation of Theorem \ref{splittingthm}, the QP $\asreduced$ (resp. $\astrivial$) is called the \emph{reduced part} (resp. \emph{trivial
part}) of $(Q,S)$ (cf. \cite{DWZ}, Definition 4.13); this terminology is well defined up to right-equivalence.

We now turn to the definition of mutation of a QP. Let $(Q,S)$ be a QP on the vertex set $Q_0$ and let $j\in Q_0$. Assume that $Q$ has no 2-cycles incident to $j$. If necessary, we replace $S$ with a cyclically equivalent potential so that we can assume that every cyclic path appearing in the expression of $S$ does not begin at $j$. This allows us to define $[S]$ as the potential on $\widetilde{\mu}_j(Q)$ obtained from $S$ by replacing each $j$-hook $ab$ with the arrow $[ab]$ (a \emph{$j$-hook} is a 2-path whose middle vertex is $j$). Also, we define $\Delta_j(Q)=\sum b^*a^*[ab]$, where the sum runs over all $j$-hooks $ab$ of $Q$.

\begin{defi}[\cite{DWZ}, equations (5.3) and (5.8) and Definition 5.5] Under the assumptions and notation just stated, we define the \emph{premutation} of $(Q,S)$ in direction $j$ as the QP
$\premuti(Q,S)=(\widetilde{\mu}_j(Q),\widetilde{S})$, where
$\widetilde{S}=[S]+\Delta_j(Q)$. The \emph{mutation} $\muti(Q,S)$ of $(Q,S)$ in direction $j$ is then defined as the reduced part of $\premuti(Q,S)$.
\end{defi}

\begin{thm}[\cite{DWZ}, Theorem 5.2 and Corollary 5.4] Premutations and mutations are well defined up to right-equivalence. That is, if $(Q,S)$ and $(Q',S')$ are right-equivalent QPs with no 2-cycles incident to the vertex $j$, then $\premuti(Q,S)$ is right-equivalent to $\premuti(Q',S')$ and the $\muti(Q,S)$ is right-equivalent to $\muti(Q',S')$.
\end{thm}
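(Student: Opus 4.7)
The plan is to first reduce the mutation assertion to the premutation assertion, and then to lift the given right-equivalence between $(Q,S)$ and $(Q',S')$ to one between the premutations. For the reduction: by definition $\muti(Q,S)$ is the reduced part of $\premuti(Q,S)$, and the Splitting Theorem (Theorem \ref{splittingthm}) asserts that the right-equivalence class of the reduced part of a QP depends only on the right-equivalence class of the QP itself. Hence, once $\premuti(Q,S)$ is shown to be right-equivalent to $\premuti(Q',S')$, the corresponding statement for mutations follows at once.

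For the premutation statement, let $\varphi\colon\ra\to\rap$ be a right-equivalence with $\varphi(S)$ cyclically equivalent to $S'$. The goal is to construct a continuous $\field$-algebra isomorphism $\widetilde{\varphi}\colon R\langle\langle\premuti(Q)\rangle\rangle\to R\langle\langle\premuti(Q')\rangle\rangle$ fixing the vertex idempotents and sending $\widetilde{S}$ to a potential cyclically equivalent to $\widetilde{S'}$. First I would replace $S$ and $S'$ by cyclically equivalent representatives none of whose cyclic paths begin at $j$ (which is always possible and does not affect $\widetilde{S}$ and $\widetilde{S'}$ up to cyclic equivalence). Next, I would define $\widetilde{\varphi}$ on the arrows of $\premuti(Q)$ as follows: on an arrow $c$ of $Q$ not incident at $j$, set $\widetilde{\varphi}(c)=\varphi(c)$; on a new arrow $[ab]$ coming from a $j$-hook of $Q$, set $\widetilde{\varphi}([ab])$ equal to the image $\varphi(ab)$ after rewriting every $j$-hook of $Q'$ occurring in it via its bracket; and define $\widetilde{\varphi}(a^*)$ and $\widetilde{\varphi}(b^*)$ using the inverse of the linearized change-of-arrows action of $\varphi$ at the vertex $j$. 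Following \cite{DWZ}, I would decompose $\varphi=\varphi_{\operatorname{lin}}\circ\varphi_{\operatorname{unip}}$ into a degree-preserving ``change-of-arrows'' factor and a unipotent factor (the identity on arrows modulo $\idealM^2$), produce the lift of each factor separately, and compose at the end.

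It remains to verify that $\widetilde{\varphi}$ is a continuous $\field$-algebra isomorphism (an $\idealM$-adic convergence argument together with the invertibility of the linearized change-of-arrows matrix at $j$) and that $\widetilde{\varphi}(\widetilde{S})$ is cyclically equivalent to $\widetilde{S'}$. I expect this last cyclic equivalence to be the technical heart of the proof: writing $\widetilde{S}=[S]+\Delta_j(Q)$, one must check that $\widetilde{\varphi}([S])$ is cyclically equivalent to $[S']$ by tracking how $j$-hooks get rewritten under $\varphi$ using the hypothesis that $\varphi(S)$ and $S'$ are cyclically equivalent, and that $\widetilde{\varphi}(\Delta_j(Q))$ is cyclically equivalent to $\Delta_j(Q')$ because $\Delta_j=\sum b^*a^*[ab]$ has a trace-like form that is cyclically invariant under the induced change of arrows at $j$. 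The cross-terms contributed by the unipotent factor of $\varphi$ must be absorbed using the basic cyclic relation $a_1\ldots a_d\equiv a_2\ldots a_d a_1$ together with the partial derivatives $\partial_a(S)$ naturally appearing in the expansion of $\widetilde{\varphi}([S])$; it is precisely this bookkeeping that I expect to be the most delicate part of the argument.
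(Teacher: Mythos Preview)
The paper does not give its own proof of this statement; it is simply cited from \cite{DWZ} (Theorem 5.2 and Corollary 5.4) as background. Your sketch is a reasonable outline of the argument in \cite{DWZ}, including the reduction of the mutation statement to the premutation statement via the Splitting Theorem and the decomposition of $\varphi$ into a change-of-arrows automorphism and a unitriangular automorphism, but there is nothing in the present paper to compare it against.
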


\begin{thm}[\cite{DWZ}, Theorem 5.7] Mutations are involutive up to right-equivalence. More specifically, if $(Q,S)$ a 2-acyclic QP, then $\muti^2(Q,S)$ is right-equivalent to $(Q,S)$.
\end{thm}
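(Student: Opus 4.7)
The plan is to compute the iterated premutation $\premutj^2(Q,S)$ explicitly, exhibit a right-equivalence that splits it as $(Q,S)$ plus a trivial QP on newly appearing 2-cycles, and then invoke the Splitting Theorem (Theorem \ref{splittingthm}) to conclude.

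First I check that the second mutation is defined, i.e., that $\mutj(Q,S)$ has no 2-cycles at $j$. In $\premutj(Q)$, the arrows incident to $j$ are the reversals $a^*: h(a)\to j$ and $b^*: j\to t(b)$, one for each arrow $a: j\to h(a)$ and $b: t(b)\to j$ of $Q$. A 2-cycle at $j$ in $\premutj(Q)$ would force $t(b)=h(a)$ for some such pair, producing a 2-cycle $ab$ in $Q$, which contradicts 2-acyclicity. Since reduction cannot create new 2-cycles at $j$, the second mutation is legitimate.

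Next I compute $\premutj^2(Q,S)$. The arrows of $\premutj^2(Q)$ are (i) the arrows of $Q$ not touching $j$; (ii) the doubly-reversed arrows $a^{**},b^{**}$, which I identify canonically with the originals $a,b$; (iii) the first-generation hook arrows $[ab]: t(b)\to h(a)$; and (iv) the second-generation hook arrows $[b^*a^*]: h(a)\to t(b)$. Crucially, $\{[ab],[b^*a^*]\}$ forms an oriented 2-cycle. The resulting potential is
\[
\widetilde{S}_2 \;=\; [S] \;+\; \sum_{(a,b)}[b^*a^*][ab] \;+\; \sum_{(a,b)} ab\,[b^*a^*],
\]
where the middle sum is what the first $\Delta_j$-term $\sum b^*a^*[ab]$ becomes after the hooks $b^*a^*$ of $\premutj(Q)$ are replaced by $[b^*a^*]$, and the last sum is the fresh $\Delta_j$-term of the second premutation (using $a^{**}=a$, $b^{**}=b$). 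Modulo cyclic equivalence, the last two sums combine to $\sum[b^*a^*]\bigl([ab]+ab\bigr)$.

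The heart of the argument is the explicit right-equivalence. Let $\varphi$ be the continuous unital $R$-algebra automorphism of $R\langle\langle\premutj^2(Q)\rangle\rangle$ defined on generators by $\varphi([ab])=[ab]-ab$ and $\varphi(x)=x$ for every other arrow $x$; this is well-defined because $[ab]$ and $ab$ share the endpoints $(t(b),h(a))$. Under $\varphi$, the combined sum above collapses to $\sum[b^*a^*][ab]$, a potential supported purely on the 2-cycles $\{[ab],[b^*a^*]\}$, while $\varphi([S])$ equals $S$ (inside the enlarged algebra) plus residual terms each containing at least one of the new hook arrows. A subsequent $\idealM$-adic Newton-style iteration, exactly analogous to that used in the proof of Theorem \ref{splittingthm}, absorbs these residual terms into a trivial QP supported on the 2-cycles. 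This produces a right-equivalence $\premutj^2(Q,S)\simeq(Q,S)\oplus\astrivial$, and the Splitting Theorem then gives $\mutj^2(Q,S)\simeq(Q,S)$ up to right-equivalence. The principal technical obstacle is verifying convergence of the iteration: completeness of $R\langle\langle\premutj^2(Q)\rangle\rangle$ in the $\idealM$-adic topology guarantees termination, while 2-acyclicity of $Q$ ensures that the new hook arrows are genuinely new and do not interfere with preexisting arrows, so that the split is clean.
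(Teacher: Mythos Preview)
The paper does not give its own proof of this statement; it is cited as a background result from \cite{DWZ}, Theorem~5.7. Your strategy is the same as in \cite{DWZ} and is sound, but two points need repair.

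First, you compute the reduced part of the \emph{double premutation} $\premutj^2(Q,S)$ and silently identify it with $\mutj^2(Q,S)=\mutj(\mutj(Q,S))$. These do agree, but it requires the observation (which you made) that $\premutj(Q,S)$ has no 2-cycles through $j$, so its trivial summand carries no arrows incident to $j$; hence premutation at $j$ commutes with adding or removing that summand, and the two reductions coincide up to right-equivalence.

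Second, and more substantively, the claim that $\varphi([S])$ equals $S$ plus bracket-containing residuals is false. If a cyclic term $w$ of $S$ passes through $j$ exactly $k$ times, then under $\varphi\colon[ab]\mapsto[ab]-ab$ the unique bracket-free contribution coming from $w$ is $(-1)^k w$. Thus the bracket-free part of $\varphi([S])$ is the sign-twisted potential $S'=\sum_w(-1)^{k(w)}w$, not $S$. The fix is immediate: compose with the right-equivalence $b\mapsto-b$ for every arrow $b$ of $Q$ with $h(b)=j$, which restores $S'$ to $S$ and leaves $\sum[b^*a^*][ab]$ untouched. After this, the residual terms (each containing some $[ab]$ but no $[b^*a^*]$) are absorbed by a single substitution $[b^*a^*]\mapsto[b^*a^*]-g_{ab}$, where $g_{ab}$ collects, for each residual cycle rotated to begin with $[ab]$, the remaining factor; this fixes the $Q$-only part $S$ and yields the splitting $(Q,S)\oplus(C,\sum[b^*a^*][ab])$ directly, with no iteration needed.
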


\begin{defi}[\cite{DWZ}, Definition 7.2] A QP $(Q,S)$ is \emph{non-degenerate} if it is 2-acyclic and the quiver of the QP obtained after any possible sequence of QP-mutations is 2-acyclic.
\end{defi}

\begin{thm}[\cite{DWZ}, Proposition 7.3 and Corollary 7.4] If the base field $\field$ is uncountable, then every 2-acyclic quiver admits a non-degenerate QP.
\end{thm}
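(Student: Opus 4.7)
The proof I propose is a Baire-category style argument exploiting the uncountability of $\field$. Let $Q$ be a fixed $2$-acyclic quiver with vertex set $Q_0$. Potentials on $Q$, considered up to cyclical equivalence, form a topological $\field$-vector space whose coefficients are naturally indexed by the countable set of rotation classes of cyclic paths. For each finite sequence $\mathbf{j}=(j_1,\ldots,j_n)$ of vertices in $Q_0$, let $U_{\mathbf{j}}$ be the set of potentials $S$ such that the iterated mutation $\mu_{j_n}\cdots\mu_{j_1}(Q,S)$ is well defined at every step and the underlying quiver of the final QP is $2$-acyclic. By definition, a potential $S$ is non-degenerate exactly when $S\in\bigcap_{\mathbf{j}}U_{\mathbf{j}}$, where the intersection ranges over the countable collection of all finite sequences in $Q_0$.

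The first step is a finite-dimensional reduction. Whether the quiver underlying $\mu_{j_n}\cdots\mu_{j_1}(Q,S)$ is $2$-acyclic depends on $S$ only modulo $\idealM^{N}$ for some $N=N(\mathbf{j})$: a single mutation step is given by the explicit formula $\widetilde{S}=[S]+\Delta_{j}(Q)$ followed by the Splitting Theorem, and both operations transform the degree-$\leq d$ part of the potential into polynomial expressions in the degree-$\leq d'$ part of $S$ for some bounded $d'$. Hence $U_{\mathbf{j}}$ is the preimage of a subset $\overline{U}_{\mathbf{j}}$ of the finite-dimensional $\field$-space $V_N$ of truncated potentials of degree at most $N$.

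The core of the argument is to show that each $\overline{U}_{\mathbf{j}}$ is a non-empty Zariski-open subset of $V_N$. Openness: the arrows of the final mutated quiver, and the coefficients of the induced potential on it, are polynomial functions of the coefficients of $S$, and $2$-acyclicity of the reduced quiver at each intermediate step amounts to the non-vanishing of certain polynomials measuring how the $2$-cycles produced by premutation are cancelled by the trivial part identified by the Splitting Theorem. Non-emptiness I would establish by induction on the length $n$: given a potential admissible for $(j_1,\ldots,j_{n-1})$, the condition imposed at the $n$-th step cuts out at worst a proper subvariety of $V_N$, provided one is allowed to modify $S$ by suitable higher-degree cyclic terms to break any ``accidental'' $2$-cycle produced by reduction. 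I expect this inductive step to be the main obstacle, since one must exhibit explicitly (or via a genericity argument) a perturbation that forces a non-trivial relation in the Jacobian ideal, killing every $2$-cycle that could otherwise survive the reduction.

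The final step is the countable intersection argument. Over an uncountable field $\field$, any finite-dimensional affine space $\field^{m}$ is not a union of countably many proper Zariski-closed subvarieties, so the intersection of countably many non-empty Zariski-open subsets of $\field^{m}$ is non-empty. Applying this jointly to the countable family $\{\overline{U}_{\mathbf{j}}\}$ --- after increasing $N$ in a compatible way, or equivalently passing to the inverse system of the $V_N$ --- we conclude that $\bigcap_{\mathbf{j}}U_{\mathbf{j}}$ is non-empty, and any element of this intersection is a non-degenerate potential on $Q$.
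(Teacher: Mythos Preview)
The paper does not prove this theorem at all: it is quoted verbatim as a background result from \cite{DWZ} (Proposition~7.3 and Corollary~7.4), with no argument given. So there is no ``paper's own proof'' to compare your proposal against.

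That said, your outline is essentially the argument in \cite{DWZ}. There the authors show, for each finite mutation sequence $\mathbf{j}$, that the condition ``$\mu_{\mathbf{j}}(Q,S)$ is defined and $2$-acyclic'' depends only on a truncation of $S$ and cuts out a non-empty Zariski-open subset of the corresponding finite-dimensional space of truncated potentials; the uncountability of $\field$ then guarantees that the countable intersection is non-empty. Your identification of the inductive non-emptiness step as the crux is accurate: in \cite{DWZ} this is handled by showing that if $(Q,S)$ is $2$-acyclic then a generic degree-$3$ perturbation of $\widetilde{\mu}_j(Q,S)$ already kills any surviving $2$-cycle after reduction (this is the content of their Proposition~7.3). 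Your sketch of this step (``modify $S$ by suitable higher-degree cyclic terms to break any accidental $2$-cycle'') is the right idea but, as you note yourself, is not a proof; filling it in requires the explicit analysis of how the degree-$2$ part of $\widetilde{S}$ interacts with the degree-$3$ terms, which is exactly what \cite{DWZ} carries out.
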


A QP $(Q,S)$ is \emph{rigid} if every cycle in $Q$ is cyclically equivalent to an element of the Jacobian ideal $J(S)$ (cf. \cite{DWZ}, Definition 6.10 and equation 8.1). Rigidity is invariant under QP-mutation.

\begin{thm}[\cite{DWZ}, Corollary 6.11, Proposition 8.1 and Corollary 8.2] Every reduced rigid QP is 2-acyclic. The class of reduced rigid QPs is closed under QP-mutation. Consequently, every rigid reduced QP is non-degenerate.
\end{thm}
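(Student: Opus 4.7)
The plan is to prove the three assertions in the listed order, noting that the third follows immediately from the first two.

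\textbf{Reduced rigid $\Rightarrow$ 2-acyclic.} Let $(Q,S)$ be reduced and rigid and suppose, for contradiction, that $Q$ contains a 2-cycle $ab$. Since $S$ is reduced, every term of $S$ has length $\ge 3$, so each cyclic derivative $\partial_c(S)$ lies in $\idealM^2$ and consequently $J(S)\subseteq\idealM^2$. Cyclic equivalence preserves path length, so the assumed relation $ab\equiv_{\cyc} x\in J(S)$ forces $x$ to have nonzero length-$2$ component, which must come from $\partial_c(S_3)$ for $c\in Q_1$, where $S_3$ is the degree-$3$ component of $S$. I would then construct an explicit change-of-variables right-equivalence $\varphi:\ra\to\ra$ (fixing idempotents, replacing $a$ by $a$ minus an appropriate length-$\ge 2$ correction extracted from $S_3$) under which $\varphi(S)$ acquires a nonzero degree-$2$ component that is cyclically equivalent to $ab$; applying the Splitting Theorem \ref{splittingthm} then yields a nontrivial trivial summand for $(Q,S)$, contradicting reducedness. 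The core obstacle here is the bookkeeping required to write the correction as a convergent element of $\ra$ and to verify that the resulting map is indeed a right-equivalence.

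\textbf{Closure under mutation.} Let $(Q,S)$ be reduced and rigid and let $k\in Q_0$. By the first part, $Q$ has no $2$-cycles, in particular none incident to $k$, so $\mu_k(Q,S)$ is well-defined. By construction $\mu_k(Q,S)$ is the reduced part of $\premutj(Q,S)$, hence automatically reduced. For rigidity I would invoke the structural fact (established in \cite{DWZ}) that premutation and passage to the reduced part both preserve the Jacobian algebra up to canonical isomorphism, so $\jacobas\cong \mathcal{P}(\premutj(Q,S))\cong\mathcal{P}(\mu_k(Q,S))$. Given any cyclic path $C$ in $\mu_k(Q)$, one lifts it through the Steps (1)--(3) of Definition \ref{threesteps}, rewriting each new arrow $[ab]$ as the $j$-hook $ab$ in $Q$ and using Step (2)'s star-involution, to obtain a cyclic element in $\ra$ whose image in the common Jacobian algebra coincides with $C$; rigidity of $(Q,S)$ says this lift is cyclically equivalent to an element of $J(S)$, and pushing this relation back through the above isomorphisms gives the desired cyclic equivalence modulo $J(\mu_k(S))$. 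The main obstacle is making the lift-and-descent rigorous at the level of cyclic equivalence rather than merely modulo the Jacobian ideal; one must keep careful track of the $\Delta_j(Q)$ correction in the premutation potential.

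\textbf{Non-degeneracy.} Let $(Q,S)$ be reduced and rigid. By the first assertion $Q$ is $2$-acyclic, and by the second assertion any QP obtained from $(Q,S)$ by an arbitrary finite sequence of mutations is again reduced and rigid, hence again $2$-acyclic by the first assertion. By Definition, $(Q,S)$ is non-degenerate.
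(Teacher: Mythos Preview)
The paper gives no proof of this statement; it is quoted from \cite{DWZ} (their Corollary~6.11, Proposition~8.1 and Corollary~8.2) as background material. So there is nothing in the paper to compare against, and your sketch has to stand on its own. Two of the three parts contain genuine errors.

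\textbf{First assertion.} The construction you propose cannot succeed. Any $R$-algebra automorphism $\varphi$ of $\ra$ fixing the idempotents satisfies $\varphi(\idealM)=\idealM$ (since $\idealM$ is the Jacobson radical), hence $\varphi(\idealM^{k})=\idealM^{k}$ for every $k$. In particular, if $S$ is reduced then $S\in\idealM^{3}$ and therefore $\varphi(S)\in\idealM^{3}$ as well; your substitution $a\mapsto a-(\text{length}\ge 2)$ only pushes degrees up, so $\varphi(S)$ can never ``acquire a nonzero degree-$2$ component''. Thus no right-equivalence can turn a reduced potential into a non-reduced one, and the contradiction you are aiming for does not materialise this way. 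The argument in \cite{DWZ} runs in the opposite direction: one first establishes (their Section~6) the nontrivial characterization that rigidity of $(Q,S)$ is equivalent to $(Q,S+S')$ being right-equivalent to $(Q,S)$ for every potential $S'$. Granting this, if $Q$ had a $2$-cycle $ab$ one takes $S'=ab$; the Splitting Theorem then forces the reduced part of $(Q,S+ab)$ to have simultaneously the same arrow span as $Q$ (via the right-equivalence with the already reduced $(Q,S)$) and a strictly smaller one (since $S+ab$ has nonzero degree-$2$ part), a contradiction.

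\textbf{Second assertion.} Your key claim ``$\jacobas\cong\mathcal{P}(\mu_k(Q,S))$'' is false; mutation does \emph{not} preserve the Jacobian algebra. A minimal counterexample: let $Q$ be the quiver $1\xrightarrow{a}2\xrightarrow{b}3$ with $S=0$. Then $\jacobas$ is the hereditary path algebra of linear $A_3$, whose indecomposable projectives have dimensions $3,2,1$. On the other hand $\mu_2(Q,0)$ is the oriented $3$-cycle with potential equal to that cycle; its Jacobian algebra annihilates every path of length $2$ and has all three indecomposable projectives of dimension $2$. These two $6$-dimensional algebras are not isomorphic, so the lift-and-descent through a putative isomorphism of Jacobian algebras that you outline cannot be carried out. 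In \cite{DWZ} invariance of rigidity under mutation is obtained instead by combining the deformation characterization of rigidity mentioned above with the involutivity of $\mu_k$ and the fact that mutation respects right-equivalence.

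\textbf{Third assertion.} Your deduction of non-degeneracy from the first two claims is correct.
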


\begin{prop}[\cite{DWZ}, Corollary 6.6]\label{findimisinvariant} Let $(Q,S)$ be a non-degenerate QP and $j\in Q_0$ any vertex, then the Jacobian algebra $\mathcal{P}(Q,S)$ is finite-dimensional if and only if so is $\mathcal{P}(\muti(Q,S))$. In other words, finite-dimensionality of Jacobian algebras is invariant under QP-mutations.
\end{prop}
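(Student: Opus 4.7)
The plan is a three-step reduction. First, by the involutivity of QP-mutation up to right-equivalence (which applies since $(Q,S)$ is non-degenerate and hence $2$-acyclic) together with the right-equivalence invariance of Jacobian algebras, it suffices to prove a single implication: that $\mathcal{P}(\mutj(Q,S))$ is finite-dimensional whenever $\jacobas$ is. Second, by definition $\mutj(Q,S)$ is the reduced part of the premutation $\premutj(Q,S)$, so by the Splitting Theorem $\premutj(Q,S)$ is right-equivalent to the direct sum of $\mutj(Q,S)$ with a trivial QP; since a trivial QP's Jacobian algebra is isomorphic to $R$, and since the Jacobian algebra of a direct sum of QPs is finite-dimensional exactly when each summand is, the task reduces to showing that $\mathcal{P}(\premutj(Q,S))$ is finite-dimensional whenever $\jacobas$ is.

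The core step is the construction of an explicit continuous $R$-algebra isomorphism $\Phi:\jacobas\to\mathcal{P}(\premutj(Q,S))$. On arrows $c$ of $Q$ not incident at $j$ I would set $\Phi(c)=c$, and on arrows $a$ with $t(a)=j$ and $b$ with $h(b)=j$ I would define $\Phi(a),\Phi(b)$ inductively as formal series in $R\langle\langle\premutj(Q)\rangle\rangle$ whose leading terms are (up to sign) $a^*$ and $b^*$, with signs chosen so that $[S]$ transforms back into $S$. These series are produced by iteratively solving the relations $\partial_{[ab]}(\widetilde{S})=b^*a^*+\partial_{[ab]}([S])=0$, which express each length-two path $b^*a^*$ as a series in strictly longer paths, together with the relations $\partial_{a^*}(\widetilde{S})=0$ and $\partial_{b^*}(\widetilde{S})=0$, ensuring that $\Phi$ descends to the Jacobian quotient. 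A formally analogous construction yields a candidate inverse $\Psi$.

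The hard part is verifying that these inductively defined series converge in the $\idealM$-adic topology and that $\Phi$ and $\Psi$ are indeed mutually inverse. Convergence follows from the fact that each iterative substitution of $b^*a^*$ raises the $\idealM$-adic degree of the correction term, a consequence of the explicit form $\widetilde{S}=[S]+\Delta_j(Q)$ and of the completeness of the complete path algebra $\ra$. Mutual inversion is then a direct but technical check via the cyclic derivatives of $\widetilde{S}$. Once $\Phi$ is known to be an isomorphism of $R$-algebras, finite-dimensionality transfers immediately between $\jacobas$ and $\mathcal{P}(\premutj(Q,S))$, and combined with the first two reductions this completes the proof.
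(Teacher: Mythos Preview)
This proposition is not proved in the paper; it is quoted as background with a citation to \cite{DWZ}, so there is no argument here to compare against directly. That said, your proposal has a genuine gap at the core step.

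Your two reductions are fine: involutivity plus right-equivalence invariance reduce to one implication, and the Splitting Theorem reduces the mutation to the premutation. The error is the claimed $R$-algebra isomorphism $\Phi:\jacobas\to\mathcal{P}(\premutj(Q,S))$. First, your construction does not even typecheck as an $R$-algebra map: for an arrow $a$ with $t(a)=j$ one has $a\in e_{h(a)}\,\ra\,e_j$, so any $R$-algebra homomorphism must send $a$ into $e_{h(a)}\,\mathcal{P}(\premutj(Q,S))\,e_j$; but $a^*$ lies in the opposite corner $e_j\,\mathcal{P}(\premutj(Q,S))\,e_{h(a)}$, so a series with leading term $\pm a^*$ cannot represent $\Phi(a)$. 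Second, and decisively, no such isomorphism can exist because the two Jacobian algebras need not even have the same dimension. Take $Q$ with arrows $a:1\to 2$ and $b,c:2\to 3$, potential $S=0$, and $j=2$. Then $\jacobas$ is the path algebra, of dimension $8$ (basis $e_1,e_2,e_3,a,b,c,ba,ca$). The premutation has arrows $a^*:2\to1$, $b^*,c^*:3\to2$, $[ba],[ca]:1\to3$ and $\widetilde S=a^*b^*[ba]+a^*c^*[ca]$; a direct check shows $\dim\mathcal{P}(\premutj(Q,S))=11$, since besides the idempotents and arrows the three length-two classes $c^*[ba]$, $b^*[ca]$, and $b^*[ba]=-c^*[ca]$ survive while all longer paths vanish.

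The argument in \cite{DWZ} does not attempt to identify the two Jacobian algebras. It instead compares the images of the powers of $\idealM$ in the two quotients, using the relations $\partial_{a^*}(\widetilde S)$, $\partial_{b^*}(\widetilde S)$, $\partial_{[ab]}(\widetilde S)$ to rewrite paths in $\premutj(Q)$ and thereby show that if sufficiently long paths vanish modulo $J(S)$ then sufficiently long paths also vanish modulo $J(\widetilde S)$. What transfers is the eventual vanishing of $\idealM^N$ in the quotient, not the algebra structure itself.
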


\subsection{Decorated representations and their mutations}\label{backgroundrepresentations}

In this subsection we describe how the notions of right-equivalence and QP-mutation extend to the level of representations. As in the previous subsection, our reference is \cite{DWZ}.

Recall that the \emph{vertex span} of a quiver $Q$ is the $\field$-vector space $R$ with basis $\{e_i\suchthat i\in Q_0\}$. This vector space is actually a commutative ring if we define $e_ie_j=\delta_{ij}e_i$.

\begin{defi}[\cite{DWZ}, Definition 10.1] Let $(Q,S)$ be any QP. A \emph{decorated} $(Q,S)$\emph{-representation}, or \emph{QP-representation}, is a quadruple $\mathcal{M}=(Q,S,M,V)$, where $M$ is a finite-dimensional left $\jacobas$-module and $V$ is a finite-dimensional left $R$-module.
\end{defi}

By setting $M_i=e_iM$ for each $i\in Q_0$, and $a_M:M_{t(a)}\rightarrow M_{h(a)}$ as the multiplication by $a\in Q_1$ given by the $\ra$-module structure of $M$, we easily see that each $\mathcal{P}(Q,S)$-module induces a representation of the quiver $Q$. The following lemma, whose proof can be found in \cite{DWZ}, allows us to deduce the relations this representation satisfies.

\begin{lemma} Every finite-dimensional $\ra$-module is nilpotent. That is, if $M$ is a finite-dimensional $\ra$-module, then there exists a positive integer $r$ such that $\idealM^rM=0$. (Remember that $\idealM$ is the ideal of $R\langle\langle Q\rangle\rangle$ generated by the arrows.)
\end{lemma}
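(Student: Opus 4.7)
The plan is to combine a stabilization argument with a Nakayama-type lemma tailored to the complete algebra $\ra$. Since $M$ is finite-dimensional over $\field$, the descending chain
$$M \supseteq \idealM M \supseteq \idealM^2 M \supseteq \cdots$$
of $\field$-subspaces of $M$ must stabilize: there exists $r \geq 0$ with $\idealM^r M = \idealM^{r+1} M$. Setting $N := \idealM^r M$, one obtains a finite-dimensional $\ra$-submodule of $M$ satisfying $\idealM N = N$. It therefore suffices to show that any such $N$ is zero.

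I would argue this by contradiction. Assuming $N\neq 0$, fix a $\field$-basis $m_1, \ldots, m_n$ of $N$. The equation $\idealM N = N$ lets me express each $m_i$ as a finite $\field$-linear combination of elements of the form $x \cdot n$ with $x \in \idealM$ and $n \in N$; expanding each such $n$ in the chosen basis and collecting terms, I obtain scalars $y_{ij}\in\idealM$ with
$$m_i = \sum_{j=1}^n y_{ij}\, m_j \qquad (1\leq i \leq n).$$
Arranging this as a matrix equation, with $Y = (y_{ij})\in M_n(\ra)$ and $\vec m = (m_1,\ldots,m_n)^T$, we have $(I_n - Y)\vec m = \vec 0$.

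The crux is then to invert $I_n - Y$ inside $M_n(\ra)$. For this I would use the geometric series $Z := \sum_{k \geq 0} Y^k$: since the entries of $Y$ lie in $\idealM$, the entries of $Y^k$ lie in $\idealM^k$, so the coordinate-wise series defining $Z$ are Cauchy in the $\idealM$-adic topology of $\ra$. Completeness of $\ra$ (with respect to the $\idealM$-adic topology) guarantees that $Z$ is a well-defined element of $M_n(\ra)$, and the usual telescoping manipulation shows that $(I_n - Y)Z = Z(I_n - Y) = I_n$. Consequently
$$\vec m = Z(I_n - Y)\vec m = Z\cdot \vec 0 = \vec 0,$$
which contradicts the choice of a basis of the nonzero space $N$. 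Hence $N = 0$, i.e.\ $\idealM^r M = 0$, as desired.

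The only substantive point in this proof is the invertibility of $I_n - Y$ in $M_n(\ra)$, which is precisely where the completeness of $\ra$ enters — in the ordinary path algebra $\usualra$ this step would fail because the inverse $Z$ is genuinely an infinite sum. Everything else (the stabilization of the $\idealM^kM$-chain, the passage from $\idealM N = N$ to the matrix identity, and the telescoping verification that $Z$ is a two-sided inverse) is routine bookkeeping.
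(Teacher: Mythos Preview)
Your argument is correct. The stabilization of the chain $\idealM^k M$ by finite-dimensionality, followed by the Nakayama-type step in which you invert $I_n-Y$ using the $\idealM$-adic completeness of $\ra$, is exactly the right mechanism; the key observation that the entries of $Y^k$ lie in $\idealM^k$ (so that the geometric series converges entrywise) is clean and valid.

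As for comparison: the paper does not actually supply its own proof of this lemma---it simply refers the reader to \cite{DWZ}. So there is no in-paper argument to compare against. Your proof is a standard and self-contained Nakayama-lemma argument adapted to the complete setting, and it would serve perfectly well as a written-out proof here. One small stylistic remark: you could shorten the argument slightly by noting that $1-y$ is a unit in $\ra$ for every $y\in\idealM$ (again by the geometric series), which says precisely that $\idealM$ is contained in the Jacobson radical of $\ra$; the classical Nakayama lemma then gives $N=0$ immediately from $\idealM N=N$ and $\dim_{\field} N<\infty$. But this is only a repackaging of what you wrote.
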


Because of this lemma, we see that any QP-representation is prescribed by the following data:
\begin{enumerate}\item A tuple $(M_i)_{i\in Q_0}$ of finite-dimensional $K$-vector spaces;
\item a family $(a_M:M_{t(a)}\rightarrow M_{h(a)})_{a\in Q_0}$ of $K$-linear transformations annihilated by $\{\partial_a(S)\suchthat a\in Q_0\}$, for which there exists an integer $r\geq 1$ with the property that the composition ${a_1}_{M}\ldots {a_r}_{M}$ is identically zero for every $r$-path $a_1\ldots a_r$ in $Q$.
\item a tuple $(V_i)_{i\in Q_0}$ of finite-dimensional $\field$-vector spaces (without any specification of linear maps between them).
\end{enumerate}

\begin{remark} In the literature, the linear map $a_M:M_{t(a)}\rightarrow M_{h(a)}$ induced by left multiplication by $a$ is more commonly denoted by $M_a$. We will use both of these notations indistinctly.
\end{remark}

\begin{defi}[\cite{DWZ}, Definition 10.2] Let $(Q,S)$ and $(Q',S')$ be QPs on the same set of vertices, and let $\mathcal{M}=(Q,S,M,V)$ and $\mathcal{M}'=(Q',S',M',V')$ be decorated representations. A triple $\Phi=(\varphi,\psi,\eta)$ is called a \emph{right-equivalence} between $\mathcal{M}$ and $\mathcal{M}'$ if the following three conditions are satisfied:
\begin{itemize}\item $\varphi:\ra\rightarrow\rap$ is a right-equivalence of QPs between $(Q,S)$ and $(Q',S')$;
\item $\psi:M\rightarrow M'$ is a vector space isomorphism such that $\psi\circ u_M=\varphi(u)_{M'}\circ\psi$ for all $u\in\ra$;
\item $\eta:V\rightarrow V'$ is an $R$-module isomorphism.
\end{itemize}
\end{defi}

\begin{ex} Consider the QP $(Q,0)$, where $Q$ is the quiver
\begin{displaymath}
\xymatrix{ & 2 \ar[dr]^b & \\
1 \ar[ur]^c \ar[rr]_a & & 3 }
\end{displaymath}
For any $\lambda\in\field$, the QP-representation
\begin{displaymath}
\xymatrix{ & \field \ar[dr]^{\idK} & & & V_2 &  \\
\field \ar[ur]^{\idK} \ar[rr]_{\lambda\idK} & & \field & V_1 & & V_3 }
\end{displaymath}
is right-equivalent to the QP-representation
\begin{displaymath}
\xymatrix{ & \field \ar[dr]^{\idK} & & & V_2 &  \\
\field \ar[ur]^{\idK} \ar[rr]_{0} & & \field & V_1 & & V_3 }
\end{displaymath}
by means of the triple $\Phi=(\varphi,\psi,\idV)$, where $\varphi:\ra\rightarrow\ra$ is the $R$-algebra isomorphism whose action on the arrows is given by $a\mapsto \lambda bc-a$, $b\mapsto b$, $c\mapsto c$, and $\psi$ is the identity on each copy of $\field$. This example shows in particular that there are right-equivalent representations that are not isomorphic.
\end{ex}

Recall that every QP is right-equivalent to the direct sum of its reduced and trivial parts, which are determined up to right-equivalence (Theorem \ref{splittingthm}). These facts have representation-theoretic extensions, which we now describe. Let $(Q,S)$ be
any QP, and let $\varphi:R\langle\langle Q_{\ored}\oplus C\rangle\rangle\rightarrow\ra$ be a right equivalence between
$(Q_{\ored},S_{\ored})\oplus(C,T)$ and $(Q,S)$, where $(Q_{\ored},S_{\ored})$ is a reduced QP and $(C,T)$ is a trivial QP. Let
$\mathcal{M}=(Q,S,M,V)$ be a decorated representation, and set $M^\varphi=M$ as $K$-vector space. Define an action of $R\langle\langle
Q_{\ored}\rangle\rangle$ on $M^\varphi$ by setting $u_{M^\varphi}=\varphi(u)_M$ for $u\in R\langle\langle
Q_{\ored}\rangle\rangle$.

\begin{prop}[\cite{DWZ}, Propositions 4.5 and 10.5]\label{reddetermineduptorequiv} With the action of $R\langle\langle Q_{\ored}\rangle\rangle$ on $M^\varphi$ just defined, the quadruple $(Q_{\ored},S_{\ored},M^\varphi,V)$ becomes a QP-representation. Moreover, the right-equivalence class of $(Q_{\ored},S_{\ored},M^\varphi,V)$ is determined by the right-equivalence class of $\mathcal{M}$.
\end{prop}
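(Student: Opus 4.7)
The plan has two parts, matching the two assertions in the statement. For the first part, I want to check that $(Q_{\ored}, S_{\ored}, M^\varphi, V)$ really is a decorated representation. The $R\langle\langle Q_{\ored}\rangle\rangle$-module structure on $M^\varphi = M$ is defined via the composition
\[
R\langle\langle Q_{\ored}\rangle\rangle \hookrightarrow R\langle\langle Q_{\ored}\oplus C\rangle\rangle \xrightarrow{\varphi} \ra,
\]
followed by the $\ra$-action on $M$. Nilpotency ($\idealM^r M^\varphi = 0$ for some $r$) transfers because $\varphi$ sends arrows of $Q_{\ored}$ into the ideal of $\ra$ generated by the arrows of $Q$, and the arrows of $Q$ already act nilpotently on $M$. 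The one substantive check is that $J(S_{\ored})$ annihilates $M^\varphi$: the key observation is that for every arrow $a$ of $Q_{\ored}$ one has $\partial_a(S_{\ored} + T) = \partial_a(S_{\ored})$ (since $T$ involves only arrows of $C$), so $\partial_a(S_{\ored})$ lies in the Jacobian ideal $J(S_{\ored} + T)$ of the split QP, which $\varphi$ sends onto $J(S)$ by Proposition~3.7 of \cite{DWZ}. Since $J(S)\cdot M = 0$, I conclude $J(S_{\ored})\cdot M^\varphi = 0$.

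For the invariance of the right-equivalence class, suppose $(\varphi_0, \psi, \eta)$ is a right-equivalence between $\mathcal{M} = (Q, S, M, V)$ and $\mathcal{M}' = (Q', S', M', V')$, and let $\varphi, \varphi'$ be reduction right-equivalences chosen for $(Q, S)$ and $(Q', S')$. The composition
\[
\chi := (\varphi')^{-1}\circ \varphi_0 \circ \varphi : R\langle\langle Q_{\ored}\oplus C\rangle\rangle \to R\langle\langle Q'_{\ored}\oplus C'\rangle\rangle
\]
is a right-equivalence between the two split trivial-plus-reduced QPs. By Proposition~4.5 of \cite{DWZ}, I can modify $\chi$ (at the expense of replacing $\varphi$ or $\varphi'$ by another admissible splitting) so that it respects the direct-sum decomposition, producing in particular a right-equivalence $\chi_{\ored}: R\langle\langle Q_{\ored}\rangle\rangle \to R\langle\langle Q'_{\ored}\rangle\rangle$ between $(Q_{\ored}, S_{\ored})$ and $(Q'_{\ored}, S'_{\ored})$. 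The candidate right-equivalence between the reduced decorated representations is then $(\chi_{\ored}, \psi, \eta)$. The compatibility $\psi \circ u_{M^\varphi} = \chi_{\ored}(u)_{(M')^{\varphi'}} \circ \psi$ for $u \in R\langle\langle Q_{\ored}\rangle\rangle$ unwinds, via the definition of the induced actions, to the identity $\varphi_0(\varphi(u)) = \varphi'(\chi_{\ored}(u))$ combined with the intertwining satisfied by $\psi$ for the original right-equivalence of $\mathcal{M}$ and $\mathcal{M}'$.

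The main obstacle I anticipate is the modification step in the second part: a right-equivalence between direct sums of QPs does not a priori respect the decomposition, and the content of Proposition~4.5 of \cite{DWZ} is precisely the inductive argument that lets one re-choose splittings so that it does. One must then separately verify that replacing $\varphi$ by another admissible splitting only alters $M^\varphi$ within its right-equivalence class; this is itself an instance of the first assertion applied to a self-right-equivalence of $(Q, S)$, so it does not introduce any new difficulty. The first assertion, by contrast, is essentially formal once one has Proposition~3.7 of \cite{DWZ} at hand.
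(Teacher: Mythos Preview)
The paper does not give its own proof of this proposition; it is stated with a citation to \cite{DWZ}, Propositions~4.5 and~10.5, and no argument is supplied. So there is nothing in the paper to compare against directly.

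That said, your outline is essentially the argument one finds in \cite{DWZ}. The first part is correct and, as you say, formal once Proposition~3.7 of \cite{DWZ} is in hand. In the second part your strategy is also the right one, but one phrase is misleading: you write that the invariance of $M^\varphi$ under change of splitting is ``an instance of the first assertion applied to a self-right-equivalence of $(Q,S)$.'' The first assertion only says the quadruple is a QP-representation; it says nothing about right-equivalence classes. What you actually need is the special case $\varphi_0 = \mathrm{id}$, $(Q,S)=(Q',S')$ of the very argument you are giving for the second assertion. This is not circular---the logical order is: (i) prove that \emph{if} the composite $\chi$ respects the direct-sum decomposition then $(\chi_{\ored},\psi,\eta)$ is a right-equivalence of reduced parts; (ii) invoke Proposition~4.5 of \cite{DWZ} to arrange that $\chi$ does respect the decomposition, possibly after replacing one of the splittings; (iii) observe that step~(i) with $\varphi_0=\mathrm{id}$ handles the replacement in step~(ii). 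Your proposal contains all of this, just with a slightly garbled pointer in the last paragraph.
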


\begin{defi}[\cite{DWZ}, Definition 10.4]\label{reducedpartofrep} The (right-equivalence class of the) QP-representation $\mathcal{M}_{\ored}=(Q_{\ored},S_{\ored},M^\varphi,V)$ is the \emph{reduced part} of $\mathcal{M}$.
\end{defi}

\begin{remark}\label{rem:restrictingaction} The construction of a right-equivalence between a QP $(Q,S)$ and the direct sum of a reduced QP with a trivial one is not given by a canonical procedure in any obvious way; that is, there is no canonical way to construct $(Q_{\ored},S_{\ored})$. However, as we saw above, once such a right-equivalence $\varphi:R\langle\langle Q_{\ored}\oplus C\rangle\rangle\rightarrow\ra$ is known, passing from $\mathcal{P}(Q,S)$ to $\mathcal{P}(Q_{\ored},S_{\ored})$ can be done functorially in terms of $\varphi$. In \cite{DWZ} a specific right-equivalence $\varphi$ is defined, satisfying the condition of acting as the identity on all the arrows of $Q$ that do not appear in the degree-2 component $S^{(2)}$ of $S$ as long as no arrow appearing in $S^{(2)}$ appears in different summands of $S^{(2)}$. Using this property of $\varphi$, we see that given a decorated representation $\mathcal{M}=(Q,S,M,V)$, the corresponding action of $u\in R\langle\langle
Q_{\ored}\rangle\rangle$ on $M^\varphi$ coincides with its action on $M$ when $u$ is seen as an element of $\ra$. That is, restricting the action of $\ra$ on $M$ to its subalgebra $R\langle\langle Q_{\ored}\rangle\rangle$ gives us the reduced part $\mathcal{M}_{\ored}$.
\end{remark}

We now turn to the representation-theoretic analogue of the notion of QP-mutation (cf. \cite{DWZ}, Section 10). Let $(Q,S)$ be a QP. Fix a vertex $j\in Q_0$, and suppose that $Q$ has no 2-cycles incident to $j$. Denote by $a_1,\ldots,a_s$ (resp. $b_1,\ldots,b_t$) the arrows ending at $j$ (resp. starting at $j$). Take a QP-representation $\mathcal{M}=(Q,S,M,V)$ and set
$$
M_{\oin}=\underset{k=1}{\overset{s}{\bigoplus}}M_{t(a_k)}, \ \ M_{\oout}=\underset{l=1}{\overset{t}{\bigoplus}}M_{h(b_l)}.
$$
Multiplication by the arrows $a_1,\ldots,a_s$ and $b_1,\ldots,b_t$ induces $K$-linear maps
$$
\al=\al_M=[a_1 \ \ldots \ a_s]:M_{\oin}\rightarrow M_j, \ \ \be=\be_M=\left[\begin{array}{c}b_1 \\ \vdots \\ b_t\end{array} \right]:M_j\rightarrow M_{\oout}.
$$

For each $k$ and each $l$ let $\ga_{k,l}:M_{h(b_l)}\rightarrow M_{t(a_k)}$ be the linear map given by multiplication by the the element $\partial_{[b_la_k]}([S])$, and arrange these maps into a matrix to obtain a linear map $\ga=\ga_M:M_{\oout}\rightarrow M_{\oin}$ (remember that $[S]$ is obtained from $S$ by replacing each $j$-hook $ab$ with the arrow $[ab]$). Since $M$ is a $\jacobas$-module, we have $\al\ga=0$ and $\ga\be=0$ (cf. \cite{DWZ}, Lemma 10.6).

Define vector spaces $\overline{M}_i=M_i$ and $\overline{V}_i=V_i$ for $i\in Q_0$, $i\neq j$, and
$$
\overline{M}_j=\frac{\ker\ga}{\image\be}\oplus\image\ga\oplus\frac{\ker\al}{\image\ga}\oplus V_j, \ \ \ \ \overline{V}_j=\frac{\ker\be}{\ker\be\cap\image\al}.
$$

We define an action of the arrows of $\widetilde{\mu}_j(Q)$ on $\overline{M}=\underset{i\in Q_0}{\bigoplus}\overline{M}_i$ as follows. If $c$ is an arrow of $Q$ not incident to $j$, we define $c_{\overline{M}}=c_M$, and for each $k$ and each $l$ we set $[b_la_k]_{\overline{M}}=(b_la_k)_M={b_l}_M{a_k}_M$. To define the action of the remaining arrows, choose a linear map $\rh:M_{\oout}\rightarrow\ker\ga$ such that $\rh\io=\id_{\ker\ga}$ (where $\io:\ker\ga\hookrightarrow M_{\oout}$ is the inclusion) and a linear map $\si:\frac{\ker\al}{\image\ga}\rightarrow\ker\al$ such that $\pipi\si=\id_{\ker\al/\image\ga}$ (where $\pipi:\ker\al\twoheadrightarrow\frac{\ker\al}{\image\ga}$ is the canonical projection). Then set
$$
[b_1^* \ \ldots \ b_t^*]=\overline{\al}=\left[\begin{array}{c}-\pipi\rh \\ -\ga \\ 0 \\ 0\end{array}\right]:M_{\oout}\rightarrow\overline{M}_j, \ \ \ \  \left[\begin{array}{c}a_1^* \\ \vdots \\ a_s^*\end{array} \right]=\overline{\be}=[0 \ \io \ \io\si \ 0]:\overline{M}_j\rightarrow M_{\oin}.
$$

This action of the arrows of $\widetilde{\mu}_j(Q)$ on $\overline{M}$ extends uniquely to an action of $R\langle\widetilde{\mu}_j(Q)\rangle$ under which $\overline{M}$ is an $R\langle\widetilde{\mu}_j(Q)\rangle$-module. And since $\idealM^{r}M=0$ for some sufficiently large $r$, this action of $R\langle\widetilde{\mu}_j(Q)\rangle$ on $\overline{M}$ extends uniquely to an action of $R\langle\langle\widetilde{\mu}_j(Q)\rangle\rangle$ under which $\overline{M}$ is an $R\langle\langle\widetilde{\mu}_j(Q)\rangle\rangle$-module.

\begin{remark} Note that the choice of the linear maps $\rh$ and $\si$ is not canonical. However, different choices lead to isomorphic $R\langle\langle\widetilde{\mu}_j(Q)\rangle\rangle$-module structures on $\overline{M}$, see \cite{DWZ} Proposition 10.9.
\end{remark}

\begin{defi}[\cite{DWZ}, Section 10] With the above action of $\rtildea$ on $\overline{M}$ and the obvious action of $R$ on $\overline{V}=\underset{i\in Q_0}{\bigoplus}\overline{V}_i$, the quadruple $(\widetilde{\mu}_j(Q),\widetilde{S},\overline{M},\overline{V})$ is called the \emph{premutation}  of $\mathcal{M}=(Q,S,M,V)$ in direction $j$, and denoted $\premutj(\mathcal{M})$. The \emph{mutation} of $\mathcal{M}$ in direction $j$, denoted by $\mutj(\mathcal{M})$, is the reduced part of $\premutj(\mathcal{M})$.
\end{defi}

\begin{thm}[\cite{DWZ}, Proposition 10.10 and Corollary 10.12] Premutations and mutations are well defined up to right-equivalence. That is, if $\qpmod$ and $\qpmodp$ are right-equivalent QP-representations and $Q$ and $Q'$ have no 2-cycles incident to the vertex $j$, then $\premutj(\mathcal{M})$ is right-equivalent to $\premutj(\mathcal{M}')$, and $\mutj(\mathcal{M})$ is right-equivalent to $\mutj(\mathcal{M}')$.
\end{thm}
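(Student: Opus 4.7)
The plan is to build, out of the right-equivalence $\Phi=(\varphi,\psi,\eta)$ between $\qpmod$ and $\qpmodp$, a triple $\widetilde{\Phi}=(\widetilde{\varphi},\widetilde{\psi},\widetilde{\eta})$ that is a right-equivalence between the premutations $\premutj(\mathcal{M})$ and $\premutj(\mathcal{M}')$, and then to derive the mutation statement from Proposition \ref{reddetermineduptorequiv}. The QP-component is produced by applying the already stated QP-level version of the result (Theorem 5.2 of \cite{DWZ}) to $\varphi$; this yields a right-equivalence $\widetilde{\varphi}:R\langle\langle\widetilde{\mu}_j(Q)\rangle\rangle\to R\langle\langle\widetilde{\mu}_j(Q')\rangle\rangle$ that acts as $\varphi$ on arrows not incident to $j$, sends each new arrow $[ab]$ to the class of $\varphi(a)\varphi(b)$ expressed in terms of $j$-hooks, and acts on the starred arrows $a_k^*,b_l^*$ in the way forced by the cyclic equivalence $\varphi(\widetilde{S})\sim\widetilde{S}'$.

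To define $\widetilde{\psi}$, I would set $\widetilde{\psi}_i=\psi_i$ for $i\neq j$ and exploit, at the vertex $j$, that each summand of
$$\overline{M}_j=\frac{\ker\ga_M}{\image\be_M}\oplus\image\ga_M\oplus\frac{\ker\al_M}{\image\ga_M}\oplus V_j$$
is a canonical invariant of the triple $(\al_M,\be_M,\ga_M)$. The intertwining identities $\psi_j\circ\al_M=\varphi(\al)_{M'}\circ\psi_{\oin}$, $\psi_{\oout}\circ\be_M=\varphi(\be)_{M'}\circ\psi_j$ and $\psi_{\oin}\circ\ga_M=\varphi(\ga)_{M'}\circ\psi_{\oout}$, combined with the fact that $\widetilde{\varphi}$ is already a right-equivalence, allow $\psi$ to restrict and descend to isomorphisms on each of the first three summands; together with $\eta_j$ on the last summand these assemble into $\widetilde{\psi}_j$. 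The map $\widetilde{\eta}$ is handled analogously: $\widetilde{\eta}_i=\eta_i$ for $i\neq j$, and at $j$ the restriction of $\psi_j$ to $\ker\be_M$ descends modulo $\image\al_M$ to the required isomorphism $\overline{V}_j\to\overline{V}'_j$.

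The main obstacle is the verification of $\widetilde{\psi}\circ u_{\overline{M}}=\widetilde{\varphi}(u)_{\overline{M'}}\circ\widetilde{\psi}$ for every arrow $u$ of $\widetilde{\mu}_j(Q)$. For an arrow $c$ not incident to $j$ and for a composite arrow $[a_kb_l]$ the identity is essentially immediate from the intertwining property of $\psi$ applied to $c$, respectively to the path $a_kb_l$. The delicate case is that of the starred arrows $a_k^*,b_l^*$, because their action on $\overline{M}_j$ is built from the non-canonical splittings $\rh$ and $\si$. My strategy is to fix $\rh,\si$ on the $\mathcal{M}$-side first, and then transport them across $\psi$ to obtain matching splittings $\rh',\si'$ on the $\mathcal{M}'$-side; with these synchronised choices, the block formulas $\overline{\al}=[-\pipi\rh,\,-\ga,\,0,\,0]^{\top}$ and $\overline{\be}=[0,\,\io,\,\io\si,\,0]$ transport by direct inspection. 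Proposition 10.9 of \cite{DWZ} guarantees that the right-equivalence class of $\premutj(\mathcal{M}')$ does not depend on the choice of splittings, so this manoeuvre is legitimate. With the premutation statement established, the mutation statement follows at once: $\mutj(\mathcal{M})$ and $\mutj(\mathcal{M}')$ are the reduced parts of right-equivalent premutations, and are therefore themselves right-equivalent by Proposition \ref{reddetermineduptorequiv}.
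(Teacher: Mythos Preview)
The paper does not contain a proof of this statement: it is quoted verbatim as a background result from \cite{DWZ} (Proposition 10.10 and Corollary 10.12 there), so there is no ``paper's own proof'' to compare against. Your sketch is a plausible outline of the argument one finds in \cite{DWZ}, and the final deduction of the mutation statement from the premutation statement via Proposition \ref{reddetermineduptorequiv} is exactly how the paper (and \cite{DWZ}) packages that step.

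That said, one point in your outline is too optimistic as stated. The intertwining identities you write down, such as $\psi_{\oin}\circ\ga_M=\varphi(\ga)_{M'}\circ\psi_{\oout}$, are not literally correct: $\ga$ is not an element of $\ra$ but a matrix whose entries are cyclic derivatives $\partial_{[b_la_k]}([S])$, and after applying $\varphi$ these derivatives do not in general coincide on the nose with the corresponding $\ga'$-entries for $(Q',S')$; rather, they agree only modulo the Jacobian ideal (this is the content of Lemma 10.7 in \cite{DWZ}). The same caution applies to your proposed action of $\widetilde{\varphi}$ on $[ab]$: one does not simply send $[ab]$ to ``the class of $\varphi(a)\varphi(b)$ expressed in terms of $j$-hooks''; the actual construction of $\widetilde{\varphi}$ in \cite{DWZ} involves a two-step composition and a careful bookkeeping of how $\varphi$ acts on arrows incident to $j$. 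So while your overall architecture (transport the splittings, check block by block, invoke independence of splittings) is the right one, the execution requires the Jacobian-ideal congruences rather than genuine equalities, and the map $\widetilde{\varphi}$ is more subtle than your description suggests.
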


\begin{thm}[\cite{DWZ}, Theorem 10.13] Mutations of QP-representations are involutive up to right-equivalence. More precisely, if $(Q,S)$ is a 2-acyclic QP and $\mathcal{M}$ is a decorated $(Q,S)$-representation, then $\mutj^2(\mathcal{M})$ and $\mathcal{M}$ are right-equivalent.
\end{thm}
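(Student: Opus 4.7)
The plan is to construct a canonical decomposition $\widetilde\mu_j^2(\mathcal{M})\sim\mathcal{M}\oplus\mathcal{T}$ of decorated representations, where the underlying QP of $\mathcal{T}$ is trivial. Since reduction of representations is compatible with direct sums and kills trivial summands (Proposition \ref{reddetermineduptorequiv}), this implies $\mutj^2(\mathcal{M})\sim\mathcal{M}$. The corresponding decomposition of underlying QPs, $\widetilde\mu_j^2(Q,S)\sim(Q,S)\oplus(\text{trivial})$, is already given at the QP level by Theorem 5.7 of \cite{DWZ}; the task is to upgrade this to the representation level.

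First I would compute the local data of $\widetilde\mu_j(\mathcal{M})$ at $j$. The maps $\al_{\widetilde\mu_j(\mathcal{M})}=\overline{\al}$ and $\be_{\widetilde\mu_j(\mathcal{M})}=\overline{\be}$ are given by definition. For $\ga_{\widetilde\mu_j(\mathcal{M})}$, I expand the doubled potential $[\widetilde S]=[S]+\sum_{k,l}[a_k^\ast b_l^\ast][b_l a_k]$ and observe that $[S]$ involves no arrow of $\widetilde\mu_j(Q)$ incident to $j$, so $\partial_{[a_k^\ast b_l^\ast]}([\widetilde S])=[b_l a_k]$. Interpreting this on $\overline{M}$ through the identification $[b_l a_k]_{\overline{M}}=(b_l a_k)_M=\be_l\al_k$ yields $\ga_{\widetilde\mu_j(\mathcal{M})}=\be\al$.

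Next I would feed this local data into the premutation construction a second time. Using the block expressions of $\overline{\al},\overline{\be}$ one checks $\ker\overline{\al}=\image\be$, $\image\overline{\be}=\ker\al$, $\image\overline{\al}=(\ker\ga/\image\be)\oplus\image\ga\subset\overline{M}_j$, and $\ker\overline{\be}\cong(\ker\ga/\image\be)\oplus V_j$; combined with the tautological isomorphisms $\al^{-1}(\ker\be)/\ker\al\cong\image\al\cap\ker\be$ and $\image\be/\be(\image\al)\cong M_j/(\image\al+\ker\be)$, this gives
\[
\overline{\overline{M}}_j\cong(\image\al\cap\ker\be)\oplus\frac{\image\al+\ker\be}{\ker\be}\oplus\frac{M_j}{\image\al+\ker\be}\oplus\frac{\ker\be}{\image\al\cap\ker\be},
\]
whose four pieces reassemble to $M_j$ via the two-step filtration by $\image\al$ and $\ker\be$; parallel arguments give $\overline{\overline{V}}_j\cong V_j$ and leave the data at vertices $i\neq j$ unchanged.

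The final step is to upgrade these vector-space identifications to a right-equivalence. Choosing compatible splittings of the filtration pieces of $M_j$, one matches $\overline{\overline{\al}},\overline{\overline{\be}}$ with $\al,\be$ on the $M_j$-block; the linear maps attached to the new arrows $[a_k^\ast b_l^\ast]$ of $\widetilde\mu_j^2(Q)$ (which are not incident to $j$) compute to $(\overline{\be}\,\overline{\al})_{k,l}=-\ga_{k,l}$, and together with the 2-cycles $[a_k^\ast b_l^\ast][b_l a_k]$ present in the doubled potential they form the trivial decorated summand $\mathcal{T}$. The main obstacle is this explicit matching of block-matrix entries with the original $\al,\be,\ga$, which requires absorbing the auxiliary choices of the sections $\rh,\si$ (and of the analogous sections used at the second premutation and at the splitting stage) into a right-equivalence; that this can always be done is the content of Proposition \ref{reddetermineduptorequiv} and Remark \ref{rem:restrictingaction}.
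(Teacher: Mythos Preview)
This paper does not prove the statement; it is quoted as background from \cite{DWZ} (Theorem 10.13 there) with no argument given. So there is nothing in the present paper to compare your proposal against.

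That said, your sketch is broadly the shape of the original DWZ proof: compute $\overline{\ga}=\be\al$ from $\partial_{[a_k^\ast b_l^\ast]}([\widetilde S])=[b_la_k]$, run the premutation recipe a second time on the triple $(\overline{\al},\overline{\be},\overline{\ga})$, and identify the resulting $\overline{\overline{M}}_j$ with $M_j$ via the two-step filtration by $\image\al$ and $\ker\be$. One point to tighten: the claim that the extraneous data forms a \emph{trivial decorated summand} $\mathcal{T}$ and that ``reduction kills trivial summands'' needs more care than you indicate. The arrows $[a_k^\ast b_l^\ast]$ together with the originals $[b_la_k]$ do appear in 2-cycles of $\widetilde\mu_j^2(S)$, but the arrows $[b_la_k]$ also occur in $[S]$, so you cannot simply split off $(\{[a_k^\ast b_l^\ast],[b_la_k]\},\sum[a_k^\ast b_l^\ast][b_la_k])$ as a direct summand at the QP level without first applying a change-of-arrows automorphism that absorbs the $[b_la_k]$-dependence of $[S]$; tracking how this automorphism acts on the module is exactly the delicate part of the DWZ argument, and it is not covered by merely invoking Proposition \ref{reddetermineduptorequiv}. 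Your final paragraph gestures at this but does not actually carry it out.
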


\subsection{Restriction}\label{subsec:restriction}

The operation of restriction of QPs is a simple, yet useful, operation: In \cite{Lqps} it helped to relatively simplify the proof of Theorem 30 by focusing on surfaces with empty boundary. In this subsection we review some properties of this operation and study its representation-theoretic analogue with the same aim in mind: reducing the proof of our main result to the situation of surfaces with empty-boundary.

\begin{defi}[\cite{DWZ}, Definition 8.8] Let $(Q,S)$ be a QP and $I$ be a subset of the vertex set $Q_0$. The \emph{restriction} of $(Q,S)$ to $I$ is the QP $(Q,S)|_I=(Q|_I,S|_I)$ on the vertex set $Q_0$, with arrow span $A|_I=\underset{i,j\in I}{\bigoplus}A_{ij}$ and potential $S|_I=\rho_I(S)$, where $A=\underset{i,j\in Q_0}{\bigoplus}A_{ij}$ is the arrow span of $(Q,S)$ and $\rho_I:R\langle\langle Q\rangle\rangle\rightarrow R\langle\langle Q|_I\rangle\rangle$ is the $R$-algebra homomorphism  such that $\rho_I(a)=a$ for $a\in (Q|_I)_1$ and $\rho_I(b)=0$ for each arrow $b\notin (Q|_I)_1$.
Notice that $Q|_I$ is the quiver whose vertex set is $Q_0$ and whose arrow set $(Q|_I)_1=Q_1|_I$ consists of the arrows of $Q$ having both head and tail in $I$.
We will use the notation $u|_I=\rho_I(u)$ for the restriction to $I$ of an element $u$ of a complete path algebra.
\end{defi}

\begin{remark}\label{remarkisolated} Notice that if $I$ is a proper subset of $Q_0$, then the elements of $Q_0\setminus I$ are isolated vertices of the restriction to $I$, that is, there are no arrows of $A|_I$ whose head or tail belongs to $Q_0\setminus I$.
\end{remark}

\begin{lemma}[\cite{Lqps}, Lemma 20]\label{redres=resred} Let $(Q,S)$ be a QP, and let $I$ be any subset of the vertex set $Q_0$. There exist a reduced and a trivial QP, $\asreduced$ and $\astrivial$, respectively, such that $(Q,S)$ is right-equivalent to $\asreduced\oplus\astrivial$, and with the property that the restriction $(Q|_I,S|_I)$ is right-equivalent to $(Q_{\operatorname{red}}|_I,S_{\operatorname{red}}|_I)\oplus(Q_{\operatorname{triv}}|_I,S_{\operatorname{triv}}|_I)$.
\end{lemma}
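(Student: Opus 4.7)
The plan is to apply the Splitting Theorem (Theorem \ref{splittingthm}) to $(Q,S)$, obtaining a right-equivalence $\varphi:\ra\to\ra$ realizing $\varphi(S)\sim S_{\ored}+S_{\triv}$, and then to show that this particular $\varphi$ automatically descends to a right-equivalence between $(Q|_I,S|_I)$ and the restriction of the splitting. The first structural fact to exploit is that the decomposition of the arrow span $A$ of $Q$ furnished by the Splitting Theorem is block-diagonal with respect to the tail-and-head grading $A=\bigoplus_{i,j\in Q_0}A_{ij}$: each summand decomposes as $A_\ast=\bigoplus_{i,j}(A_\ast)_{ij}$ with $(A_\ast)_{ij}\subseteq A_{ij}$ for $\ast\in\{\ored,\triv\}$ (this is forced by the $R$-linearity of the construction). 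In particular, restriction respects the splitting: $A|_I=A_{\ored}|_I\oplus A_{\triv}|_I$.

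The key technical step is to show that $\varphi$ descends through $\rho_I$. For an arrow $a\notin A|_I$, at least one of $t(a),h(a)$ lies outside $I$; thus every path from $t(a)$ to $h(a)$ has either its first or its last arrow with an endpoint outside $I$, and is therefore annihilated by $\rho_I$. Since $\varphi(a)$ is an infinite linear combination of such paths, $(\rho_I\circ\varphi)(a)=0$. Hence $\ker\rho_I\subseteq\ker(\rho_I\circ\varphi)$, and $\rho_I\circ\varphi$ descends uniquely to an $R$-algebra homomorphism $\varphi':R\langle\langle Q|_I\rangle\rangle\to R\langle\langle Q|_I\rangle\rangle$ with $\varphi'\circ\rho_I=\rho_I\circ\varphi$. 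A standard argument using the block-diagonal linear part (which is invertible on each $A_{ij}$ because $\varphi$ is) shows that $\varphi'$ is itself an $R$-algebra automorphism fixing vertex idempotents.

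Applying $\rho_I$ to the cyclic equivalence $\varphi(S)\sim S_{\ored}+S_{\triv}$ (and noting that $\rho_I$ preserves cyclic equivalence, being an $R$-algebra homomorphism) yields
\[
\varphi'(S|_I)=\rho_I(\varphi(S))\sim S_{\ored}|_I+S_{\triv}|_I,
\]
so $\varphi'$ is a right-equivalence between $(Q|_I,S|_I)$ and $(Q_{\ored}|_I,S_{\ored}|_I)\oplus(Q_{\triv}|_I,S_{\triv}|_I)$. To conclude, one checks that the two summands are respectively reduced and trivial. Reducedness is immediate since restriction removes terms but cannot create 2-cycles. For triviality, every 2-cycle summand $\lambda ab$ of $S_{\triv}$ has $a\in(A_{\triv})_{ij}$, $b\in(A_{\triv})_{ji}$ for some $i,j\in Q_0$, and since both arrows have endpoint set $\{i,j\}$, one has $a\in A|_I\iff b\in A|_I\iff i,j\in I$. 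Thus $S_{\triv}|_I$ is the sub-sum over those 2-cycles whose vertices lie entirely in $I$, and the non-degeneracy of the associated bilinear pairing on $A_{\triv}|_I$ is inherited block by block from that on $A_{\triv}$.

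The main obstacle is conceptual rather than computational: one must isolate the fact that DWZ's splitting is necessarily block-diagonal with respect to the $A_{ij}$-grading, and that a right-equivalence therefore commutes naturally with restriction to $I$. Once these structural observations are in place, everything reduces to a routine commuting-square verification.
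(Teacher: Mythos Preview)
Your proof is correct and follows essentially the same route as the paper's own (alternative) argument, which appears in the first paragraph of the proof of Lemma~\ref{restrofrightequiv} and is flagged in the subsequent remark as an alternative proof of Lemma~\ref{redres=resred}: both define the restricted right-equivalence as $u\mapsto\rho_I(\varphi(u))$, verify that this kills the arrows outside $Q|_I$ (so that $\varphi|_I(S|_I)=\varphi(S)|_I$), and use that $\rho_I$ preserves cyclic equivalence. Your write-up is somewhat more explicit---you factor through $\rho_I$ rather than restrict along the inclusion $R\langle\langle Q|_I\rangle\rangle\hookrightarrow\ra$, and you spell out why $\varphi'$ is an isomorphism and why the restricted summands remain reduced and trivial---but these are elaborations of the same idea rather than a different strategy.
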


\begin{prop}[\cite{Lqps}, Proposition 21]\label{resmut=mutres} Let $(Q,S)$ be a QP and $I$ a subset of $Q_0$. For $i\in I$, the mutation $\muti(Q|_I,S|_I)$ is right-equivalent to the restriction of $\muti(Q,S)$ to $I$.
\end{prop}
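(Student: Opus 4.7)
My plan is to reduce the proposition to the claim that \emph{premutation} commutes with restriction on the nose, and then invoke Lemma \ref{redres=resred} to pass from premutations to mutations. Unfolding the definition of mutation as the reduced part of premutation, the desired statement becomes
\[
\mu_i(Q,S)|_I \;=\; \bigl(\widetilde{\mu}_i(Q,S)\bigr)_{\ored}|_I \;\sim\; \bigl(\widetilde{\mu}_i(Q,S)|_I\bigr)_{\ored} \;=\; \bigl(\widetilde{\mu}_i(Q|_I,S|_I)\bigr)_{\ored} \;=\; \mu_i(Q|_I,S|_I),
\]
where $\sim$ is the right-equivalence supplied by Lemma \ref{redres=resred} (applied to the QP $\widetilde{\mu}_i(Q,S)$), and the key middle equality $\widetilde{\mu}_i(Q,S)|_I = \widetilde{\mu}_i(Q|_I,S|_I)$ is what I need to establish.

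To prove this premutation identity, I would verify separately that the underlying quivers coincide and that the potentials are cyclically equivalent. For the quivers, since $i \in I$, an arrow $[ab]$ of $\widetilde{\mu}_i(Q)$ lies in the restriction $\widetilde{\mu}_i(Q)|_I$ iff $t(b), h(a) \in I$, which is equivalent to $a,b$ both being arrows of $Q|_I$, i.e.\ to $ab$ being an $i$-hook of $Q|_I$; similarly, an arrow $a^*$ (resp.\ $b^*$) lies in $\widetilde{\mu}_i(Q)|_I$ iff $a$ (resp.\ $b$) lies in $Q|_I$. This yields $\widetilde{\mu}_i(Q)|_I = \widetilde{\mu}_i(Q|_I)$. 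For the potentials, $\Delta_i(Q)|_I$ consists of those 3-cycles $b^*a^*[ab]$ for which $a,b$ both lie in $Q|_I$, which is precisely $\Delta_i(Q|_I)$. Finally, to compare $[S]|_I$ with $[S|_I]$, I would examine each cyclic term $P$ of $S$: if every arrow of $P$ lies in $Q|_I$, then each $i$-hook substitution produces an arrow of $\widetilde{\mu}_i(Q|_I) = \widetilde{\mu}_i(Q)|_I$ and $\rho_I([P]) = [\rho_I(P)]$ on the nose; otherwise some arrow $a_k$ of $P$ is not in $Q|_I$, and --- whether $a_k$ survives into $[P]$ unaltered or gets absorbed into a bracket $[a_{k-1}a_k]$ or $[a_ka_{k+1}]$ --- the surviving arrow inherits the non-$I$ endpoint of $a_k$ (its only possible $I$-endpoint being $i$ itself), killing the term under $\rho_I$ on both sides.

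The main subtlety, and what forces cyclic (rather than literal) equality, is the rotation step built into the definition of $[\,\cdot\,]$: one must first rewrite $S$ cyclically so that no cyclic term begins at $i$. This is harmless because $\rho_I$ preserves cyclic equivalence termwise (a rotation of a term maps to a rotation of its image, or to zero), so different rotation choices on the two sides of the identity do not obstruct the comparison. Once the premutation identity $\widetilde{\mu}_i(Q,S)|_I = \widetilde{\mu}_i(Q|_I,S|_I)$ is established up to cyclic equivalence of potentials, it is a right-equivalence of QPs via the identity map, and Lemma \ref{redres=resred} applied to $\widetilde{\mu}_i(Q,S)$ then delivers the right-equivalence claimed in the proposition. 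I expect the cyclic bookkeeping in the potential comparison to be the only real obstacle; everything else is a matching of bijective indexing sets that takes advantage of the hypothesis $i \in I$.
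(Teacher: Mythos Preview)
Your proposal is correct and follows the same approach the paper uses: in this paper the proposition itself is only cited from \cite{Lqps}, but the paper's own proof of the representation-theoretic analogue (Theorem \ref{thm:resmut=mutres1.1}) proceeds by exactly your two steps --- first observing that premutation commutes with restriction on the nose (stated there as ``an easy check''), and then invoking the reduction-commutes-with-restriction lemma (Corollary \ref{redcommuteswithrestr}, the representation-level version of Lemma \ref{redres=resred}). Your careful bookkeeping for the quiver and potential identities, including the cyclic-rotation subtlety, is precisely what that ``easy check'' amounts to.
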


\begin{coro}[\cite{Lqps}, Corollary 22] If $(Q,S)$ is a non-degenerate QP, then for every subset $I$ of $Q_0$ the restriction $(Q|_I,S|_I)$ is non-degenerate as well. In other words, restriction preserves non-degeneracy.
\end{coro}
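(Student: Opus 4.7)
The plan is to argue by induction on the length of an arbitrary mutation sequence applied to $(Q|_I,S|_I)$, using Proposition \ref{resmut=mutres} as the key tool and the elementary observation that the operation of restricting a quiver to $I$ cannot produce any 2-cycles (it only deletes arrows).

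First I would dispose of the base case: the quiver $Q|_I$ is obtained from $Q$ by deleting all arrows with at least one endpoint in $Q_0\setminus I$, so if $Q$ is 2-acyclic then so is $Q|_I$. Hence $(Q|_I,S|_I)$ is itself 2-acyclic. Next, given any sequence of vertices $i_1,\ldots,i_n\in I$, I would show by induction on $n$ that:
\begin{enumerate}
\item the iterated mutation $\mu_{i_n}\cdots\mu_{i_1}(Q|_I,S|_I)$ is defined (i.e.\ 2-acyclicity at each intermediate vertex holds), and
\item this iterated mutation is right-equivalent to the restriction to $I$ of $\mu_{i_n}\cdots\mu_{i_1}(Q,S)$.
\end{enumerate}
For the inductive step, suppose the claim is true for $n-1$. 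Since $(Q,S)$ is non-degenerate, the QP $(Q',S'):=\mu_{i_{n-1}}\cdots\mu_{i_1}(Q,S)$ has a 2-acyclic underlying quiver, in particular no 2-cycle is incident to $i_n$. Restricting a 2-acyclic quiver to $I$ yields a 2-acyclic quiver, so $(Q'|_I,S'|_I)$ also has no 2-cycle at $i_n\in I$, and by the inductive hypothesis it is right-equivalent to $\mu_{i_{n-1}}\cdots\mu_{i_1}(Q|_I,S|_I)$. Thus the mutation $\mu_{i_n}\mu_{i_{n-1}}\cdots\mu_{i_1}(Q|_I,S|_I)$ is defined, and by Proposition \ref{resmut=mutres} applied to $(Q',S')$ at the vertex $i_n\in I$, it is right-equivalent to $\mu_{i_n}(Q',S')|_I$, which is the restriction to $I$ of $\mu_{i_n}\cdots\mu_{i_1}(Q,S)$, completing the induction.

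Finally, to conclude that $(Q|_I,S|_I)$ is non-degenerate, I would observe that the underlying quiver of $\mu_{i_n}\cdots\mu_{i_1}(Q,S)$ is 2-acyclic (by non-degeneracy of $(Q,S)$), hence so is its restriction to $I$, and right-equivalent QPs have the same underlying quiver. Therefore $\mu_{i_n}\cdots\mu_{i_1}(Q|_I,S|_I)$ is 2-acyclic for every choice of $i_1,\ldots,i_n\in I$, which is exactly the definition of non-degeneracy (note that mutations of $(Q|_I,S|_I)$ only make sense at vertices of $I$, since the vertices of $Q_0\setminus I$ are isolated in $Q|_I$ by Remark \ref{remarkisolated} and mutating at an isolated vertex is the identity).

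There is no real obstacle here beyond being careful to invoke Proposition \ref{resmut=mutres} only after verifying that all the necessary mutations are defined; this verification is automatic from the non-degeneracy of $(Q,S)$ combined with the fact that restriction preserves 2-acyclicity.
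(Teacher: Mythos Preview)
Your argument is correct and is precisely the natural deduction from Proposition \ref{resmut=mutres}: restriction only deletes arrows, so it preserves 2-acyclicity, and since mutation commutes with restriction up to right-equivalence, every iterated mutation of the restricted QP is the restriction of an iterated mutation of the original non-degenerate QP, hence 2-acyclic. Note that the present paper does not supply its own proof of this corollary---it is simply quoted from \cite{Lqps}---so there is no alternative argument here to compare against; your proof is the expected one and matches the logic implicit in the statement's placement immediately after Proposition \ref{resmut=mutres}.
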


We now turn to restriction of representations.  For a representation $M$, given a subset $I$ of the vertex set $Q_0$, one can define the restriction $M|_I$ in the obvious way, namely, by setting $(M|_I)_j=0$ for $j\notin I$, $(M|_I)_i=M_i$ for $i\in I$, and $a_{M|_I}=a_M:M_{t(a)}\rightarrow M_{h(a)}$ for $a\in Q_1|_I$. However, if the representation $M$ satisfies the cyclic derivatives of a potential $S$, the restriction $M|_I$ does not necessarily satisfy the relations obtained by restricting the cyclic derivatives of $S$.

\begin{defi} Let $I$ be a subset of the vertex set $Q_0$, and $M$ be a representation of $Q$. We will say that $M$ is \emph{$I$-path-restrictable} if for each pair $i,j\in I$, every path from $i$ to $j$ in $Q$ passing through a vertex $k\notin I$ acts as zero on $M_i$. Accordingly, a QP-representation $\qpmod$ will be called $I$-path-restrictable if $M$ is $I$-path-restrictable.
\end{defi}

Clearly, if the representation $M$ of $Q$ is $I$-path-restrictable and satisfies the cyclic derivatives of the potential $S$, then $M|_I$ satisfies the cyclic derivatives of $S|_I$ (which are also the restrictions of the cyclic derivatives of $S$). This implies the following:

\begin{lemma}\label{lemmarestI} Let $I$ be a subset of the vertex set $Q_0$ and $\qpmod$ be a QP-representation such that $M$ is $I$-path-restrictable. Set $(M|_I)_{i}=M_i$ for $i\in I$ and $(M|_I)_{j}=0$ for $j\notin I$. Then $(Q|_I,S|_I,((M|_I)_i)_{i\in Q_0},(a_M)_{a\in Q_1|_I},(V_i)_{i\in Q_0})$ is a QP-representation.
\end{lemma}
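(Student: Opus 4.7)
The plan is to verify that the proposed quadruple satisfies the three conditions spelled out just after the nilpotency lemma for being a QP-representation: namely, that the prescribed linear maps $(a_M)_{a\in Q_1|_I}$ extend to a nilpotent action of $R\langle\langle Q|_I\rangle\rangle$ on $M|_I=\bigoplus_{i\in Q_0}(M|_I)_i$, that this action is annihilated by $\{\partial_a(S|_I)\suchthat a\in (Q|_I)_1\}$, and that $V=\bigoplus_{i\in Q_0}V_i$ is a finite-dimensional $R$-module. The $R$-module piece and the finite-dimensionality of each $(M|_I)_i$ are immediate from the corresponding properties of $\mathcal{M}$; the substance is in the cyclic-derivative relations, where the $I$-path-restrictability hypothesis enters.

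First I would compare $\partial_a(S|_I)$ with $\partial_a(S)$ for an arrow $a\in Q_1|_I$. Write $S=S'+S''$, where $S'$ is the sum of those cyclic summands of $S$ all of whose arrows belong to $Q_1|_I$, and $S''$ collects the remaining cyclic summands. Then $S|_I=\rho_I(S)$ equals $S'$ under the natural identification, so $\partial_a(S|_I)=\partial_a(S')$, and
\[
\partial_a(S)-\partial_a(S|_I)=\partial_a(S'')\text{ in }\ra.
\]
Each cyclic summand appearing in $S''$ contains at least one arrow $b$ having some endpoint outside $I$; in any term of $\partial_a$ applied to such a summand, this arrow $b$ appears internally in a path from $h(a)\in I$ to $t(a)\in I$, so that path passes through a vertex of $Q_0\setminus I$. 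By the $I$-path-restrictability of $M$, every such path acts as zero on $M_{h(a)}$, so $\partial_a(S'')$ acts as zero on $M$. Since $\partial_a(S)$ also acts as zero on $M$, we conclude that $\partial_a(S|_I)$ acts as zero on $M$. Because the action of any $a\in Q_1|_I$ on $(M|_I)_{t(a)}=M_{t(a)}$ agrees with its action on $M$, and trivially any path-action lands in $(M|_I)_j=0$ for $j\notin I$, the same relation holds on $M|_I$.

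It remains to check nilpotency of the action of the arrow ideal of $R\langle\langle Q|_I\rangle\rangle$ on $M|_I$. Every path in $Q|_I$ is a path in $Q$ of the same length, so if $\idealM^r M=0$, then the analogous power of the arrow ideal of $R\langle\langle Q|_I\rangle\rangle$ annihilates $M|_I$. This, together with the routine $R$-module structure on $V$ and the finite-dimensionality observations, completes the verification that the tuple is a decorated $(Q|_I,S|_I)$-representation. The only delicate moment is the cyclic-derivative computation, and the cleanest way to package it is precisely the $S=S'+S''$ decomposition above, because it isolates the purely ``internal'' paths in $Q|_I$ from the ``external'' ones to which the $I$-path-restrictability hypothesis directly applies.
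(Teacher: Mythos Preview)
Your proof is correct and follows essentially the same approach as the paper, which merely asserts the lemma as clear after noting that the cyclic derivatives of $S|_I$ are the restrictions to $I$ of the cyclic derivatives of $S$. Your decomposition $S=S'+S''$ makes this explicit and shows precisely where the $I$-path-restrictability hypothesis is used, namely to kill the action of $\partial_a(S'')$ on $M$; this is exactly the content the paper leaves implicit.
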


\begin{defi} In the situation of Lemma \ref{lemmarestI}, the QP-representation $(Q|_I,S|_I,((M|_I)_i)_{i\in Q_0},$ $(a_M)_{a\in Q_1|_I},$
$(V_i)_{i\in Q_0})$ will be called the \emph{restriction} of $\qpmod$ to $I$ and denoted $\mathcal{M}|_I$.
\end{defi}

The following lemma and its corollary are the representation-theoretic analogue of Lemma \ref{redres=resred} above.

\begin{lemma}\label{restrofrightequiv} If the decorated representations $\qpmod$ and $\qpmodp$ are right-equivalent and $\qpmod$ is $I$-path-restrictable, then $\qpmodp$ is $I$-path-restrictable as well, and the restrictions $\mathcal{M}|_I$ and $\mathcal{M}'|_I$ are right-equivalent.
\end{lemma}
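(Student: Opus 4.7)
The plan is to recast $I$-path-restrictability as the vanishing of the action of a certain closed two-sided ideal, transfer that condition along the given right-equivalence $\Phi=(\varphi,\psi,\eta)$, and then construct the restricted right-equivalence by composing $\varphi$ with the canonical projection $\rho_I$ that already appeared in the definition of restriction of QPs.

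Let $\mathcal{I}_I\subseteq\ra$ be the closed two-sided ideal generated by $\{e_k\suchthat k\notin I\}$, and let $\mathcal{I}'_I\subseteq\rap$ be defined analogously. For $i,j\in I$, the subspace $e_j\mathcal{I}_I e_i$ coincides with the closure of the linear span of paths in $Q$ from $i$ to $j$ passing through a vertex of $Q_0\setminus I$; consequently, $M$ is $I$-path-restrictable if and only if $e_j\mathcal{I}_I e_i$ acts as zero on $M_i$ for all $i,j\in I$, and similarly for $M'$. Because $\varphi$ fixes the idempotents $e_k$, one has $\varphi(\mathcal{I}_I)=\mathcal{I}'_I$ and $\varphi(e_j\mathcal{I}_I e_i)=e_j\mathcal{I}'_I e_i$. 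The intertwining identity applied with $u=e_i$ forces $\psi(M_i)=M'_i$, so $\psi$ decomposes as $\bigoplus_{i\in Q_0}\psi_i$; thus for any $u'\in e_j\mathcal{I}'_I e_i$ written as $u'=\varphi(u)$ with $u\in e_j\mathcal{I}_I e_i$, the restriction of $u'_{M'}$ to $M'_i$ equals $\psi_j\circ u_M|_{M_i}\circ\psi_i^{-1}=0$, establishing $I$-path-restrictability of $\mathcal{M}'$.

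To build the restricted right-equivalence I would set $\overline{\varphi}$ equal to the composition $R\langle\langle Q|_I\rangle\rangle\hookrightarrow\ra\xrightarrow{\varphi}\rap\xrightarrow{\rho_I}R\langle\langle Q'|_I\rangle\rangle$, $\overline{\psi}=\bigoplus_{i\in I}\psi_i$, and $\overline{\eta}=\eta$ (the $V$-component is unchanged by restriction). Multiplicativity and idempotent-fixing of $\overline{\varphi}$ are immediate. Its inverse is the analogous composition with $\varphi^{-1}$: for $u\in R\langle\langle Q|_I\rangle\rangle$, the element $w=\varphi(u)-\overline{\varphi}(u)$ lies in $\mathcal{I}'_I$, so $\varphi^{-1}(w)\in\mathcal{I}_I$, and applying $\rho_I$ on the idempotent slots $e_j(-)e_i$ with $i,j\in I$ (plus the trivially preserved diagonal slots at vertices outside $I$) recovers $u$. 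Cyclic equivalence of $\overline{\varphi}(S|_I)$ with $S'|_I$ is obtained by applying $\rho_I$ to the cyclic equivalence of $\varphi(S)$ and $S'$, together with the observation that $S-S|_I$ and $\varphi(S-S|_I)$ are combinations of cyclic paths lying in $\mathcal{I}_I$ and $\mathcal{I}'_I$ respectively, all of which are killed by $\rho_I$. Finally, the intertwining for $\overline{\psi}$ reduces to $\varphi(u)_{M'}|_{M'_i}=\overline{\varphi}(u)_{M'}|_{M'_i}$ for $u\in R\langle\langle Q|_I\rangle\rangle$ and $i\in I$, whose difference sits in $e_j\mathcal{I}'_I e_i$ and acts as zero by the $I$-path-restrictability of $\mathcal{M}'$.

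The main technical point will be the bookkeeping around $\rho_I$ acting on $\mathcal{I}_I$ and $\mathcal{I}'_I$: specifically, verifying that $\rho_I$ annihilates every cyclic path in these ideals (regardless of whether its basepoint lies in $I$), and that $\rho_I\circ\varphi^{-1}$ genuinely inverts $\overline{\varphi}$. Everything else is routine unwinding of the definitions, with continuity issues handled automatically by the $\idealM$-adic nilpotency of the actions on the finite-dimensional modules.
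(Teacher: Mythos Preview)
Your proposal is correct and takes essentially the same approach as the paper: both define the restricted algebra map as $\rho_I\circ\varphi$ on $R\langle\langle Q|_I\rangle\rangle$ (the paper writes this as $\varphi|_I$), transfer $I$-path-restrictability via the intertwining relation $\psi\circ u_M=\varphi(u)_{M'}\circ\psi$, and then verify the potential and module compatibilities in the same way. Your ideal-theoretic repackaging via $\mathcal{I}_I$ is a minor stylistic variant of the paper's path-by-path arguments, and you are in fact slightly more careful than the paper in explicitly exhibiting the inverse of $\overline{\varphi}$.
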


\begin{proof} Let $\Phi=(\varphi,\psi,\eta)$ be a right-equivalence between $\qpmod$ and $\qpmodp$. Consider the $R$-algebra homomorphism $\varphi|_I:R\langle\langle Q|_I\rangle\rangle\rightarrow R\langle\langle Q'|_I\rangle\rangle$, defined by the rule $u\mapsto\varphi(u)|_I$. We claim that $\varphi|_I(S|_I)=\varphi(S)|_I$. To see this, write $S=S|_I+S'$, where $S'\in\ra$ is a potential each of whose terms has at least one arrow $b\notin Q_1|_I$. Then each term of $\varphi(S')$ has at least one arrow not from $Q'|_I$, which means that $\varphi(S')|_I=0$, and hence $\varphi(S)|_I=\varphi(S|_I)|_I=\varphi|_I(S|_I)$. Now, the $R$-algebra homomorphism $\rho_I:R\langle\langle Q'\rangle\rangle\rightarrow R\langle\langle Q'|_I\rangle\rangle$, being continuous, sends cyclically equivalent potentials to cyclically equivalent ones, from which it follows that $\varphi(S)|_I$ is cyclically equivalent to $S'|_I$. Therefore, $\varphi|_I$ is a right-equivalence between $(Q|_I,S|_I)$ and $(Q'|_I,S'|_I)$.

To see that $M'$ is $I$-path-restrictable if $M$ is, take any pair of vertices $i,j\in I$, and let $u$ be any path from $i$ to $j$ in $Q'$ that passes through some vertex $k\notin I$. Then for $m\in M_i'$ we have $u_{M'}m=\varphi(\varphi^{-1}(u))_{M'}\psi(\psi^{-1}(m))=\psi\circ(\varphi^{-1}(u)_M)(\psi^{-1}(m))=0$ since $\varphi^{-1}(u)$ is a (possibly infinite) linear combination of paths that pass through $k\notin I$. This shows that $M'$ is $I$-path-restrictable.

Note that $M|_I$ (resp. $M'|_I$) is an $R$-submodule of $M$ (resp. $M'$), and $\psi(M|_I)=M'|_I$. This allows us to define $\psi|_I:M|_I\rightarrow M'|_I$ as the restriction of the map $\psi$ to $M|_I$. Clearly $\psi|_I:M|_I\rightarrow M'|_I$ is a $K$-vector space isomorphism, and for $a\in Q_1|_I$ we have $\psi|_I\circ a_{M|_I}=(\psi\circ a_{M})|_I=(\varphi(a)_{M'}\circ\psi)|_I=(\varphi|_I(a)_{M'|_I})\circ(\psi|_I)$ (the last equality follows from the fact that $M'$ is $I$-path-restrictable).

It follows that the triple $\Phi|_I=(\varphi|_I,\psi|_I,\eta)$ is a right-equivalence between $\mathcal{M}|_I$ and $\mathcal{M}'|_I$.
\end{proof}

\begin{remark} The first paragraph of the above proof is an alternative proof of Lemma 20 of \cite{Lqps} (stated as Lemma \ref{redres=resred} above).
\end{remark}

\begin{coro}\label{redcommuteswithrestr} Let $I$ be a subset of the vertex set $Q_0$ and $\mathcal{M}=\qprep$ be an $I$-path-restrictable QP-representation. Then $\mathcal{M}_{\ored}$ is $I$-path-restrictable and $\mathcal{M}_{\ored}|_I$ is right-equivalent to the reduced part of $\mathcal{M}|_I$.
\end{coro}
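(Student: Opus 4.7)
The plan is to use Remark \ref{rem:restrictingaction} together with Lemma \ref{redres=resred} so that both $\mathcal{M}_{\ored}|_I$ and the reduced part $(\mathcal{M}|_I)_{\ored}$ can be exhibited as outcomes of one and the same compatible splitting. First, I would invoke Lemma \ref{redres=resred} to fix a right-equivalence $\varphi\colon R\langle\langle Q_{\ored} \oplus Q_{\triv}\rangle\rangle \to \ra$ between $(Q_{\ored}, S_{\ored}) \oplus (Q_{\triv}, S_{\triv})$ and $(Q, S)$ whose induced restriction $\varphi|_I\colon R\langle\langle Q_{\ored}|_I \oplus Q_{\triv}|_I\rangle\rangle \to R\langle\langle Q|_I\rangle\rangle$ is a right-equivalence between $(Q_{\ored}|_I, S_{\ored}|_I) \oplus (Q_{\triv}|_I, S_{\triv}|_I)$ and $(Q|_I, S|_I)$. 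By Remark \ref{rem:restrictingaction}, both $\mathcal{M}_{\ored}$ and $(\mathcal{M}|_I)_{\ored}$ can then be realized concretely by simply restricting the ambient module action to the subalgebra generated by the reduced part, via $\varphi$ and $\varphi|_I$ respectively.

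For the first assertion, I would take any path $a_1\ldots a_d$ in $Q_{\ored}$ from $i \in I$ to $j \in I$ passing through some vertex $k \notin I$, and observe that each $\varphi(a_l)$ is a (possibly infinite) $\field$-linear combination of paths in $Q$ from $t(a_l)$ to $h(a_l)$. Hence the product $\varphi(a_1)\cdots\varphi(a_d)$ expands as a sum of paths in $Q$, each of which visits the intermediate vertex $k$. By the $I$-path-restrictability of $M$, every such summand acts as zero on $M_i$, so the action of $a_1\ldots a_d$ on $(M_{\ored})_i = M_i$ vanishes. This yields the $I$-path-restrictability of $\mathcal{M}_{\ored}$.

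For the second assertion, I would show that under the common choice of $\varphi$ above, the two QP-representations $\mathcal{M}_{\ored}|_I$ and $(\mathcal{M}|_I)_{\ored}$ share the same underlying $R$-module $M|_I$ and the same action of every arrow $a \in (Q_{\ored}|_I)_1$, namely the restriction of $\varphi(a)_M$ to $M|_I$. For $\mathcal{M}_{\ored}|_I$ this is immediate from the definitions of reduced part and of restriction; for $(\mathcal{M}|_I)_{\ored}$ one has a priori the slightly different action of $\varphi|_I(a) = \varphi(a)|_I$, and one uses the $I$-path-restrictability of $M$ to observe that the paths contributing to $\varphi(a) - \varphi(a)|_I$, which necessarily leave $I$, act as zero on $M|_I$. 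Thus the two representations literally coincide, and in particular are right-equivalent.

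The main obstacle I foresee is verifying that the specific right-equivalence $\varphi$ used to define the reduced part in Remark \ref{rem:restrictingaction} can really be taken to be compatible with restriction in this strong sense; this is essentially the content of Lemma \ref{redres=resred}, and the representation-theoretic refinement here leans on Remark \ref{rem:restrictingaction}, which identifies the reduced part of a QP-representation with a mere restriction of its $\ra$-module structure.
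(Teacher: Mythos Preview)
Your proposal is correct, and the underlying idea---fix a splitting $\varphi$ for $(Q,S)$ and check that everything is compatible with restriction to $I$---is the same as the paper's. The main difference is one of packaging: the paper does not invoke Lemma \ref{redres=resred} or Remark \ref{rem:restrictingaction} at all, but instead observes that the triple $\Phi=(\varphi,\idM,\idV)$ is itself a right-equivalence of QP-representations between $(Q_{\ored}\oplus C,\,S_{\ored}+T,\,M^\varphi,\,V)$ and $\mathcal{M}$, and then applies Lemma \ref{restrofrightequiv} directly. That lemma immediately yields both that $M^\varphi$ is $I$-path-restrictable (your first assertion) and that $\Phi|_I$ is a right-equivalence between the restricted representations; Proposition \ref{reddetermineduptorequiv} then finishes the argument. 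In effect, your hands-on verification that $\varphi(a)_M$ and $\varphi|_I(a)_{M|_I}$ agree on $M|_I$ is precisely the content of the last paragraph of the proof of Lemma \ref{restrofrightequiv}, so you are reproving that lemma inline rather than citing it. Your concern about whether the $\varphi$ of Remark \ref{rem:restrictingaction} is compatible with restriction is therefore moot: the paper sidesteps it entirely by working with an arbitrary splitting $\varphi$ and appealing to the well-definedness of the reduced part up to right-equivalence (Proposition \ref{reddetermineduptorequiv}).
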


\begin{proof} Let $\varphi:R\langle\langle Q_{\ored}\oplus C\rangle\rangle\rightarrow\ra$ be a right-equivalence between
$(Q_{\ored},S_{\ored})\oplus(C,T)$ and $(Q,S)$, where $(Q_{\ored},S_{\ored})$ is a reduced QP and $(C,T)$ is a trivial QP. Then the triple $\Phi=(\varphi,\idM,\idV)$ is a right-equivalence between $(Q_{\ored}\oplus C,S_{\ored}+T,M^\varphi,V)$ and $(Q,S,M,V)$ (where $M^\varphi=M$ as $K$-vector spaces, see Proposition \ref{reddetermineduptorequiv} and Definition \ref{reducedpartofrep}). By Lemma \ref{restrofrightequiv} $M^\varphi$ is $I$-path-restrictable. By the proof of Lemma \ref{restrofrightequiv}, the triple $\Phi|_I=(\varphi|_I,\idM|_I,\idV)$ is a right-equivalence between $(Q_{\ored}|_I\oplus C|_I,S_{\ored}|_I+T|_I,M'|_I,V)$ and $(Q|_I,S|_I,M|_I,V)$. By Proposition \ref{reddetermineduptorequiv}, this implies that the reduced part of $\mathcal{M}|_I$ is right-equivalent to $(Q_{\ored}|_I,S_{\ored}|_I,M'|_I,V)$.
\end{proof}

\begin{thm}\label{thm:resmut=mutres1.1} Let $(Q,S)$ be a QP, $I$ a subset of the vertex set $Q_0$, and $j\in I$. If $\qpmod$ is an $I$-path-restrictable QP-representation, then the mutation $\mutj(\mathcal{M})$ is $I$-path-restrictable as well, and  the restriction $\mutj(\mathcal{M})|_I$ is right-equivalent to the mutation $\mutj(\mathcal{M}|_I)$.
\end{thm}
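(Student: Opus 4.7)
The plan is to break the argument into two main parts and combine them via Corollary \ref{redcommuteswithrestr}. Since $\mutj(\mathcal{M})$ is by definition the reduced part of the premutation $\premutj(\mathcal{M})$, and similarly $\mutj(\mathcal{M}|_I)$ is the reduced part of $\premutj(\mathcal{M}|_I)$, the first task is to establish $I$-path-restrictability at the premutation level, and the second is to show that premutation commutes with restriction up to right-equivalence.

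First I would show that $\premutj(\mathcal{M})$ is $I$-path-restrictable. The key translation is that each type B arrow $[b_l a_k]$ of $\premutj(Q)$ acts on $\overline{M}$ as $b_l a_k$ acts on $M$, while each $2$-segment $a_k^* b_l^*$ acts on $\overline{M}$ as $-\ga_{k,l}$, which by construction is multiplication by $-\partial_{[b_l a_k]}([S])\in\ra$, itself a (possibly infinite) linear combination of paths in $Q$ from $h(b_l)$ to $t(a_k)$ passing through $j$. Consequently, any path $u$ in $\premutj(Q)$ from $i\in I$ to $i'\in I$ passing through $k\notin I$ acts on $\overline{M}_i$ as the action on $M_i$ of a linear combination of paths in $Q$ from $i$ to $i'$ that still pass through $k$: the substitutions $[ba]\leadsto ba$ and $a_k^*b_l^*\leadsto -\partial_{[b_l a_k]}([S])$ introduce only additional visits to $j\in I$. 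The $I$-path-restrictability of $M$ then forces the action to vanish. The cases where $i$ or $i'$ equals $j$ are handled analogously, by unpacking the formulas for $\overline{\al}$ and $\overline{\be}$ and using that the summands $\image\ga$ and $\ker\al/\image\ga$ of $\overline{M}_j$ embed canonically into $M_{\oin}$ via $\io$ and $\io\si$.

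Second I would show that $\premutj(\mathcal{M})|_I$ is right-equivalent to $\premutj(\mathcal{M}|_I)$. The underlying QPs coincide on the nose: the identities $[S]|_I = [S|_I]$ and $\Delta_j(Q)|_I = \Delta_j(Q|_I)$ are direct checks, essentially contained in the proof of Proposition \ref{resmut=mutres}, so both sides are decorated representations of the same QP $\premutj((Q,S)|_I)$. At vertices $i\in I$ with $i\neq j$ the underlying vector spaces agree trivially. At the vertex $j$, I would choose the auxiliary sections $\rh$ and $\si$ in the construction of $\overline{M}_j$ compatibly with the decompositions $M_{\oin} = M_{\oin}^I \oplus M_{\oin}^{Q_0\setminus I}$ and $M_{\oout} = M_{\oout}^I \oplus M_{\oout}^{Q_0\setminus I}$; the $I$-path-restrictability of $M$ is what controls the cross-blocks of $\ga$ and allows one to identify the $I$-part of $\overline{M}_j$ with the corresponding data building $\premutj(\mathcal{M}|_I)_j$.

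Combining these two steps with Corollary \ref{redcommuteswithrestr} produces the theorem: by the first step, $\premutj(\mathcal{M})$ is $I$-path-restrictable, so the corollary gives that $\mutj(\mathcal{M})=(\premutj(\mathcal{M}))_\ored$ is $I$-path-restrictable and that $\mutj(\mathcal{M})|_I$ is right-equivalent to the reduced part of $\premutj(\mathcal{M})|_I$; by the second step and invariance of reduction under right-equivalence, the latter is right-equivalent to the reduced part of $\premutj(\mathcal{M}|_I)$, which is $\mutj(\mathcal{M}|_I)$ by definition. The main obstacle is the second step, in which matching the underlying vector spaces at vertex $j$ requires delicate exploitation of the $I$-path-restrictability of $M$ to control the cross-blocks of $\al, \be, \ga$ linking the $I$- and non-$I$-parts of $M_{\oin}$ and $M_{\oout}$, together with the freedom in choosing $\rh$ and $\si$ guaranteed by the fact that the right-equivalence class of $\premutj(\mathcal{M})$ is independent of these choices.
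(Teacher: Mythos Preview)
Your approach is essentially the same as the paper's. The paper's proof is extremely terse: it asserts as ``an easy check'' that $(\widetilde{Q|_I},\widetilde{S|_I},\overline{M|_I},\overline{V})=(\widetilde{Q}|_I,\widetilde{S}|_I,\overline{M}|_I,\overline{V})$ (i.e., that premutation commutes with restriction on the nose) and then invokes Corollary~\ref{redcommuteswithrestr}; your plan unpacks precisely this, making explicit the verification that $\premutj(\mathcal{M})$ is $I$-path-restrictable (which the paper leaves implicit but is needed to apply the Corollary) and isolating the matching of the data at the mutated vertex $j$ as the substantive point.
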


\begin{proof} An easy check shows that $(\widetilde{Q|_I},\widetilde{S|_I},\overline{M|_I},\overline{V})=(\widetilde{Q}|_I,\widetilde{S}|_I,\overline{M}|_I,\overline{V})$, where $(\widetilde{Q|_I},\widetilde{S|_I})=\widetilde{\mu}_j(Q|_I,S|_I)$ and $(\widetilde{Q}|_I,\widetilde{S}|_I)=\widetilde{\mu}_j(Q,S)|_I$. The theorem follows then from Corollary \ref{redcommuteswithrestr}.
\end{proof}

\begin{remark} It is very easy to give examples where $I$-path-restrictability fails and the conclusion of Theorem \ref{thm:resmut=mutres1.1} does not hold. Consider, for instance the QP-representation $\calMtauarc$ of Example \ref{hexagonnice} below, with $I=\{\arcone_1\}$. On the other hand, there are conditions weaker than path-restrictability that still ensure the conclusion of Theorem \ref{thm:resmut=mutres1.1}; we do not state these conditions here since we will not need them.
\end{remark}

\section{The QP of an ideal triangulation}\label{Section:QPoftriangulation}

In this section we briefly review the basic material on triangulations of surfaces and their signed-adjacency quivers and potentials. For far-reaching discussions on the triangulations' cluster behavior, we refer the reader to \cite{FST} and \cite{FT}.

\begin{defi}[\cite{FST}, Definition 2.1] A \emph{bordered surface with marked points} is a pair $\surf$, where $\surfnoM$ is a compact connected oriented Riemann surface
with (possibly empty) boundary, and $\marked$ is a finite set of points on $\surfnoM$, called \emph{marked points}, such that $\marked$ is
non-empty and has at least one point from each connected component of the boundary of $\surfnoM$. The marked points that lie in the interior of
$\surfnoM$ will be called \emph{punctures}, and the set of punctures of $\surf$ will be denoted $\punct$. We will always assume that $\surf$ is
none of the following:
\begin{itemize}
\item a sphere with less than five punctures;
\item an unpunctured monogon, digon or triangle;
\item a once-punctured monogon.
\end{itemize}
Here, by a monogon (resp. digon, triangle) we mean a disk with exactly one (resp. two, three) marked point(s) on the boundary.
\end{defi}

An (\emph{ordinary}) \emph{arc} in $\surf$ (cf. \cite{FST}, Definition 2.2) is a curve $\arc$ in $\surfnoM$ such that:
\begin{itemize}
\item the endpoints of $\arc$ are marked points in $\marked$;
\item $\arc$ does not intersect itself, except that its endpoints may coincide;
\item the relative interior of $\arc$ is disjoint from $\marked$ and from the boundary of $\surfnoM$;
\item $\arc$ does not cut out an unpunctured monogon or an unpunctured digon.
\end{itemize}
We consider two arcs $\arc_1$ and $\arc_2$ to be the same whenever they are isotopic in $\surfnoM$ rel $\marked$, that is whenever there exists an isotopy $H:I\times\surfnoM\rightarrow\surfnoM$ such that $H(0,x)=x$ for all $x\in\surfnoM$, $H(1,\arc_1)=\arc_2$, and $H(t,m)=m$ for all $t\in I$ and all $m\in\marked$. An arc whose endpoints coincide will be called
a \emph{loop}. We denote the set of (isotopy classes of) arcs in
$\surf$ by $\arcsinsurf$.

Two arcs are \emph{compatible} if there are arcs in their respective isotopy classes whose relative interiors do not intersect (cf. \cite{FST}, Definition 2.4). An \emph{ideal triangulation} of $\surf$ is any maximal collection of pairwise compatible arcs whose relative interiors do not intersect each other (cf. \cite{FST}, Definition 2.6).
All ideal triangulations of $\surf$ have the same number $n$ of arcs, the \emph{rank} of $\surf$ (because it coincides with the rank of the cluster algebra associated to $\surf$, see \cite{FST}).

If $\tau$ is an ideal triangulation of $\surf$ and we take a connected component of the complement in $\surfnoM$ of the union of the arcs in $\tau$, the closure $\triangle$ of this component will be called an \emph{ideal triangle} of $\tau$.
An ideal triangle $\triangle$ is \emph{self-folded} if it contains exactly two arcs of $\tau$ and shares at most one point with the boundary of $\surf$ (see Figure \ref{selffoldedtriang}).
% SELF-FOLDED TRIANGLE
        \begin{figure}[!h]
                \caption{Self-folded triangle}\label{selffoldedtriang}
                \centering
                \includegraphics[scale=.4]{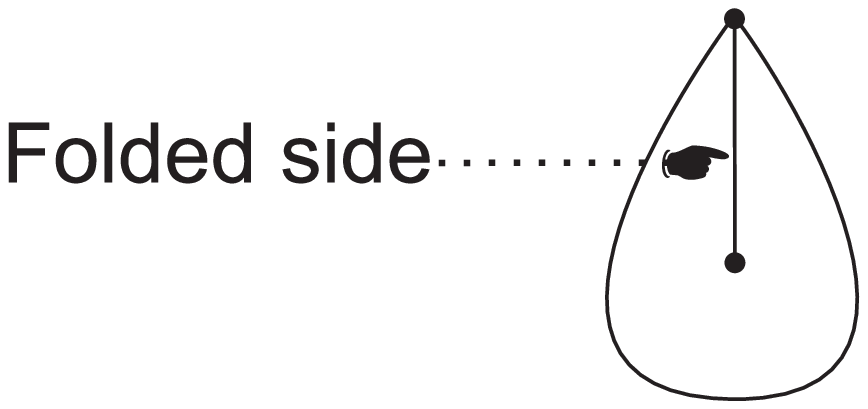}
        \end{figure}

Let $\tau$ be an ideal triangulation of $\surf$ and let $\arcone\in\tau$ be an arc. If $\arcone$ is not the folded side of a self-folded triangle,
then there exists exactly one arc $\arctwo$, different from $\arcone$, such that $\sigma=(\tau\setminus\{\arcone\})\cup\{\arctwo\}$ is an ideal
triangulation of $\surf$. We say that $\sigma$ is obtained by applying a \emph{flip} to $\tau$, or by \emph{flipping} the arc $\arcone$ (cf. \cite{FST}, Definition 3.5), and write
$\sigma=f_\arcone(\tau)$. Any two ideal triangulations are related by a sequence of flips (cf. \cite{FST}, Proposition 3.8). In the literature, flips are sometimes called \emph{Whitehead moves} or \emph{elementary moves}.

In order to be able to flip the folded sides of self-folded triangles, one has to enlarge the set of arcs with which
triangulations are formed. This is done by introducing the notion of \emph{tagged arc}. Since we will deal only with ordinary arcs in this paper, we refer the reader to \cite{FST} and \cite{FT} for the definition and properties of tagged arcs and \emph{tagged triangulations}. If $\surf$ is not a surface with empty boundary and exactly one puncture, then any two tagged triangulations are related by a sequence of flips (cf. \cite{FST}, Proposition 7.10).

\begin{defi}\label{def:adjquiver} Each ideal triangulation $\tau$ without self-folded triangles is the vertex set of a quiver whose arrows are defined by means of the following two-step procedure:
\begin{enumerate}
\item For each triangle $\triangle$ with sides $\arc,\arcone,\arctwo\in\tau$, ordered in the clockwise direction induced by the orientation of $\surfnoM$, introduce multiplicity-one arrows $\arc\rightarrow\arcone$, $\arcone\rightarrow\arctwo$, $\arctwo\rightarrow\arc$.
\item Delete 2-cycles one by one.
\end{enumerate}
The quiver $\hatqtau$ obtained after the $1^{\operatorname{st}}$ step will be called the \emph{unreduced signed-adjacency quiver of $\tau$}, whereas the quiver $\qtau$ obtained after applying both steps will receive the name of \emph{(reduced) signed-adjacency quiver}.
\end{defi}

\begin{remark}\begin{enumerate}\item Signed-adjacency quivers and their unreduced versions can be defined for any ideal triangulation, and even for any tagged triangulation (see \cite{FST} or \cite{FT}). We do not include the general definitions since we will work only with ideal triangulations without self-folded triangles.
\item As we will see in Definition \ref{QPfortriangulation}, all the 2-cycles of $\widehat{Q}(\tau)$ will be summands of the corresponding potential, hence the reduction process of Theorem \ref{splittingthm} will get rid of all the 2-cycles of $\widehat{Q}(\tau)$. Therefore the underlying quiver of the reduced part of this QP will indeed be $\qtau$. The reason for not deleting the 2-cycles before defining the potential is that, if we did so, then for some triangulations we would get a potential for which Theorem \ref{flip<->mutation} does not hold.
\end{enumerate}
\end{remark}

\begin{thm}[\cite{FST}, Proposition 4.8] Let $\tau$ and $\sigma$ be ideal triangulations. If $\sigma$ is obtained from $\tau$
by flipping the arc $\arc$ of $\tau$, then $Q(\sigma)=\muti(\qtau)$.
\end{thm}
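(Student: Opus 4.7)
The plan is to argue locally around the flipped arc. Triangles of $\tau$ not incident at $\arc$ are also triangles of $\sigma$, so they contribute the same arrows to $\hatqtau$ as to $\widehat{Q}(\sigma)$; moreover these arrows involve neither the vertex $\arc$ nor any $2$-path through $\arc$, and hence are left untouched by Steps 1--3 of Definition \ref{threesteps} applied at $\arc$. The proof therefore reduces to comparing the arrows incident to $\arc$ that arise from the two triangles of $\tau$ sharing $\arc$ with those arising from the two new triangles of $\sigma$ sharing the flipped arc.

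First I would handle the generic case, in which these two triangles form a quadrilateral with four distinct vertices $A,B,C,D$ (in clockwise order) and four distinct sides $p=AB$, $q=BC$, $r=CD$, $s=DA$, with $\arc$ being the diagonal $AC$. Reading off clockwise cyclic orders, the two triangles $\{A,B,C\}$ and $\{A,C,D\}$ contribute the arrows
\begin{equation*}
p\to q,\quad q\to\arc,\quad\arc\to p,\quad\arc\to r,\quad r\to s,\quad s\to\arc
\end{equation*}
to $\hatqtau$, whereas after the flip the new triangles $\{A,B,D\}$ and $\{B,C,D\}$ contribute
\begin{equation*}
p\to\arc,\quad\arc\to s,\quad s\to p,\quad q\to r,\quad r\to\arc,\quad\arc\to q
\end{equation*}
to $\widehat{Q}(\sigma)$. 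A direct application of Steps 1 and 2 of Definition \ref{threesteps} to $\hatqtau$ at $\arc$ introduces the four composed arrows $q\to p$, $q\to r$, $s\to p$, $s\to r$ (Step 1) and replaces the four arrows at $\arc$ by their reverses $p\to\arc$, $r\to\arc$, $\arc\to q$, $\arc\to s$ (Step 2), while leaving $p\to q$ and $r\to s$ intact. The resulting premutation contains exactly the two $2$-cycles $p\rightleftarrows q$ and $r\rightleftarrows s$, which Step 3 cancels; what remains agrees on the nose with the six arrows displayed above for $\widehat{Q}(\sigma)$. This establishes $Q(\sigma)=\muti(\qtau)$ in the generic case.

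What is left is the case analysis for degenerate configurations of the local quadrilateral: two of the vertices $A,B,C,D$ may be identified (so $\arc$ becomes a loop, or two of the sides $p,q,r,s$ coincide), or some of the four boundary sides may be identified while the vertices remain distinct. Since Definition \ref{def:adjquiver} already rules out self-folded triangles, it suffices to enumerate the identifications among $A,B,C,D$ and among $p,q,r,s$. In each such degeneration, the identifications produce additional $2$-cycles already inside $\hatqtau$ and inside $\widehat{Q}(\sigma)$, but they also force matching $2$-cycles to appear in the premutation $\widetilde{\mu}_\arc(Q(\tau))$; one then checks that the reduction step removes precisely the same $2$-cycles on both sides. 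The main obstacle here is simply the bookkeeping: the clockwise orientation convention of Definition \ref{def:adjquiver} must be tracked carefully through each degeneration so that the bijection between $2$-cycles cancelled in $\widehat{Q}(\sigma)$ and those cancelled in $\widetilde{\mu}_\arc(Q(\tau))$ is consistent. (Alternatively, since both $Q(\tau)$ and $Q(\sigma)$ are $2$-acyclic, one can encode them by their skew-symmetric adjacency matrices and derive the statement from the corresponding identity for signed adjacency matrices established in \cite{FST}.)
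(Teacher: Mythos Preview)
The paper does not prove this statement at all: it is quoted verbatim as Proposition~4.8 of \cite{FST} and no argument is supplied. So there is no ``paper's own proof'' to compare against; your proposal is simply a stand-alone attempt at the result.

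Your approach is the natural one and your generic-case computation is correct. A couple of remarks on the part you left as a sketch. First, the list of degenerations you name (identifications among $A,B,C,D$ or among $p,q,r,s$) is not quite exhaustive: even when the four sides $p,q,r,s$ are pairwise distinct, two of them (say $p$ and $q$) may bound a \emph{third} triangle elsewhere on the surface, so that $\hatqtau$ already contains an arrow $q\to p$ which cancels the local $p\to q$ in passing to $Q(\tau)$. One then has to check that the ``missing'' $p\to q$ on the $Q(\tau)$ side is exactly compensated by the surviving $q\to p$ on the $Q(\sigma)$ side; this works, but it is a separate bookkeeping item from the vertex/side identifications you list. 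Second, your computation tacitly applies $\mu_\arc$ to $\hatqtau$ rather than to $Q(\tau)$; since the theorem is stated for the reduced quiver, you should either argue that the two reductions (the one inside Definition~\ref{def:adjquiver} and Step~3 of Definition~\ref{threesteps}) commute with each other, or work directly with $Q(\tau)$ from the start.

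The cleanest way to finish, as you yourself note parenthetically, is to pass to the skew-symmetric signed-adjacency matrix $B(\tau)$ and verify the matrix-mutation identity $B(\sigma)=\mu_\arc(B(\tau))$ entrywise; this is exactly what \cite{FST} does, and it absorbs all of the above degenerations into a single uniform formula.
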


\begin{defi}[\cite{Lqps}, Definition 23]\label{QPfortriangulation} Let $\surf$ be a bordered surface with marked points and $P\subseteq M$ be the set of punctures of $\surf$. Fix a choice $(x_p)_{p\in P}$ of non-zero scalars (one scalar $x_p\in\field$ for each $p\in P$), which is going to remain fixed for all ideal triangulations of $\surf$. Let $\tau$ be an ideal triangulation of $\surf$ without self-folded triangles. Based on our choice $(x_p)_{p\in\punct}$ we associate to $\tau$ a potential $\stau\in\ratau$ as follows. Let $\widehat{Q}(\tau)$ be the unreduced signed adjacency quiver of $\tau$ (cf. \cite{Lqps}, Definition 8).
\begin{itemize} \item Each interior ideal triangle $\triangle$ of $\tau$ gives rise to an oriented triangle of $\widehat{Q}(\tau)$, let $\widehat{S}^\triangle$ be such oriented triangle up to cyclical equivalence.
\item For each puncture $p$, the arrows of $\widehat{Q}(\tau)$ between the arcs incident to $p$ form a unique cycle $a^p_1\ldots a^p_d$ that exhausts all such arcs and gives a complete round around $p$ in the counter-clockwise orientation defined by the orientation of $\surfnoM$. We define $\widehat{S}^p=x_pa^p_1\ldots a^p_d$ (up to cyclical equivalence).
\end{itemize}
The \emph{unreduced potential} $\unredstau\in\runredatau$ of $\tau$ is then defined by
\begin{equation}
\unredstau=\underset{\triangle}{\sum}\widehat{S}^\triangle+\underset{p\in\punct}{\sum}\widehat{S}^p,
\end{equation}
where the first sum runs over all interior triangles.

Finally, we define $\qstau$ to be the (right-equivalence class of the) reduced part of $\unredastau$.
\end{defi}

\begin{thm}[\cite{Lqps}, Theorems 30 and 31]\label{flip<->mutation} Let $\tau$ and $\sigma$ be ideal triangulations of $\surf$. If $\sigma=f_j(\tau)$, then $\mu_j(\atau,\stau)$ and $(\asigma,\ssigma)$ are right-equivalent QPs. Furthermore, if $\surfnoM$ has non-empty boundary, then all the potentials $\qtau$ are rigid, hence non-degenerate.
\end{thm}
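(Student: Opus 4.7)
The two assertions are handled separately. For the flip--mutation compatibility I would work first at the level of the unreduced QPs $\unredastau = (\unredatau,\unredstau)$ and then pass to the reduced parts via the Splitting Theorem and Proposition \ref{reddetermineduptorequiv}. The virtue of the unreduced picture is that $\unredstau$ decomposes transparently as the sum $\sum_{\triangle}\widehat{S}^\triangle+\sum_{p\in\punct}\widehat{S}^p$ of triangle-cycles and puncture-cycles, so that the premutation $\widetilde{\mu}_j$ can be computed term by term. The argument is then local around the flipped arc $j$, driven by a case analysis of the configuration of the two ideal triangles of $\tau$ meeting at $j$: whether their four outer sides are all distinct, coincide pairwise into loops, are glued at a puncture, and so on. In each configuration I would select a subset $I\subseteq Q_0$ consisting of $j$ together with the arcs touching the two adjacent triangles, verify that $\widetilde{\mu}_j(\unredatau,\unredstau)|_I$ is right-equivalent to $(\unredqsigma,\unredssigma)|_I$ after direct-summing a suitable trivial QP, by exhibiting an explicit change of variables on $\runredatau$, and then invoke Proposition \ref{resmut=mutres} together with the observation that both the flip and the mutation leave the structure outside $I$ untouched to conclude the global identification.

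For the rigidity statement, under the hypothesis that $\surfnoM$ has non-empty boundary, the plan is to combine the already-established flip--mutation compatibility with the theorem recalled in Subsection~2.1 saying that the class of reduced rigid QPs is closed under QP-mutation, and with Proposition~3.8 of \cite{FST}, according to which any two ideal triangulations of $\surf$ are connected by a sequence of flips. These three inputs reduce the problem to exhibiting \emph{one} ideal triangulation $\tau_0$ of $\surf$ for which $(Q(\tau_0),S(\tau_0))$ is rigid; every other triangulation is reached from $\tau_0$ by a finite sequence of flips, and the QP remains reduced and rigid along the way. To produce such a $\tau_0$ I would argue by induction on the number of arcs, picking a triangulation that contains a boundary ``ear''---a triangle with two sides on $\partial\surfnoM$---peeling the ear off to obtain a smaller bordered surface, and lifting rigidity back from the induced smaller QP by checking that the cyclic derivatives attached to the ear and to any puncture cycle modified by the removal express all newly appearing cycles as elements of the Jacobian ideal. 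The base cases correspond to small polygons, where the quiver is acyclic of type $A$, the potential is zero, and rigidity is vacuous.

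The main obstacle is the first part, and specifically the degenerate local configurations around the flipped arc $j$ in which the two triangles adjacent to $j$ share more than just the arc $j$, or in which one of them is incident to a puncture whose cycle involves $j$. In these cases the premutation produces spurious $2$-cycles and higher cyclic terms that are not summands of $\unredssigma$ and that must be absorbed into the trivial part by a non-trivial change of variables on $\runredatau$. Propagating the scalars $(x_p)_{p\in\punct}$ attached to the puncture cycles through this change of variables is the core bookkeeping step, and it is essential that these scalars all be non-zero, since their inverses typically appear in the explicit $R$-algebra automorphism of $\runredatau$ realizing the right-equivalence.
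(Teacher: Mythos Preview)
This theorem is not proved in the present paper; it is quoted from \cite{Lqps}. Nonetheless the paper tells us enough about the strategy there to compare.

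Your overall plan for the flip--mutation part---compute $\widetilde{\mu}_j$ on the unreduced QP, write down an explicit $R$-algebra automorphism absorbing the spurious $2$-cycles, then reduce---is the right shape, and your final paragraph correctly identifies where the work lies. But your use of restriction is inverted relative to \cite{Lqps}. You propose to restrict \emph{down} to a local subset $I$ around $j$, establish right-equivalence of the restricted QPs, and then ``conclude the global identification'' via Proposition~\ref{resmut=mutres}. That proposition says $\mu_j(Q,S)|_I$ is right-equivalent to $\mu_j(Q|_I,S|_I)$; it gives no mechanism for promoting a right-equivalence of restrictions to a right-equivalence of the ambient QPs. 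The puncture-cycles $\widehat{S}^p$ at the four corners of the quadrilateral around $j$ are the obstruction: they involve arrows both inside and outside your $I$, so a change of variables supported on $I$ will alter terms of $\unredstau$ that do not lie in $\unredstau|_I$.

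What \cite{Lqps} does instead---and what the present paper explicitly signals in the opening of Subsection~\ref{subsec:restriction} and in Lemma~\ref{lemma:allarerestrictions1.1}---is to use restriction in the \emph{opposite} direction. One embeds $\surf$ into a surface $(\widetilde{\Sigma},\widetilde{M})$ with empty boundary, extends $\tau$ to a triangulation $\widetilde{\tau}$, and proves the flip--mutation compatibility there, where every corner of the quadrilateral is a puncture and the local picture is uniform (cf.\ the setup around Figure~\ref{paper2case1} and Remark~\ref{rem:markedpointsmaycoincide}). Proposition~\ref{resmut=mutres} is then invoked to \emph{restrict back}: since $(Q(\tau),S(\tau))=(Q(\widetilde{\tau}),S(\widetilde{\tau}))|_\tau$, the right-equivalence on the closed surface descends. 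So restriction serves to eliminate boundary cases, not to localize around $j$.

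Your rigidity sketch is closer to the mark: reduce via mutation-invariance of rigidity and flip-connectedness to a single triangulation, then argue inductively. This is the right architecture, though the induction in \cite{Lqps} is organized around adding punctures rather than peeling ears.
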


For the definition of $\qstau$ in the presence of self-folded triangles we refer the reader to \cite{Lqps}, where some examples are treated as well.

\section{Definition of arc representations}\label{repsforarcsspecial}

Let $\surf$ be a bordered surface with marked points and $P\subseteq M$ be the set of punctures of $\surf$. Fix a choice $(x_p)_{p\in P}$ of non-zero elements of the field $\field$. Let $\tau$ be an ideal triangulation of $\surf$ without self-folded triangles and $\arc\in\arcsinsurf$ be an arc.

\subsection{First case: $\arc$ does not cut out a once-punctured monogon}\label{case1.1}

Throughout this subsection we will assume that
\begin{equation}\label{arcnotaloop}\text{the arc $\arc$ is not a loop that cuts out a once-punctured monogon.}
\end{equation}

We are going to define a representation $\Mtauarc$ in several stages. First we define the \emph{detours} of $\arc$ with respect to $\tau$ and encode them into \emph{detour matrices}. Then we define the \emph{segment representation} of $Q(\tau)$ with respect to $\arc$, and use the detour matrices to modify it and obtain the \emph{arc representation} $\Mtauarc$ of the unreduced QP $(\widehat{Q}(\tau),\widehat{S}(\tau))$. By Remark \ref{rem:restrictingaction}, $\Mtauarc$ will actually be a representation of $(Q(\tau),S(\tau))$, where the action of $Q(\tau)$ on $\Mtauarc$ is given by simply forgetting the action of the arrows appearing in the 2-cycles of $\widehat{Q}(\tau)$.

Let us begin assuming that $\arc\notin\tau$. Then, replacing $\arc$ by an isotopic arc if necessary, we can assume that
\begin{equation}\label{transversally} \text{$\arc$ intersects transversally each of the arcs of $\tau$ (if at all), and}
\end{equation}
\begin{equation}\label{minimalintersection}\text{the number of intersection points of $\arc$ with each of the arcs of $\tau$ is minimal.}
\end{equation}

In (\ref{minimalintersection}) we mean that, if $\arc'$ is isotopic to $\arc$, then $\arc'$ does not have a smaller number of intersection points with any of the arcs of $\tau$.

Fix an arc $\arcone\in\tau$; it is contained in two ideal triangles. Fix one such triangle $\triangle$, and let
$\badintersection^{\triangle,1}_{\arc,\arcone}$ be the set whose elements are the ordered quadruples $(q_{\arcone,t_1},q_{\arcone,t_2},r_1,p)$ for which we have the situation of Figure \ref{curvearoundcounterclock},
% CURVE AROUND PUNCTURE
\begin{figure}[!h]
                \caption{}\label{curvearoundcounterclock}
                \centering
                \includegraphics[scale=.5]{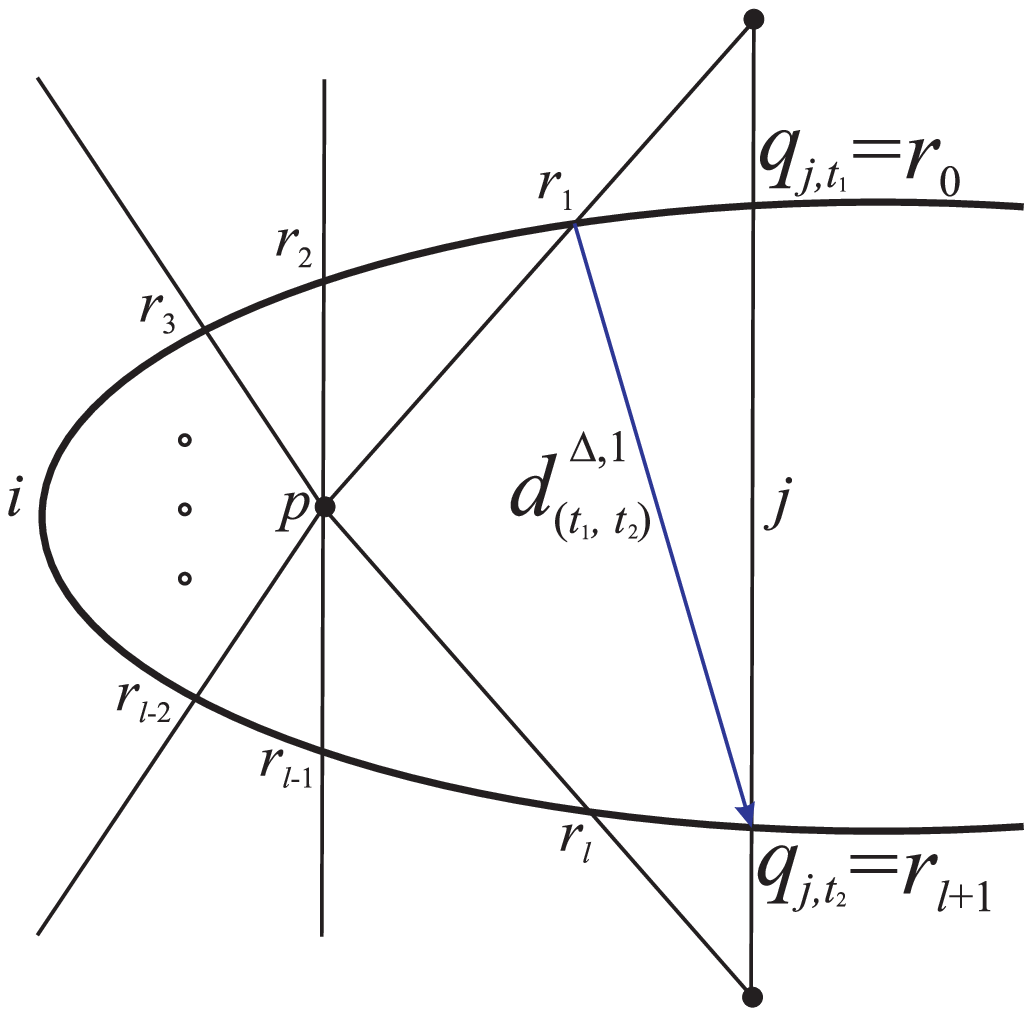}
        \end{figure}
where the segment of $\arc$ that goes from $q_{\arcone,t_1}$ to $q_{\arcone,t_2}$ can be divided into segments $[r_0,r_1]_\arc,\ldots,[r_{l},r_{l+1}]_\arc$, with the following properties:
\begin{itemize}
\item $r_0=q_{\arcone,t_1}\in\arcone$, $r_{l+1}=q_{\arcone,t_2}\in\arcone$;
\item $l$ is the number of arcs of $\tau$ incident to the puncture $p$ (counted with multiplicity);
\item the only points of $[r_k,r_{k+1}]_\arc$ that lie on an arc of $\tau$ are $r_k$ and $r_{k+1}$;
\item the segment $[r_k,r_{k+1}]_\arc$ is contractible to the puncture $p$ with a homotopy each of whose intermediate maps are segments with endpoints in the arcs of $\tau$ to which $r_k$ and $r_{k+1}$ belong;
\item the segments $[r_0,r_1]_\arc$ and $[r_{l},r_{l+1}]_\arc$ are contained in $\triangle$;
\item the union of the oriented segments $[q_{\arcone,t_1},q_{\arcone,t_2}]_\arc$ and $[q_{\arcone,t_2},q_{\arcone,t_1}]_\arcone$ is a closed simple curve contractible to the puncture $p$, and whose complement in $\surfnoM$ consists of two connected components, one of which contains exactly one puncture (namely $p$);
\item the oriented closed curve of the previous item surrounds $p$ in the counterclockwise direction.
\end{itemize}

\begin{defi} For each such quadruple $(q_{\arcone,t_1},q_{\arcone,t_2},r_1,p)\in\badintersection^{\triangle,1}_{\arc,\arcone}$ we draw an oriented simple curve $d^{\triangle,1}_{(t_1,t_2)}$ contained in $\triangle$ and going from $r_1$ to $q_{t_2}$, and say that $d^{\triangle,1}_{(t_1,t_2)}$ is a \emph{1-detour} of $(\tau,\arc)$. We will write $b(d^{\triangle,1}_{(t_1,t_2)})=r_1$ for the \emph{beginning point} of $d^{\triangle,1}_{(t_1,t_2)}$ and $e(d^{\triangle,1}_{(t_1,t_2)})=q_{t_2}$ for its \emph{ending point}. We shall also say that $p$ is the puncture \emph{detoured} by $d^{\triangle,1}_{(t_1,t_2)}$.
\end{defi}

For $n\geq 1$, after having drawn all $n$-detours of $(\tau,\arc)$, take an arc $\arcone$ and fix an ideal triangle $\triangle$ containing $\arcone$. Let $\badintersection^{\triangle,n+1}_{\arc,\arcone}$ be the set whose elements are the ordered quadruples $(q_{\arcone,t_1},q_{\arcone,t_2},b(d^n),p)$ for which we have the situation shown in Figure \ref{nplus1detour},
% n+1-DETOUR
\begin{figure}[!h]
                \caption{Drawing $n+1$-detours after drawing the $n$-detours}\label{nplus1detour}
                \centering
                \includegraphics[scale=.5]{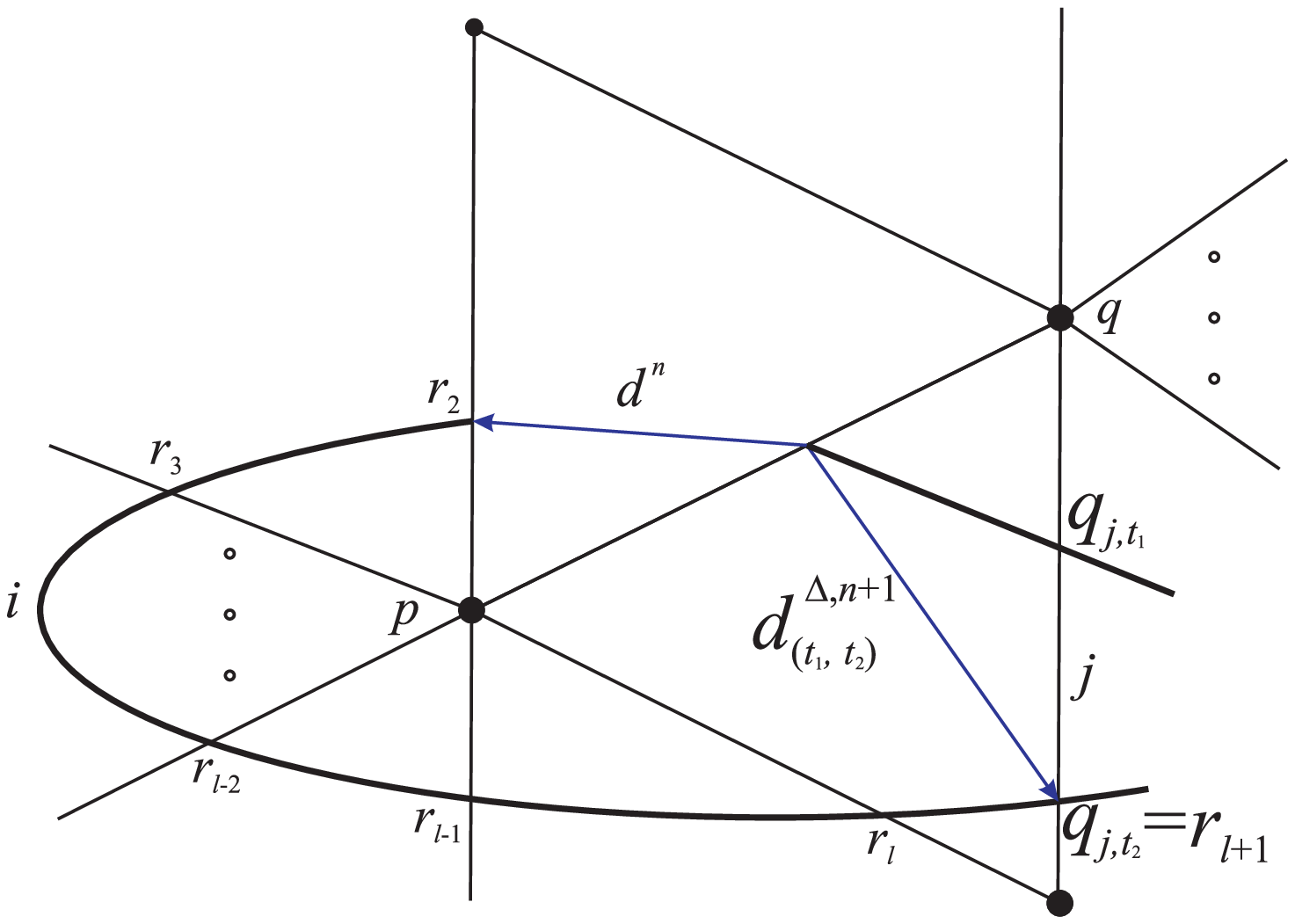}
        \end{figure}
where the segment of $\arc$ that goes from the endpoint of the $n$-detour $d^n$ to $q_{\arcone,t_2}$ can be divided into segments
$[r_2,r_3]_\arc,\ldots,[r_{l},r_{l+1}]_\arc$, with the following properties:
\begin{itemize}
\item $r_{l+1}=q_{\arcone,t_2}\in\arcone$;
\item $l$ is the number of arcs of $\tau$ incident to the puncture $p$ (counted with multiplicity);
\item the only points of $[r_k,r_{k+1}]_\arc$ that lie on an arc of $\tau$ are $r_k$ and $r_{k+1}$;
\item for $2\leq k\leq l-1$, the segment $[r_k,r_{k+1}]_\arc$ is contractible to the puncture $p$ with a homotopy each of whose intermediate maps are segments with endpoints in the arcs of $\tau$ to which $r_k$ and $r_{k+1}$ belong;
\item the segments $[q_{\arcone,t_1},b(d^n)]_{\arc}$ and $[r_l,q_{\arcone,t_2}]_\arc$ are contained in $\triangle$;
\item the union of the oriented segment $[q_{\arcone,t_1},b(d^n)]_\arc$, the $n$-detour $d^n$, and the oriented segments $[e(d^n),q_{\arcone,t_2}]_\arc$ and $[q_{\arcone,t_2},q_{\arcone,t_1}]_\arcone$ is a closed simple curve contractible to the puncture $p$, and whose complement in $\surfnoM$ consists of two connected components, one of which contains exactly one puncture (namely $p$);
\item the oriented closed curve of the previous item surrounds $p$ in the counterclockwise direction.
\end{itemize}

\begin{defi} For each quadruple $(q_{\arcone,t_1},q_{\arcone,t_2},b(d^n),p)\in\badintersection^{\triangle,n+1}_{\arc,\arcone}$ we draw an oriented simple curve $d^{\triangle,n+1}_{(t_1,t_2)}$ contained in $\triangle$ and going from $b(d^n)$ to $q_{t_2}$, and say that $d^{\triangle,n+1}_{(t_1,t_2)}$ is an $(n+1)$\emph{-detour} of $(\tau,\arc)$. We will write $b(d^{\triangle,n+1}_{(t_1,t_2)})$ and $e(d^{\triangle,n+1}_{(t_1,t_2)})$ for the \emph{beginning} and \emph{ending} points of $d^{\triangle,n+1}_{(t_1,t_2)}$. We shall also say that $p$ is the puncture \emph{detoured} by $d^{\triangle,n+1}_{(t_1,t_2)}$.
\end{defi}

\begin{remark}\label{remaboutdetours} \begin{enumerate}\item Since each detour connects points of intersection of $\arc$ with (arcs of) $\tau$, and since for a triangle $\triangle$ and intersection points $q_{1}$ and $q_2$ there is at most one detour contained in $\triangle$ and connecting $q_1$ with $q_2$, the arc $\arc$ has only finitely many detours with respect to $\tau$. In other words, the process of drawing detours stops after finitely many steps;
\item Given $n$, a triangle $\triangle$ may contain more than one $n$-detour;
\item Given an $(n+1)$-detour $d^{n+1}$ there exists exactly one $n$-detour used to define $d^{n+1}$. This $n$-detour $d^n$ satisfies $b(d^n)=b(d^{n+1})$, point that lies on an arc of $\tau$ that connects the punctures detoured by $d^n$ and $d^{n+1}$. This means that each $n$-detour $d^n$ determines a sequence $(d^1,\ldots,d^n)$ where $d^m$ is an $m$-detour with $1\leq m\leq n$ and $b(d^m)=b(d^{n+1})$; the sequence of punctures detoured by the members of the sequence alternates between two punctures of $\surf$.
\item If we think of the arrows of $\qtau$ as oriented curves on the surface, then each detour is \emph{parallel} to exactly one arrow of $\qtau$. Notice that if an arrow $a$ is parallel to a detour, then $a$ is parallel to a 1-detour.
\end{enumerate}
\end{remark}

\begin{defi} Using the detours of $(\tau,\arc)$ we define two \emph{detour matrices} for each arc $\arcone$ as follows. Take an ideal triangle $\triangle$ cointaining $\arcone$. The rows and columns of the \emph{detour matrix} $D^\triangle_{\arc,\arcone}$ are indexed by the intersection points of $\arc$ with the relative interior of $\arcone$. For each such point $q_{\arcone,t}$, the corresponding column of $D^\triangle_{\arc,\arcone}$ is defined according to the following rules:
\begin{itemize}\item the $q_{\arcone,t}^{\phantom{\arcone t}\operatorname{th}}$ entry is 1;
\item if an intersection point $q_{\arcone,s}$ is the ending point of an $n$-detour $d^{\triangle,n}_{(t,s)}$ and there is a quadruple $(q_{\arcone,t},q_{\arcone,s},b(d^{\triangle,n}_{(t,s)}),p)\in \badintersection^{\triangle,n}_{\arc,\arcone}$,  then the $q_{\arcone,s}^{\phantom{\arcone s}\operatorname{th}}$ entry is
\begin{equation}\label{eq:nontriventrydetmatrix}
(-1)^nx_p^{\lfloor\frac{n+1}{2}\rfloor}x_q^{\lfloor\frac{n}{2}\rfloor},
\end{equation}
     where $\{p,q\}$ is the set of punctures incident to the arc that contains the point $b(d^{\triangle,n}_{(s,t)})$;
\item all the remaining entries of the $q_{\arcone,t}^{\phantom{\arcone t}\operatorname{th}}$ column are zero.
\end{itemize}
\end{defi}

We now turn to the definition of the \emph{segment} and \emph{arc representations} for $\arc$. In both of them, the vector spaces attached to the vertices of $\qtau$ will be given by
\begin{equation}\label{spacesforrepresentation}
\Mtauarc_\arcone=\field^{\A(\arc,\arcone)},
\end{equation}
where $\A(\arc,\arcone)$ is the number of intersection points of $\arc$ with the relative interior of $\arcone$.
For $t=1,\ldots,\A(\arc,\arcone)$, we will write $K_{\arcone,t}$ to denote the copy of the field $\field$ that corresponds to $q_{\arcone,t}$ in equation (\ref{spacesforrepresentation}).

Now we define the linear maps $(\mtauarc_a)_{a\in Q_1(\tau)}$. Let $a:\arcone\rightarrow\arctwo$ be an arrow of $\widehat{Q}(\tau)$; the way in which $\widehat{Q}(\tau)$ is defined gives us a puncture $p(a)$ canonically associated to $a$, namely the puncture at which $\arcone$ and $\arctwo$ are adjacent. Assume that $\arc$ intersects the relative interior of $\arcone$ (resp. $\arctwo$) in the $\A(\arc,\arcone)$ (resp. $\A(\arc,\arctwo)$) different points $q_{\arcone,1},\ldots,q_{\arcone,\A(\arc,\arcone)}$ (resp. $q_{\arctwo,1},\ldots,q_{\arctwo,\A(\arc,\arctwo)}$). For $1\leq s\leq\A(\arc,\arcone)$ and $1\leq r\leq\A(\arc,\arctwo)$, let $(\mtauarc_a)_{r,s}:\field_{\arcone,s}\rightarrow\field_{\arctwo,r}$ be the identity if and only if the following conditions are satisfied:
\begin{itemize}
\item The relative interior of the segment $[q_{\arcone,s},q_{\arctwo,r}]_\arc$ of $\arc$ that connects $q_{\arcone,s}$ and $q_{\arctwo,r}$ does not intersect any arc of $\tau$;
\item the segments $[p(a),q_{\arcone,s}]_{\arcone}$, $[p(a),q_{\arctwo,r}]_{\arctwo}$, $[q_{\arcone,s},q_{\arctwo,r}]_\arc$ form a triangle contractible in $\surfnoM\setminus(M\cup\partial\surfnoM)$. More precisely, the segment $[q_{\arcone,s},q_{\arctwo,r}]_\arc$ can be contracted to the puncture $p(a)$ with a homotopy each of whose intermediate maps are segments with endpoints in the arcs $\arcone$ and $\arctwo$.
\end{itemize}
Otherwise, define $(\mtauarc_a)_{r,s}:\field\rightarrow\field$ to be the zero map.

\begin{defi} The representation $\mtauarc$ just constructed will be called the \emph{segment representation} of $\unredqtau$ induced by $\arc$.
\end{defi}

It is easy to see that, in the presence of punctures, the segment representation $\mtauarc$ does not necessarily satisfy the cyclic derivatives of $\widehat{S}(\tau)$. Let us illustrate with an example.

\begin{ex} Consider the arc $\arc$ and the ideal triangulation $\tau$ of the once-punctured hexagon shown in Figure \ref{curvepuncthexagonmotivating}, where
% PUNCTURED HEXAGON NICE TRIANGULATION
        \begin{figure}[!h]
                \caption{}\label{curvepuncthexagonmotivating}
                \centering
                \includegraphics[scale=1]{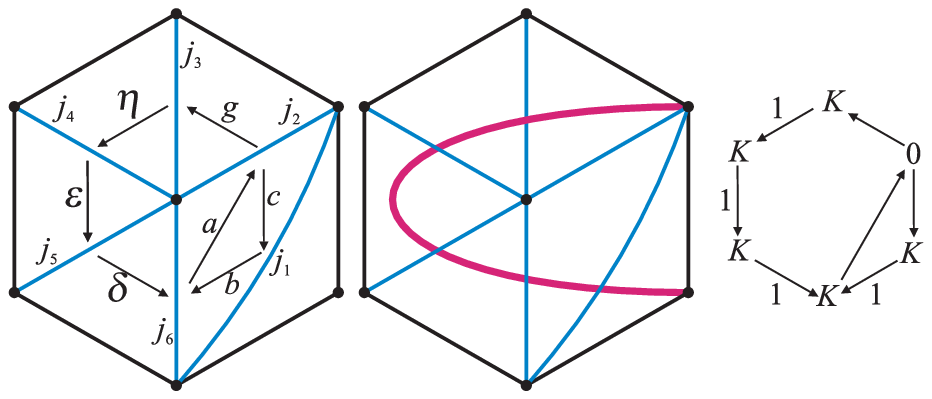}
        \end{figure}
the representation $\mtauarc$ is shown as well. This representation obviously satisfies the cyclic derivatives of the potential $\stau=abc+xa\delta\varepsilon\eta g$. Furthermore, after applying the sequence of mutations $\mu_{\arcone_1}$, $\mu_{\arcone_3}$, $\mu_{\arcone_4}$, $\mu_{\arcone_5}$, $\mu_{\arcone_6}$, we get the $\arcone_6^{\phantom{6}\operatorname{th}}$ negative simple representation $\mathcal{S}^-_6(\mathbb{D}_6,0)=\mu_{\arcone_6}\mu_{\arcone_5}\mu_{\arcone_4}\mu_{\arcone_3}\mu_{\arcone_1}(\mtauarc)$ of the QP $(\mathbb{D}_6,0)=\mu_{\arcone_6}\mu_{\arcone_5}\mu_{\arcone_4}\mu_{\arcone_3}\mu_{\arcone_1}\qstau$, where $\mathbb{D}_6$ has the following orientation and labeling of vertices:
\begin{displaymath}
\xymatrix{ & & & & \arcone_2\\
\arcone_3 \ar[r] & \arcone_4 \ar[r] & \arcone_5 \ar[r] & \arcone_6 \ar[ur] & \\
 & & & & \arcone_1 \ar[ul]}
\end{displaymath}

If we flip the arc $\arcone_2$ of $\tau$, we obtain the ideal triangulation $\sigma=f_{\arcone_2}(\tau)$ shown in Figure \ref{curvepuncthexagonmotivatingwithdetour},
% PUNCTURED HEXAGON DETOURS NEEDED
        \begin{figure}[!h]
                \caption{}\label{curvepuncthexagonmotivatingwithdetour}
                \centering
                \includegraphics[scale=1]{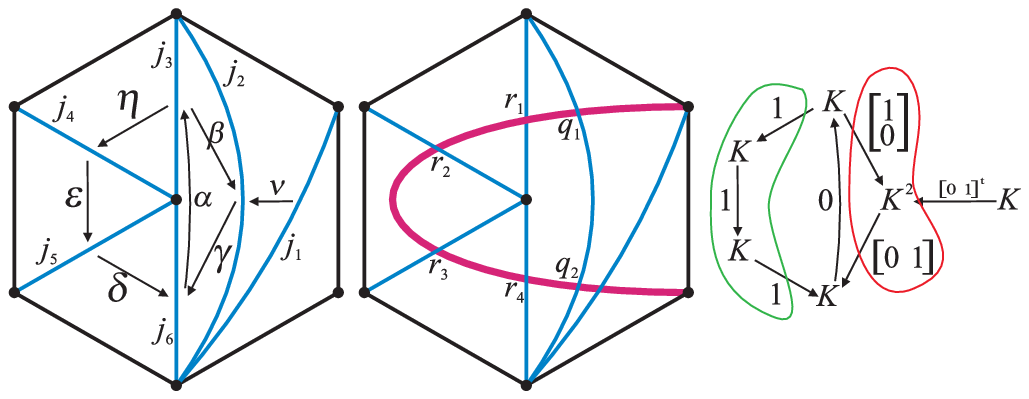}
        \end{figure}
where also the representation $\msigmaarc$ is shown. This representation does not satisfy all the cyclic derivatives of the potential $\ssigma=\gamma\beta\alpha+x\alpha\delta\varepsilon\eta$, namely, it is not annihilated by $\partial_\alpha(\ssigma)=\gamma\beta+x\delta\varepsilon\eta$. Therefore, $\msigmaarc$ cannot be obtained from $\mtauarc$ by applying the $\arcone_2^{\phantom{2}\operatorname{th}}$ mutation. Consequently, $\msigmaarc$ is not mutation-equivalent to the negative simple representation $\mathcal{S}^-_6(\mathbb{D}_6,0)=
\mu_{\arcone_6}\mu_{\arcone_5}\mu_{\arcone_4}\mu_{\arcone_3}\mu_{\arcone_1}(\mtauarc)$
of $\mathbb{D}_6$.
\end{ex}

We modify $\mtauarc$ using the detour matrices as follows. For each arrow $a:\arcone\rightarrow\arctwo$ of $\qtau$, let $\triangle^{a}$ be the unique ideal triangle that contains $a$. Define a linear map $\Mtauarc_a:\field^{\A(\arc,\arcone)}\rightarrow\field^{\A(\arc,\arctwo)}$ to be given by the matrix product
\begin{equation}
(D^{\triangle^a}_{\arc,\arctwo})(\mtauarc_a).
\end{equation}

\begin{defi} With the action of $R\langle\langle\qtau\rangle\rangle$ induced by the inclusion of quivers $\qtau\hookrightarrow\unredqtau$, the representation $\Mtauarc$ will be called the \emph{arc representation} of $\qtau$ induced by $\arc$.
\end{defi}

\begin{remark}\begin{enumerate}\item In many cases (even for punctured surfaces), the arc representation $\Mtauarc$ coincides with the segment representation $\mtauarc$. When the surface has no punctures, we always have $\Mtauarc=\mtauarc$.
\item For $\surf=$ unpunctured polygon, the arc representations were defined by P. Caldero, F. Chapoton an R. Schiffler (cf. \cite{CCS04}), and have been recently generalized to the situation $\surf=$ unpunctured surface by I. Assem, T. Br\"{u}stle, G. Charbonneau-Jodoin and P-G. Plamondon (cf. \cite{ABCP}).
\item We use the terms ``segment representation" and ``arc representation" instead of the more appealing ``string module" because, in the presence of punctures, the Jacobian algebras $\mathcal{P}(Q(\tau),S(\tau))$ are not necessarily string algebras.
\item If $\surfnoM$ has non-empty boundary, then all the QPs $\qstau$ are non-degenerate and have finite-dimensional Jacobian algebras, so these QPs admit C. Amiot's categorification \cite{Amiot-gldim2}. In this context, each arc on $\surf$ represents an object of the \emph{cluster category} $\mathcal{C}$, and each triangulation $\tau$ represents a \emph{cluster-tilting object} whose endomorphism algebra is precisely the Jacobian algebra $\mathcal{P}(Q(\tau),S(\tau))$; moreover, for each fixed triangulation there is a functor from $\mathcal{C}$ to the module category of the Jacobian algebra of the triangulation. As a consequence of Theorem \ref{thm:flip<->mut} below, the arc representation $\Mtauarc$ gives an explicit calculation of the image of $\arc$ under the functor $\mathcal{C} \rightarrow \operatorname{mod}\mathcal{P}(Q(\tau),S(\tau))$. For type $\mathbb{D}_n$, a complete geometric model of the cluster category was given by R. Schiffler in \cite{S}, and the representations $\Mtauarc$ can also be seen as an explicit calculation of the image of the objects under the corresponding functor.
\end{enumerate}
\end{remark}

To illustrate these constructions, let us have a look at an example.

\begin{ex}\label{hexagonnice} Consider the triangulation $\tau$ and the arc $\arc$ on the twice-punctured hexagon shown in Figure \ref{nicecurveonhexagon}.
\begin{figure}[!h]
                \caption{}\label{nicecurveonhexagon}
                \centering
                \includegraphics[scale=.35]{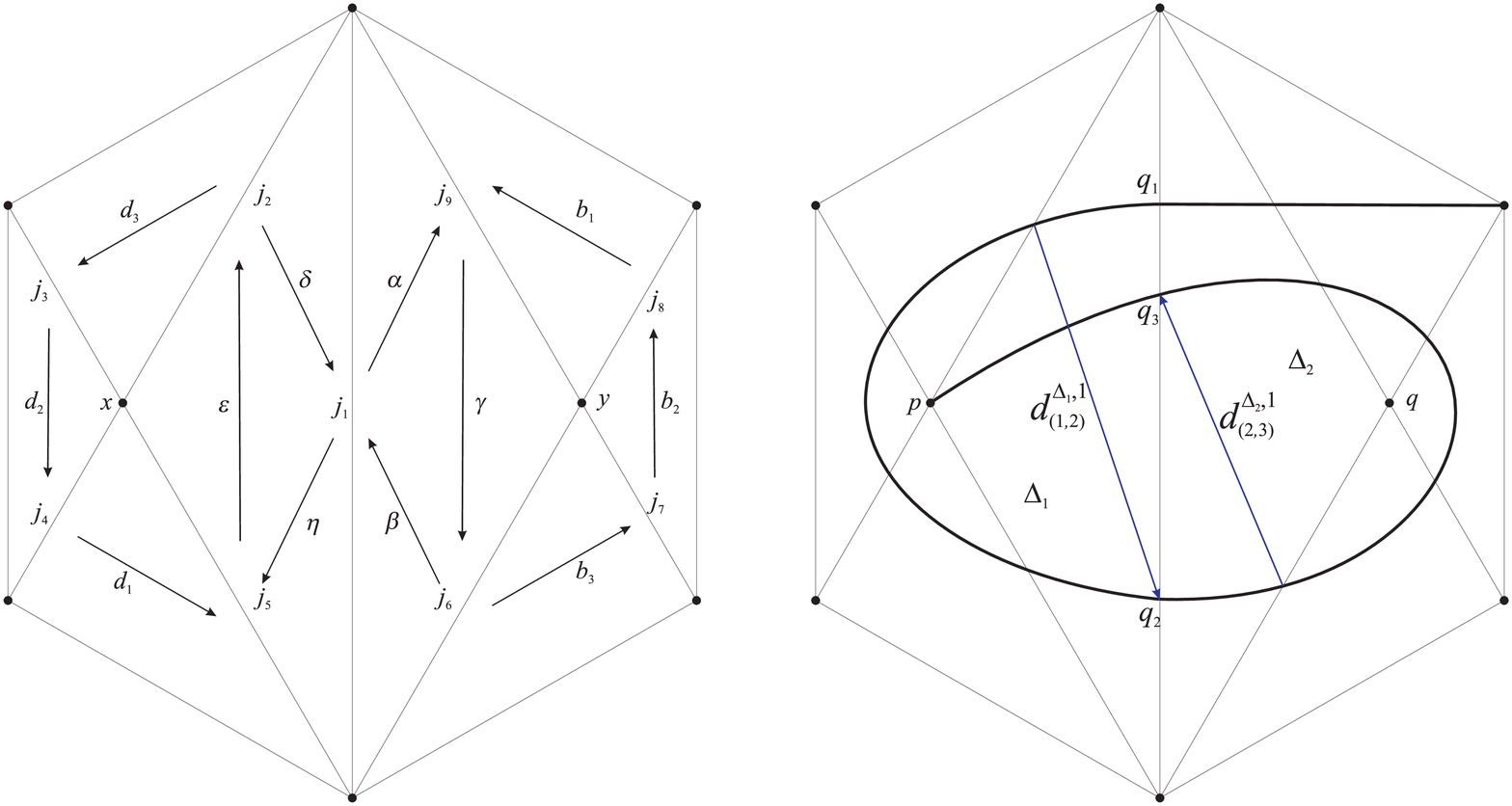}
        \end{figure}
It is straightforward to see that
$$
\badintersection^{\triangle_1,1}_{\arc,\arcone_1}=\{(q_1,q_2,b(d^{\triangle_1,1}_{(1,2)}),p)\}, \ \ \ \badintersection^{\triangle_2,1}_{\arc,\arcone_1}=\{(q_2,q_3,b(d^{\triangle_2,1}_{(2,3)}),q)\}
$$
$$
\text{and} \ \badintersection^{\triangle,n}_{\arc,\arcone}=\varnothing \ \text{for} \ n\geq 2 \ \text{and} \ \arcone\in Q_1(\tau)
$$
$$
\Mtauarc_{\arcone_1}=\field^3, \ \Mtauarc_{\arcone_l}=\field \ \text{for} \ l=2,\ldots,8, \ \text{\and} \ \Mtauarc_{\arcone_9}=\field^2;
$$
Hence $D^{\triangle}_{\arc,\arcone_l}=\mathbf{1}$ for $l=2,\ldots,9$ (and each ideal triangle $\triangle$ containing $\arcone_l$), whereas
$$
D^{\triangle_1}_{\arc,\arcone_1}=\left[\begin{array}{ccc}1 & 0 & 0 \\
                                                -x & 1 & 0 \\
                                                0 & 0 & 1\end{array}\right], \ \text{and} \
D^{\triangle_2}_{\arc,\arcone_1}=\left[\begin{array}{ccc}1 & 0 & 0 \\
                                                                      0 & 1 & 0 \\
                                                                      0 & -y & 1\end{array}\right],
$$
where $x=x_p$ and $y=x_q$. The linear maps $\mtauarc_a$ of the segment representation are given as follows
$$
\mtauarc_\alpha=\left[\begin{array}{ccc}
1 & 0 & 0 \\
0 & 0 & 1
\end{array}\right], \
\mtauarc_\beta=\left[\begin{array}{c}
0 \\
1 \\
0
\end{array}\right], \
\mtauarc_\gamma=0, \
\mtauarc_\delta=\left[\begin{array}{c}
1 \\
0 \\
0
\end{array}\right],
$$
$$
\mtauarc_\varepsilon=0, \
\mtauarc_\eta=\left[\begin{array}{ccc}
0 & 1 & 0
\end{array}\right], \
\mtauarc_{d_1}=\mathbf{1}, \
\mtauarc_{d_2}=\mathbf{1},
$$
$$
\mtauarc_{d_3}=\mathbf{1}, \
\mtauarc_{b_1}=\left[\begin{array}{c}
0 \\
1
\end{array}\right], \
\mtauarc_{b_2}=\mathbf{1} \
\mtauarc_{b_3}=\mathbf{1}.
$$
Therefore, the representation $\Mtauarc$ is
\begin{displaymath}
\xymatrix{
\field \ar[dd]_{1} & \field \ar[l]_{1} \ar[drr]^{{\tiny \left[\begin{array}{c}1 \\ -x \\ 0\end{array}\right] }} &   &  &  & \field^{2} \ar[dd]_{0} & \field \ar[l]_{{\tiny \left[\begin{array}{cc}0 & 1\end{array}\right] }} \\
 & & & \field^{3} \ar[urr]^{{\tiny \left[\begin{array}{ccc}1 & 0 & 0 \\ 0 & 0 & 1\end{array}\right] }} \ar[dll]^{{\tiny \left[\begin{array}{ccc}0 & 1 & 0\end{array}\right] }} & & & \\
\field \ar[r]_{1} & \field \ar[uu]_{0} & & & & \field \ar[ull]^{{\tiny \left[\begin{array}{c}0 \\ 1 \\ -y\end{array}\right] }} \ar[r]_1 & \field \ar[uu]_1}
\end{displaymath}

Note that this representation is actually a $\jacobqstau$-module, that is, it satisfies the relations imposed by the potential $\stau=\alpha\beta\gamma+yb_1b_2b_3\gamma+\delta\eta\varepsilon+xd_1d_2d_3\varepsilon$. Therefore, $\calMtauarc=(Q(\tau),S(\tau),\Mtauarc,\Vtauarc)$ is a QP-representation, where $\Vtauarc=0$.

If we flip the arc $\arcone_1$, we get ideal triangulation $\sigma=f_{\arcone_1}(\tau)$ shown in Figure \ref{nicecurveonhexagonflipped}
\begin{figure}[!h]
                \caption{}\label{nicecurveonhexagonflipped}
                \centering
                \includegraphics[scale=.35]{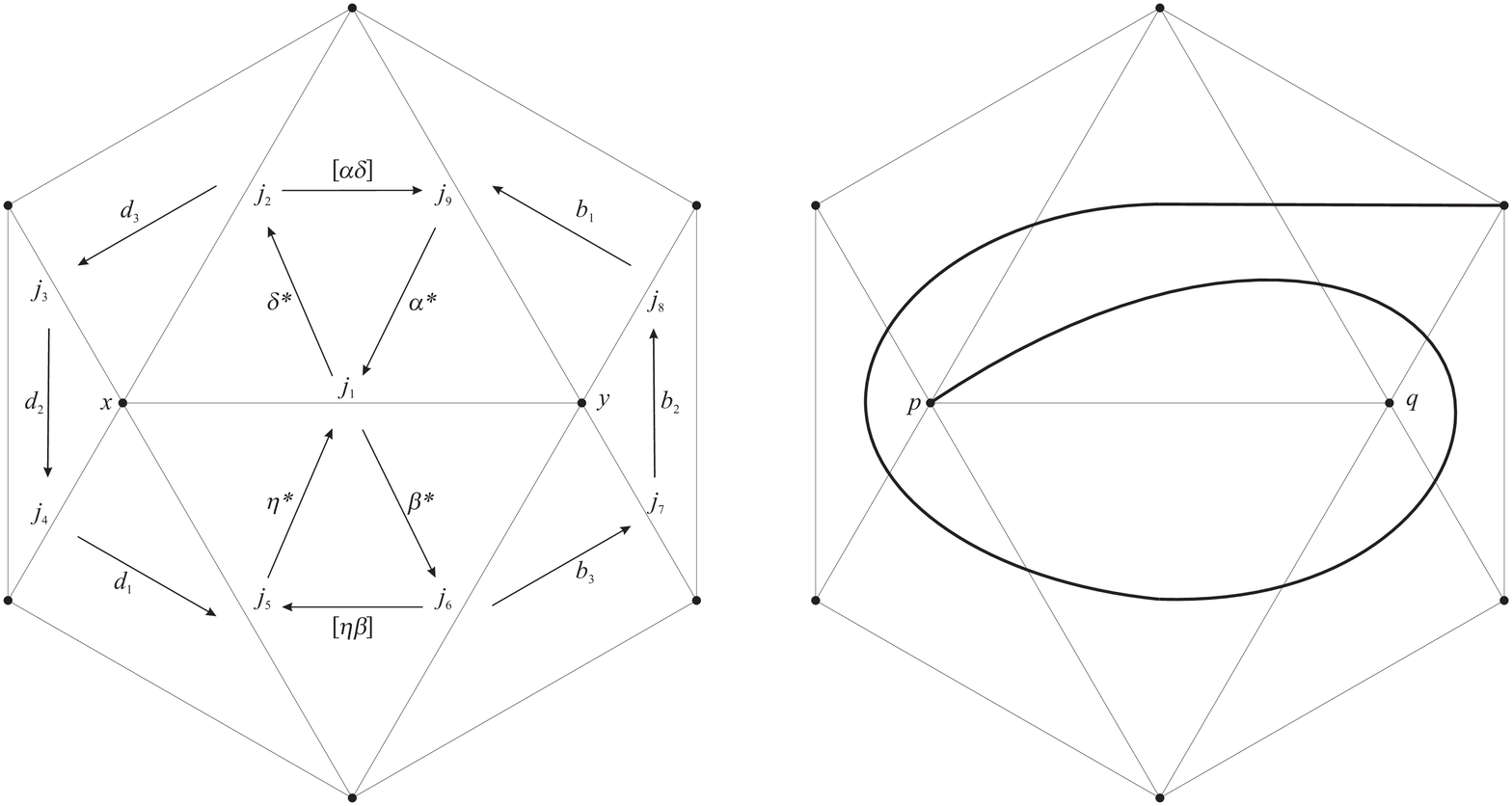}
        \end{figure}
(abusing notation, we use the same symbol $j_1$ in both $\tau$ and $\sigma$) and the following representation of its signed adjacency quiver
\begin{displaymath}
\xymatrix{
\field \ar[dd]_{1} & \field \ar[l]_{1} \ar[rr]^{\left[\begin{array}{c}1 \\ 0\end{array}\right]} & & \field^{2} \ar[dl] & \field \ar[l]_{\left[\begin{array}{c}0 \\ 1\end{array}\right]} \\
 &  & 0 \ar[ul] \ar[dr] & &  \\
\field \ar[r]_{1} & \field \ar[ur] & & \field \ar[ll]^{1} \ar[r]_{1} & \field \ar[uu]_{1}}
\end{displaymath}
which obviously satisfies the relations imposed by the potential $S(\sigma)=[\alpha\delta]\delta^*\alpha^*+[\eta\beta]\beta^*\eta^*+x\gamma^*\eta^*d_1d_2d_3+y\beta^*\alpha^*b_1b_2b_3$.

A straightforward calculation shows that this representation (with the zero decoration) can be obtained also by performing the mutation $\mu_{\arcone_1}$ to $\calMtauarc$. That is, the flip of $\arcone_1$ has the same effect on $\calMtauarc$ as the $\arcone_1^{\phantom{1}\operatorname{th}}$ QP-mutation.
\end{ex}

\subsection{Second case: $\arc$ cuts out a once-punctured monogon}\label{subsec:cutsout}

In this subsection we deal with the case where the arc $\arc$ is a loop that cuts out a once-punctured monogon from $\surf$. Specifically, throughout this section we will keep assuming that $\tau$ is an ideal triangulation of $\surf$ without self-folded triangles, that $\arc$ is an arc on $\surf$, $\arc\not\in\tau$, satisfying (\ref{transversally}) and (\ref{minimalintersection}), and that
\begin{equation}\label{arcloop} \text{the arc $\arc$ is a loop that cuts out a once-punctured monogon from $\surf$}.
\end{equation}

Let $\bigodot$ the monogon cut out by $\arc$ and $p$ be the puncture inside $\bigodot$. Consider the (unique) arc $\arctwo$ that connects $p$ with the marked point $m$ at which $\arc$ is based and is contained in $\bigodot$.
\begin{equation}
\text{If $\arctwo$ arc belongs to $\tau$, then the arc representation $\Mtauarc$ is defined following the}
\end{equation}
\begin{center}
the exact same rules of Subsection \ref{case1.1}.
\end{center}
So, assume $\arctwo$ does not belong to $\tau$. Consider all the segments of arcs of $\tau$ contained in $\bigodot$ that have one extreme on $\arc$ and the other at $p$. Let $\mathfrak{F}$ be those extreme points that lie on $\arc$  (see Figure \ref{aroundp}).
\begin{figure}[!h]
                \caption{$\mathfrak{F}$ consists of the circled intersection points}\label{aroundp}
                \centering
                \includegraphics[scale=.4]{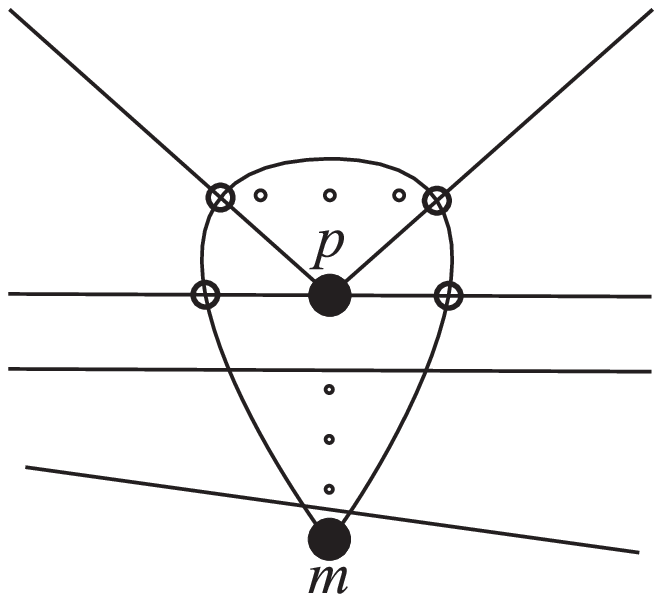}
        \end{figure}
If we traverse $\arc$ in the clockwise direction around $p$, at some moment we will begin passing through the elements of $\mathfrak{F}$. Right after exhausting these elements, before having finished traversing $\arc$, we must pass through a point of $\arc\cap\tau$ that does not belong to $\mathfrak{F}$ (see Figure \ref{aroundp}). Let $t$ be the first such point, and delete the segment of $\arc$ we have not traversed yet (see Figure \ref{deletingsegment}).

\begin{figure}[!h]
                \caption{}\label{deletingsegment}
                \centering
                \includegraphics[scale=.4]{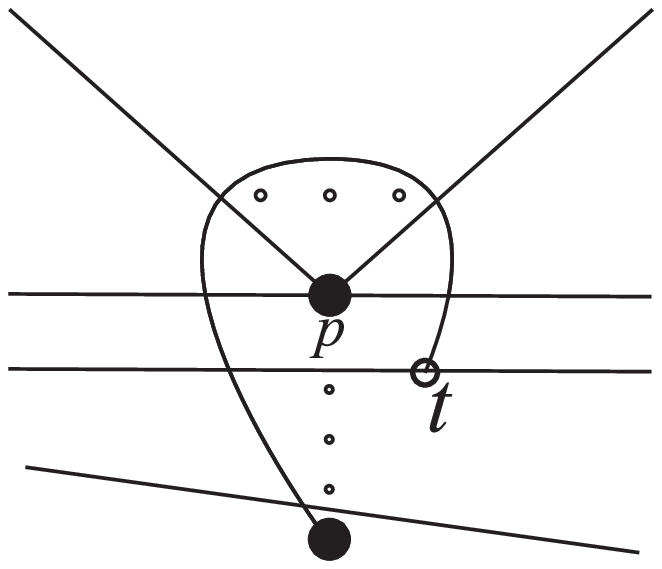}
        \end{figure}
The segment of $\arc$ we have not deleted is a curve $\iota=\iota_{\tau,\arc}$ on $\surf$ one of whose endpoints is $m$, a marked point, $t$ being the other endpoint. For $n\geq 1$ we define the $n$-detours of $(\tau,\iota)$ in the exact same way we did in the previous subsection, but with respect to $\iota$ instead of $\arc$. The detour matrices of $(\tau,\iota)$ are also defined in the exact same way.

Now we turn to the definition of the segment representation $\mtauarc$. For each arc $\arcone\in\tau$, assume that the points in which the
oriented segment $[m,t]_{\iota}$ intersects $\arcone$ are $q_{\arcone,1},\ldots,q_{\arcone,\A(\arc,\arcone)}$ (in this order along the orientation chosen for $[m,f]_{\iota}$). The vector spaces attached to the vertices of $\qtau$ by the segment representation will be given by
\begin{equation}\label{spacesforrepsMONOGONCASE}
\mtauarc_\arcone=\field^{\A(\arc,\arcone)}.
\end{equation}

The linear maps $\mtauarc_a$ are defined as follows. Let $a:\arcone\rightarrow\arctwo$ be an arrow of $\hatqtau$, and assume that the segment $[m,t]_{\iota}$ intersects the relative interior of $\arcone$ (resp. $\arctwo$) in the $\A(\arc,\arcone)$ (resp. $\A(\arc,\arctwo)$) different points $q_{\arcone,1},\ldots,q_{\arcone,\A(\arc,\arcone)}$ (resp. $q_{\arctwo,1},\ldots,q_{\arctwo,\A(\arc,\arctwo)}$). Let $(\mtauarc_a)_{r,s}:\field_{q_{\arcone,s}}\rightarrow\field_{q_{\arctwo,r}}$ be the identity if and only if the following conditions are satisfied:
\begin{itemize}
\item The relative interior of the segment $[q_{\arctwo,r},q_{\arcone,s}]_\arc$ of $\arc$ that connects $q_{\arctwo,r}$ and $q_{\arcone,s}$ does not intersect any arc of $\tau$;
\item the segments $[p(a),q_{\arcone,s}]_{\arcone}$, $[p(a),q_{\arctwo,r}]_{\arctwo}$, $[q_r,p_s]_\arc$ form a triangle contractible in $\surfnoM\setminus(M\cup\partial\surfnoM)$.
\end{itemize}
Otherwise, define $(\mtauarc_a)_{r,s}:\field\rightarrow\field$ to be the zero map.

\begin{defi} The representation $\mtauarc$ just constructed will be called the \emph{segment representation} of $\qtau$ induced by $\arc$.
\end{defi}

Just as in Subsection \ref{case1.1}, it is easy to see that the segment representation $\mtauarc$ does not satisfy the cyclic derivatives of $S(\tau)$. So we modify it using the detour matrices to produce the arc representation $\Mtauarc$. The dimension of this representation will be one less than that of $\mtauarc$. Let $\arc'$ be the arc of $\tau$ containing $t$; for $\arcone\neq\arc'$, we set
$$
\Mtauarc_{\arcone}=\mtauarc_{\arcone}.
$$
As for $\arc'$, the space $\Mtauarc_{\arc'}$ is defined to be the quotient of $\mtauarc_{\arc'}$ by the copy of $\field$ that corresponds to the intersection point $t$. That is, $\Mtauarc_{\arc'}$ takes into account only the intersection points of $[m,f]_{\iota}$ with $\arc'$.

Now let us define the linear maps of the arc representation. Let $a:\arcone\rightarrow\arctwo$ be an arrow of $\qtau$, and $\triangle^a$ be the unique ideal triangle of $\tau$ that contains $a$. If $\arcone\neq\arc'\neq\arctwo$, then $\Mtauarc_a:\field:\Mtauarc_{\arcone}\rightarrow\Mtauarc_{\arctwo}$ is defined to be $(D^{\triangle^a}_{\arc,\arctwo})(\mtauarc_a)$. If $\arc'=\arcone$, then $\Mtauarc_a=(D^{\triangle^a}_{\arc,\arctwo})(\mtauarc_a)\ell$, where $\ell:\Mtauarc_{\arc'}\hookrightarrow\mtauarc_{\arc'}$ is the canonical vector space inclusion. And if $\arc'=\arctwo$, then $\Mtauarc_a=\pi(D^{\triangle^a}_{\arc,\arctwo})(\mtauarc_a)$, where $\pi:\mtauarc_{\arc'}\twoheadrightarrow\Mtauarc$ is the canonical vector space projection.

\begin{defi} With the action of $R\langle\langle\qtau\rangle\rangle$ induced by the inclusion of quivers $\qtau\hookrightarrow\unredqtau$, the representation $\Mtauarc$ will be called the \emph{arc representation} of $\qtau$ induced by $\arc$.
\end{defi}

\begin{remark} The arc representation $\Mtauarc$ never coincides with the segment representation $\mtauarc$.
\end{remark}

To illustrate these definition, let us give an example.

\begin{ex}\label{monogonhexagon} Consider the triangulation $\tau$ and the arc $\arc$ on the twice-punctured hexagon shown in Figure \ref{punctmonogononhexagon}.
\begin{figure}[!h]
                \caption{}\label{punctmonogononhexagon}
                \centering
                \includegraphics[scale=.35]{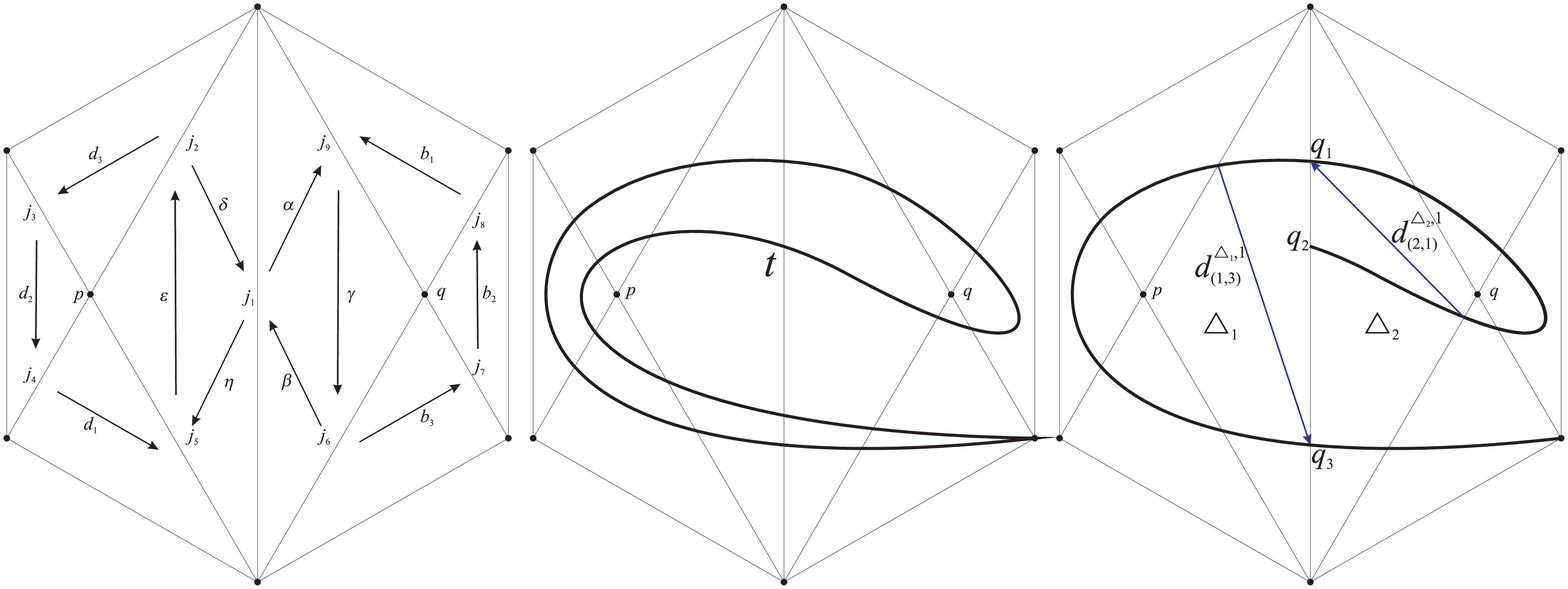}
        \end{figure}
The point $t$ is indicated there, and farthest right we can see the segment $\iota=\iota_{\arc,\tau}$ and its detours with respect to $\tau$. It is straightforward to see that
$$
\badintersection^{\triangle_1,1}_{\arc,\arcone_1}=\{(q_1,q_3,b(d^{\triangle_1,1}_{(1,3)}),p)\}, \ \ \ \badintersection^{\triangle_2,1}_{\arc,\arcone_1}=\{(q_2,q_1,b(d^{\triangle_2,1}_{(2,1)}),q)\}
$$
$$
\text{and} \ \badintersection^{\triangle,n}_{\arc,\arcone}=\varnothing \ \text{for} \ n\geq 2 \ \text{and} \ \arcone\in Q_1(\tau)
$$
$$
\mtauarc_{\arcone_1}=\field^3, \ \mtauarc_{\arcone_6}=\field^2, \ \text{and} \ \Mtauarc_{\arcone_l}=\field \ \text{for} \ l=2,3,4,5,7,8,9;
$$
Hence $D^{\triangle}_{\arc,\arcone_l}=\mathbf{1}$ for $l=2,\ldots,9$ (and each ideal triangle $\triangle$ containing $\arcone_l$), whereas
$$
D^{\triangle_1}_{\arc,\arcone_1}=\left[\begin{array}{ccc}1 & 0 & 0 \\
                                                0 & 1 & 0 \\
                                                -x & 0 & 1\end{array}\right], \ \text{and} \
D^{\triangle_2}_{\arc,\arcone_1}=\left[\begin{array}{ccc}1 & -y & 0 \\
                                                                      0 & 1 & 0 \\
                                                                      0 & 0 & 1\end{array}\right],
$$
where $x=x_p$ and $y=x_q$. The linear maps $\mtauarc_a$ of the segment representation are given as follows
$$
\mtauarc_\alpha=\left[\begin{array}{ccc}
1 & 0 & 0 \\
\end{array}\right], \
\mtauarc_\beta=\left[\begin{array}{cc}
0 & 0\\
1 & 0\\
0 & 1
\end{array}\right], \
\mtauarc_\gamma=0, \
\mtauarc_\delta=\left[\begin{array}{c}
1 \\
0 \\
0
\end{array}\right],
$$
$$
\mtauarc_\varepsilon=0, \
\mtauarc_\eta=\left[\begin{array}{ccc}
0 & 0 & 1
\end{array}\right], \
\mtauarc_{d_1}=\mathbf{1}, \
\mtauarc_{d_2}=\mathbf{1},
$$
$$
\mtauarc_{d_3}=\mathbf{1}, \
\mtauarc_{b_1}=\mathbf{1}, \
\mtauarc_{b_2}=\mathbf{1} \
\mtauarc_{b_3}=\left[\begin{array}{cc}1 & 0\end{array}\right].
$$
Therefore, the representation $\Mtauarc$ is
\begin{displaymath}
\xymatrix{
\field \ar[dd]_{1} & \field \ar[l]_{1} \ar[drr]^{{\tiny \left[\begin{array}{c}1 \\ -x\end{array}\right] }} &   &  &  & \field \ar[dd]_{0} & \field \ar[l]_{1} \\
 & & & \field^{2} \ar[urr]^{{\tiny \left[\begin{array}{cc}1 & 0\end{array}\right] }} \ar[dll]^{{\tiny \left[\begin{array}{cc}0 & 1\end{array}\right] }} & & & \\
\field \ar[r]_{1} & \field \ar[uu]_{0} & & & & \field^{2} \ar[ull]^{{\tiny \left[\begin{array}{cc}-y & 0 \\ 0 & 1\end{array}\right] }} \ar[r]_{{\tiny\left[\begin{array}{cc}1 & 0\end{array}\right]}} & \field \ar[uu]_1}
\end{displaymath}

Note that this representation is actually a $\jacobqstau$-module, that it, it satisfies the relations imposed by the potential $\stau=\alpha\beta\gamma+yb_1b_2b_3\gamma+\delta\varepsilon\eta+xd_1d_2d_3\varepsilon$. Therefore, $\calMtauarc=(Q(\tau),S(\tau),\Mtauarc,\Vtauarc)$ is a QP-representation, where $\Vtauarc=0$.

If we flip the arc $\arcone_1$, we get ideal triangulation $\sigma=f_{\arcone_1}(\tau)$ shown in Figure \ref{punctmonogononhexagonflipped}
\begin{figure}[!h]
                \caption{}\label{punctmonogononhexagonflipped}
                \centering
                \includegraphics[scale=.35]{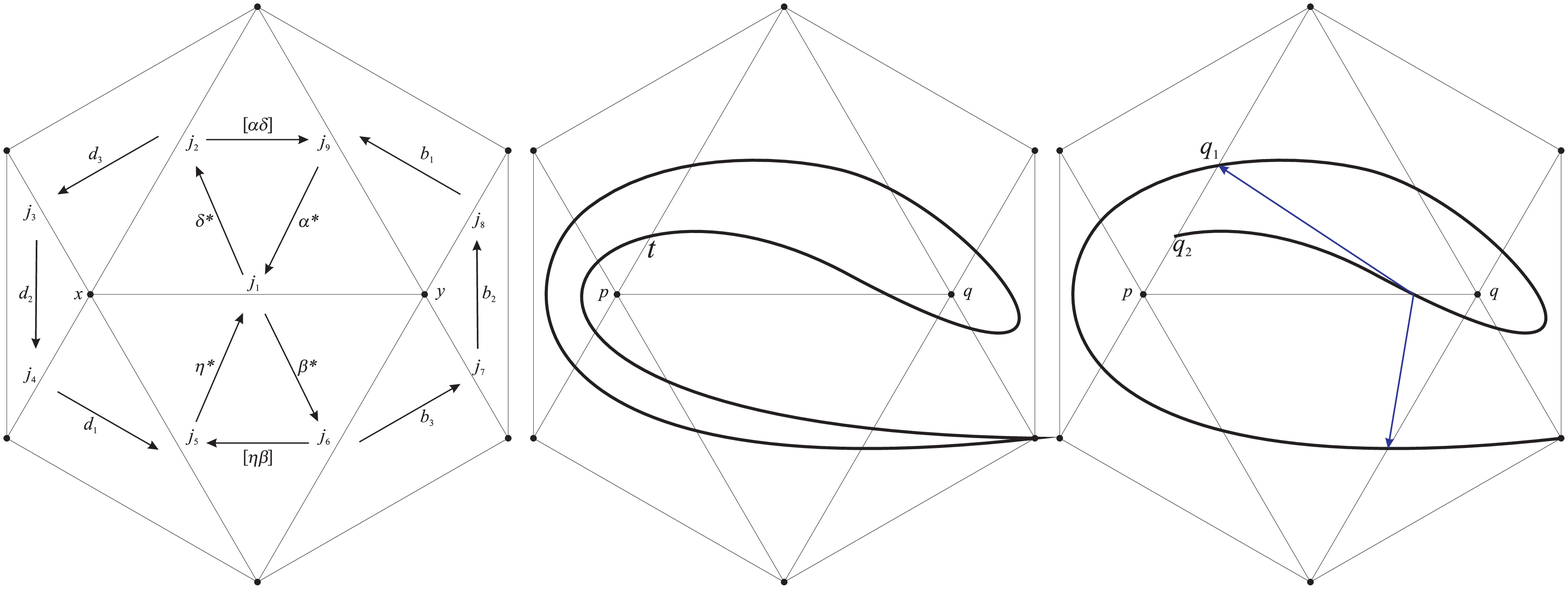}
        \end{figure}
(abusing notation, we use the same symbol $j_1$ in both $\tau$ and $\sigma$) and the following representation of its signed adjacency quiver
\begin{displaymath}
\xymatrix{
\field \ar[dd]_{1} & \field \ar[l]_{1} \ar[rr]^{1} & & \field \ar[dl]^0 & \field \ar[l]_{1} \\
 &  & \field \ar[ul]^{[-y]} \ar[dr]^{{\tiny\left[\begin{array}{c}1 \\ xy\end{array}\right]}} & &  \\
\field \ar[r]_{1} & \field \ar[ur]^0 & & \field^{2} \ar[ll]^{{\tiny\left[\begin{array}{cc}0 & 1\end{array}\right]}} \ar[r]_{{\tiny\left[\begin{array}{cc}1 & 0\end{array}\right]}} & \field \ar[uu]_{1}}
\end{displaymath}
which obviously satisfies the relations imposed by the potential $S(\sigma)=[\alpha\delta]\delta^*\alpha^*+[\eta\beta]\beta^*\eta^*+x\delta^*\eta^*d_1d_2d_3+y\beta^*\alpha^*b_1b_2b_3$.

A straightforward calculation shows that this representation can be obtained also by performing the mutation $\mu_{\arcone_1}$ to $\calMtauarc$. That is, the flip of $\arcone_1$ has the same effect as the $\arcone_1^{\operatorname{th}}$ QP-mutation.
\end{ex}

\section{Checking Jacobian relations}\label{Section:Jacobianidealsatisfied}

\subsection{Local decompositions}\label{subsec:localdecompositions}

In this subsection we shall see that our representations $\Mtauarc$ decompose \emph{locally} as the direct sum of \emph{simpler} representations that come from specific segments of the arc $\arc$. This will (relatively) simplify the proof of annihilation of $\Mtauarc$ by the Jacobian ideal, and the proof of the compatibility between flips of triangulations and mutations of representations.

Let $(Q,S)$ be an arbitrary QP, and let $j\in Q_0$ be any vertex. Define a quiver $Q(\partial)$ as follows: the set of vertices of $Q(\partial)$ consists of all the heads and tails of the arrows of $Q$ that are incident to $j$. For each $j$-hook $ab$ of $Q$, introduce one arrow $\alpha_{ab}:h(a)\rightarrow t(b)$; the set of arrows of $Q(\partial)$ consists of all the arrows of $Q$ that are incident to $j$ and all the arrows of the form $\alpha_{ab}:h(a)\rightarrow t(b)$.

Given a decorated representation $\mathcal{M}=(Q,S,\stau,M,V)$, let $M(\partial)$ be the representation of $Q(\partial)$ that attaches to each vertex of $Q(\partial)$ the same vector space $M$ attaches to it. As for the linear maps, for each arrow $a$ of $Q$ incident to $j$ let $M(\partial)_a=M_a$, and for each $j$-hook $ab$ of $Q$, let $M(\partial)_{\alpha_{ab}}=\partial_{[ab]}([S])_M:M_{h(a)}\rightarrow M_{t(b)}$.

A quick look at Subsection \ref{backgroundrepresentations} makes us see that in order to calculate the $j^{\operatorname{th}}$ mutation of $\mathcal{M}$, it is enough to apply the mutation process with respect to the data defining $M(\partial)$. The next Proposition, whose proof uses only basic linear algebra, tells us that if we decompose $M(\partial)$ as the direct sum of subrepresentations (which may be possible even when $M$ is indecomposable), then in order to calculate $\mu_{j}(\mathcal{M})$ it is enough to apply the mutation process to each of the summands of $M(\partial)$ separately.

\begin{prop}\label{prop:localdirectsum} Let $\mathcal{M}=(Q,S,M,V)$ be any decorated QP-representation. Fix a vertex $\arcone\in Q_0$ and, with respect to this vertex, define the quiver $Q(\partial)$ and the representation $M(\partial)$ as above. Suppose that $M(\partial)$ decomposes as
$$
M(\partial)=N^1\oplus\ldots\oplus N^t,
$$
where the representations $N^1,\ldots,N^t$, need not be indecomposable. For $1\leq l\leq t$ let $\mathcal{N}^l$ denote the representation $N^l$ with the zero decoration and $\mu_{\arcone}(\mathcal{N}^l)=(\overline{N^l},\overline{V^l})$ denote the decorated representation obtained from $\mathcal{N}^l$ by applying the mutation process with respect to the data $\al_{N^l}$, $\be_{N^l}$, $\ga_{N^l}$ of $N^l$. Then the mutation $\mu_{\arcone}(\mathcal{M})$ is isomorphic, as a representation of $\mu_{\arcone}(Q,S)$, to the direct sum of $\mu_{\arcone}(Q,S,0,V)$ and $(\mu_{\arcone}(Q,S),M_N,V_N)$, where $M_N$ is the representation obtained from
$\overline{N^1}\oplus\ldots\oplus\overline{N^t}$ by remembering the spaces and maps attached by $M$ to the arrows of $Q$ not incident to $\arcone$, and $V_N$ is the decoration obtained from $\overline{V^1}\oplus\ldots\oplus\overline{V^t}$ by attaching the zero vector space to each of the vertices of $Q$ that are not head or tail of an arrow incident to $\arcone$.
\end{prop}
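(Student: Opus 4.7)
The plan is to check that the mutation at vertex $\arcone$ depends only on the local data encoded by $M(\partial)$ together with the decoration $V_\arcone$, so that a direct sum decomposition $M(\partial) = N^1 \oplus \cdots \oplus N^t$ propagates through the premutation and reduction steps, with the decoration $V$ contributing a separate summand that must be peeled off.

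First I would unpack the definition of $\premutj(\mathcal{M})$ from Subsection \ref{backgroundrepresentations}: the new space $\overline{M}_\arcone$, the decoration $\overline{V}_\arcone$, the new maps $\overline{\al}_M$ and $\overline{\be}_M$, and the compositions $[ab]_{\overline{M}} = b_M a_M$ are all constructed from $\al_M$, $\be_M$, $\ga_M$, from $V_\arcone$, and from the untouched spaces and maps $M_i, a_M$ for $i \neq \arcone$ and $a$ not incident to $\arcone$. The assumption $M(\partial) = N^1 \oplus \cdots \oplus N^t$ forces $\al_M$, $\be_M$, $\ga_M$ to be block diagonal with blocks $\al_{N^l}$, $\be_{N^l}$, $\ga_{N^l}$, hence kernels, images, and the quotients appearing in the formulas for $\overline{M}_\arcone$ and $\overline{V}_\arcone$ all split as direct sums over $l$. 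In particular,
\begin{equation*}
\overline{M}_\arcone \;\cong\; \bigoplus_{l=1}^{t} \overline{N^l}_\arcone \;\oplus\; V_\arcone, \qquad \overline{V}_\arcone \;\cong\; \bigoplus_{l=1}^{t} \overline{V^l}_\arcone,
\end{equation*}
where the extra $V_\arcone$ summand is exactly what the mutation process attaches to the zero module carrying the full decoration.

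Next I would choose the sections $\rh : M_{\oout} \to \ker\ga_M$ and $\si : \ker\al_M/\image\ga_M \to \ker\al_M$ block-diagonally with respect to the decomposition; this is possible because each summand $\ker\ga_{N^l}$ is a direct summand of the corresponding block of $M_{\oout}$, and the same holds for $\si$. With these compatible choices, the matrices $\overline{\al}_M$ and $\overline{\be}_M$ become block diagonal, and so do the compositions $[ab]_{\overline{M}}$. The untouched pieces $M_i, a_M$ away from $\arcone$ are by definition the ones recorded in $M_N$, while the extra $V_\arcone$-summand appearing in $\overline{M}_\arcone$ is the module part of $\mu_\arcone(Q,S,0,V)$. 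Putting everything together yields the claimed decomposition of $\premutj(\mathcal{M})$ as a direct sum of $\mu_\arcone(Q,S,0,V)$ and $(\widetilde{\mu}_\arcone(Q,S), M_N, V_N)$ at the premutation level.

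Finally, passing from $\premutj$ to $\mutj$ amounts to applying a reduction right-equivalence $\varphi$; since such a $\varphi$ acts the same way on each summand (cf. Remark \ref{rem:restrictingaction}), the direct sum structure is preserved by reduction. The main technical point of the argument is the compatible choice of the sections $\rh$ and $\si$; the freedom to make such a choice, combined with the fact that different choices yield right-equivalent outputs by Proposition 10.9 of \cite{DWZ}, makes the resulting isomorphism well-defined up to right-equivalence.
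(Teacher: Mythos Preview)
Your proposal is correct and is precisely the ``basic linear algebra'' argument the paper has in mind: the paper does not spell out a proof, merely remarking that it ``uses only basic linear algebra,'' and your blockwise analysis of $\al$, $\be$, $\ga$ together with a compatible choice of $\rh$ and $\si$ is exactly how one fills in those details.
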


\begin{remark} Notice that the representation $M(\partial)$ can be defined even when $M$ does not satisfy the Jacobian relations imposed by $S$.
\end{remark}

In the case when $M$ is an arc representation $\Mtauarc$, we can find a decomposition of $M(\partial)=\Mtauarc(\partial)$ as follows (here we adopt the notation of Figure \ref{paper2case1}).
\begin{figure}[!h]
                \caption{}\label{paper2case1}
                \centering
                \includegraphics[scale=.3]{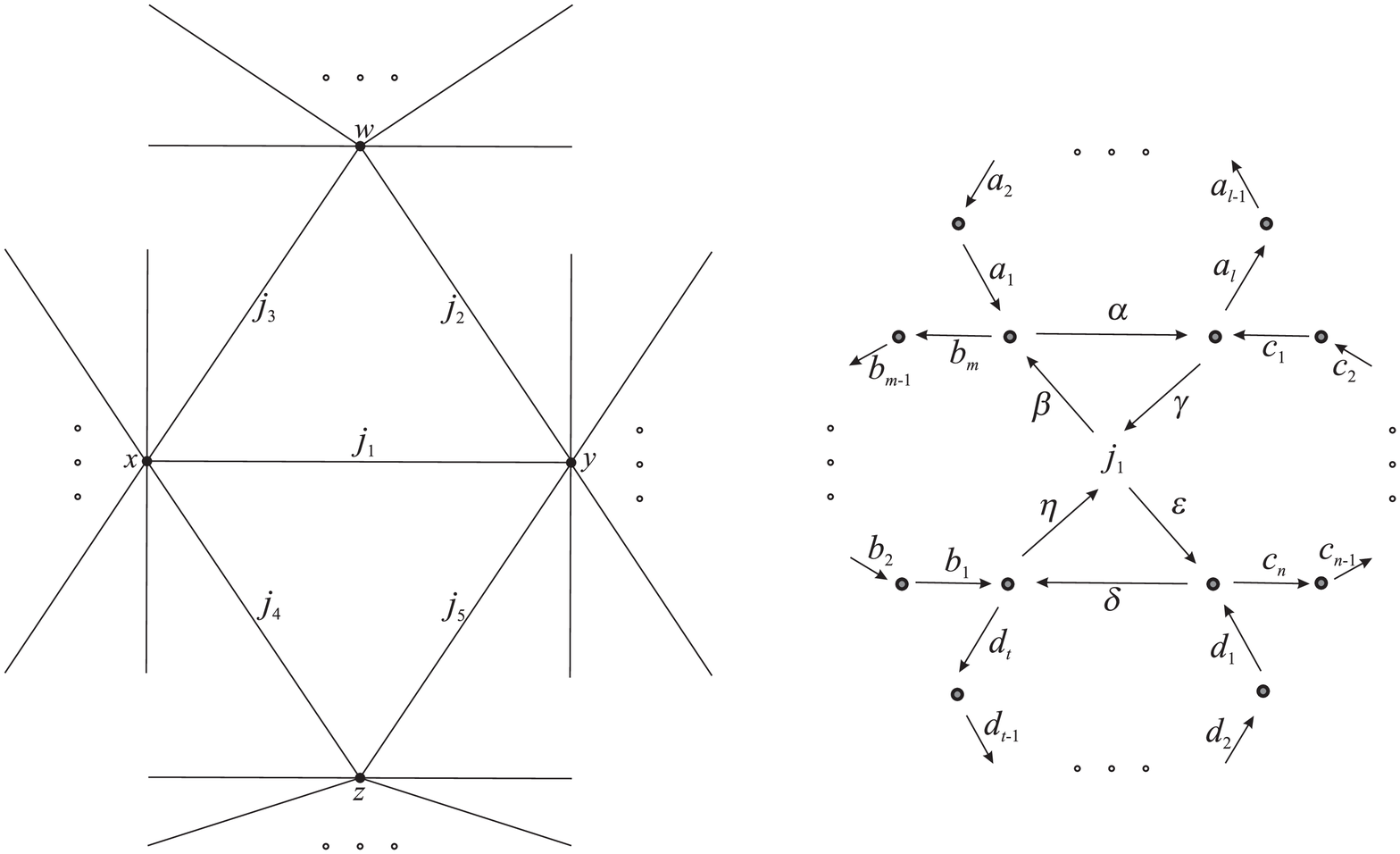}
        \end{figure}\\
Let
$$
\iota=\begin{cases}\iota_{\tau,\arc} \ \ \ \text{if $\arc$ is an arc cutting out a once-punctured monogon};\\
\arc \ \ \ \ \text{otherwise}.
\end{cases}
$$
Also, let $G(\partial)$ be the (unoriented) graph whose vertices are the points of intersection of $\iota$ with each of the arcs $\arcone_1$, $\arcone_2$, $\arcone_3$, $\arcone_4$ and $\arcone_5$. An edge of $G(\partial)$ will be either
\begin{enumerate}
\item\label{Item:segment} a segment of $\iota$ that connects a pair of vertices of $G(\partial)$ and is parallel (that is, homotopic) to either of the following paths on $\qtau$:
$$
\alpha, \ \beta, \ \gamma, \ \delta, \ \varepsilon, \ \eta, \ a_1\ldots a_l, \ b_1\ldots b_m, \ c_1\ldots c_n, \ d_1\ldots d_t; \ \text{or}
$$
\item\label{Item:combinededge} a curve parallel to either of the paths
$$
a_1\ldots a_l, \ b_1\ldots b_m, \ c_1\ldots c_n, \ d_1\ldots d_t,
$$
and obtained as the union of an $n$-detour $d^n$ (for any $n$) whose beginning point lies on $\arcone_2\cup\arcone_3\cup\arcone_4\cup\arcone_5$, and a segment of $\iota$ having one extreme at the ending point of $d^n$ and another at one of the intersection points of $\arc$ with $\tau$.
\end{enumerate}

Let $H^1,\ldots,H^t$ be the connected components of $G(\partial)$. For each connected component $H^s$ and each arc $\arcone_r$, $1\leq s\leq t$, $1\leq r\leq 5$, let $N^s_{\arcone_r}$ be the vector subspace of $\Mtauarc_{\arcone_r}$ spanned by the intersection points of $\iota$ with $\arcone_r$ that lie on $H_s$ and are different from the point $t$ of Figure \ref{deletingsegment} (in case $\arc$ is a loop cutting out a once-punctured monogon). It is easy to check that each $N^s$ is a subrepresentation of $M(\partial)=\Mtauarc(\partial)$ and that
$M(\partial)=N^1\oplus\ldots\oplus N^t$.

To know how the representations $N^1,\ldots,N^t$ can be, it suffices to find all possibilities for the components $H^1,\ldots,H^t$.

A connected component $H_s$ that contains an edge like the one described in number (\ref{Item:combinededge}) above must coincide with one of the graphs depicted in Figures \ref{Fig:component1} and \ref{Fig:component1punctmonogon}.
\begin{figure}[!h]
                \caption{}\label{Fig:component1}
                \centering
                \includegraphics[scale=.2]{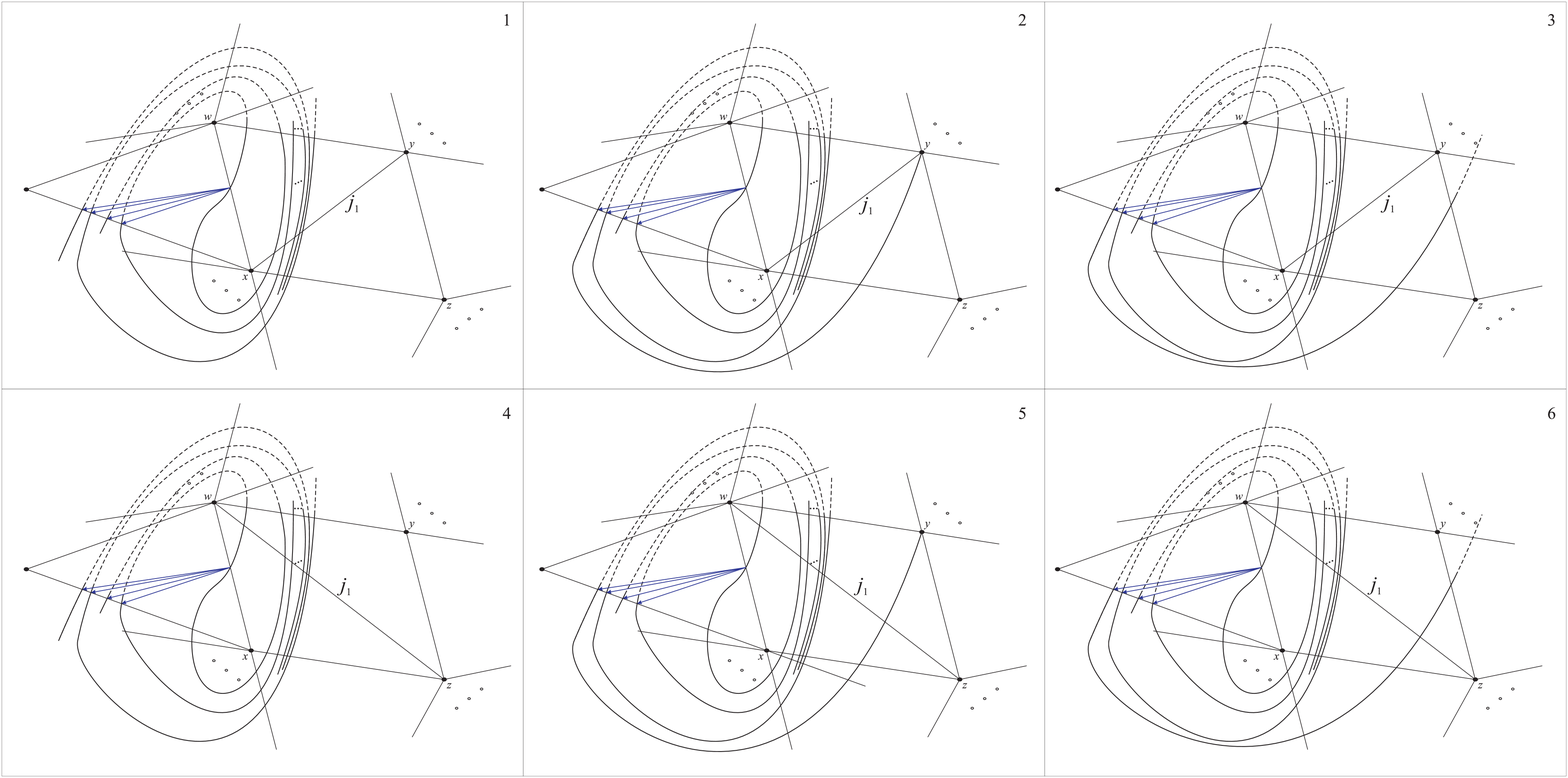}
        \end{figure}
\begin{figure}[!h]
                \caption{}\label{Fig:component1punctmonogon}
                \centering
                \includegraphics[scale=.2]{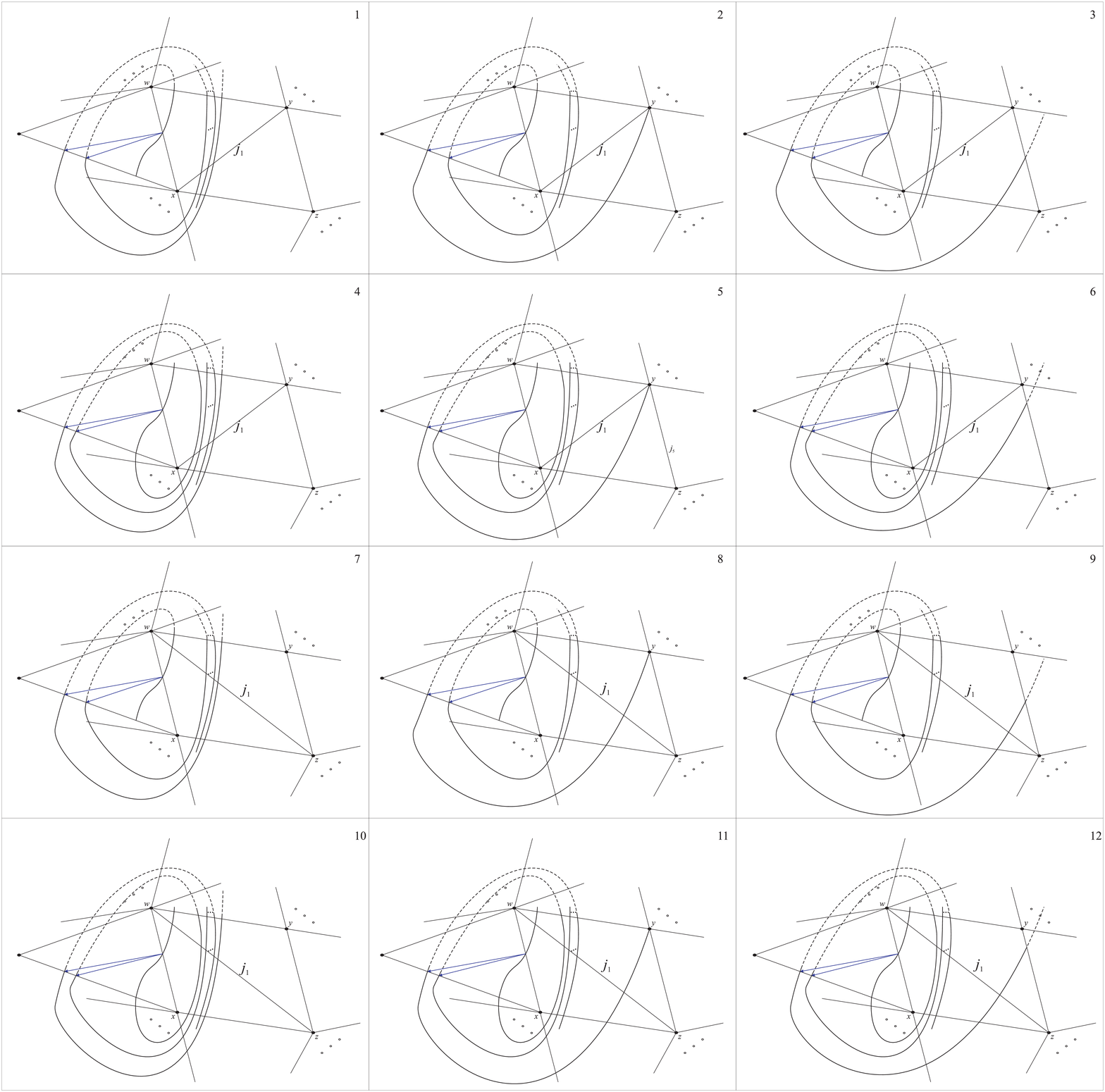}
        \end{figure}

If a connected component $H_s$ does not contain an edge as in number (\ref{Item:combinededge}) above, then there are two obvious possibilities: either there is a detour of $(\tau,\arc)$ connecting vertices of $H_s$, or there is not. If there is not, then $H_s$ must look as either of the curves depicted in Figure \ref{Fig:component2}.
\begin{figure}[!h]
                \caption{}\label{Fig:component2}
                \centering
                \includegraphics[scale=.18]{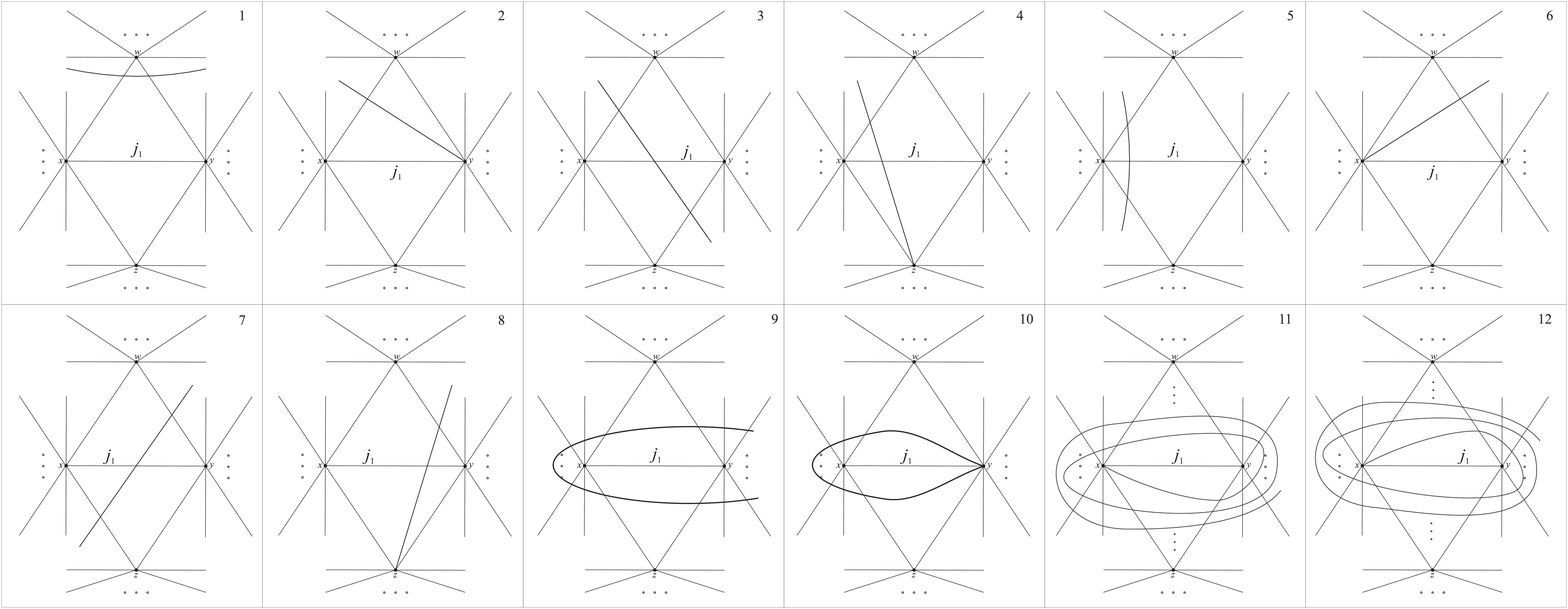}
        \end{figure}
And if there is, then $H_s$ must be one of the graphs appearing in Figures \ref{Fig:component3} and \ref{Fig:component3punctmonogon} (warning: the detours drawn in these two Figures are not part of the graph $H_s$).
\begin{figure}[!h]
                \caption{}\label{Fig:component3}
                \centering
                \includegraphics[scale=.18]{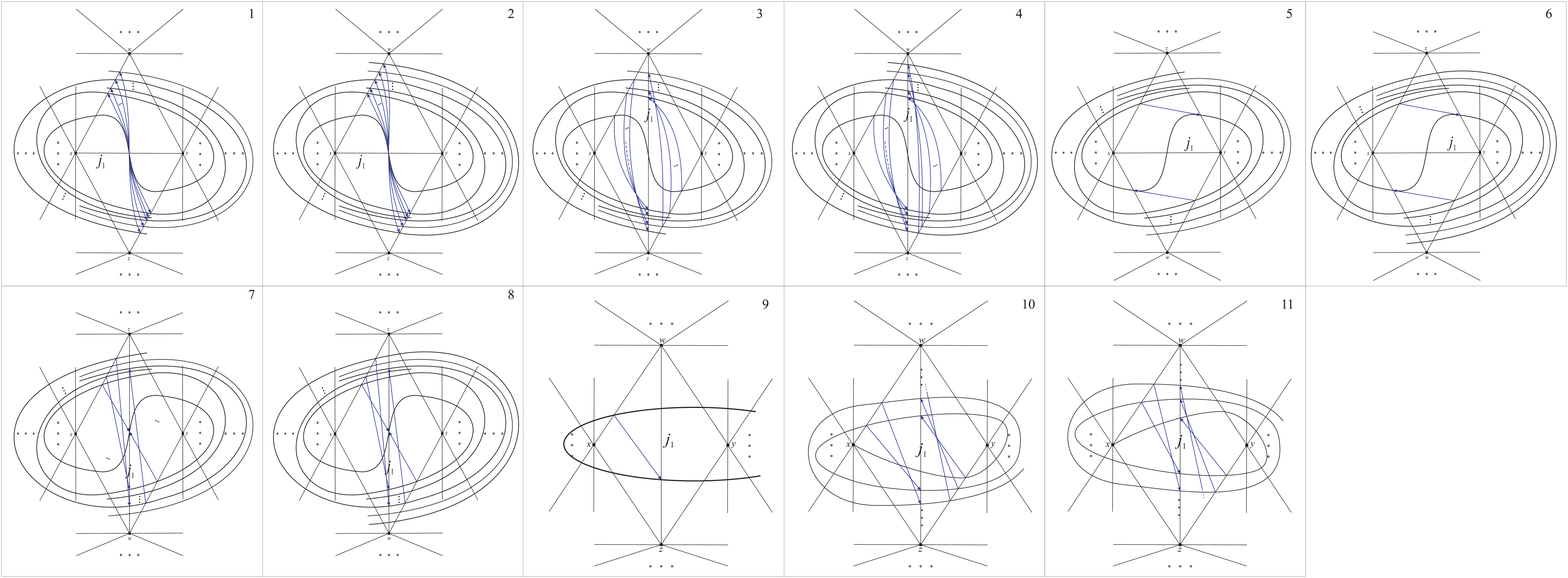}
        \end{figure}
\begin{figure}[!h]
                \caption{}\label{Fig:component3punctmonogon}
                \centering
                \includegraphics[scale=.2]{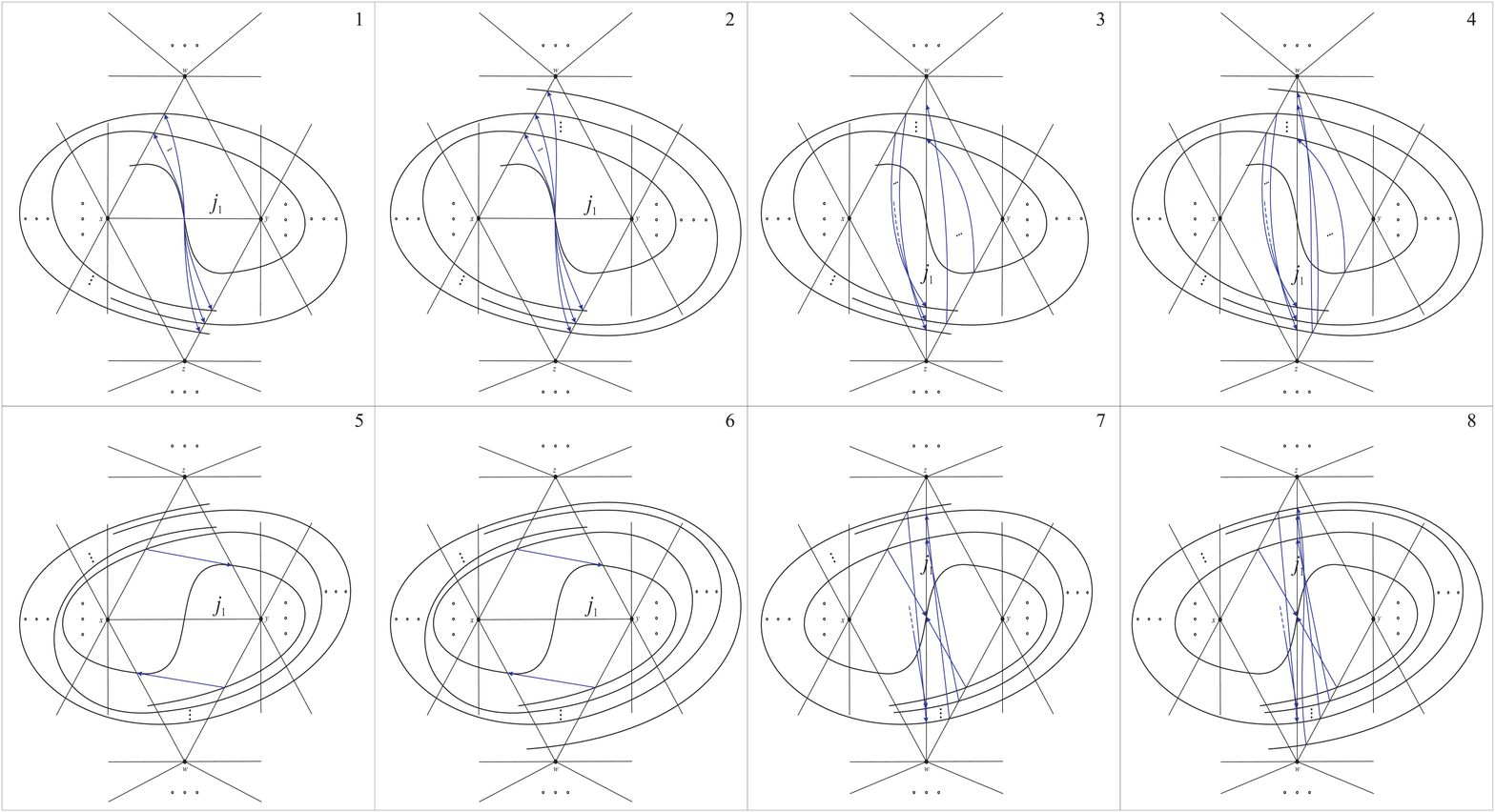}
        \end{figure}

\subsection{$\Mtauarc$ satisfies the Jacobian ideal}\label{subsec:mtauarcsatisfies}

Let us prove that our arc representations satisfy the Jacobian ideal.

\begin{prop}\label{jacobiansatisfied} Let $\tau$ be an ideal triangulation without self-folded triangles and $\arc$ be an arc. If $\arc$ satisfies(\ref{transversally}) and (\ref{minimalintersection}), then $\Mtauarc$ is a nilpotent representation of $\qtau$ that satisfies the cyclic derivatives of $\unredstau$ and is therefore annihilated by the Jacobian ideal $J(\unredstau)$.
\end{prop}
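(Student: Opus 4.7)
The plan is to verify the two assertions separately: nilpotency of $\Mtauarc$ and the annihilation by each cyclic derivative $\partial_a(\unredstau)$. For nilpotency, each $\Mtauarc_{\arcone}$ has a finite basis indexed by the intersection points of the curve $\iota$ with $\arcone$ (where $\iota=\arc$ in the setup of Subsection \ref{case1.1} and $\iota=\iota_{\tau,\arc}$ in the setup of Subsection \ref{subsec:cutsout}); the matrix $\Mtauarc_a$ of any arrow $a$ either shifts a basis vector strictly forward along $\iota$ (via $\mtauarc_a$) or modifies it by one of the finitely many detours of $(\tau,\iota)$ recorded in the detour matrix $D^{\triangle^a}_{\arc,h(a)}$, finiteness being granted by Remark \ref{remaboutdetours}(1). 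Hence any path in $\qtau$ longer than the total number of intersection points of $\iota$ with the arcs of $\tau$ plus the total number of detours of $(\tau,\iota)$ must act as zero, yielding nilpotency.

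For the Jacobian relations, I would exploit the splitting $\unredstau=\sum_{\triangle}\widehat{S}^{\triangle}+\sum_{p\in P}\widehat{S}^{p}$. For each arrow $a$, the derivative $\partial_a(\unredstau)$ is the sum of a triangle contribution $\partial_a(\widehat{S}^{\triangle^a})$ (where $\triangle^a$ is the unique ideal triangle of $\tau$ containing $a$) and at most two puncture contributions of the form $\partial_a(\widehat{S}^{p})=x_p\,a^p_{i+1}\cdots a^p_{i-1}$, one for each endpoint of $a$ that is a puncture. To organize the verification I would invoke the local-decomposition principle of Subsection \ref{subsec:localdecompositions}: fixing a vertex $\arcone$ incident to $a$, decompose $\Mtauarc(\partial)=N^1\oplus\cdots\oplus N^t$ along the connected components $H^s$ of the graph $G(\partial)$, all enumerated (up to local isomorphism) in Figures \ref{Fig:component1}--\ref{Fig:component3punctmonogon}. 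This reduces the identity $\partial_a(\unredstau)=0$ on $\Mtauarc$ to a finite list of local checks, one per component type.

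The triangle part $\Mtauarc_b\Mtauarc_c=0$ (for the oriented triangle $abc$ of $\unredqtau$ with derivative $\partial_a=bc$) reduces, after factoring out the detour-matrix prefactors, to $\mtauarc_b\mtauarc_c=0$; this holds because by minimality (\ref{minimalintersection}) and simplicity of $\iota$, no sub-curve of $\iota$ can traverse a single ideal triangle along two consecutive segments. The puncture contributions constitute the main obstacle: on components of type Figure \ref{Fig:component2} (no detours) the triangle relation alone already does the job; on the remaining component types (Figures \ref{Fig:component1}, \ref{Fig:component1punctmonogon}, \ref{Fig:component3}, \ref{Fig:component3punctmonogon}) the composite $a^p_{i+1}\cdots a^p_{i-1}$ shifts a basis vector one intersection point past $p$ along $\iota$, producing a contribution whose sign and scalar factor are engineered to cancel exactly against the corresponding entry $(-1)^n x_p^{\lfloor(n+1)/2\rfloor}x_q^{\lfloor n/2\rfloor}$ of the detour matrix dictated by formula (\ref{eq:nontriventrydetmatrix}). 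In the iterated-detour cases, the successive detour entries form a telescoping sum against the $x_p$ prefactor of $\partial_a(\widehat{S}^p)$ and, where applicable, the $x_q$ prefactor of $\partial_a(\widehat{S}^q)$ for the paired adjacent puncture $q$; by Remark \ref{remaboutdetours}(3) the alternation between the two detoured punctures is precisely what makes the telescoping collapse. Examples \ref{hexagonnice} and \ref{monogonhexagon} already exhibit this telescoping in the simplest nontrivial instances, and the general case is a direct extension of the same explicit matrix computation.
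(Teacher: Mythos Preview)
Your overall plan---local decomposition via the graph $G(\partial)$ of Subsection~\ref{subsec:localdecompositions}, followed by a case-by-case verification on the component types of Figures~\ref{Fig:component1}--\ref{Fig:component3punctmonogon}---is a legitimate route, and is related to but not identical with the paper's proof. The paper instead builds an auxiliary filtration $M_0=\mtauarc,\,M_1,\ldots,M_r=\Mtauarc$ by switching on the detour matrices one arc at a time, and shows that the annihilator subspace $W_l=\{w:\,J(\unredstau)w=0\}$ grows monotonically; the local decomposition is used only inside the key sub-lemma (Lemma~\ref{beta1and2equalzero}), where the telescoping identity is checked on a single component. Your approach would dispense with the filtration and check every $\partial_a(\unredstau)$ directly on $\Mtauarc$, which is also workable.

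There is, however, a genuine error in your treatment of the triangle term. You assert that $\Mtauarc_b\Mtauarc_c=0$ and that this ``reduces, after factoring out the detour-matrix prefactors, to $\mtauarc_b\mtauarc_c=0$.'' The identity $\mtauarc_b\mtauarc_c=0$ is correct for the reason you give, but the first claim is false and the reduction does not hold. By definition $\Mtauarc_b\Mtauarc_c=D^{\triangle^a}_{\arc,h(b)}\,\mtauarc_b\,D^{\triangle^a}_{\arc,h(c)}\,\mtauarc_c$, and the \emph{middle} detour matrix $D^{\triangle^a}_{\arc,h(c)}$ is sandwiched between the two segment maps; writing it as $I+N$, the cross-term $\mtauarc_b\,N\,\mtauarc_c$ is typically nonzero. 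The paper's explicit computation in Lemma~\ref{beta1and2equalzero} shows exactly this: the triangle contribution $(\gamma_1\alpha_1^{l+1})_M$ equals the nonzero column $[-x_q,\,x_px_q,\,\ldots]^T$, produced entirely by the detour matrix acting on the image of $\mtauarc_{\alpha_1^{l+1}}$, and it is \emph{this} term that cancels against the puncture-cycle contribution $x_q(d_1\cdots d_{s_d}\alpha_2^{l+1})_M$. So the telescoping you correctly anticipate in your final paragraph occurs between the (nonzero) detour-modified triangle term and the puncture term, not within the triangle term alone. As written, your two paragraphs are mutually inconsistent: if $\Mtauarc_b\Mtauarc_c$ were already zero there would be nothing for the puncture term to cancel against, and the puncture term itself would then force $\partial_a(\unredstau)_{\Mtauarc}\neq 0$. (A smaller point: each arrow of $\unredqtau$ lies in exactly one puncture cycle $\widehat{S}^{p(a)}$, not ``at most two''.)
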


\begin{proof} We give the proof in case $\arc$ is not a loop cutting out a once-punctured monogon; the other case is similar.

For each $\arcone\in\tau$, let $n_\arcone$ be the total number of 1-detours of $(\tau,\arc)$ whose beginning point lies on $\arcone$. Let $(\arcone_1,\ldots,\arcone_r)$ be any ordering of the arcs of $\tau$. We are going to recursively define representations $M_0,\ldots,M_r$, of $\qtau$, with the following properties:
\begin{equation}
M_0=\mtauarc, \ M_r=\Mtauarc,
\end{equation}
\begin{equation}\label{dimWincreases}
\dim(W_l)\geq\dim(W_{l-1})+n_{\arcone_l} \ \text{for} \ 1\leq l\leq r,
\end{equation}
where $W_l=\{w\in M_l\suchthat J(S(\tau))w=0\}$ is the maximal vector subspace of $M_l$ satisfying the cyclic derivatives of $S(\tau)$. The proposition will then be a consequence of the fact that
\begin{equation}\label{dimW0controlled}
\dim(W_0)\geq\dim(\Mtauarc)-\underset{\arcone\in\tau}{\sum}n_\arcone.
\end{equation}
In all the representations $M_l$, $0\leq l\leq r$, the vector space attached to each $\arcone\in\tau$ will be $\Mtauarc_\arcone$. We define $M_0=\mtauarc$. For $0\leq l\leq r-1$, once $M_l$ has been constructed, let $\alpha^{l+1}_1$ and $\alpha^{l+1}_2$ be the arrows of $\unredqtau$ that have $\arcone_{l+1}$ as tail. Define $M_{l+1}$ as follows:
\begin{equation}
(M_{l+1})_{\alpha^{l+1}_1}=(D_{\arc,h(\alpha^{l+1}_1)}^{\triangle^{\alpha^{l+1}_1}})(\mtauarc_{\alpha^{l+1}_1}), \ (M_{l+1})_{\alpha^{l+1}_2}=(D_{\arc,h(\alpha^{l+1}_2)}^{\triangle^{\alpha^{l+1}_2}})(\mtauarc_{\alpha^{l+1}_2}),
\end{equation}
$$
\text{and} \ (M_{l+1})_{a}=(M_l)_a \ \text{for} \ a\notin\{\alpha^{l+1}_1,\alpha^{l+1}_2\}.
$$
We obviously have $M_r=\Mtauarc$. We have to prove that $\dim(W_l)\geq\dim(W_{l-1})+n_{\arcone_l}$ for $1\leq l\leq r$. Notice that $M_{l+1}\neq M_l$ only if at least one of $\alpha^{l+1}_1$ or $\alpha^{l+1}_2$ is parallel to a detour.

\begin{lemma}\label{Wincreases} For $0\leq l\leq r-1$, $W_l\subseteq W_{l+1}$.
\end{lemma}
\begin{proof} With the notation of Figure \ref{alpha1and2},
 \begin{figure}[!h]
                \caption{}\label{alpha1and2}
                \centering
                \includegraphics[scale=.4]{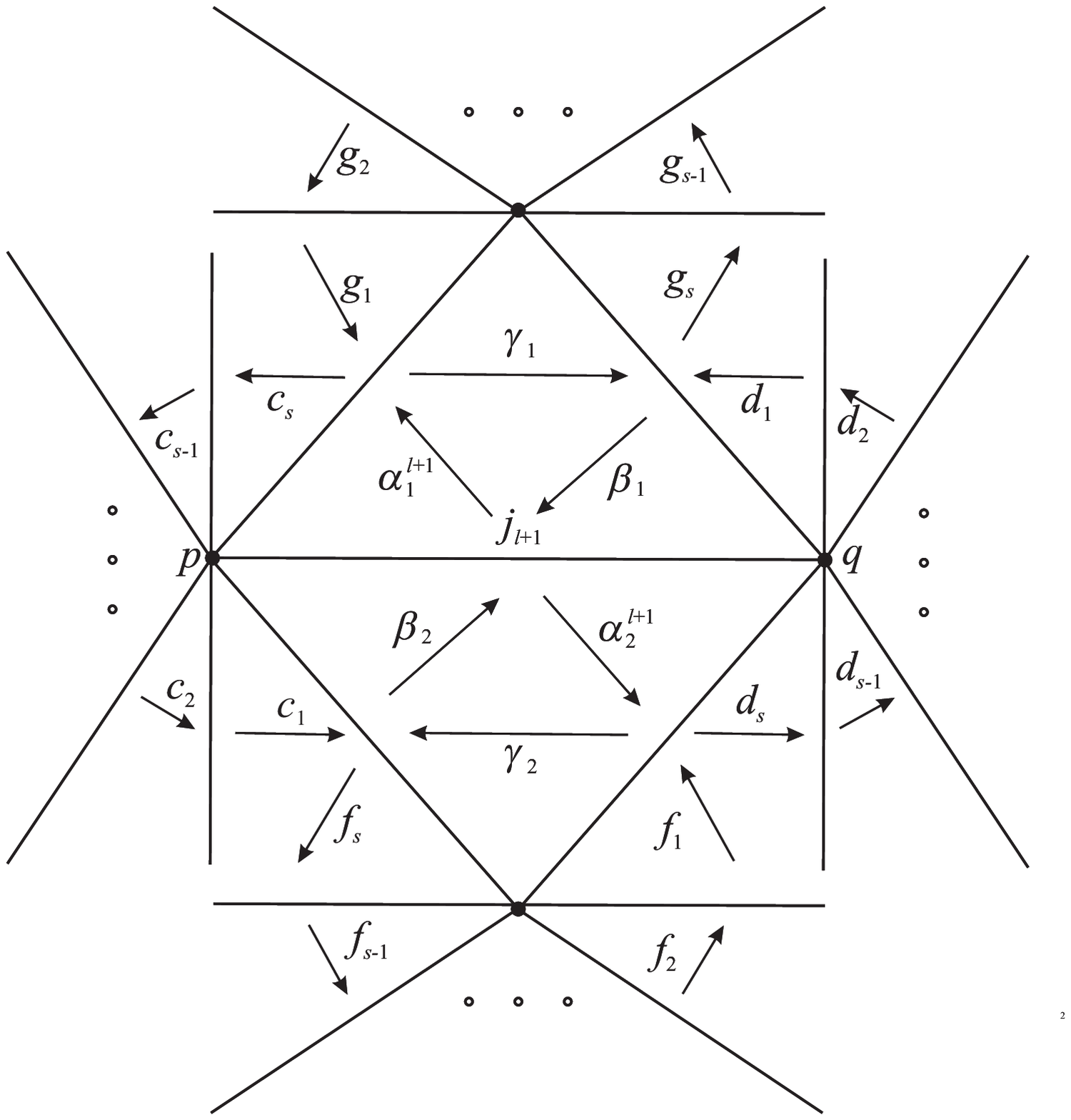}
        \end{figure}
 the arrow $\alpha^{l+1}_1$ (resp. $\alpha^{l+1}_2$) appears as a factor of two terms of the potential $\unredstau$, namely $\alpha^{l+1}_1\beta_1\gamma_1$ and $x_p\alpha^{l+1}_1\beta_2c$ (resp. $\alpha^{l+1}_2\beta_2\gamma_2$ and $x_q\alpha^{l+1}_2\beta_1d$), where we are writing $c=c_1\ldots c_{s_c}$ and $d=d_1\ldots d_{s_d}$. If we were to have $W_l\nsubseteq W_{l+1}$, then there would exist an arc $\arctwo$, a basis element $v$ of $(M_{l+1})_\arctwo$ corresponding to an intersection point of $\arc$ with $\arctwo$, and $\xi\in\rho(\arctwo)=\{\partial_a(\unredstau)\suchthat a\in Q_1(\tau), \ h(a)=\arctwo\}$ such that $\xi_{M_l}v=0$ but $\xi_{M_{l+1}}v\neq 0$. Since $M_{l+1}$ may differ from $M_l$ only by the action of $\alpha^{l+1}_1$ and $\alpha^{l+1}_2$, this would force $\xi$ to have the form $\xi=\partial_a(\unredstau)$ for some $a\in\{c_1,\ldots,c_{s_c},d_1,\ldots,d_{s_c},\gamma_1,\gamma_2,\beta_1,\beta_2\}$, and $\alpha^{l+1}_1$ or $\alpha^{l+1}_2$ (or both) would then be parallel to some detour of $(\tau,\arc)$. Therefore, Lemma \ref{Wincreases} will follow if we establish $\ker(\xi_{M_l})\subseteq\ker(\xi_{M_{l+1}})$ when $\xi$ has the form $\xi=\partial_a(\unredstau)$ for some $a\in\{c_1,\ldots,c_{s_c},d_1,\ldots,d_{s_d},\gamma_1,\gamma_2\}$, and $\xi_{M_{l+1}}=0$ when $\xi=\partial_a(\unredstau)$ for $a\in\{\beta_1,\beta_2\}$.

\begin{lemma}\label{beta1and2equalzero} $\xi_{M_{l+1}}=0$ when $\xi=\partial_a(\unredstau)$ for $a\in\{\beta_1,\beta_2\}$.
\end{lemma}

\begin{proof} We will unravel the definition of the linear maps $(M_{l+1})_{\alpha^{l+1}_1}=(D_{\arc,\arcone_{l+1}}^{\triangle^{\alpha^{l+1}_1}})(\mtauarc_{\alpha^{l+1}_1})$ and $(M_{l+1})_{\alpha^{l+1}_2}=(D_{\arc,\arcone_{l+1}}^{\triangle^{\alpha^{l+1}_2}})(\mtauarc_{\alpha^{l+1}_2})$. It is enough to check that $\partial_{\beta_1}(\unredstau)$ and $\partial_{\beta_2}(\unredstau)$ act as zero in each of the possible summands of the representation $M(\partial)$ corresponding to $\arcone_{l+1}$. In such a summand, if none of $\alpha^{l+1}_1$ and $\alpha^{l+1}_2$ is parallel to a detour, there is nothing to prove. Otherwise, we have either of the situations sketched in entries 1 and 2 of Figure \ref{Fig:component3}. Let us analyze the configuration of the latter entry, the former one being completely analogous. It is represented, with all the necessary notation, in Figure \ref{detourssatisfyjacobian},
\begin{figure}[!h]
                \caption{$\partial_{\beta_1}(\unredstau)$ and $\partial_{\beta_2}(\unredstau)$ act as zero in $M_{l+1}$}\label{detourssatisfyjacobian}
                \centering
                \includegraphics[scale=.6]{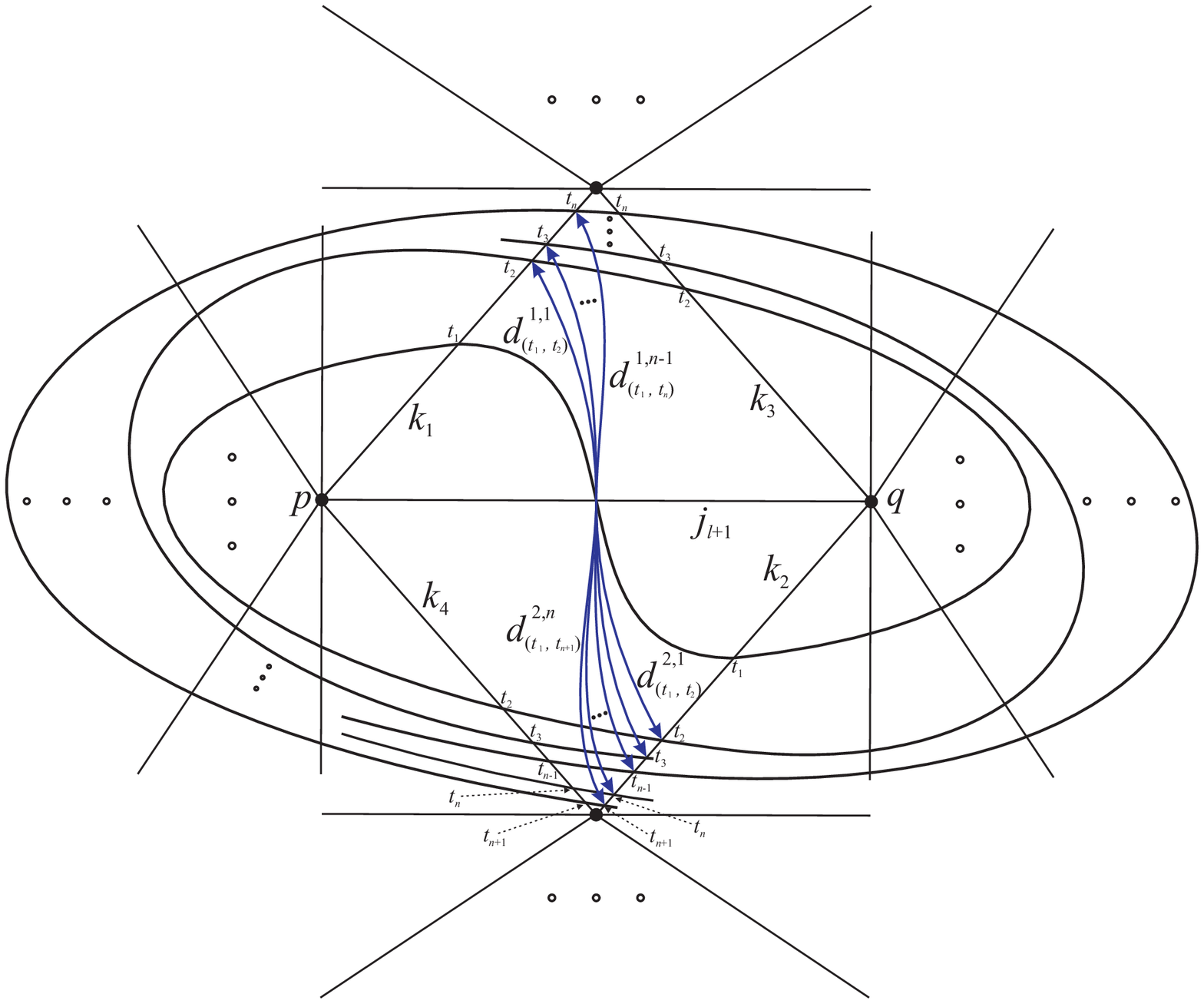}
        \end{figure}
where $\beta_1:k_3\rightarrow\arcone_{l+1}$ and $\beta_2:k_4\rightarrow\arcone_{l+1}$. Whence, with the numbering of intersection points shown in Figure \ref{detourssatisfyjacobian}, the action of $\partial_{\beta_1}(\unredstau)$ on $M_{l+1}$ is
$$
\partial_{\beta_1}(\unredstau)_{M_{l+1}}=(\gamma_1\alpha^{l+1}_1)_{M_{l+1}}+(x_qd_1\ldots d_{s_d}\alpha^{l+1}_2)_{M_{l+1}}=
$$
$$
=\left[\begin{array}{cc}0 & \mathbf{1}_{(n-1)\times (n-1)}\end{array}\right]
\left[\begin{array}{c}1\\
                      -x_q\\
                      x_px_q\\
                      \vdots\\
                      (-1)^{n-1}x_p^{\lfloor\frac{n-1}{2}\rfloor}x_q^{\lfloor\frac{n}{2}\rfloor}\end{array}\right]+x_q
\left[\begin{array}{cc}\mathbf{1}_{(n-1)\times (n-1)} & \mathbf{0}_{(n-1)\times 2}\end{array}\right]
\left[\begin{array}{c}1\\
                      -x_p\\
                      x_px_q\\
                      \vdots\\
                      (-1)^{n-1}x_p^{\lfloor\frac{n}{2}\rfloor}x_q^{\lfloor\frac{n-1}{2}\rfloor}\\
                      (-1)^nx_p^{\lfloor\frac{n+1}{2}\rfloor}x_q^{\lfloor\frac{n}{2}\rfloor}\end{array}\right]=
$$
$$
=\left[\begin{array}{c}-x_q\\
                      x_px_q\\
                      \vdots\\
                      (-1)^{n-1}x_p^{\lfloor\frac{n-1}{2}\rfloor}x_q^{\lfloor\frac{n}{2}\rfloor}\end{array}\right]+
\left[\begin{array}{c}x_q\\
                      -x_px_q\\
                      x_px_q^2\\
                      \vdots\\
                      (-1)^{n-2}x_p^{\lfloor\frac{n-1}{2}\rfloor}x_q^{\lfloor\frac{n-2}{2}\rfloor+1}\end{array}\right]=
\left[\begin{array}{c}0\\
                      \vdots\\
                      0\end{array}\right].
$$
Similarly, the action of $\partial_{\beta_2}(\unredstau)$ on $M_{l+1}$ is zero.
\end{proof}

Now we prove that $\ker(\xi_{M_{l}})\subseteq\ker(\xi_{M_{l+1}})$ when $\xi=\partial_a(\unredstau)$ for $a\in\{c_1,\ldots,c_{s_c},d_1,\ldots,d_{s_d},\gamma_1,\gamma_2\}$.

 \begin{casea} Both $\alpha^{l+1}_1$ and $\alpha^{l+1}_2$ parallel to a detour.

We already noted that if an arrow is parallel to a detour, then it is parallel to a 1-detour (See Remark \ref{remaboutdetours}). So in this case both $\alpha^{l+1}_1$ and $\alpha^{l+1}_2$ are parallel to a 1-detour. There are two ways this can happen, depending on whether $\alpha^{l+1}_1$ and $\alpha^{l+1}_2$ are parallel to detours with the same beginning point or not. See Figure \ref{bothparallel2detour}.
 \begin{figure}[!h]
                \caption{}\label{bothparallel2detour}
                \centering
                \includegraphics[scale=.35]{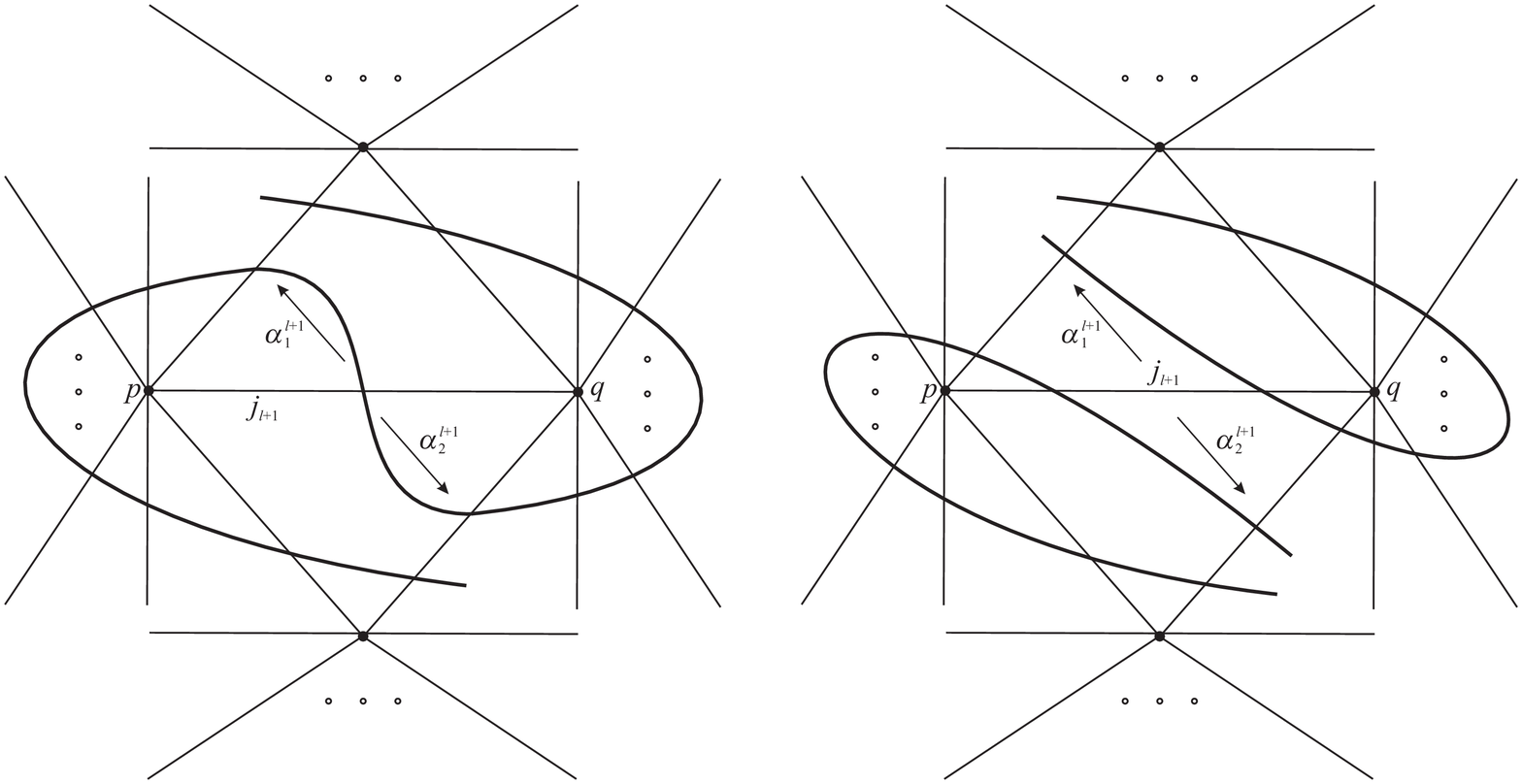}
        \end{figure}
In both situations it is easy to see that $\mtauarc_{\beta_1}$ and $\mtauarc_{\beta_2}$ are zero, which implies $(\beta_1)_{M_{l+1}}=0$ and $(\beta_2)_{M_{l+1}}=0$.

 \begin{subcase} $\xi=\partial_a(\unredstau)$ for some $a\in\{c_1,\ldots,c_{s_c},d_1,\ldots,d_{s_d}\}$.

 If $a=c_t$ then, with the notation of Figure \ref{bothparallelperipheraltriangle},
 \begin{figure}[!h]
                \caption{}\label{bothparallelperipheraltriangle}
                \centering
                \includegraphics[scale=.35]{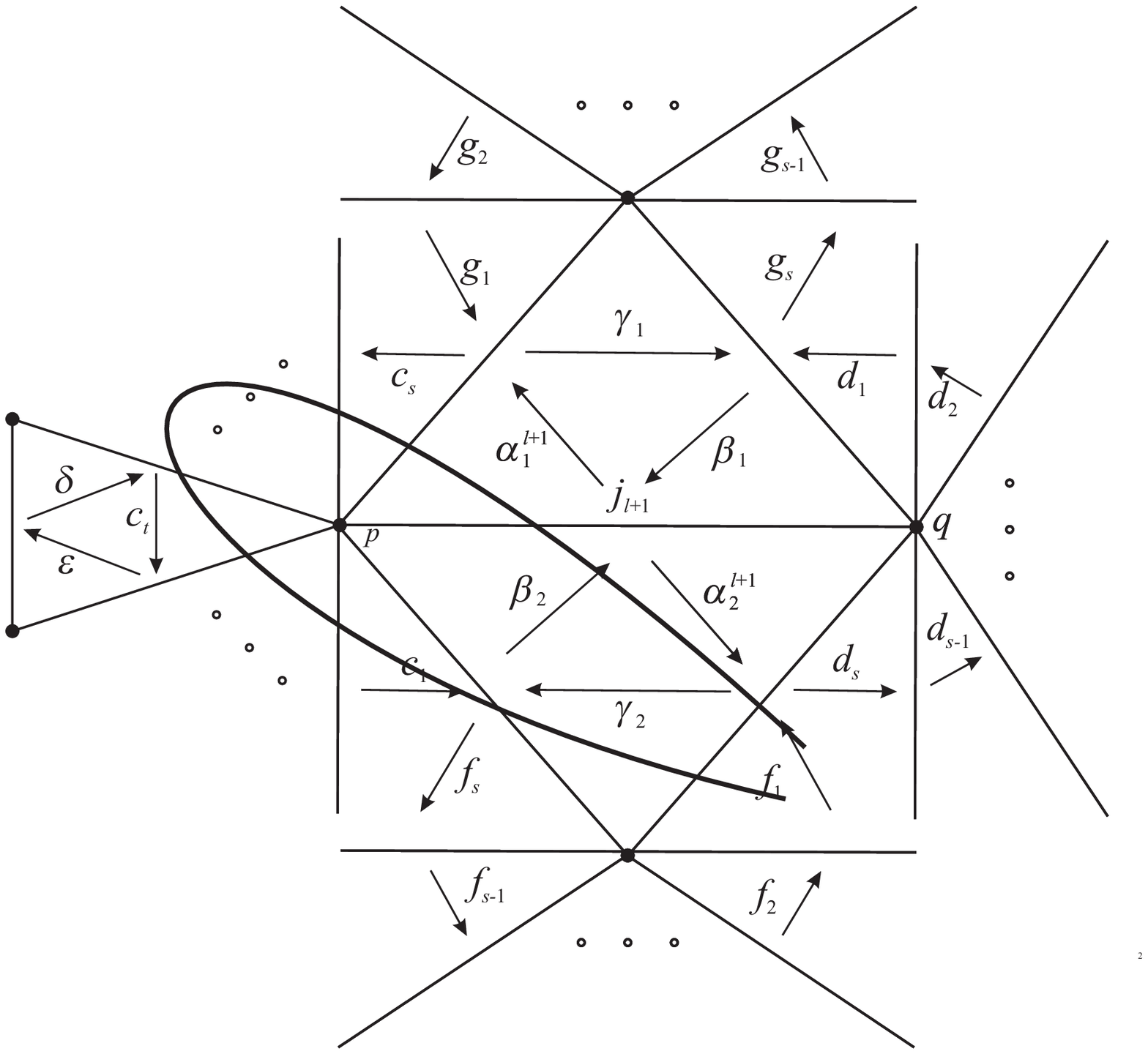}
        \end{figure}
  we have $\xi=\partial_a(\unredstau)=x_pc_{t+1}\ldots c_{s_c}\alpha^{l+1}_1\beta_2c_1\ldots c_{t-1}$ $+\delta\varepsilon$. Notice that $\varepsilon$ cannot be parallel to a detour of $(\tau,\arc)$, and this implies that the composition $(\delta\varepsilon)_{M_{l+1}}$ is zero. Therefore, $\xi_{M_{l+1}}=0$. The situation $a\in\{d_1,\ldots,d_{s_d}\}$ leads to a similar conclusion.
 \end{subcase}

 \begin{subcase} $\xi=\partial_a(\unredstau)$ for $a\in\{\gamma_1,\gamma_2\}$.

 Since $\mtauarc_{\beta_1}=0$, every intersection of $\arc$ with $t(\beta_1)$ is part of a segment of $\arc$ with endpoints in $t(\beta_1)$ and $t(\gamma_1)$, and this implies that the composition $(g_1\ldots g_{s_g})_{M_{l+1}}$ is zero (even if some $g_t$ is parallel to some detour). Thus $\partial_{\gamma_1}(\unredstau)_{M_{l+1}}=0$. A similar argument shows that $\partial_{\gamma_2}(\unredstau)_{M_{l+1}}=0$
 \end{subcase}
 \end{casea}

 \begin{casea} $\alpha^{l+1}_1$ parallel to a detour, $\alpha^{l+1}_2$ not parallel to any detour.

 \begin{subcasea} $\xi=\partial_a(\unredstau)$ for some $a\in\{c_1,\ldots,c_{s_c}\}$.

 The image of $(\beta_{2})_{M_{l+1}}$ has zero intersection with the vector subspace of $(M_{l+1})_{j_{l+1}}$ spanned by (the basis vectors corresponding to) intersection points of $\arc$ with $\arcone_{l+1}$ that are beginning points of detours. And the linear maps $(\alpha^{l+1}_{1})_{M_l}$ and $(\alpha^{l+1}_{1})_{M_{l+1}}$ agree on the rest of basis vectors of $(M_{l+1})_{\arc_{l+1}}$ that correspond to intersection points of $\arc$ with $\arcone_1$. Therefore $\ker(\xi_{M_l})\subseteq\ker(\xi_{M_{l+1}})$.
 \end{subcasea}

 \begin{subcasea} $\xi=\partial_a(\unredstau)$ for some $a\in\{d_1,\ldots,d_{s_d}\}$.

 Since $\alpha^{l+1}_2$ is not parallel to any detour of $(\tau,\arc)$, we have $\partial_a(\unredstau)_{M_{l}}=\partial_a(\unredstau)_{M_{l+1}}$.
 \end{subcasea}

 \begin{subcasea} $\xi=\partial_a(\unredstau)$ for $a\in\{\gamma_1,\gamma_2\}$.

 Again, since $\alpha^{l+1}_2$ is not parallel to any detour of $(\tau,\arc)$, we have $\partial_{\gamma_2}(\unredstau)_{M_{l}}=\partial_{\gamma_2}(\unredstau)_{M_{l+1}}$.
 \end{subcasea}
 \end{casea}

This finishes the proof of Lemma \ref{Wincreases}.
\end{proof}

Now, for each 1-detour $d^1$ whose beginning point lies on $\arcone_{l+1}$, the basis element of $\mtauarc$ corresponding to the beginning point $b(d^1)$ belongs to $W_{l+1}$ (by Lemma \ref{beta1and2equalzero}) but not to $W_l$. This fact and Lemma \ref{Wincreases} prove that property (\ref{dimWincreases}) is satisfied. The proof of property (\ref{dimW0controlled}) follows from the observation that if $v\in\mtauarc$ is a basis element corresponding to an intersection point that is not a beginning point of a 1-detour of $(\tau,\arc)$, then $v\in W_0$.

The nilpotency of $\Mtauarc$ follows by induction on $l=0,\ldots,r$. Proposition \ref{jacobiansatisfied} is proved.
\end{proof}

\begin{remark} It is not true that any representation annihilated by the cyclic derivatives of $\stau$ is nilpotent. That is, it is possible to construct $R\langle\qtau\rangle$-modules that satisfy the cyclic derivatives of $\stau$ but cannot be given the structure of $\mathcal{P}(\qtau,\stau)$-module. An example of this is given by the representation
$$
\xymatrix{
 & \field \ar@/^/@{->}[dr]^{1}\ar[dr]_ {1} & \\
\field \ar@/^/@{->}[ur]^{1}\ar[ur]_{1} &  & \field \ar@/^/@{->}[ll]^{1}\ar[ll]_{1}}
$$
of the the quiver
$$
\xymatrix{
 & 3 \ar@/^/@{->}[dr]^{c_1}\ar[dr]_ {c_2} & \\
1 \ar@/^/@{->}[ur]^{a_1}\ar[ur]_{a_2} &  & 2 \ar@/^/@{->}[ll]^{b_1}\ar[ll]_{b_2}}
$$
which obviously satisfies the cyclic derivatives of the potential $S=a_1b_1c_1+a_2b_2c_2-a_1b_2c_1a_2b_1c_2$, but is not nilpotent.
\end{remark}

Now we know that $\Mtauarc$ is annihilated by the Jacobian ideal $J(\unredstau)\subseteq R\langle\langle \unredqtau\rangle\rangle$. By definition of $\qstau$, there is a right-equivalence $\varphi$ between $(\unredqtau,\unredstau)$ and the direct sum of $\qstau$ with a trivial QP. By Remark \ref{rem:restrictingaction}, the action of $R\langle\langle \qtau\rangle\rangle$ on $\Mtauarc$ induced by $\varphi$ is given by simply forgetting the action of the 2-cycles of $\unredqtau$ on $\Mtauarc$. Moreover, under this action, $\Mtauarc$ is annihilated by the Jacobian ideal $J(\stau)$.

To close the section, let us decorate the representations defined in Subsections \ref{case1.1} and \ref{subsec:cutsout}.

\begin{defi} Let $\tau$ be an ideal triangulation of $\surf$ and $\arc$ be any arc on $\surf$.
We define the \emph{decorated arc representation} $\calMtauarc$ to be $(Q(\tau),S(\tau),\Mtauarc,V(\tau,\arc))$, where $V(\tau,\arc)_{\arcone}=\delta_{\arc,\arcone}\field$ ($\delta_{\arc,\arcone}$ being the \emph{Kronecker delta}). In other words, $\calMtauarc$ is the arc representation $\Mtauarc$ with the zero decoration if $\arc\notin\tau$, and is the $\arc^{\operatorname{th}}$ negative simple representation if $\arc\in\tau$.
\end{defi}

\section{Flip $\leftrightarrow$ mutation compatibility}\label{Section:Flip-mutationcompatibility}

We now turn to investigate the compatibility between flips of triangulations and mutations of representations. Throughout this section we will be interested in flipping the arc $\arcone$ of $\tau$. We will work under the assumption that none of the ideal triangulations $\tau$ and $\sigma=f_\arcone(\tau)$ has self-folded triangles.

\subsection{Effect of flips on detour matrices}\label{subsec:effectonmatrices}

From their very definition, detour matrices depend on the triangles of $\tau$. Let us be more specific; take an arc $\arcone\in\tau$ and let $\triangle_1$ and $\triangle_2$ be the triangles of $\tau$ that contain $\arcone$, let also $\diamondsuit=\triangle_1\cup\triangle_2$ be the quadrilateral in $\tau$ of which $\arcone$ is a diagonal. Given an arc $\arctwo\in\tau$, $\arctwo\neq\arcone$, $\arctwo\subseteq\triangle_1$, the detour matrix $D^{\triangle_1}_{\arc,\arctwo}$ has been defined with respect to $\tau$, but it does not even make sense to talk of such the matrix $D^{\triangle_1}_{\arc,\arctwo}$ with respect to $\sigma=f_{\arcone}(\tau)$ because $\triangle_1$ is not a triangle of $\sigma$. This of course does not mean that the arc $\arctwo$ does not have two detour matrices attached according to $\sigma$, but rather that, strictly speaking, we should use some notation like $D^{\triangle,\tau}_{\arc,\arctwo}$ to indicate the dependence on the triangulation (we will do so only when it is really necessary).

But the above mentioned change of detour matrices is not the only expectable one when we flip $\arcone$: the existence of many detours contained in the triangles of $\tau$ adjacent to $\diamondsuit$ (which in most cases will remain triangles of $\sigma$) depends on the existence of detours contained in $\triangle_1$ or $\triangle_2$. So, in principle, the existence could be possible of a triangle $\triangle$ present in both $\tau$ and $\sigma$ and an arc $\arctwo\in\tau\cap\sigma$, $\arctwo\subseteq\triangle$, such that the detour matrices $D^{\triangle,\tau}_{\arc,\arctwo}$ and $D^{\triangle,\sigma}_{\arc,\arctwo}$ were different. This would ultimately lead to the existence of an arrow $a$ not incident to $\arcone$ (hence belonging to both $\qtau$ and $\qsigma$) such that the linear maps $\Mtauarc_a$ and $\Msigmaarc_a$ do not coincide. This subsection is devoted to show that this does not happen, that is, that the detour matrices that should not change actually do not. For time and space reasons, we show this only when $\arc$ is not a loop cutting out a once-punctured monogon, and leave to the reader the task of doing the necessary checks when $\arc$ is such a loop.

\begin{lemma}\label{flipdoesnotaffect1} If the arc $\arctwo$ is not contained in any of the ideal triangles that contain $\arcone$, then the flip of $\arcone$ does not affect any of the two detour matrices attached to $\arctwo$.
\end{lemma}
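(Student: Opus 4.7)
My plan is to verify, term by term, that each entry of the detour matrix $D^{\triangle}_{\arc,\arctwo}$ is the same whether computed with respect to $\tau$ or with respect to $\sigma=f_\arcone(\tau)$, where $\triangle$ is either of the two ideal triangles of $\tau$ containing $\arctwo$. First I would observe that $\triangle$ is simultaneously an ideal triangle of $\tau$ and of $\sigma$: by hypothesis $\arcone$ is not a side of $\triangle$, and a flip alters only the two ideal triangles of $\tau$ that do contain $\arcone$. The rows and columns of the detour matrix are indexed by the intersection points of $\arc$ with the relative interior of $\arctwo$, and these are intrinsic to the pair $(\arc,\arctwo)$, so they coincide in $\tau$ and in $\sigma$.

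Next, the content of a nontrivial off-diagonal entry at position $(s,t)$ is the scalar $(-1)^n x_p^{\lfloor(n+1)/2\rfloor}x_q^{\lfloor n/2\rfloor}$ attached to an $n$-detour $d=d^{\triangle,n}_{(t,s)}$, where $p$ is the puncture detoured and $\{p,q\}$ is the pair of punctures incident to the arc $\alpha$ that carries the beginning point $b(d)$. The crucial structural remark is that $b(d)$ must lie on a side of $\triangle$: indeed $d$ is a simple curve contained in $\triangle$ whose ending point $q_{\arctwo,s}$ lies on the side $\arctwo$, so its beginning point is forced onto one of the two remaining sides. Since by hypothesis $\arcone$ is not a side of $\triangle$, the arc $\alpha$ carrying $b(d)$ is distinct from $\arcone$, and $\alpha$ together with its incident punctures $p,q$ is unaffected by the flip. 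Hence, whenever the detour exists in both $\tau$ and $\sigma$ with the same $(t,s)$-endpoints on $\arctwo$, its scalar contribution is the same.

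The remaining, and most delicate, step is to check that the collection of detours $d^{\triangle,n}_{(t,s)}$ itself is unchanged under the flip. The existence condition for such a detour is fundamentally topological: a segment of $\arc$ between $q_{\arctwo,t}$ and $q_{\arctwo,s}$, together with the segment of $\arctwo$ joining them and (for $n\geq 2$) the chain of previously drawn detours, must form a closed simple curve contractible to $p$ with the prescribed winding. Since $\triangle$, $\arctwo$, $\arc$, and the puncture $p$ are all unaffected by the flip of $\arcone$, this condition is preserved. The main obstacle is that the combinatorial recipe defining an $n$-detour refers to \emph{the number of arcs of $\tau$ incident to $p$}, and this number can change when $\arcone$ is incident to $p$ without lying in $\triangle$. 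I would handle this by induction on $n$, using Remark \ref{remaboutdetours}(3): the relation $b(d^m)=b(d^n)$ for $1\leq m\leq n$ pins the whole chain $d^1,\ldots,d^n$ onto the fixed side $\alpha$ of $\triangle$, and a direct count then shows that flipping $\arcone$ merely relabels the intermediate intersection points in the chain, without affecting either the existence of the detour or the arc $\alpha$ that determines its scalar. Degenerate configurations (for instance when $\arctwo$ is a loop at $p$, or when $\arc$ cuts out a once-punctured monogon, in which case $\arc$ is replaced by $\iota_{\tau,\arc}$) are handled by entirely analogous case-checks that I would perform separately.
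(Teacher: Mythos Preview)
Your outline shares the paper's overall strategy: both triangles containing $\arctwo$ survive the flip, the indexing of the detour matrices is intrinsic to $(\arc,\arctwo)$, and the arc carrying the beginning point of any detour in $\triangle$ is a side of $\triangle$ distinct from $\arcone$, so the scalar data $(p,q)$ attached to each detour is unchanged. (A small sharpening: that beginning point is not on an arbitrary ``remaining side'' of $\triangle$ but specifically on the side $\arctwo_1$ with an arrow $\arctwo_1\to\arctwo$ in $\widehat{Q}(\tau)$.)

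Where your argument becomes thin is exactly at the point you flag as the main obstacle. The recursive definition of an $n$-detour in $\triangle$ invokes an $(n-1)$-detour with the same beginning point on $\arctwo_1$, and by Remark~\ref{remaboutdetours}(3) the chain $d^1,\dots,d^n$ alternates between $\triangle$ and the \emph{other} triangle $\triangle_1$ containing $\arctwo_1$. The hypothesis of the lemma does not prevent $\arcone$ from being a side of $\triangle_1$; when it is, $\triangle_1$ is destroyed by the flip, and your proposed induction on $n$ has no clean inductive hypothesis for the odd-indexed detours living in $\triangle_1$. The phrase ``a direct count then shows\dots'' is doing unearned work here.

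The paper handles this not by induction but by a concrete case split: it identifies $\arctwo_1$ and $\triangle_1$ explicitly, observes that $\arcone\neq\arctwo_1$, and then separates the easy case $\arcone\notin\{\arctwo_2,\arctwo_3\}$ (the remaining sides of $\triangle_1$) from the delicate one $\arcone\in\{\arctwo_2,\arctwo_3\}$. In the latter it exhibits the before/after configurations (Figures~\ref{detmatricesnochange1} and~\ref{detmatricesnochange2}) and reads off directly that each $m$-detour of $(\tau,\arc)$ contained in $\triangle$ is again an $m$-detour of $(\sigma,\arc)$ with the same level and the same detoured puncture, and conversely. That explicit verification is what your ``direct count'' would have to become; as written, the induction does not close.
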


\begin{proof} Let $\sigma=f_\arcone(\tau)$ be the ideal triangulation obtained from $\tau$ by flipping $\arcone$. Since $\arctwo$ is not
contained in any of the ideal triangles of $\tau$ that contain $\arcone$, both of the ideal triangles of $\tau$ that contain $\arctwo$ are ideal triangles
of $\sigma$ as well. Fix one such triangle $\triangle$, and denote by $D^{\triangle,\tau}_{\arc,\arctwo}$ (resp.
$D^{\triangle,\sigma}_{\arc,\arctwo}$) the detour matrix attached to $\arctwo$ using the detours of $(\tau,\arc)$ (resp. $(\sigma,\arc)$) that
are contained in $\triangle$. The assertion of the lemma is that $D^{\triangle,\tau}_{\arc,\arctwo}=D^{\triangle,\sigma}_{\arc,\arctwo}$.

Let $\arctwo_1$ be the (unique) arc contained in $\triangle$ such that there is an arrow $\alpha:\arctwo_1\rightarrow\arctwo$
in $\unredqtau$. Let $\triangle_1$ be the (unique) triangle of $\tau$ that contains $\arctwo_1$ and is different from $\triangle$ (see Figure \ref{adjtriangs}).
\begin{figure}[!h]
                \caption{}\label{adjtriangs}
                \centering
                \includegraphics[scale=.35]{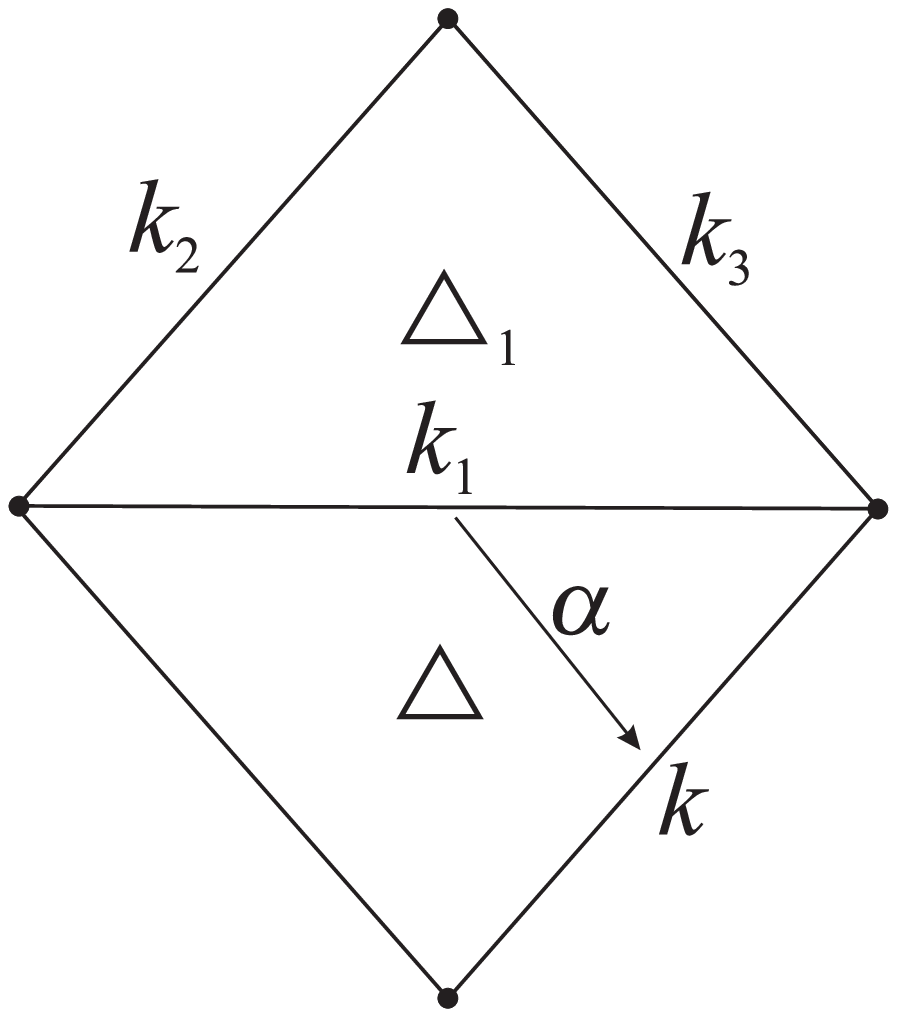}
        \end{figure}
Since $\arctwo$ is not contained in any of the ideal triangles of $\tau$ that contain $\arcone$, we have $\arcone\neq\arctwo_1$. If $\arcone\notin\{\arctwo_2,\arctwo_3\}$, then all the $n$-detours of $(\tau,\arc)$ whose beginning point lies on $\arctwo_1$ are $n$-detours of $(\sigma,\arc)$, and clearly $D^{\triangle,\tau}_{\arc,\arctwo}=D^{\triangle,\sigma}_{\arc,\arctwo}$. To see what happens when $\arcone\in\{\arctwo_2,\arctwo_3\}$, let us begin assuming that $\arcone=\arctwo_2$, and that the detours of $(\tau,\arc)$ are determined in terms of the situation described in Figure \ref{detmatricesnochange1}.
\begin{figure}[!h]
                \caption{}\label{detmatricesnochange1}
                \centering
                \includegraphics[scale=.5]{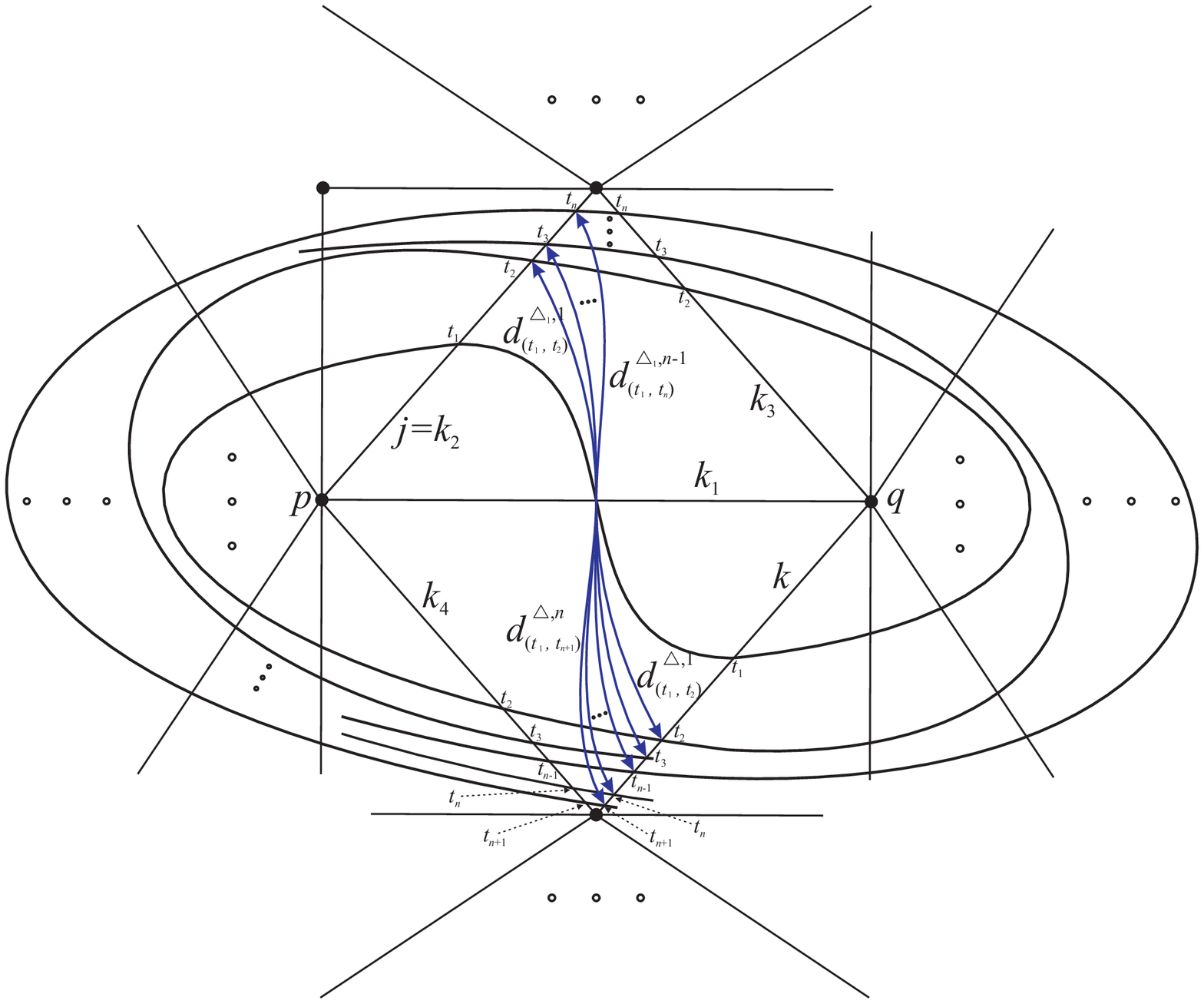}
        \end{figure}

When we flip $\arcone$ we get the configuration shown in Figure \ref{detmatricesnochange2}.
\begin{figure}[!h]
                \caption{}\label{detmatricesnochange2}
                \centering
                \includegraphics[scale=.5]{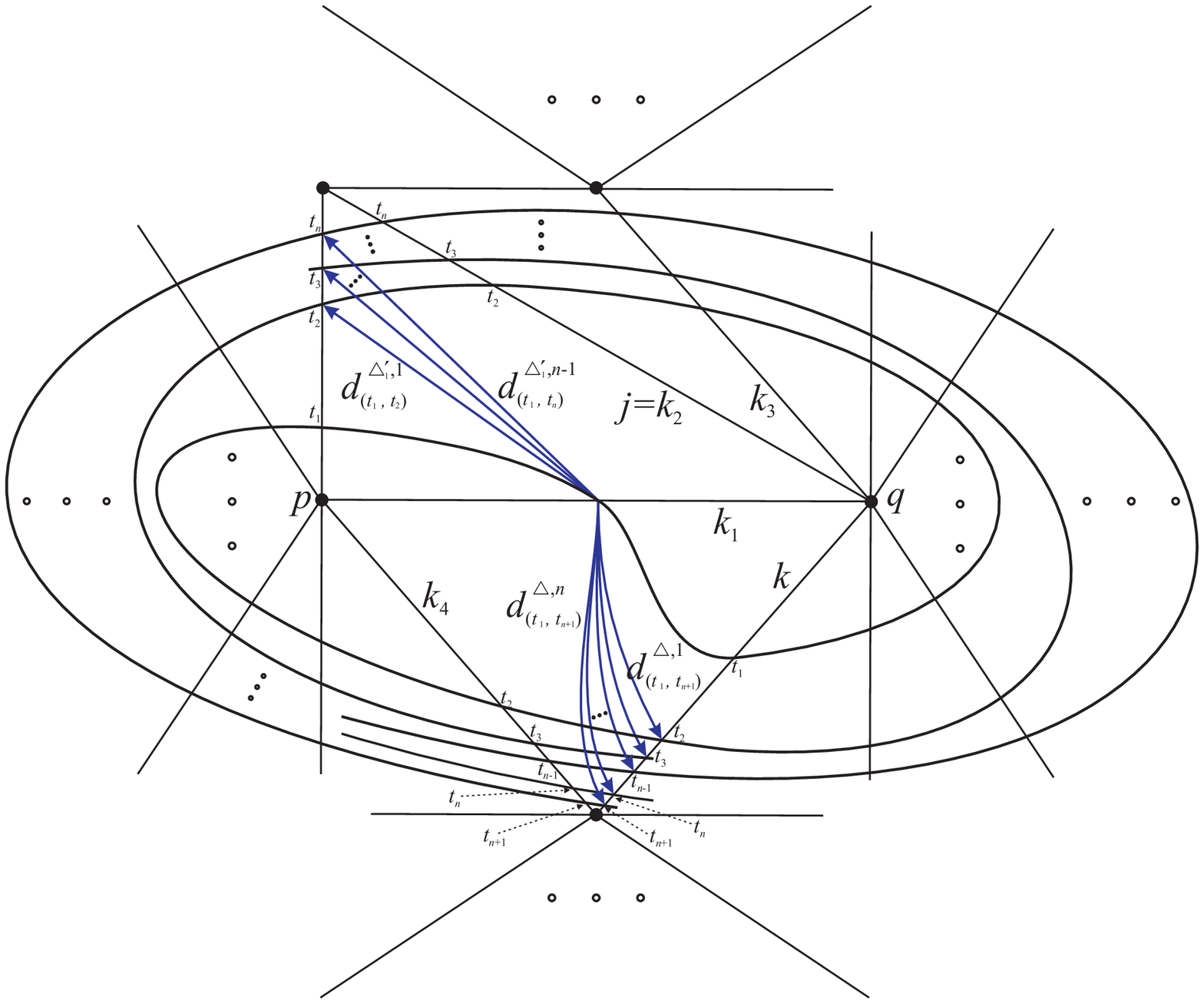}
        \end{figure}
The detours of $(\tau,\arc)$ contained in the triangle $\triangle$ are detours of $(\sigma,\arc)$. More precisely, if $(d^{\triangle,m}_{(t_1,t_m)})$ is an $m$-detour of $(\tau,\arc)$, then it is also an $m$-detour of $(\sigma,\arc)$; conversely, all $m$-detours of $(\sigma,\arc)$ contained in $\triangle$ are $m$-detours of $(\tau,\arc)$. Moreover, if $d^{\triangle,m}_{(t_1,t_m)}$ detours the puncture $p$ with respect to $(\tau,\arc)$, then it detours $p$ with respect to $(\sigma,\arc)$ as well. Therefore, the $t_l^{\operatorname{th}}$ column of $D^{\triangle,\tau}_{\arc,\arctwo}$ coincides with the $t_l^{\operatorname{th}}$ column of $D^{\triangle,\sigma}_{\arc,\arctwo}$ for $l=1,\ldots,n+1$. Applying this argument to each connected component of the graph $G(\partial)$ defined with respect to $\arctwo_1$ (see Subsection \ref{subsec:localdecompositions}), we obtain $D^{\triangle,\tau}_{\arc,\arctwo}=D^{\triangle,\sigma}_{\arc,\arctwo}$.

A similar argument also proves that $D^{\triangle,\tau}_{\arc,\arctwo}=D^{\triangle,\sigma}_{\arc,\arctwo}$ when $\arcone=\arctwo_3$.
\end{proof}

\begin{lemma}\label{flipdoesnotaffect2} Let $\triangle$ be one of the ideal triangles of $\tau$ that contain $\arcone_1$ and $\arctwo\in\tau$, $\arctwo\neq\arcone$, be an arc contained in $\triangle$. If $\triangle'$ denotes the unique triangle of $\tau$ that contains $\arctwo$ and is different from $\triangle$, then $\triangle'$ is a triangle of $\sigma=f_{\arcone_1}(\tau)$ and the detour matrices $D^{\triangle',\tau}_{\arc,\arctwo}$ and $D^{\triangle',\sigma}_{\arc,\arctwo}$ coincide.
\end{lemma}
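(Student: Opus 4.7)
The plan is to prove the lemma in two parts.

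First, I will show that $\triangle'$ remains a triangle of $\sigma$. The flip $f_{\arcone_1}$ destroys only the two triangles of $\tau$ containing $\arcone_1$: namely $\triangle$ and the unique other triangle $\triangle^{**}$ adjacent to $\triangle$ across $\arcone_1$. Since $\arctwo$ is a side of exactly two triangles of $\tau$ -- which are $\triangle$ and $\triangle'$ by definition -- we must have $\triangle^{**}\neq\triangle'$: otherwise $\arcone_1$ would be a side of $\triangle'$ and $\triangle'$ would vanish after the flip, making the lemma's conclusion ill-posed. Hence $\triangle'$ is a triangle of $\sigma$.

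Second, I will show that the two detour matrices coincide by arguing that the data determining each entry is preserved by the flip. Write $\arctwo=AB$ and let $E$ denote the third vertex of $\triangle'$, so that its sides are $\arctwo,AE,BE$. Directly from the definition of a $1$-detour -- the initial segment inside $\triangle'$ must be contractible to the detoured puncture $p$ with endpoints on $\arctwo$ and another side of $\triangle'$ -- $p$ is forced to be an endpoint of $\arctwo$, so $p\in\{A,B\}$. For $n\geq 2$, Remark \ref{remaboutdetours}(3) tells us that any $n$-detour $d^n$ in $\triangle'$ with endpoints on $\arctwo$ belongs to a chain $(d^1,\ldots,d^n)$ whose members share a common beginning point on a single arc $\arctwo_0$ whose endpoints are the two alternating punctures; since $\arctwo_0$ must be a side of $\triangle'$, we have $\arctwo_0\in\{\arctwo,AE,BE\}$, and the chain alternates between detours in $\triangle'$ and detours in the triangle $\triangle''$ adjacent to $\triangle'$ across $\arctwo_0$. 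Each of the three candidate arcs $\arctwo,AE,BE$ is distinct from $\arcone_1$, so the arc $\arctwo_0$ and the intersection set $\arc\cap\arctwo_0$ are preserved by the flip; consequently the two alternating punctures (the endpoints of $\arctwo_0$) and the chain depth $n$ are topological invariants common to $\tau$ and $\sigma$, whence the entries $(-1)^n x_p^{\lfloor(n+1)/2\rfloor}x_q^{\lfloor n/2\rfloor}$ agree.

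The main obstacle will be handling the cases where $\triangle''$ itself fails to survive the flip, which occurs when $\arctwo_0=\arctwo$ (so $\triangle''=\triangle$) or in the degenerate situation $\triangle''=\triangle^{**}$. In these cases $\triangle''$ is replaced in $\sigma$ by a new triangle $\widetilde{\triangle}$ adjacent to $\triangle'$ across $\arctwo_0$; a direct case analysis in the spirit of the proof of Lemma \ref{flipdoesnotaffect1}, based on drawing the local configurations before and after the flip, shows that each chain of detours in $\tau$ passing through $\triangle''$ corresponds to a chain in $\sigma$ passing through $\widetilde{\triangle}$ with the same depth and the same pair of alternating punctures, so the matrix entries still agree.
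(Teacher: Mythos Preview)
Your overall approach matches the paper's: the paper's proof is literally ``Similar to the proof of Lemma \ref{flipdoesnotaffect1}'', i.e.\ a local case analysis tracking how detour chains behave across the flip, and you ultimately reduce to the same case analysis. The extra structural layer you add --- identifying the candidate arc $\arctwo_0$ carrying the common beginning point of a chain as a side of $\triangle'$, and hence an arc of $\tau\cap\sigma$ --- is correct and is a helpful organizing observation, but it does not replace the case analysis (as you yourself acknowledge when $\triangle''$ is destroyed).

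There is one genuine weakness. Your argument that $\triangle'$ survives the flip is circular: you write that $\triangle^{**}\neq\triangle'$ because otherwise ``the lemma's conclusion would be ill-posed''. But ``$\triangle'$ is a triangle of $\sigma$'' is precisely part of the conclusion you are asked to prove, so you cannot assume it. The correct argument uses the standing hypothesis (in force throughout this section) that $\sigma=f_{\arcone_1}(\tau)$ has no self-folded triangles: if $\triangle^{**}=\triangle'$ then the two triangles of $\tau$ containing $\arcone_1$ would also share the side $\arctwo$, so the quadrilateral around $\arcone_1$ would have two of its four sides equal to the same arc $\arctwo$, and flipping $\arcone_1$ would produce a self-folded triangle in $\sigma$ --- a contradiction. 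Once you replace your circular sentence by this, the first paragraph is fine.

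A smaller point: in the ``easy'' case where $\triangle''$ survives, saying that the chain depth is a ``topological invariant'' is correct in spirit but you should say why. Since both $\triangle'$ and $\triangle''$ are triangles of $\sigma$, every arc on which any $d^m$ in the chain ends is a side of one of them and hence lies in $\tau\cap\sigma$; the defining ``$\arc$ wraps once counterclockwise around the puncture'' condition for each step is then independent of how many arcs of the ambient triangulation are incident to that puncture, so the chain $(d^1,\ldots,d^n)$ exists verbatim in $\sigma$. This is exactly the mechanism exhibited in the figures in the proof of Lemma \ref{flipdoesnotaffect1}.
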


\begin{proof} Similar to the proof of Lemma \ref{flipdoesnotaffect1}.
\end{proof}

\begin{coro} Let $\arctwo_1,\arctwo_2\in\tau$. If $a:\arctwo_1\rightarrow\arctwo_2$ is an arrow of $\unredqtau$ contained in an ideal triangle of $\tau$ that does not contain $\arcone_1$, then $a$ is also an arrow of $\unredqsigma$, where $\sigma=f_{\arcone_1}(\tau)$, and the linear maps $\Mtauarc_a:\Mtauarc_{\arctwo_1}\rightarrow\Mtauarc_{\arctwo_2}$ and $\Msigmaarc_a:\Msigmaarc_{\arctwo_1}\rightarrow\Msigmaarc_{\arctwo_2}$ coincide.
\end{coro}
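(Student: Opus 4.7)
The plan is to unravel the definition of the linear maps $\Mtauarc_a$ and $\Msigmaarc_a$ and then invoke the two preceding lemmas directly. Writing $\triangle^a$ for the unique ideal triangle of $\tau$ that contains the arrow $a$, by hypothesis $\arcone_1 \notin \triangle^a$, so none of the three arcs bounding $\triangle^a$ is $\arcone_1$; in particular $\arctwo_1,\arctwo_2 \in \tau \cap \sigma$ and $\triangle^a$ is also an ideal triangle of $\sigma$. Hence $a$ is an arrow of $\unredqsigma$ and we are comparing
\[
\Mtauarc_a \;=\; D^{\triangle^a,\tau}_{\arc,\arctwo_2}\,\mtauarc_a
\qquad\text{with}\qquad
\Msigmaarc_a \;=\; D^{\triangle^a,\sigma}_{\arc,\arctwo_2}\,\msigmaarc_a.
\]

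First I would show $\mtauarc_a = \msigmaarc_a$. The linear map $(\mtauarc_a)_{r,s}$ is determined solely by the combinatorial configuration of $\arc$, $\arctwo_1$, $\arctwo_2$ and the interior of $\triangle^a$ (whether a segment of $\arc$ between specified intersection points is contained in $\triangle^a$ and contractible, with its endpoints on $\arctwo_1$ and $\arctwo_2$, to the puncture $p(a)$). Since none of $\arctwo_1,\arctwo_2,\triangle^a$ nor the arc $\arc$ is altered by the flip $f_{\arcone_1}$, every entry of $\mtauarc_a$ equals the corresponding entry of $\msigmaarc_a$.

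Second, I would show $D^{\triangle^a,\tau}_{\arc,\arctwo_2} = D^{\triangle^a,\sigma}_{\arc,\arctwo_2}$ by a case split on the position of $\arctwo_2$. If $\arctwo_2$ is not contained in any triangle of $\tau$ that contains $\arcone_1$, then Lemma \ref{flipdoesnotaffect1} applies directly and gives the desired equality of both detour matrices attached to $\arctwo_2$, in particular the one indexed by $\triangle^a$. Otherwise $\arctwo_2$ is contained in some triangle $\triangle$ of $\tau$ containing $\arcone_1$; since $\arctwo_2 \ne \arcone_1$, the triangle $\triangle^a$ is precisely the other triangle containing $\arctwo_2$, i.e.\ $\triangle^a = \triangle'$ in the notation of Lemma \ref{flipdoesnotaffect2}, and the lemma gives $D^{\triangle^a,\tau}_{\arc,\arctwo_2} = D^{\triangle^a,\sigma}_{\arc,\arctwo_2}$.

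Combining the two equalities yields $\Mtauarc_a = \Msigmaarc_a$. There is essentially no obstacle here: the whole content has been packaged into Lemmas \ref{flipdoesnotaffect1} and \ref{flipdoesnotaffect2}, and the only mild subtlety is making sure that the case split above exhausts all possibilities for the position of $\arctwo_2$ relative to the quadrilateral $\diamondsuit$ containing $\arcone_1$, which is immediate from the hypothesis $\arcone_1 \notin \triangle^a$.
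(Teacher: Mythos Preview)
Your argument is correct and is exactly the intended deduction: the paper states this as an unproved corollary of Lemmas \ref{flipdoesnotaffect1} and \ref{flipdoesnotaffect2}, and your decomposition $\Mtauarc_a = D^{\triangle^a}_{\arc,\arctwo_2}\,\mtauarc_a$ together with the case split on whether $\arctwo_2$ lies in a triangle containing $\arcone_1$ is precisely how those lemmas are meant to be combined.
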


\subsection{Main result: Statement and proof}\label{subsec:proof}

Throughout this subsection we assume that the arc $\arc$ satisfies (\ref{transversally}), (\ref{minimalintersection}) and that
\begin{equation}\label{sigmatransversally} \text{$\arc$ intersects transversally each of the arcs of $\sigma=f_\arcone(\tau)$ (if at all), and}
\end{equation}
\begin{equation}\label{sigmaminimalintersection}\text{the number of intersection points of $\arc$ with each of the arcs of $\sigma$ is minimal.}
\end{equation}

As a final step in the preparation for the proof of our main result, we point out the fact that in such proof we can restrict our attention to surfaces without boundary.

\begin{lemma}\label{lemma:allarerestrictions1.1} For every QP-representation of the form $\calMtauarc$ there exists an ideal triangulation $\widetilde{\tau}$ of a surface $(\widetilde{\Sigma},\widetilde{M})$ with empty boundary with the following properties:
\begin{itemize}\item $\surfnoM\subseteq\widetilde{\Sigma}$ and $M\subseteq\widetilde{M}$;
\item $\widetilde{\tau}$ contains all the arcs of $\tau$;
\item $\mathcal{M}(\widetilde{\tau},\arc)$ is $\tau$-path-restrictable, and the restriction of $\mathcal{M}(\widetilde{\tau},\arc)$ to $\tau$ is $\calMtauarc$.
\end{itemize}
\end{lemma}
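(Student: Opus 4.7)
The plan is to cap off each boundary component of $\Sigma$ with a once-punctured disk. For each boundary component $B$ of $\Sigma$, first add auxiliary marked points on $B$ if necessary so that $B$ contains at least two marked points; then glue a disk $D_B$ along $B$ with a single puncture $p_B$ in its interior, assigning $p_B$ an arbitrary non-zero scalar $x_{p_B}\in\field$. The resulting closed surface $(\widetilde{\Sigma},\widetilde{M})$ satisfies $\Sigma\subseteq\widetilde{\Sigma}$ and $M\subseteq\widetilde{M}$. Define $\widetilde{\tau}$ as $\tau$ together with (i) every former boundary segment of $\Sigma$ (now an interior arc of $\widetilde{\Sigma}$) and (ii) for each $B$, the collection of arcs in $D_B$ going from $p_B$ to each marked point of $B$. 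Since $|B\cap\widetilde{M}|\geq 2$, each $D_B$ is triangulated by non-self-folded triangles (whose three vertices are $p_B$ and two consecutive marked points on $B$); combined with the self-folded-free triangulation $\tau$ of $\Sigma$, this produces an ideal triangulation $\widetilde{\tau}$ of $(\widetilde{\Sigma},\widetilde{M})$ without self-folded triangles and containing $\tau$, so the constructions of Section \ref{repsforarcsspecial} apply and yield $\mathcal{M}(\widetilde{\tau},\arc)$.

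Next I would check $\tau$-path-restrictability. The arc $\arc$ lies in $\Sigma$ with its relative interior disjoint from $\partial\Sigma$, so it does not transversally intersect any arc of $\widetilde{\tau}\setminus\tau$ (each such arc either lies in a capping disk or is a former boundary segment). Hence $M(\widetilde{\tau},\arc)_{\arcone}=\field^{\A(\arc,\arcone)}=0$ for every $\arcone\in\widetilde{\tau}\setminus\tau$, and any path in $Q(\widetilde{\tau})$ linking two vertices of $\tau$ through an outside vertex factors through the zero space and thus annihilates $M(\widetilde{\tau},\arc)_i$ for $i\in\tau$.

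To verify $\mathcal{M}(\widetilde{\tau},\arc)|_\tau=\calMtauarc$, note that the vector spaces $M(\widetilde{\tau},\arc)_\arcone=\field^{\A(\arc,\arcone)}$ coincide with $M(\tau,\arc)_\arcone$ for $\arcone\in\tau$, and the decorations trivially agree. Every triangle of $\widetilde{\tau}$ with two sides in $\tau$ must lie in $\Sigma$ (the capping triangles have no side in $\tau$) and so is a triangle of $\tau$; thus the arrows of $\widehat{Q}(\widetilde{\tau})|_\tau$ and $\widehat{Q}(\tau)$ coincide, and $\widehat{S}(\widetilde{\tau})|_\tau=\widehat{S}(\tau)$ since the only potential terms surviving restriction are the triangle cycles of $\tau$ and the puncture cycles around punctures of $P$ (all arcs incident to a puncture of $P$ remain unchanged in $\widetilde{\tau}$, and the scalars $x_p$ for $p\in P$ are used in both constructions). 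Lemma \ref{redres=resred} then guarantees that the restriction of $(Q(\widetilde{\tau}),S(\widetilde{\tau}))$ to $\tau$ is right-equivalent to $(Q(\tau),S(\tau))$. For the linear maps attached to an arrow $a:\arcone\to\arctwo$ of $Q(\widetilde{\tau})|_\tau$, the containing triangle $\triangle^a$ is a triangle of $\tau$, so the segment-representation contribution agrees with the one in $\mtauarc$; and because $\arc$ does not cross arcs of $\widetilde{\tau}\setminus\tau$, every detour of $(\widetilde{\tau},\arc)$ is contained in $\Sigma$ and detours only punctures of $P$, so $D^{\triangle,\widetilde{\tau}}_{\arc,\arctwo}=D^{\triangle,\tau}_{\arc,\arctwo}$ and the scalars appearing in (\ref{eq:nontriventrydetmatrix}) match.

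The principal technical obstacle is ensuring $\widetilde{\tau}$ has no self-folded triangles: a boundary component with a single marked point would, upon capping with one puncture, create a once-punctured monogon whose only triangulation is self-folded. The remedy is to first enrich such components with an auxiliary marked point, which enlarges $\widetilde{M}$ without altering $\tau$. A secondary subtlety --- that the operations of reduction and restriction genuinely commute --- is handled by Lemma \ref{redres=resred} above.
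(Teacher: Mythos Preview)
Your construction is the right idea and is essentially what the paper intends (the paper simply cites Lemma~29 of \cite{Lqps}, which is this same capping construction). However, your handling of boundary components with a single marked point contains a genuine gap.

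When you add an auxiliary marked point $m'$ to a one-marked component $B$, the unique triangle $\triangle$ of $\tau$ having $B$ as a side (with arc-sides $\arcone_1,\arcone_2\in\tau$) becomes a quadrilateral with sides $B_1,B_2,\arcone_1,\arcone_2$. Your $\widetilde{\tau}$ as written contains no diagonal of this region, so it is not an ideal triangulation. More seriously, even after you insert a diagonal, the arrow $\arcone_1\to\arcone_2$ that $\triangle$ contributes to $\widehat{Q}(\tau)$ has no counterpart in $\widehat{Q}(\widetilde{\tau})$: the region is now split into two triangles, each with only one side in $\tau$, so neither contributes an arrow between $\arcone_1$ and $\arcone_2$. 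Hence $\widehat{Q}(\widetilde{\tau})|_\tau\neq\widehat{Q}(\tau)$ and your restriction claim collapses (and with it the identification of potentials and of linear maps). The arc $\arc$ can also cross the added diagonal, so the path-restrictability argument would need to be reworked as well.

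The clean remedy is to leave $M$ unchanged and instead cap each one-marked boundary component with a disk carrying \emph{two} (or more) interior punctures: the twice-punctured monogon is an admissible surface and possesses an ideal triangulation without self-folded triangles, with all of its arcs lying in the cap. Then $\arc$ meets no arc of $\widetilde{\tau}\setminus\tau$, every ideal triangle of $\tau$ remains an ideal triangle of $\widetilde{\tau}$, and the rest of your argument (vanishing on outside vertices, equality of segment maps and detour matrices, and Lemma~\ref{redres=resred}) goes through verbatim.
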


The proof of this Lemma is identical to that of Lemma 29 of \cite{Lqps}. Remark \ref{remarkisolated} applies here just as it applies in Lemma 29 of \cite{Lqps}.

The following theorem is the main result of this paper. Its proof is long, due to the separation into several cases, ending in page \pageref{page:endofproof}.

\begin{thm}\label{thm:flip<->mut} Let $\tau$ and $\sigma$ be ideal triangulations without self-folded triangles and $\arc$ be an arc satisfying (\ref{transversally}), (\ref{minimalintersection}), (\ref{sigmatransversally}),  and (\ref{sigmaminimalintersection}). If $\sigma=f_{\arcone}(\tau)$ for an arc $\arcone\in\tau$, then the decorated arc representations $\mu_{\arcone}(\calMtauarc)$ and $\calMsigmaarc$ are right-equivalent.
\end{thm}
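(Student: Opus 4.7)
The plan is to verify the right-equivalence locally at the vertex $\arcone$ and compare both sides arrow by arrow. First I would reduce to the case where $\surfnoM$ has empty boundary: by Lemma \ref{lemma:allarerestrictions1.1}, $\calMtauarc$ is the restriction to $\tau$ of an arc representation $\mathcal{M}(\widetilde{\tau},\arc)$ on a closed ambient surface $(\widetilde{\Sigma},\widetilde{M})$, and the analogous statement holds for $\sigma$ (the triangulation $\widetilde{\sigma}=f_{\arcone}(\widetilde{\tau})$ again restricts to $\sigma$ on the original arcs because the flip is local to $\arcone$). Since $\tau$-path-restrictability is preserved under the constructions involved, Theorem \ref{thm:resmut=mutres1.1} together with Lemma \ref{restrofrightequiv} then transports any right-equivalence $\mu_{\arcone}(\mathcal{M}(\widetilde{\tau},\arc))\cong\mathcal{M}(\widetilde{\sigma},\arc)$ to the desired right-equivalence over $\surf$.

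Next, Theorem \ref{flip<->mutation} supplies a right-equivalence $\mu_{\arcone}(\qtau,\stau)\cong(\qsigma,\ssigma)$ at the level of QPs, so it remains to compare the decorated representations over the corresponding Jacobian algebras. For every arrow of $\qtau\cap\qsigma$ not incident to $\arcone$, Lemmas \ref{flipdoesnotaffect1} and \ref{flipdoesnotaffect2} and their corollary in Subsection \ref{subsec:effectonmatrices} already show $\Mtauarc_a=\Msigmaarc_a$, which is precisely what the mutation $\mu_{\arcone}$ prescribes (it leaves such arrows unchanged). The decoration is also easy: if $\arc\in\tau$ then the flip at $\arcone$ does not affect $\arc$, and if $\arc\notin\tau$ then the decoration is zero and stays zero under the mutation. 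Hence the only nontrivial task is to compare, for the arrows $a^*$, $b^*$ and $[ab]$ of $\qsigma$ incident to $\arcone$, the linear maps produced by the mutation construction on $\Mtauarc$ with those defined directly by $\Msigmaarc$.

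For this I would invoke Proposition \ref{prop:localdirectsum}: decompose the representation $\Mtauarc(\partial)$ attached to the vertex $\arcone$ as the direct sum $N^1\oplus\cdots\oplus N^t$ of the subrepresentations associated with the connected components $H^1,\ldots,H^t$ of the graph $G(\partial)$ from Subsection \ref{subsec:localdecompositions}. The mutation of $\Mtauarc$ at $\arcone$ then splits as the direct sum of the mutations $\mu_{\arcone}(\mathcal{N}^s)$, and in parallel the target $\Msigmaarc(\partial)$ splits along the images of the components $H^s$ inside $\sigma$. The core of the argument is the case-by-case verification, for each of the admissible local configurations enumerated in Subsection \ref{subsec:localdecompositions} (Figures \ref{Fig:component1}, \ref{Fig:component1punctmonogon}, \ref{Fig:component2}, \ref{Fig:component3} and \ref{Fig:component3punctmonogon}), that the maps $\overline{\al}$ and $\overline{\be}$ constructed from $\al_{N^s}$, $\be_{N^s}$, $\ga_{N^s}$, together with choices of splittings $\rh$ and $\si$, reproduce the detour-matrix description of $\Msigmaarc$ along the arrows of $\qsigma$ incident to $\arcone$. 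This is where I expect the real work to sit: each configuration requires explicitly writing down the matrices $\al$, $\be$ and $\ga$ (the latter computed from $\partial_{[ab]}([\stau])$), identifying $\ker\al/\image\ga$, $\ker\ga/\image\be$ and $\image\ga$, and then matching these with the intersection-and-detour data on the flipped quadrilateral.

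Finally, the separate case in which $\arc$ is a loop cutting out a once-punctured monogon is reduced to the previous argument by replacing $\arc$ with the cut curve $\iota_{\tau,\arc}$ of Subsection \ref{subsec:cutsout}: the detours of $(\tau,\iota)$ produce the same local decomposition as before, and the mutation computation is identical, with one extra bookkeeping step verifying that the canonical projection $\pi:\mtauarc_{\arc'}\twoheadrightarrow\Mtauarc_{\arc'}$ and inclusion $\ell:\Mtauarc_{\arc'}\hookrightarrow\mtauarc_{\arc'}$ at the arc $\arc'$ containing the cut point $t$ are compatible with $\mu_{\arcone}$; this compatibility follows by tracking how the point $t$ sits with respect to the connected component of $G(\partial)$ that contains it, and invoking the same local identification performed in the previous paragraph.
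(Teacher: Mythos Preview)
Your proposal is correct and follows essentially the same strategy as the paper: reduce to the empty-boundary case via Lemma \ref{lemma:allarerestrictions1.1} and Theorem \ref{thm:resmut=mutres1.1}, use the results of Subsection \ref{subsec:effectonmatrices} to handle arrows not incident to $\arcone$, invoke Proposition \ref{prop:localdirectsum} to decompose $\Mtauarc(\partial)$ along the connected components of $G(\partial)$, and then carry out an explicit case-by-case matrix verification for each of the finitely many local configurations listed in Subsection \ref{subsec:localdecompositions}. The paper additionally inserts an intermediate partial-reduction step (the explicit right-equivalence $\lambda$ killing the $2$-cycles $\alpha[\beta\gamma]$ and $\delta[\varepsilon\eta]$ before comparing representations), and it treats the case $\arc=\arcone$ as part of the general machinery rather than via your decoration remark, but these are implementation details rather than differences in approach.
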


\begin{proof} Our \label{page:beginningofproof} assumptions mean that none of $\tau$ and $\sigma$ has self-folded triangles, and that $\arc$ intersects transversally each of the arcs in $\tau$ and each of the arcs in $\sigma$; moreover, $\arc$ is a representative of its isotopy class that minimizes intersection numbers with each of the arcs in $\tau$ and each of the arcs in $\sigma$.

Now, $\arcone$ is the diagonal of a quadrilateral whose sides are arcs in $\tau\cap\sigma$. The vertices of this quadrilateral are marked points of $\surf$, which, by Proposition \ref{thm:resmut=mutres1.1} and Lemma \ref{lemma:allarerestrictions1.1}, we can assume to be punctures. After exchanging $\tau$ and $\sigma$ if necessary, we can suppose that each of these punctures is incident to at least three arcs of $\tau$, and so, the configuration of $\tau$ near the arc $\arcone=\arcone_1$ to be flipped looks like the one shown in Figure \ref{paper2case1},
where each of the punctures (labeled with the scalars) $w$ and $z$ is incident to at least three arcs of $\tau$ and each of the punctures (labeled with the scalars) $x$ and $y$ is incident to at least four arcs of $\tau$.
The quiver $Q(\tau)$ is shown in that Figure as well. As for the potential, we have
$$
S(\tau)=\alpha\beta\gamma+\delta\varepsilon\eta+w\alpha a+x\beta\eta b+y\varepsilon\gamma c+z\delta d+\sptau.
$$

\begin{remark}\label{rem:markedpointsmaycoincide} As observed in Remark 7 of \cite{Lqps}, some of the points (labeled with the scalars) $w,x,y$ and $z$ may actually represent the same marked point of $\surf$, and the potentials $\stau$ and $\ssigma$ immediately reflect this fact. For instance, if (the marked points labeled) $x$ and $y$ coincide, with no other coincidences among (the points labeled) $w,x,y$ and $z$, then the paths $\beta\eta$ and $\varepsilon\gamma$, instead of appearing as factors of two different summands of $\stau$ as in the previous paragraph, will appear as factors of a single summand of $\surf$. The cases we shall consider below will have the implicit supposition that the four marked points appearing in Figure \ref{paper2case1} are indeed different; we leave to the reader the task of making the suitable modifications of the right-equivalences we will give to adjust them to the cases when some of the mentioned marked points coincide.
\end{remark}

If we apply the premutation $\widetilde{\mu}_{\arcone_1}$ to $(Q(\tau),S(\tau))$ we obtain the QP $(\widetilde{\mu}_{\arcone}(Q(\tau)), \tildestau)$, where $\widetilde{\mu}_{\arcone}(Q(\tau))$ is the quiver obtained from $Q(\tau)$ by performing an ordinary quiver premutation (see, e.g., \cite{Lqps}), and $\tildestau=\alpha[\beta\gamma]+\delta[\varepsilon\eta]+w\alpha a+\gamma^*\beta^*[\beta\gamma]+z\delta d+\eta^*\varepsilon^*[\varepsilon\eta]+x[\beta\eta]b+y[\varepsilon\gamma]c+ \eta^*\beta^*[\beta\eta]+\gamma^*\varepsilon^*[\varepsilon\gamma]+\sptau\in
R\langle\langle\widetilde{\mu}_{\arcone}(Q(\tau))\rangle\rangle$.

The $R$-algebra automorphism $\lambda$ of $R\langle\langle\widetilde{\mu}_{\arcone}(Q(\tau))\rangle\rangle$ whose action on the arrows is given by
$$
\alpha\mapsto\alpha-\gamma^*\beta^*,\ [\beta\gamma]\mapsto[\beta\gamma]-wa,\ \delta\mapsto\delta-\eta^*\varepsilon^*,\
[\varepsilon\eta]\mapsto[\varepsilon\eta]-zd,
$$
and the identity in the rest of the arrows, sends $\tildestau$ to
$$
\lambda(\tildestau)=\alpha[\beta\gamma]+\delta[\varepsilon\eta]-w\gamma^*\beta^*a+x[\beta\eta] b+y[\varepsilon\gamma] c-z\eta^*\varepsilon^*
d+\sptau+\eta^*\beta^*[\beta\eta]+\gamma^*\varepsilon^*[\varepsilon\gamma].
$$
That is, $\lambda$ is a right-equivalence between $(\widetilde{\mu}_{\arcone}(Q(\tau)),\tildestau)$ and the direct sum of
$(\widetilde{\mu}_{\arcone}(Q(\tau))_{\ored}',\lambda(\tildestau)-\alpha[\beta\gamma]-\delta[\varepsilon\eta])$ and
$((\widetilde{\mu}_{\arcone}(Q(\tau))_{\triv}',\alpha[\beta\gamma]+\delta[\varepsilon\eta])$, where $(\widetilde{\mu}_{\arcone}(Q(\tau))_{\ored}'$
is obtained from $\widetilde{\mu}_{\arcone}(Q(\tau))$ by deleting the arrows $\alpha$, $[\beta\gamma]$, $\delta$ and $[\varepsilon\eta]$ and
$(\widetilde{\mu}_{\arcone}(Q(\tau))_{\triv}'$ is the quiver that has $\tau$ as its vertex set and whose only arrows are $\alpha$, $[\beta\gamma]$, $\delta$ and $[\varepsilon\eta]$. Notice that $(\widetilde{\mu}_{\arcone}(Q(\tau))_{\ored}',\lambda(\tildestau)-\alpha[\beta\gamma]-\delta[\varepsilon\eta])$ is not necessarily reduced since $[\beta\eta]b$ and $[\varepsilon\gamma]c$ may be 2-cycles. On the other hand, only $[\beta\eta]b$ and $[\varepsilon\gamma]c$ may be 2-cycles in the quiver $((\widetilde{\mu}_{\arcone}(Q(\tau))_{\ored}'$, and there is an inclusion of quivers $((\widetilde{\mu}_{\arcone}(Q(\tau))_{\ored}'\hookrightarrow\widehat{Q}(\sigma)$.

Now, as pointed out in the paragraph that follows Example 28 of \cite{Lqps}, the process of reducing a QP can be done in steps, taking care of 2-cycles one by one. From this and Remark \ref{rem:restrictingaction} we see that the QP-representation $(\widetilde{\mu}_{\arcone}(Q(\tau)),\tildestau,\overline{\Mtauarc},\overline{V(\tau,\arc)})$ gives rise, by reduction, to a QP-representation $\mathcal{M}_1=(((\widetilde{\mu}_{\arcone}(Q(\tau))_{\ored}',\lambda(\tildestau)-\alpha[\beta\gamma]-\delta[\varepsilon\eta],
\overline{\Mtauarc},\overline{V(\tau,\arc)})$, where the action of $R\langle\langle ((\widetilde{\mu}_{\arcone}(Q(\tau))_{\ored}'\rangle\rangle$ on $\overline{\Mtauarc}$ is induced by the inclusion of quivers $((\widetilde{\mu}_{\arcone}(Q(\tau))_{\ored}'\hookrightarrow
\widetilde{\mu}_{\arcone}(Q(\tau))$.  Similarly, the QP-representation
$(\widehat{Q}(\sigma),\widehat{S}(\sigma),\Msigmaarc,V(\sigma,\arc))$ gives rise, by reduction, to a QP-representation
$\mathcal{M}_2=(\check{Q}(\sigma),\check{S}(\sigma),\Msigmaarc,V(\sigma,\arc))$, where $\check{Q}(\sigma)=((\widetilde{\mu}_{\arcone}(Q(\tau))_{\ored}'$, $\check{S}(\sigma)=w\gamma^*\beta^*a+x[\beta\eta] b+y[\varepsilon\gamma] c+z\eta^*\varepsilon^*
d+\sptau+\eta^*\beta^*[\beta\eta]+\gamma^*\varepsilon^*[\varepsilon\gamma]$ and the action of $R\langle\langle\check{Q}(\sigma)\rangle\rangle$ on $\Msigmaarc$ is induced by the inclusion of quivers $((\widetilde{\mu}_{\arcone}(Q(\tau))_{\ored}'\hookrightarrow\widehat{Q}(\sigma)$. Since the reduced part of $\mathcal{M}_1$ is $\mu_{\arcone_1}(\calMtauarc)$ and the reduced part of $\mathcal{M}_2$ is $\calMsigmaarc$, we deduce that, to prove the theorem, it is enough to show that $\mathcal{M}_1$ is right-equivalent to $\mathcal{M}_2$. Notice that this discussion is unnecessary if $[\beta\eta]b$ and $[\varepsilon\gamma]c$ are not 2-cycles.

As said above, we assume, without loss of generality, that the boundary of $\surfnoM$ is empty. We may further assume, by Proposition \ref{prop:localdirectsum} and the classification of the possible summands of a direct sum decomposition of $\Mtauarc(\partial)$ given at the end of Subsection \ref{subsec:localdecompositions}, that the configuration $\arc$ presents around $\arcone$ is given by either of the Figures \ref{Fig:component1}, \ref{Fig:component2} and \ref{Fig:component3}. For time and space reasons, we are not going to include the  \emph{flip $\leftrightarrow$ mutation} analysis of each one of these configurations; we will analyze some of them and leave the rest as an exercise for the reader. Having said all this, let us proceed to check some cases.

\begin{case}\label{casenodetours} We are going to deal with the configurations 1, 2, 3, 4 and 5 of Figure \ref{Fig:component2} at once. Assume that, around the arc $\arcone_1$ to be flipped, $\tau$ and $\arc$ look as shown in Figure \ref{curvecase1nodetours} (to make the exposition less tedious, throughout the analysis of this case we will not make emphasis in the \emph{local decomposition} of $M(\partial)$ as the direct sum of five subrepresentations).
\begin{figure}[!h]
                \caption{}\label{curvecase1nodetours}
                \centering
                \includegraphics[scale=.3]{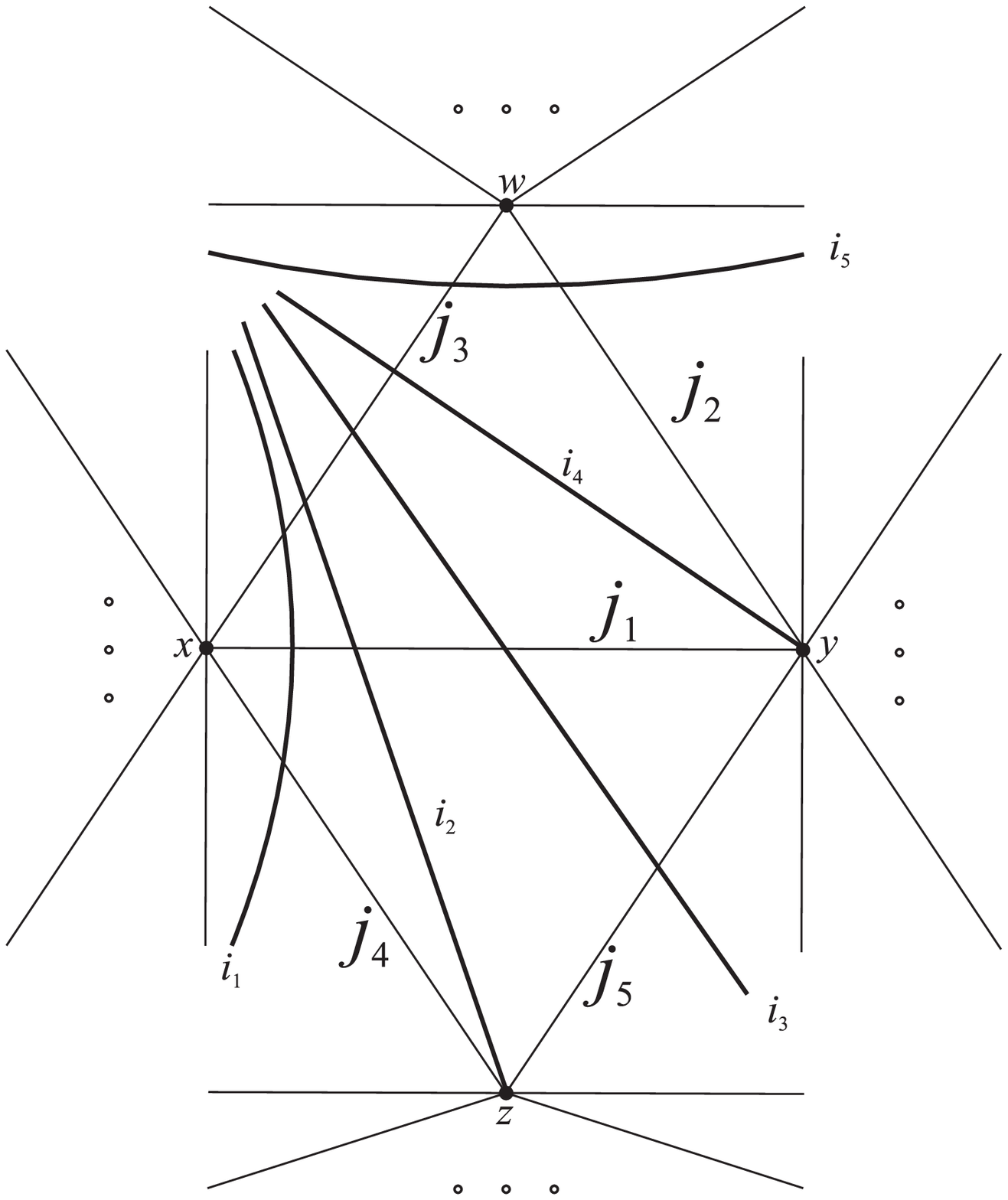}
        \end{figure}
The relevant vector spaces assigned in $\Mtauarc$ to the vertices of $Q(\tau)$ are
$$
M_{\arcone_{1}}=\Mtauarc_{\arcone_{1}}=\field^3, \ M_{\arcone_{2}}=\Mtauarc_{\arcone_{2}}=\field,
$$
$$
M_{\arcone_{3}}=\Mtauarc_{\arcone_{3}}=\field^{5}, \ M_{\arcone_{4}}=\Mtauarc_{\arcone_{4}}=\field,
$$
$$
\text{and} \ M_{\arcone_{5}}=\Mtauarc_{\arcone_{5}}=\field.
$$

Since none of $\alpha$, $\beta$, $\gamma$, $\delta$, $\varepsilon$ and $\eta$ is parallel to any detour of $(\tau,\arc)$, the detour matrices
$D^{\triangle_1}_{\arc,\arcone_1}$, $D^{\triangle_1}_{\arc,\arcone_2}$, $D^{\triangle_1}_{\arc,\arcone_3}$, $D^{\triangle_2}_{\arc,\arcone_1}$,
$D^{\triangle_2}_{\arc,\arcone_4}$ and $D^{\triangle_2}_{\arc,\arcone_5}$ are identities (of the corresponding sizes). Hence the arrows $\alpha$, $\beta$, $\gamma$, $\delta$, $\varepsilon$ and $\eta$ act on $\Mtauarc$ according to the following linear maps:
$$
\Mtauarc_\alpha={\tiny\left[\begin{array}{ccccc}
0 & 0 & 0 & 0 & 1\end{array}\right]}:\field^{5}\rightarrow\field, \ \
\Mtauarc_\beta={\tiny\left[\begin{array}{ccc}
1 & 0 & 0\\
0 & 1 & 0\\
0 & 0 & 1\\
0 & 0 & 0\\
0 & 0 & 0\end{array}\right]}:\field^{3}\rightarrow \field^{5},
$$
$$
\Mtauarc_\gamma=\mathbf{0}:\field\rightarrow \field^{3}, \ \
\Mtauarc_\delta=\mathbf{0}:\field\rightarrow \field,
$$
$$
\Mtauarc_\varepsilon={\tiny\left[\begin{array}{ccc}
0 & 0 & 1\end{array}\right]}:\field^{3}\rightarrow \field, \ \
\text{and} \ \
\Mtauarc_\eta={\tiny\left[\begin{array}{c}
1\\
0\\
0\end{array}\right]}:\field\rightarrow \field^{3}.
$$

Let us investigate the effect of the $\arcone_1^{\operatorname{th}}$ QP-mutation on $\calMtauarc$. An easy check using the information about $\calMtauarc$ we have collected thus far yields
$$
M_{\oin}=M_{\arcone_2}\oplus M_{\arcone_4}=\field\oplus\field,
$$
$$
M_{\oout}=M_{\arcone_3}\oplus M_{\arcone_5}=\field^{5}\oplus\field.
$$
$$
\al={\tiny\left[\begin{array}{cccc}
0 & 1 \\
0 & 0 \\
0 & 0 \end{array}\right]}:M_{\oin}=\field\oplus\field\rightarrow \field^{3}=M_{\arcone_1},
$$
$$
\be={\tiny\left[\begin{array}{ccc}
1 & 0 & 0\\
0 & 1 & 0\\
0 & 0 & 1\\
0 & 0 & 0\\
0 & 0 & 0\\
0 & 0 & 1\end{array}\right]}:M_{\arcone_1}=\field^{3}\rightarrow\field^{5}\oplus\field=M_{\oout},
$$
$$
\ga={\tiny\left[\begin{array}{ccccc|c}
0 & 0 & 0 & 0 & 1 & 0\\
\hline
0 & 0 & 0 & 0 & 0 & 0\end{array}\right]}:
M_{\oout}=\field^{5}\oplus\field\rightarrow \field\oplus\field=M_{\oin}.
$$

We see that $\be$ is injective and that $\ker\ga=\{(v_1,v_2,v_3,v_4,0,v_5)\suchthat v_1,v_2,v_3,v_4,v_5\in\field\}$, which is obviously isomorphic to $\field^5$ under the map $\ell:(v_1,v_2,v_3,v_4,0,v_5)\mapsto(v_1,v_2,v_3,v_4,v_5)$. The image of $\image\be=\{(u_1,u_2,u_3,0,0,u_3)\suchthat u_1,u_2,u_3\in\field\}$ under $\ell$ is $\ell(\image\be)=\{(u_1,u_2,u_3,0,u_3)\suchthat u_1,u_2,u_3\in\field\}$. Hence $\frac{\ker\ga}{\image\be}\cong\field^2$ and we can describe the canonical projection $\ker\ga\twoheadrightarrow\frac{\ker\ga}{\image\be}$ by means of the matrix
$$
\pipi=\left[\begin{array}{ccccc}
0 & 0 & -1 & 0 & 1\\
0 & 0 & 0  & 1 & 0\end{array}\right]:
\field^5\rightarrow\field^2.
$$

We also have $\ker\al=\image\ga=\{(u,0)\suchthat u\in\field\}\cong\field$. Therefore,
\begin{displaymath}
\overline{M}_{\arcone_1}=\field^2\oplus\field\oplus 0\oplus 0 \ \
\text{and} \ \ \overline{V}_{\arcone_1}=0.
\end{displaymath}

Let us compute the action of the arrows of $\widetilde{\mu}_{\arcone_1}(Q(\tau))$ on $\overline{\Mtauarc}$.
A straightforward check shows that $[\beta\gamma]$ and $[\varepsilon\eta]$ act as zero on $\overline{\Mtauarc}$, whereas
$$
[\beta\eta]={\tiny\left[\begin{array}{c}
1\\
0\\
0\\
0\\
0\end{array}\right]}:\overline{M}_{\arcone_4}=\field\rightarrow\field^5=\overline{M}_{\arcone_3} \ \
\text{and} \ \
[\varepsilon\gamma]=\mathbf{0}:\overline{M}_{\arcone_2}=\field\rightarrow\field=\overline{M}_{\arcone_5}.
$$

Since the arrows of $\widetilde{\mu}_{\arcone_1}(\qtau)$ not incident to $\arcone_1$ act on $\overline{\Mtauarc}$ in the exact same way they act on $\Mtauarc$, we just have to find out how the arrows $\beta^*$, $\gamma^*$, $\varepsilon^*$ and $\eta^*$ of $\widetilde{\mu}_{\arcone_1}(\qtau)$ act on $\overline{\Mtauarc}$. To this end, we choose the zero section $\si=\zero:\frac{\ker\al}{\image\ga}=0\rightarrow\ker\al$ and the retraction $\rh:M_{\oout}=\field^5\oplus\field\rightarrow\field^5\cong\ker\ga$ given by $(v_1,v_2,v_3,v_4,v_5,v_6)\mapsto(v_1,v_2,v_3,v_4,v_6)$ (here we are thinking of $\ell:\ker\ga\overset{\cong}{\longrightarrow}\field^{4}$ as an identification). A straightforward check yields
$$
-\pipi\rh=\left[\begin{array}{cccccc}0 & 0 & 1 & 0 & 0 & -1\\
0 & 0 & 0 & -1 & 0 & 0\end{array}\right].
$$
The action of $\beta^*$ and $\varepsilon^*$ is therefore encoded by the matrix
$$
[\beta^* \ \varepsilon^*]={\tiny\left[\begin{array}{ccccc|c}0 & 0 & 1 & 0 & 0 & -1\\
                                                            0 & 0 & 0 & -1 & 0 & 0\\
                                                            0 & 0 & 0 & 0 & -1 & 0\\
                                                            - & - & - & - & - & -\\
                                                            - & - & - & - & - & -\end{array}\right]}:\field^{5}\oplus\field^{1}\rightarrow \field^{2}\oplus\field\oplus 0\oplus 0,
$$
whereas the arrows $\gamma^*$ and $\eta^*$ act according to the matrix
$$
\left[\begin{array}{c}\gamma^*\\
                      \eta^*\end{array}\right]=
{\tiny\left[\begin{array}{ccccc}0 & 0 & 1 & - & -\\
                                \hline
                                0 & 0 & 0 & - & - \end{array}\right]}:\field^2\oplus\field\oplus 0\oplus 0\rightarrow \field\oplus\field.
$$

This completes the computation of the action of the arrows of $\widetilde{\mu}_{\arcone_1}(\qtau)$ on $\overline{\Mtauarc}$.
We have thus computed the premutation $\widetilde{\mu}_{\arcone_1}(\calMtauarc)=(\widetilde{\mu}_{\arcone_1}(\qtau),\tildestau,$ $\overline{\Mtauarc},0)$.

On the other hand, if we flip the arc $\arcone_1$ of $\tau$ we obtain the ideal triangulation $\sigma=f_{\arcone_1}(\tau)$ sketched at the left of Figure \ref{curvecase1nodetoursflipped}  (in a clear abuse of notation, we are using the same symbol $\arcone_1$ in both $\tau$ and $\sigma$).
\begin{figure}[!h]
                \caption{}\label{curvecase1nodetoursflipped}
                \centering
                \includegraphics[scale=.3]{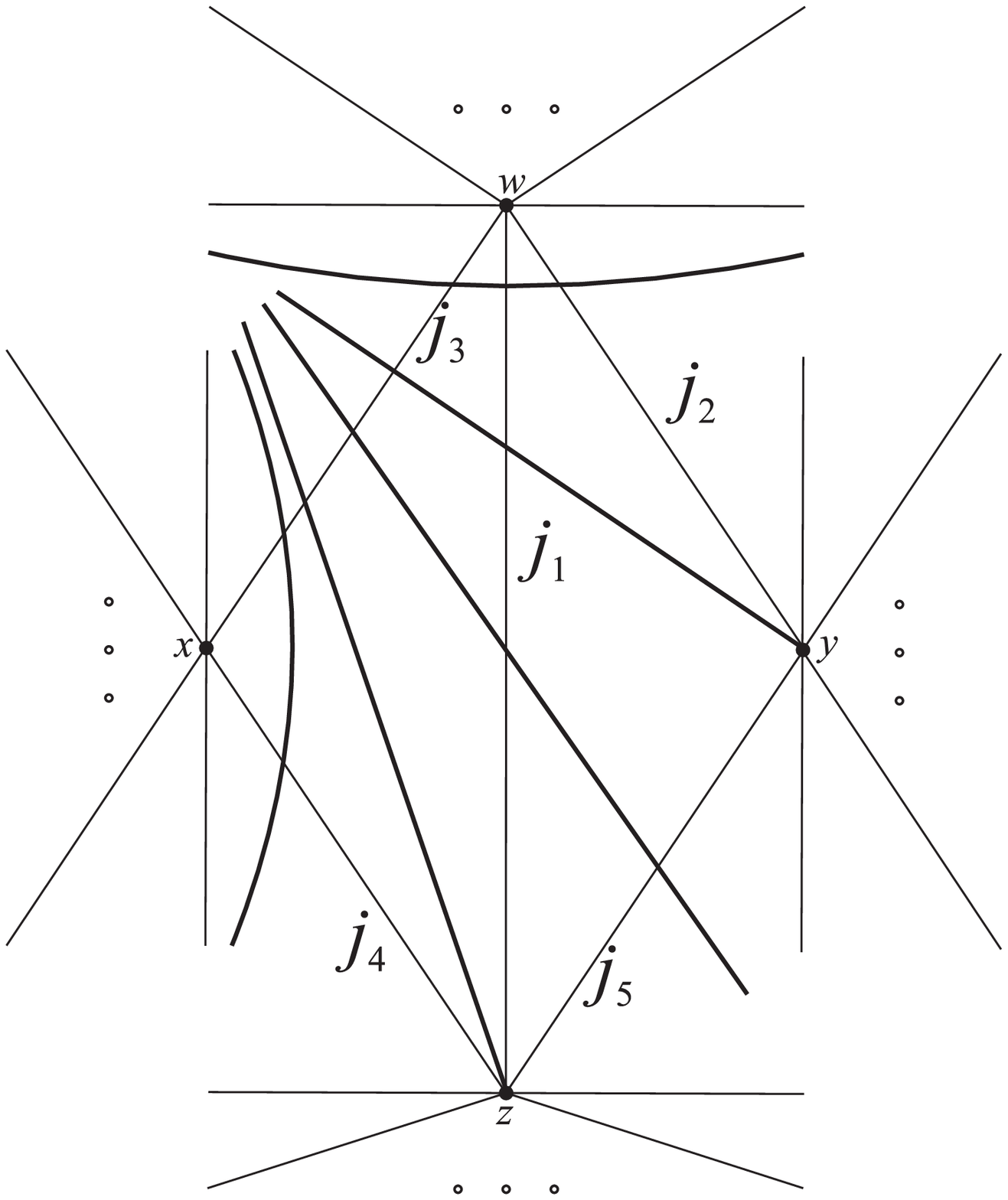}
        \end{figure}\\
The relevant vector spaces attached to the vertices of $Q(\sigma)$ are
$$
N_{\arcone_{1}}=\Msigmaarc_{\arcone_{1}}=\field^{3}, \ N_{\arcone_{2}}=\Msigmaarc_{\arcone_2}=\field,
$$
$$
N_{\arcone_{3}}=\Msigmaarc_{\arcone_{3}}=\field^{5}, \ N_{\arcone_{4}}=\Msigmaarc_{\arcone_4}=\field,
$$
$$
\text{and} \ N_{\arcone_5}=\Msigmaarc_{\arcone_5}=\field.
$$

Since none of $\beta^*$, $\gamma^*$, $\varepsilon^*$, $\eta^*$, $[\beta\eta]$ and $[\varepsilon\gamma]$ is parallel to any detour of $(\sigma,\arc)$, the detour matrices
$D^{\triangle'}_{\arc,\arcone_1}$, $D^{\triangle''}_{\arc,\arcone_2}$, $D^{\triangle'}_{\arc,\arcone_3}$, $D^{\triangle''}_{\arc,\arcone_1}$,
$D^{\triangle'}_{\arc,\arcone_4}$ and $D^{\triangle''}_{\arc,\arcone_5}$ are identities (of the corresponding sizes). Hence the arrows $\beta^*$, $\gamma^*$, $\varepsilon^*$, $\eta^*$, $[\beta\eta]$ and $[\varepsilon\gamma]$ act on $\Mtauarc$ according to the following linear maps:
$$
\Msigmaarc_{\beta^*}={\tiny\left[\begin{array}{ccccc}
0 & 0 & 1 & 0 & 0\\
0 & 0 & 0 & 1 & 0\\
0 & 0 & 0 & 0 & 1\end{array}\right]}:\field^{5}\rightarrow\field^{3}, \ \
\Msigmaarc_{\gamma^*}={\tiny\left[\begin{array}{ccc}
0 & 0 & 1\end{array}\right]}:\field^3\rightarrow \field,
$$
$$
\Msigmaarc_{\varepsilon^*}={\tiny\left[\begin{array}{c}
1\\
0\\
0\end{array}\right]}:\field\rightarrow \field^{3}, \ \
\Msigmaarc_{\eta^*}=\mathbf{0}:\field^{3}\rightarrow \field,
$$
$$
\Msigmaarc_{[\beta\eta]}={\tiny\left[\begin{array}{c}
1\\
0\\
0\\
0\\
0\end{array}\right]}:\field\rightarrow \field^{5}, \ \
\text{and} \ \
\Msigmaarc_{[\varepsilon\gamma]}=\mathbf{0}:\field\rightarrow \field.
$$

We have thus computed the spaces and linear maps of $\calMsigmaarc$ relevant to the flip of the arc $\arcone_1$. Now we have to compare it to $\mu_{\arcone_1}(\calMtauarc)$. The triple $\Phi=(\varphi,\psi,\eta)$ is a right-equivalence between these QP-representations, where
\begin{itemize}\item $\varphi:(\overline{Q(\tau)},\lambda(\tildestau)-\alpha[\beta\gamma]-\delta[\varepsilon\eta])\rightarrow \qssigma$ is the right-equivalence whose action on the arrows is given by
$$
\beta^*\mapsto-\beta^*, \ \eta^*\mapsto-\eta^*,
$$
and the identity in the rest of the arrows;
\item $\psi:\overline{\Mtauarc}\rightarrow\Msigmaarc$ is the vector space isomorphism given by the identity $\id:\overline{\Mtauarc}_{\arctwo}\rightarrow\Msigmaarc_{\arctwo}$ for $\arctwo\neq\arcone_1$, and the matrix
    $$
    \psi_{\arcone_{1}}={\tiny\left[\begin{array}{ccc}
    -1        & 0         & 0\\
    0         & 1         & 0\\
    0         & 0         & 1\end{array}\right]}:\overline{\Mtauarc}_{\arcone_1}\rightarrow\Msigmaarc_{\arcone_1}.
    $$
\item $\eta$ is the zero map (of the zero space).
\end{itemize}

This finishes the proof of Theorem \ref{thm:flip<->mut} for the configurations 1, 2, 3, 4 and 5 of Figure \ref{Fig:component2}. Furthermore, if $[\beta\eta]b$ and $[\varepsilon\gamma]c$ are not 2-cycles, the case just analyzed deals also with configurations 6, 7 and 8 of that Figure.
\end{case}

\begin{case}\label{casesomedetours} Now we are going to deal with configuration 1 from Figure \ref{Fig:component3}. Assume that, around the arc $\arcone_1$ to be flipped, $\tau$ and $\arc$ look as shown in Figure \ref{curvecase1var1}.
\begin{figure}[!h]
                \caption{}\label{curvecase1var1}
                \centering
                \includegraphics[scale=.5]{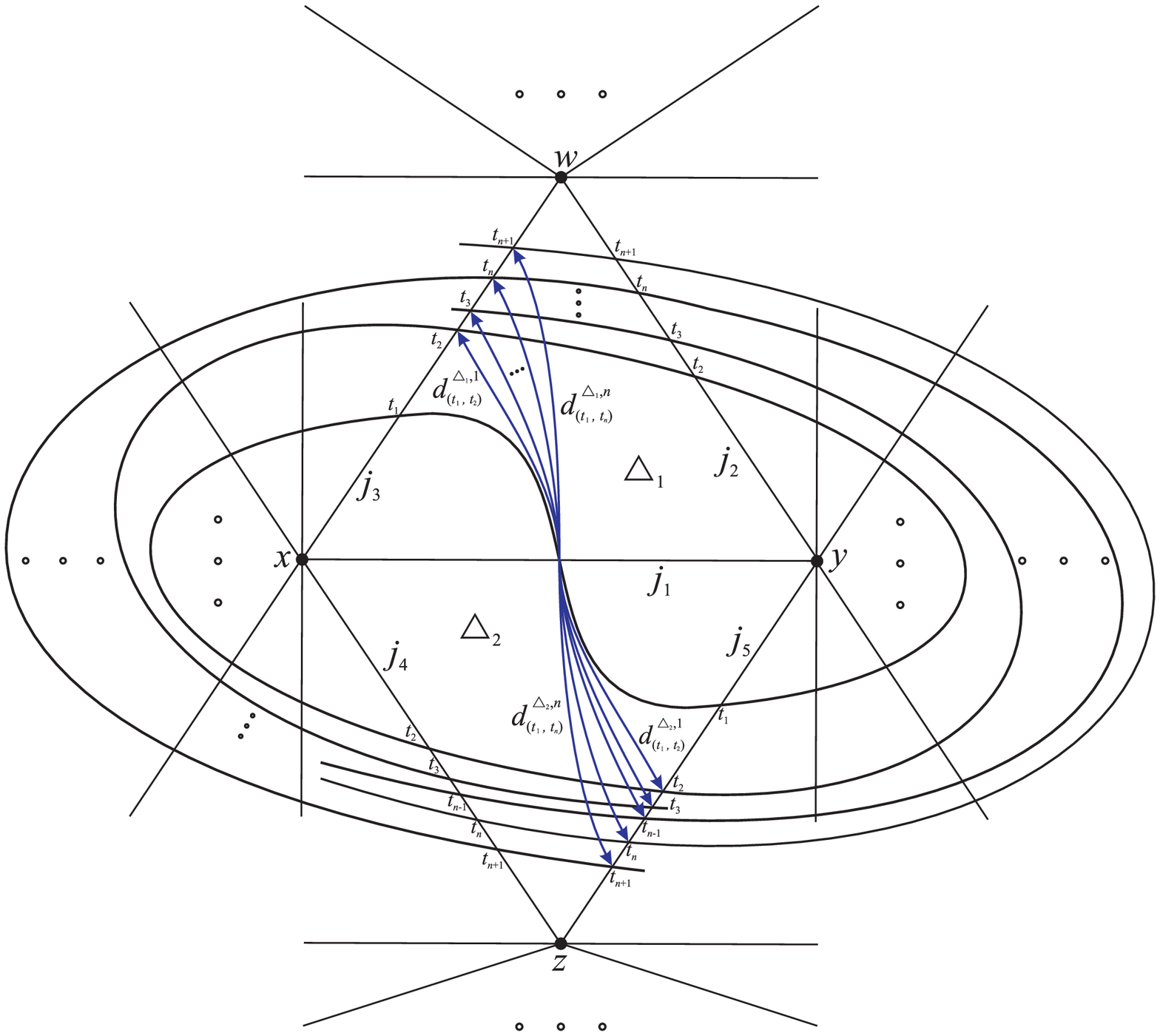}
        \end{figure}

The relevant vector spaces assigned in $\Mtauarc$ to the vertices of $Q(\tau)$ are
$$
M_{\arcone_{1}}=\Mtauarc_{\arcone_{1}}=\field, \ M_{\arcone_{2}}=\Mtauarc_{\arcone_{2}}=\field^{n},
$$
$$
M_{\arcone_{3}}=\Mtauarc_{\arcone_{3}}=\field^{n+1}, \ M_{\arcone_{4}}=\Mtauarc_{\arcone_{4}}=\field^{n},
$$
$$
\text{and} \ M_{\arcone_{5}}=\Mtauarc_{\arcone_{5}}=\field^{n+1}.
$$

We also have (with some notational abuse regarding the intersection points of $\arc$ with the arcs of $\tau$)
$$
\badintersection^{\triangle_1,l}_{\arc,\arcone_1}=\badintersection^{\triangle_2,l}_{\arc,\arcone_1}=
\badintersection^{\triangle_1,l}_{\arc,\arcone_2}=\badintersection^{\triangle_2,l}_{\arc,\arcone_4}=\varnothing \ \text{for} \ l\geq 1;
$$
$$
\badintersection^{\triangle_1,l}_{\arc,\arcone_3}=\{(t_1,t_{l+1},b(d^{\triangle_1,l}_{(t_1,t_{l+1})},y))\} \ \text{and} \ \badintersection^{\triangle_2,l}_{\arc,\arcone_5}=\{(t_1,t_{l+1},b(d^{\triangle_1,l}_{(t_1,t_{l+1})},x)\} \ \text{for} \ 1\leq l\leq n.
$$

The relevant detour matrices are therefore defined as follows. The matrices $D^{\triangle_1}_{\arc,\arcone_1}$, $D^{\triangle_2}_{\arc,\arcone_1}$, $D^{\triangle_1}_{\arc,\arcone_2}$ and $D^{\triangle_2}_{\arc,\arcone_4}$ are identities (of the corresponding sizes), whereas
$$
D^{\triangle_1}_{\arc,\arcone_3}={\tiny\left[
\begin{array}{cccc}1 &  & \mathbf{0}_{1\times n} &  \\
-y                    &  & &  \\
xy                    &  &  &  \\
\vdots                &   &      &   \\
(-1)^{l-1}x^{\lfloor\frac{l-1}{2}\rfloor}y^{\lfloor\frac{l}{2}\rfloor} &  & \text{{\Huge{\textbf{1}}$_{n\times n}$}} &  \\
\vdots                &   &        &   \\
(-1)^{n}x^{\lfloor\frac{n}{2}\rfloor}y^{\lfloor\frac{n+1}{2}\rfloor}   &  &  &  \end{array}\right]}, \ D^{\triangle_2}_{\arc,\arcone_5}={\tiny\left[
\begin{array}{ccccc}1 & & \mathbf{0}_{1\times n} &  \\
-x                    &  &  &  \\
xy                    &  &  & & \\
\vdots                &   &       & \\
(-1)^{l-1}x^{\lfloor\frac{l}{2}\rfloor}y^{\lfloor\frac{l-1}{2}\rfloor} &  & \text{{\Huge{\textbf{1}}$_{n\times n}$}} &  \\
\vdots                &   &        &   &  \\
(-1)^{n}x^{\lfloor\frac{n+1}{2}\rfloor}y^{\lfloor\frac{n}{2}\rfloor}   &  &  &  \end{array}\right]}.
$$

Hence the arrows $\alpha$, $\beta$, $\gamma$, $\delta$, $\varepsilon$ and $\eta$ act on $\Mtauarc$ according to the following linear maps:
$$
\Mtauarc_\alpha=(D_{\arc,\arcone_2}^{\triangle_1})(\mtauarc_\alpha)=
\left[\begin{array}{cc}\mathbf{0}_{n\times1} & \mathbf{1}_{n\times n}\end{array}\right]:\field^{n+1}\rightarrow\field^{n},
$$
$$
\Mtauarc_\beta=(D_{\arc,\arcone_3}^{\triangle_1})(\mtauarc_\beta)
={\tiny\left[\begin{array}{c}1\\
                       -y\\
                       xy\\
                       \vdots\\
                       (-1)^{l-1}x^{\lfloor\frac{l-1}{2}\rfloor}y^{\lfloor\frac{l}{2}\rfloor}\\
                       \vdots\\
                       (-1)^{n}x^{\lfloor\frac{n}{2}\rfloor}y^{\lfloor\frac{n+1}{2}\rfloor}\end{array}\right]}:\field\rightarrow \field^{n+1},
$$
$$
\Mtauarc_\gamma=(D_{\arc,\arcone_1}^{\triangle_1})(\mtauarc_\gamma)
=\zero:\field^{n}\rightarrow \field,
$$
$$
\Mtauarc_\delta=(D_{\arc,\arcone_4}^{\triangle_2})(\mtauarc_\delta)=
\left[\begin{array}{cc}\zero_{n\times 1} & \id_{n\times n}\end{array}\right]:
\field^{n+1}\rightarrow \field^{n},
$$
$$
\Mtauarc_\varepsilon=
(D_{\arc,\arcone_5}^{\triangle_2})(\mtauarc_\varepsilon)
={\tiny\left[\begin{array}{c}1\\
                       -x\\
                       xy\\
                       \vdots\\
                       (-1)^{l-1}x^{\lfloor\frac{l}{2}\rfloor}y^{\lfloor\frac{l-1}{2}\rfloor}\\
                       \vdots\\
                       (-1)^{n}x^{\lfloor\frac{n+1}{2}\rfloor}y^{\lfloor\frac{n}{2}\rfloor}\end{array}\right]}:\field\rightarrow \field^{n+1},
$$
$$
\text{and} \
\Mtauarc_\eta=(D_{\arc,\arcone_1}^{\triangle_2})(\mtauarc_\eta)=\zero:
\field^{n}\rightarrow \field.
$$

Let us investigate the effect of the $\arcone_1^{\operatorname{th}}$ QP-mutation on $\calMtauarc$. An easy check using the information about $\calMtauarc$ we have collected thus far yields
$$
M_{\oin}=M_{\arcone_2}\oplus M_{\arcone_4}=\field^{n}\oplus\field^{n},
$$
$$
M_{\oout}=M_{\arcone_3}\oplus M_{\arcone_5}=\field^{n+1}\oplus\field^{n+1}.
$$
$$
\al=\zero:M_{\oin}=\field^{n}\oplus\field^{n}\rightarrow \field=M_{\arcone_1},
$$
$$
\be={\tiny\left[\begin{array}{c}1\\
                          -y\\
                          xy\\
                          \vdots\\
                          (-1)^{n}x^{\lfloor\frac{n}{2}\rfloor}y^{\lfloor\frac{n+1}{2}\rfloor}\\
                           1\\
                           -x\\
                           xy\\
                           \vdots\\
                           (-1)^{n}x^{\lfloor\frac{n+1}{2}\rfloor}y^{\lfloor\frac{n}{2}\rfloor}\end{array}\right]}:
                           M_{\arcone_1}=\field\rightarrow\field^{n+1}\oplus\field^{n+1}=M_{\oout},
$$
$$
\ga={\tiny\left[\begin{array}{cc|cc}
\text{{\LARGE{\textbf{0}}$_{n\times 1}$}} & \text{{\LARGE{\textbf{1}}$_{n\times n}$ \ \ }} & \text{{\LARGE{ \ \ \ $y$\textbf{1}}$_{n\times n}$}} & \text{{\LARGE{\textbf{0}}$_{n\times 1}$}} \\
                                               & & & \\
                                              \hline
                                               & & & \\
\text{{\LARGE{ \ \ \ $x$\textbf{1}}$_{n\times n}$}} & \text{{\LARGE{\textbf{0}}$_{n\times 1}$}} & \text{{\LARGE{\textbf{0}}$_{n\times 1}$}} &
\text{{\LARGE{\textbf{1}}$_{n\times n}$ \ \ }} \end{array}\right]}:
M_{\oout}=\field^{n+1}\oplus\field^{n+1}\rightarrow \field^{n}\oplus\field^{n}=M_{\oin}.
$$

It is easily seen that $\be$ is injective and that $\ker\ga=\{(u,v)\in\field^{n+1}\oplus\field^{n+1}\suchthat u_{l+1}+yv_l=xu_{l}+v_{l+1}=0 \ \forall l\in\{1,\ldots,n\}\}=
%\{\sum_{l=1}^{n+1}(-1)^{l-1}x^{\lfloor\frac{l-1}{2}\rfloor}y^{\lfloor\frac{l}{2}\rfloor}u_1^{\delta_{\frac{l+1}{2},\lfloor\frac{l+1}{2}\rfloor}}
%v_1^{\delta_{\frac{l}{2},\lfloor\frac{l}{2}\rfloor}}\mathbf{e}_l+\sum_{l=1}^{n+1}(-1)^{l-1}x^{\lfloor\frac{l}{2}\rfloor}
%y^{\lfloor\frac{l-1}{2}\rfloor}u_1^{\delta_{\frac{l}{2},\lfloor\frac{l}{2}\rfloor}}v_1^{\delta_{\frac{l+1}{2},\lfloor\frac{l+1}{2}\rfloor}}
%\mathbf{e}_{n+2+l}\suchthat u_1,v_1\in\field\}=
\{(u_1,-yv_1,xyu_1,\ldots,v_1,-xu_1,xyv_1,\ldots)\suchthat u_1,v_1\in\field\}$, which is isomorphic to $\field^2$ under the assignment $\ell:(u_1,-yv_1,xyu_1,\ldots,v_1,-xu_1,xyv_1,\ldots)\mapsto(u_1,v_1-u_1)$. Together with a standard dimension counting, this yields surjectivity of $\ga$.

The image of $\image\be=\{u,-yu,xyu,\ldots,(-1)^{n}x^{\lfloor\frac{n}{2}\rfloor}y^{\lfloor\frac{n+1}{2}\rfloor},u,-xu,\ldots
(-1)^nx^{\lfloor\frac{n+1}{2}\rfloor}y^{\lfloor\frac{n}{2}\rfloor}u)\suchthat u\in\field\}$ under $\ell$ is $\ell(\image\be)=
\{(u,0)\suchthat u\in\field\}$. Hence $\frac{\ker\ga}{\image\be}\cong\field$ and we can describe the canonical projection
$\ker\ga\twoheadrightarrow\frac{\ker\ga}{\image\be}$ by means of the matrix
$$
\pipi=\left[\begin{array}{cc}0 & -1\end{array}\right]:\field^2\rightarrow\field.
$$

From the previous two paragraphs we deduce that
\begin{equation}
\overline{M}_{\arcone_1}=\field\oplus(\field^{n}\oplus\field^{n})\oplus 0\oplus 0 \ \ \text{and} \ \
\overline{V}_{\arcone_1}=0.
\end{equation}

And from the fact that $\gamma$ and $\eta$ act as zero on $\Mtauarc$, we conclude that the arrows $[\beta\gamma]$, $[\varepsilon\gamma]$, $[\beta\eta]$ and $[\varepsilon\eta]$ of $\widetilde{\mu}_{\arcone_1}(\qtau)$ act as zero on $\overline{\Mtauarc}$. Since the arrows of $\widetilde{\mu}_{\arcone_1}(\qtau)$ not incident to $\arcone_1$ act on $\overline{\Mtauarc}$ in the exact same way they act on $\Mtauarc$, we just have to find out how the arrows $\beta^*$, $\gamma^*$, $\varepsilon^*$ and $\eta^*$ of $\widetilde{\mu}_{\arcone_1}(\qtau)$ act on $\overline{\Mtauarc}$. To this end, we choose the zero section $\si=\zero:\frac{\ker\al}{\image\ga}=0\rightarrow\ker\al$ and the retraction $\rh:M_{\oout}\rightarrow\ker\ga$ given by the matrix
$$
\rh={\tiny\left[\begin{array}{c}1\\
                                -1\end{array}
\begin{array}{c}\text{{\LARGE{\textbf{0}}$_{2\times n}$}}\end{array}
\begin{array}{c} 0 \\ 1 \end{array}
\begin{array}{c}\text{{\LARGE{\textbf{0}}$_{2\times n}$}}\end{array}\right]}:\field^{n+1}\oplus\field^{n+1}\rightarrow\field^2
$$
(here we are thinking of $\ell:\ker\ga\overset{\cong}{\longrightarrow}\field^{2}$ as an identification). A straightforward check yields
$$
-\pipi\rh=\left[\begin{array}{cccc}-1 & \mathbf{0}_{1\times n} & 1 & \mathbf{0}_{1\times n}\end{array}\right].
$$
The action of $\beta^*$ and $\varepsilon^*$ is therefore encoded by the matrix
$$
[\beta^* \ \varepsilon^*]=\left[\begin{array}{c|} \\
\text{{\Huge$-${\textbf{1}}$_{(n+1)\times (n+1)}$}} \\
                                               \\
%                                               \\
\text{{\LARGE{$-x$\textbf{1}}$_{n\times n}$}} \ \ \text{{\LARGE{\textbf{0}}$_{n\times 1}$}} \\
 - \\
 - \end{array}
\begin{array}{c}
1\\
-y\\
\text{{\LARGE{\textbf{0}}$_{(n-1)\times 1}$}}\\
% \\
\text{{\LARGE{\textbf{0}}$_{n\times 1}$}}\\
-\\
-
\end{array}
\begin{array}{cc}
 & \\
\text{{\LARGE\textbf{0}$_{2\times (n-1)}$}} & \text{{\LARGE\textbf{0}$_{2\times1}$}} \\
\text{{\LARGE{$-y$\textbf{1}}$_{(n-1)\times (n-1)}$}} & \text{{\LARGE{\textbf{0}}$_{(n-1)\times 1}$}}\\
% & \\
\text{{\LARGE{ \ \ \ \ \ \ \ \ \ \ \ \ \  $-$\textbf{1}}$_{n\times n}$}} & \\
- & - \\
- & -
\end{array}\right]:
$$
$$
:\field^{n+1}\oplus\field^{n+1}\rightarrow \field\oplus(\field^{n}\oplus\field^{n})\oplus 0\oplus 0,
$$
whereas the arrows $\gamma^*$ and $\eta^*$ act according to the matrix
$$
\left[\begin{array}{c}\gamma^*\\
                      \eta^*\end{array}\right]=
\left[\begin{array}{ccccc}
\mathbf{0}_{n\times 1} & \mathbf{1}_{n\times n} & \mathbf{0}_{n\times n} & - & -\\
\hline
\mathbf{0}_{n\times 1} & \mathbf{0}_{n\times n} & \mathbf{1}_{n\times n} & - & -
\end{array}\right]
:\field\oplus(\field^{n+1}\oplus\field^{n+1})\oplus 0\oplus 0\rightarrow \field^{n+1}\oplus\field^{n+1}.
$$

This completes the computation of the action of the arrows of $\widetilde{\mu}_{\arcone_1}(Q(\tau))$ on $\overline{\Mtauarc}$.
We have thus computed the premutation $\widetilde{\mu}_{\arcone_1}(\calMtauarc)=(\widetilde{\mu}_{\arcone_1}(Q(\tau)),\tildestau,$ $\overline{\Mtauarc},0)$.

On the other hand, if we flip the arc $\arcone_1$ of $\tau$ we obtain the ideal triangulation $\sigma=f_{\arcone_1}(\tau)$ sketched in Figure \ref{curvecase1var1flipped}  (in a clear abuse of notation, we are using the same symbol $\arcone_1$ in both $\tau$ and $\sigma$).
\begin{figure}[!h]
                \caption{}\label{curvecase1var1flipped}
                \centering
                \includegraphics[scale=.5]{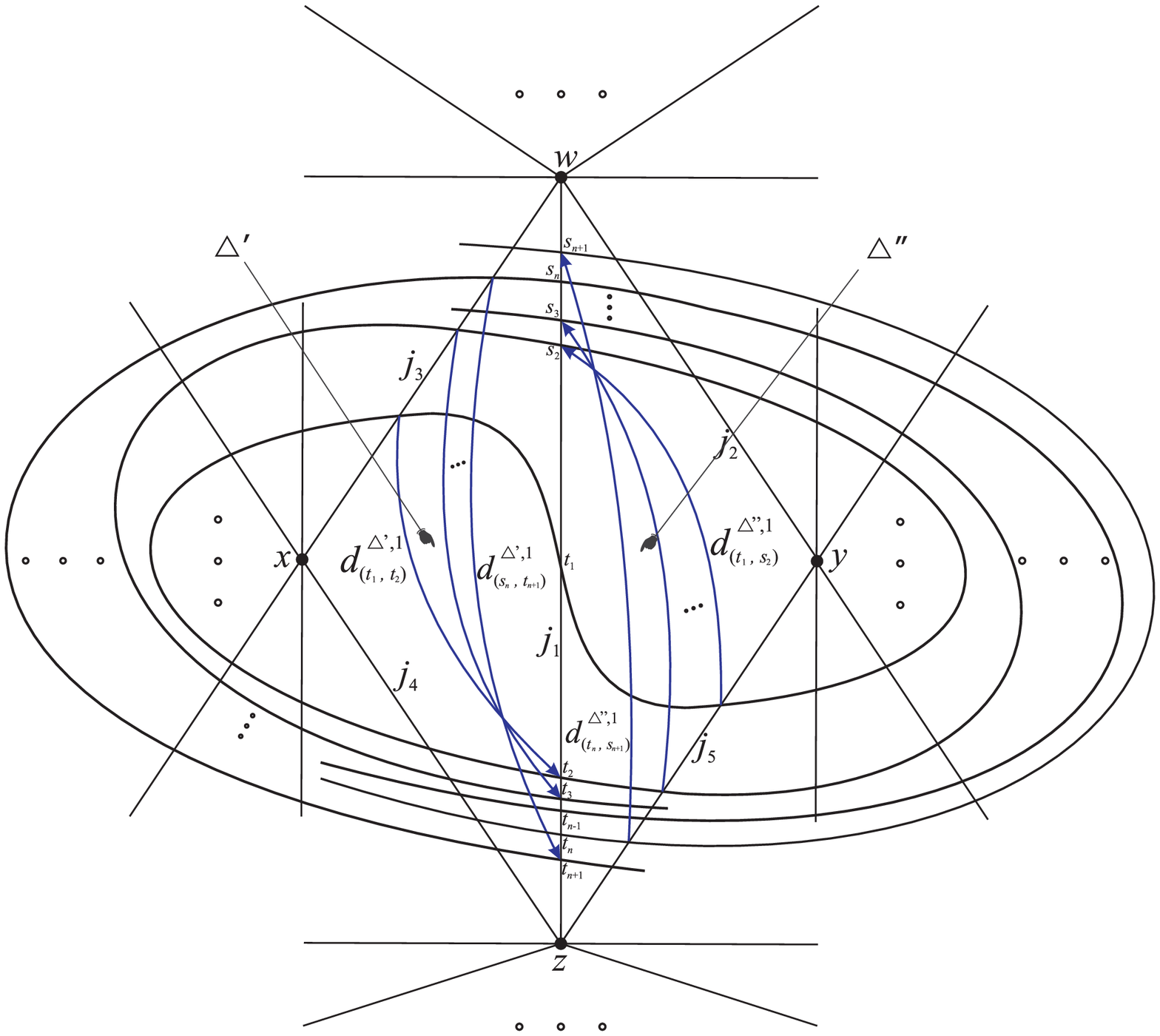}
        \end{figure}\\
The relevant vector spaces attached to the vertices of $Q(\sigma)$ are
$$
N_{\arcone_{1}}=\Msigmaarc_{\arcone_{1}}=\field^{2n+1}, \ N_{\arcone_{2}}=\Msigmaarc_{\arcone_2}=\field^{n},
$$
$$
N_{\arcone_{3}}=\Msigmaarc_{\arcone_{3}}=\field^{n+1}, \ N_{\arcone_{4}}=\Msigmaarc_{\arcone_4}=\field^{n},
$$
$$
\text{and} \ N_{\arcone_5}=\Msigmaarc_{\arcone_5}=\field^{n+1}.
$$
We also have (again with some notational abuse regarding intersection points)
$$
\badintersection^{\triangle',1}_{\arc,\arcone_1}=
\{(t_1,t_2,b(d^{\triangle',1}_{(t_1,t_2)}),x)\}\cup\{(s_{l},t_{l+1},b(d^{\triangle',1}_{(s_l,s_{l+1})}),x)\suchthat 2\leq l\leq n\},
$$
$$
\badintersection^{\triangle'',1}_{\arc,\arcone_1}=
\{(t_1,s_2,b(d^{\triangle'',1}_{(t_1,s_2)}),y)\}\cup\{(t_l,s_{l+1},b(d^{\triangle'',1}_{(t_l,s_{l+1})}),y)\suchthat 2\leq l\leq n\};
$$
$$
\badintersection^{\triangle',r}_{\arc,\arcone_1}=\badintersection^{\triangle'',r}_{\arc,\arcone_1}=\varnothing \ \text{for} \ r\geq 2;
$$
$$
\text{and} \ \badintersection^{\triangle',r}_{\arc,\arcone_2}=\badintersection^{\triangle',r}_{\arc,\arcone_3}=
\badintersection^{\triangle',r}_{\arc,\arcone_4}=\badintersection^{\triangle',r}_{\arc,\arcone_5}=
$$
$$
\badintersection^{\triangle'',r}_{\arc,\arcone_2}=\badintersection^{\triangle'',r}_{\arc,\arcone_3}=
\badintersection^{\triangle'',r}_{\arc,\arcone_4}=\badintersection^{\triangle'',r}_{\arc,\arcone_5}=\varnothing \ \text{for} \ r\geq 1.
$$

The relevant detour matrices are therefore defined as follows. The matrices $D^{\triangle''}_{\arc,\arcone_2}$, $D^{\triangle'}_{\arc,\arcone_3}$, $D^{\triangle'}_{\arc,\arcone_4}$ and $D^{\triangle''}_{\arc,\arcone_5}$ are identities (of the corresponding sizes), whereas
$$
D_{\arc,\arcone_1}^{\triangle'}=\left[\begin{array}{cc}
\text{{\LARGE\textbf{1}$_{(n+1)\times (n+1)}$}} & \mathbf{0}_{(n+1)\times n}\\
-x\mathbf{1}_{n\times n} \ \ \mathbf{0}_{n\times 1} & \mathbf{1}_{n\times n}
\end{array}\right] \ \ \text{and}
$$
$$
D_{\arc,\arcone_1}^{\triangle''}=
{\tiny\left[\begin{array}{c}
1 \\
-y \\
 \\
\text{{\Huge\textbf{0}$_{(2n-1)\times 1}$}}
\end{array}
\begin{array}{c}
 \\
\mathbf{0}_{1\times n}\\
\text{{\Huge\textbf{1}$_{n\times n}$}}\\
\text{{\Huge\textbf{0}$_{n\times n}$}}
\end{array}
\begin{array}{cc}
% \\
 \\
\text{{\large \ \ \ \ \ \ \ \ \ \ \ \ \ \ \ \ \ \ \ \ \textbf{0}$_{2\times n}$}} & \\
\text{{\LARGE$-y$\textbf{1}$_{(n-1)\times (n-1)}$}} & \mathbf{0}_{(n-1)\times 1}\\
\text{{\Huge \ \ \ \ \ \ \ \ \ \ \ \ \ \textbf{1}$_{n\times n}$}} &
\end{array}
\right]},
$$
where the order in which the basis vectors of $N_{\arcone_1}$ are taken is $(t_1,s_2,\ldots,s_{n+1},t_2,\ldots,t_{n+1})$.

Hence the arrows $\beta^*$, $[\beta\eta]$, $\eta^*$, $\gamma^*$, $\varepsilon^*$ and $[\varepsilon\gamma]$ act on $\Msigmaarc$ according to the following linear maps:
$$
\Msigmaarc_{\beta^*}=(D_{\arc,\arcone_1}^{\triangle'})(\msigmaarc_{\beta^*})=
\left[\begin{array}{cc}
\text{{\LARGE\textbf{1}$_{(n+1)\times (n+1)}$}} \\
-x\mathbf{1}_{n\times n} \ \ \mathbf{0}_{n\times 1}
\end{array}\right]:\field^{n+1}\rightarrow\field^{2n+1},
$$
$$
\Msigmaarc_{[\beta\eta]}=
(D_{\arc,\arcone_3}^{\triangle'})(\msigmaarc_{[\beta\eta]})=\zero:\field^{n}\rightarrow \field^{n+1},
$$
$$
\Msigmaarc_{\eta^*}=(D_{\arc,\arcone_4}^{\triangle'})(\msigmaarc_{\eta^*})=
\left[\begin{array}{cc}
\mathbf{0}_{n\times{(n+1)}} & \mathbf{1}_{n\times n}\end{array}\right]
:\field^{2n+1}\rightarrow \field^{n},
$$
$$
\Msigmaarc_{\gamma^*}=(D_{\arc,\arcone_2}^{\triangle''})(\msigmaarc_{\gamma^*})=
\left[\begin{array}{ccc}
\mathbf{0}_{n\times 1} & \mathbf{1}_{n\times n} & \mathbf{0}_{n\times n}\end{array}\right]
:\field^{2n+1}\rightarrow \field^{n},
$$
$$
\Msigmaarc_{\varepsilon^*}=(D_{\arc,\arcone_1}^{\triangle''})(\msigmaarc_{\varepsilon^*})=
{\tiny\left[\begin{array}{c}
1 \\
-y \\
 \\
\text{{\Huge\textbf{0}$_{(2n-1)\times 1}$}}
\end{array}
\begin{array}{c}
% \\
 \\
\text{{\large \ \ \ \ \ \ \ \ \ \ \ \ \ \ \ \ \ \textbf{0}$_{2\times n}$}} \\
\text{{\LARGE$-y$\textbf{1}$_{(n-1)\times (n-1)}$}} \ \ \  \text{{\large\textbf{0}$_{(n-1)\times 1}$}}\\
\text{{\Huge \ \ \ \ \ \ \ \ \ \ \textbf{1}$_{n\times n}$}}
\end{array}
\right]}
:\field^{n+1}\rightarrow \field^{2n+1},
$$
$$
\Msigmaarc_{[\varepsilon\gamma]}=
(D_{\arc,\arcone_5}^{\triangle''})(\msigmaarc_{[\varepsilon\gamma]})=\zero:\field^{n}\rightarrow \field^{n+1}.
$$

We have thus computed the spaces and linear maps of $\calMsigmaarc$ relevant to the flip of the arc $\arcone_1$. Now we have to compare it to $\mu_{\arcone_1}(\calMtauarc)$. The triple $\Phi=(\varphi,\psi,\eta)$ is a right-equivalence between these QP-representations, where
\begin{itemize}\item $\varphi:(\overline{Q(\tau)},\lambda(\tildestau)-\alpha[\beta\gamma]-\delta[\varepsilon\eta])\rightarrow \qssigma$ is the right-equivalence whose action on the arrows is given by
$$
\beta^*\mapsto-\beta^*, \ \eta^*\mapsto-\eta^*,
$$
and the identity in the rest of the arrows;
\item $\psi:\overline{\Mtauarc}\rightarrow\Msigmaarc$ is the vector space isomorphism given by the identity $\id:\overline{\Mtauarc}_{\arctwo}\rightarrow\Msigmaarc_{\arctwo}$ for $\arctwo\neq\arcone_1$, and the matrix
    $$
    \psi_{\arcone_{1}}=\left[\begin{array}{cc}
    \mathbf{1}_{(n+1)\times(n+1)} & \mathbf{0}_{(n+1)\times n}\\
    \mathbf{0}_{n\times(n+1)} & -\mathbf{1}_{n\times n}\end{array}\right]
:\overline{\Mtauarc}_{\arcone_1}\rightarrow\Msigmaarc_{\arcone_1};
    $$
\item $\eta$ is the zero map (of the zero space).
\end{itemize}

This finishes the proof of Theorem \ref{thm:flip<->mut} for configuration 1 of Figure \ref{Fig:component3}. Furthermore, if $[\beta\eta]b$ and $[\varepsilon\gamma]c$ are not 2-cycles, the case just analyzed deals also with configuration 3 of that Figure.
\end{case}

\begin{case}\label{casemanydetours} Now we are going to deal with configuration 2 from Figure \ref{Fig:component1}. Assume that, around the arc $\arcone_1$ to be flipped, $\tau$ and $\arc$ look as shown in Figure \ref{curvecase1var2}.
\begin{figure}[!h]
                \caption{}\label{curvecase1var2}
                \centering
                \includegraphics[scale=.5]{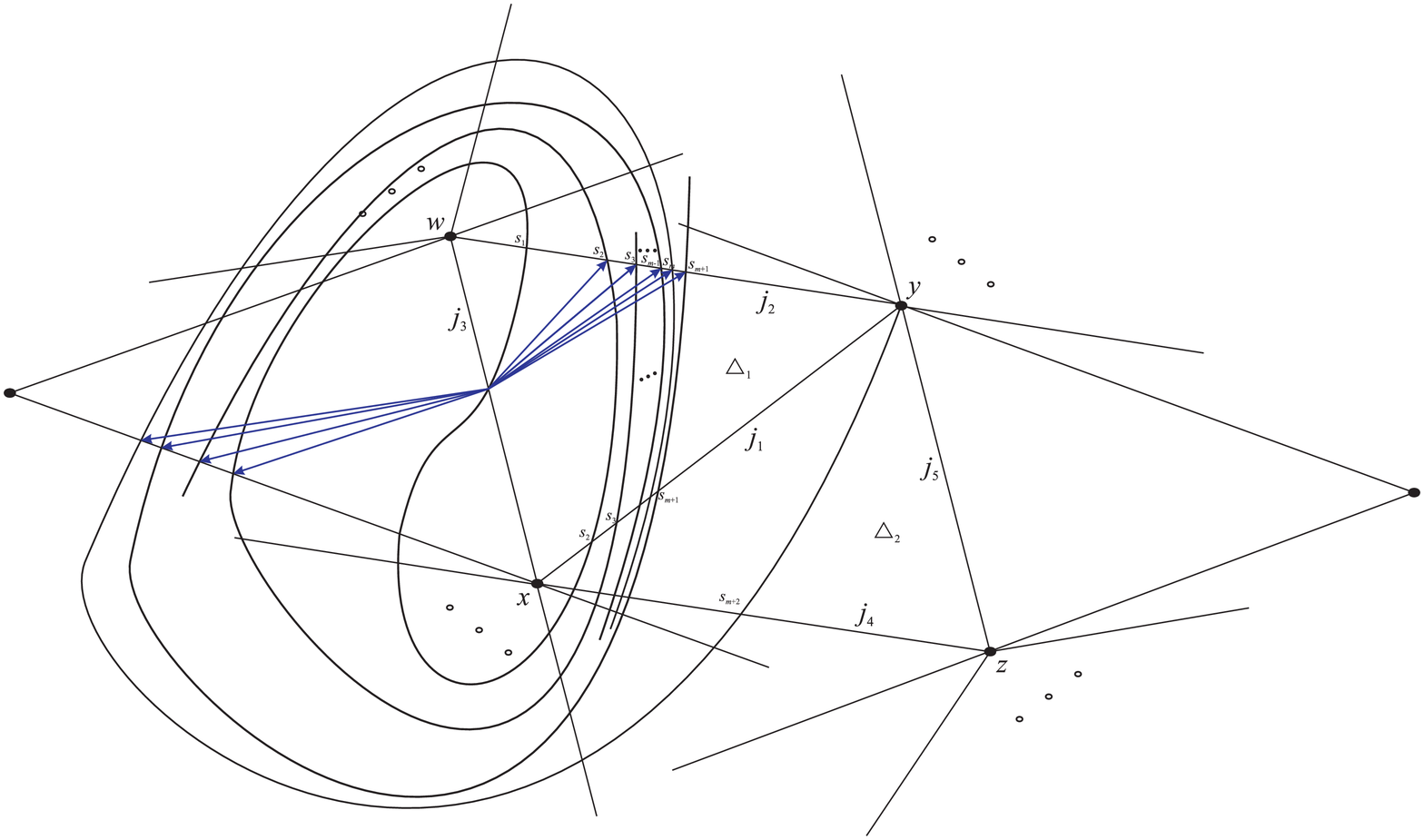}
        \end{figure}

The relevant vector spaces assigned in $\Mtauarc$ to the vertices of $Q(\tau)$ are
$$
M_{\arcone_{1}}=\Mtauarc_{\arcone_{1}}=\field^{m}, \ M_{\arcone_{2}}=\Mtauarc_{\arcone_{2}}=\field^{m+1},
$$
$$
M_{\arcone_{3}}=\Mtauarc_{\arcone_{3}}=\field, \ M_{\arcone_{4}}=\Mtauarc_{\arcone_{4}}=\field^{m+1},
$$
$$
\text{and} \ M_{\arcone_{5}}=\Mtauarc_{\arcone_{5}}=0.
$$

We also have
$$
\badintersection^{\triangle_1,l}_{\arc,\arcone_1}=\badintersection^{\triangle_2,l}_{\arc,\arcone_1}=
\badintersection^{\triangle_1,l}_{\arc,\arcone_3}=\badintersection^{\triangle_2,l}_{\arc,\arcone_5}=
\badintersection^{\triangle_2,l}_{\arc,\arcone_4}=\varnothing \ \text{for} \ l\geq 1, \ \ \text{and}
$$
$$
\badintersection^{\triangle_1,l}_{\arc,\arcone_2}=\{(s_1,s_{l+1},b(d^{\triangle_1,l}_{\arc,\arcone_2}),x)\} \ \text{for} \ 1\leq l\leq m \ \text{and} \
$$

The relevant detour matrices are therefore defined as follows. The matrices $D^{\triangle_1}_{\arc,\arcone_1}$, $D^{\triangle_2}_{\arc,\arcone_1}$, $D^{\triangle_1}_{\arc,\arcone_3}$, $D^{\triangle_2}_{\arc,\arcone_4}$ and $D^{\triangle_2}_{\arc,\arcone_5}$ are identities (of the corresponding sizes), whereas
$$
D^{\triangle_1}_{\arc,\arcone_2}=
{\tiny\left[\begin{array}{ccccc}
1 & & \mathbf{0}_{1\times m} &  \\
-x                    &  &  &  \\
xw                    &  &  & & \\
\vdots                &   &       & \\
(-1)^{l-1}x^{\lfloor\frac{l}{2}\rfloor}w^{\lfloor\frac{l-1}{2}\rfloor} &  & \text{{\Huge{\textbf{1}}$_{m\times m}$}} &  \\
\vdots                &   &        &   &  \\
(-1)^{m}x^{\lfloor\frac{m+1}{2}\rfloor}w^{\lfloor\frac{m}{2}\rfloor}   &  &  &  \end{array}\right]}.
$$
Hence the arrows $\alpha$, $\beta$, $\gamma$, $\delta$, $\varepsilon$ and $\eta$ act on $\Mtauarc$ according to the following linear maps:
$$
\Mtauarc_\alpha=(D_{\arc,\arcone_2}^{\triangle_1})(\mtauarc_\alpha)=
{\tiny\left[\begin{array}{c}
1 \\
-x \\
xw \\
\vdots \\
(-1)^{l-1}x^{\lfloor\frac{l}{2}\rfloor}w^{\lfloor\frac{l-1}{2}\rfloor} \\
\vdots \\
(-1)^{m}x^{\lfloor\frac{m+1}{2}\rfloor}w^{\lfloor\frac{m}{2}\rfloor} \end{array}\right]}:\field\rightarrow\field^{m+1},
$$
$$
\Mtauarc_\beta=(D_{\arc,\arcone_3}^{\triangle_1})(\mtauarc_\beta)
=\zero:\field^{m}\rightarrow \field,
$$
$$
\Mtauarc_\gamma=(D_{\arc,\arcone_1}^{\triangle_1})(\mtauarc_\gamma)
=\left[\begin{array}{cc}\mathbf{0}_{m\times 1} & \id_{m\times m}\end{array}\right]:\field^{m+1}\rightarrow \field^{m},
$$
$$
\Mtauarc_\delta=(D_{\arc,\arcone_4}^{\triangle_2})(\mtauarc_\delta)=\zero:\zero\rightarrow\field^{m+1},
$$
$$
\Mtauarc_\varepsilon=
(D_{\arc,\arcone_5}^{\triangle_2})(\mtauarc_\varepsilon)=
\zero:\field^{m}\rightarrow \zero,
$$
$$
\text{and} \
\Mtauarc_\eta=(D_{\arc,\arcone_1}^{\triangle_2})(\mtauarc_\eta)=
$$
$$
=\left[\begin{array}{cc}
\mathbf{1}_{m\times m} & \mathbf{0}_{n\times 1}
\end{array}\right]:\field^{m+1}\rightarrow\field^{m}.
$$

Let us investigate the effect of the $\arcone_1^{\operatorname{th}}$ QP-mutation on $\calMtauarc$. An easy check using the information about $\calMtauarc$ we have collected thus far yields
$$
M_{\oin}=M_{\arcone_2}\oplus M_{\arcone_4}=\field^{m+1}\oplus\field^{m+1},
$$
$$
M_{\oout}=M_{\arcone_3}\oplus M_{\arcone_5}=\field\oplus 0.
$$
$$
\al=\left[\begin{array}{cccc}
\mathbf{0}_{m\times 1} & \mathbf{1}_{m\times m} & \mathbf{1}_{m\times m} & \mathbf{0}_{m\times 1} \end{array}\right]:M_{\oin}=\field^{m+1}\oplus\field^{m+1}\rightarrow \field^{m}=M_{\arcone_1},
$$
$$
\be=\zero:M_{\arcone_1}=\field^{m}\rightarrow\field\oplus 0=M_{\oout},
$$
$$
\ga={\tiny\left[\begin{array}{c|c}
1 & -\\
-x & -\\
wx & -\\
\vdots & \vdots \\
%(-1)^{m-2}x^{\lfloor\frac{m-1}{2}\rfloor}w^{\lfloor\frac{m-2}{2}\rfloor} & -\\
(-1)^{m-1}x^{\lfloor\frac{m}{2}\rfloor}w^{\lfloor\frac{m-1}{2}\rfloor} & -\\
(-1)^{m}x^{\lfloor\frac{m+1}{2}\rfloor}w^{\lfloor\frac{m}{2}\rfloor} & -\\
x & -\\
-wx & -\\
wx^2 & -\\
\vdots &  \vdots\\
(-1)^{m-1}w^{\lfloor\frac{m}{2}\rfloor}x^{\lfloor\frac{m+1}{2}\rfloor} & - \\
(-1)^{m}w^{\lfloor\frac{m+1}{2}\rfloor}x^{\lfloor\frac{m+2}{2}\rfloor} & -
\end{array}\right]}:
M_{\oout}=\field\oplus 0\rightarrow \field^{m+1}\oplus\field^{m+1}=M_{\oin}.
$$

It is easily seen that $\al$ is surjective and $\ga$ is injective. Moreover, a straightforward computation shows that $\ker\al=\{(u_1,u_2,\ldots,u_{m+1},-u_2,\ldots,-u_{m+1},v)\suchthat u_1,\ldots,u_{m+1},v\in\field\}$, which is isomorphic to $\field^{m+2}$ under the linear map $\ell:(u_1,u_2,\ldots,u_{m+1},-u_2,\ldots,-u_{m+1},v)\mapsto
(u_1,u_2,\ldots,u_{m+1},v)$. The image of $\image\ga$ under $\ell$ is $\ell(\image\ga)=
\{(u,-xu,xwu,\ldots,(-1)^mx^{\lfloor\frac{m+1}{2}\rfloor}w^{\lfloor\frac{m}{2}\rfloor}u,
(-1)^{m}x^{\lfloor\frac{m+2}{2}\rfloor}w^{\lfloor\frac{m+1}{2}\rfloor}u)\suchthat u\in\field\}$. Therefore,
$\frac{\ker\al}{\image\ga}\cong\field^{m+1}$, and we can describe the canonical projection $\ker\al\twoheadrightarrow\frac{\ker\al}{\image\ga}$ by means of the matrix
$$
{\tiny\left[\begin{array}{c}
x \\
-xw \\
\vdots \\
(-1)^{m+1}x^{\lfloor\frac{m+1}{2}\rfloor}w^{\lfloor\frac{m}{2}\rfloor} \\
(-1)^{m+1}x^{\lfloor\frac{m+2}{2}\rfloor}w^{\lfloor\frac{m+1}{2}\rfloor}
\end{array}
\begin{array}{c}
 \\
 \\
 \text{{\Huge{\textbf{1}}$_{(m+1)\times (m+1)}$}} \\
 \\
\end{array}\right]}:\field^{m+2}\rightarrow\field^{m+1}
$$
and the inclusion $\ker\al\hookrightarrow M_{\oin}$ by means of the matrix
$$
\io=
\left[\begin{array}{cc}
\text{{\Huge{\textbf{1}}$_{(m+1)\times (m+1)}$}} & \text{{\Large{\textbf{0}}$_{(m+1)\times 1}$}} \\
\text{{\Large{\textbf{0}}$_{m\times 1}$}} \ \ \ \ \ \ \ \ \ \text{{\LARGE $-${\textbf{1}}$_{m\times m}$}} & \text{{\Large{\textbf{0}}$_{m\times 1}$}}  \\
 \ \ \ \text{{\Large{\textbf{0}}$_{1\times (m+1)}$}} & 1
\end{array}\right]
:\field^{m+2}\rightarrow\field^{m+1}\oplus\field^{m+1}.
$$

We deduce that
\begin{displaymath}
\overline{M}_{\arcone_1}=0\oplus\field\oplus\field^{m+1}\oplus 0 \ \ \text{and} \ \
\overline{V}_{\arcone_1}=0.
\end{displaymath}

Now, from the fact that $\beta$, and $\varepsilon$ act as zero on $\Mtauarc$, we conclude that the arrows $[\beta\gamma]$, $[\varepsilon\gamma]$, $[\beta\eta]$ and $[\varepsilon\eta]$ of $\tildeqtau$ act as zero on $\overline{\Mtauarc}$. Since the arrows of $\widetilde{\mu}_{\arcone_1}(\qtau)$ not incident to $\arcone_1$ act on $\overline{\Mtauarc}$ in the exact same way they act on $\Mtauarc$, we just have to find out how the arrows $\beta^*$, $\gamma^*$, $\varepsilon^*$ and $\eta^*$ of $\tildeqtau$ act on $\overline{\Mtauarc}$. To this end, we choose the zero retraction $\rh:M_{\oout}\rightarrow\ker\ga=0$ and the section $\si:\frac{\ker\al}{\image\ga}\rightarrow\ker\al$ given by the matrix
$$
\si=\left[\begin{array}{c}
\text{{\large{\textbf{0}}$_{1\times (m+1)}$}}\\
\text{{\Large{\textbf{1}}$_{(m+1)\times (m+1)}$}}\end{array}\right]
:\field^{m+1}\rightarrow\field^{m+2}.
$$
A straightforward check yields
$$
\io\si=\left[\begin{array}{cc}
 \text{{\Large{\textbf{0}}$_{1\times m}$}} & 0 \\
\text{{\LARGE{\textbf{1}}$_{m\times m}$}} & \text{{\Large{\textbf{0}}$_{m\times 1}$}} \\
 \ \ \ \ \ \ \text{{\LARGE $-${\textbf{1}}$_{m\times m}$}} & \text{{\Large{\textbf{0}}$_{m\times 1}$}}  \\
 \text{{\Large{\textbf{0}}$_{1\times m}$}} & 1
\end{array}\right].
$$
From all these pieces of information we deduce that the action of $\beta^*$ and $\varepsilon^*$ is encoded by the matrix
$$
[\beta^* \ \varepsilon^*]={\tiny\left[\begin{array}{c|c}- & -\\
-1 & -\\
\zero_{(m+1)\times 1} & -\\
- & - \end{array}\right]}:\field\oplus0\rightarrow 0\oplus\field\oplus\field^{m+1}\oplus 0,
$$
whereas the arrows $\gamma^*$ and $\eta^*$ act according to the matrix
$$
\left[\begin{array}{c}\gamma^*\\
                      \eta^*\end{array}\right]=
\left[\begin{array}{cccc}
- & 1 & \mathbf{0}_{1\times m} & 0 \\%\text{{\Large{\textbf{0}}$_{1\times m}$}} & 0 \\
- & -x & & \\
- & wx & & \\
 & \vdots & \text{{\Huge{\textbf{1}}$_{m\times m}$}} & \text{{\Large{\textbf{0}}$_{m\times 1}$}}  \\
- & (-1)^{m-1}x^{\lfloor\frac{m}{2}\rfloor}w^{\lfloor\frac{m-1}{2}\rfloor} \\
- & (-1)^{m}x^{\lfloor\frac{m+1}{2}\rfloor}w^{\lfloor\frac{m}{2}\rfloor} \\
\hline
- & x & & \\
- & -wx & & \\
- & wx^2 & & \\
 & \vdots & \ \ \ \ \ \ \text{{\Huge $-${\textbf{1}}$_{m\times m}$}} & \text{{\Large{\textbf{0}}$_{m\times 1}$}}  \\
- & (-1)^{m-1}w^{\lfloor\frac{m}{2}\rfloor}x^{\lfloor\frac{m+1}{2}\rfloor} & & \\
- & (-1)^{m}w^{\lfloor\frac{m+1}{2}\rfloor}x^{\lfloor\frac{m+2}{2}\rfloor} &  \mathbf{0}_{1\times m} & 1%\text{{\Large{\textbf{0}}$_{1\times m}$}} & 1
\end{array}\right]
:0\oplus\field\oplus\field^{m+1}\oplus 0\rightarrow \field^{m+1}\oplus\field^{m+1}.
$$

This completes the computation of the action of the arrows of $\widetilde{\mu}_{\arcone_1}(Q(\tau))$ on $\overline{\Mtauarc}$.
We have thus computed the premutation $\widetilde{\mu}_{\arcone_1}(\calMtauarc)=(\widetilde{\mu}_{\arcone_1}(Q(\tau)),\tildestau,$ $\overline{\Mtauarc},0)$.

On the other hand, if we flip the arc $\arcone_1$ of $\tau$ we obtain the ideal triangulation $\sigma=f_{\arcone_1}(\tau)$ sketched in Figure \ref{curvecase1var2flipped}  (in a clear abuse of notation, we are using the same symbol $\arcone_1$ in both $\tau$ and $\sigma$).
\begin{figure}[!h]
                \caption{}\label{curvecase1var2flipped}
                \centering
                \includegraphics[scale=.50]{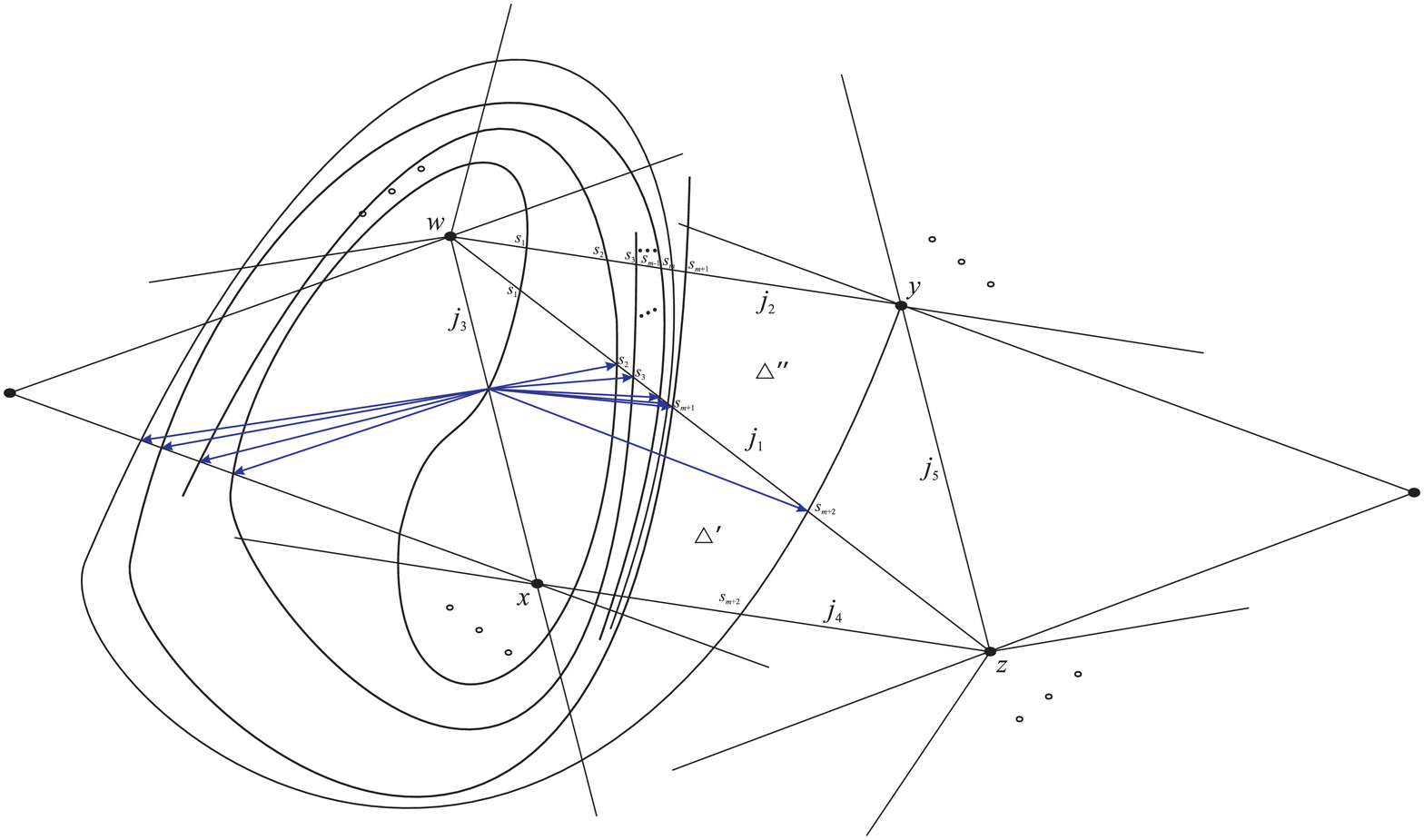}
        \end{figure}\\
The relevant vector spaces attached to the vertices of $Q(\sigma)$ are
$$
N_{\arcone_{1}}=\Msigmaarc_{\arcone_{1}}=\field^{m+2}, \ N_{\arcone_{2}}=\Msigmaarc_{\arcone_2}=\field^{m+1},
$$
$$
N_{\arcone_{3}}=\Msigmaarc_{\arcone_{3}}=\field, \ N_{\arcone_{4}}=\Msigmaarc_{\arcone_4}=\field^{m+1},
$$
$$
\text{and} \ N_{\arcone_5}=\Msigmaarc_{\arcone_5}=0.
$$
We also have
$$
\badintersection^{\triangle',l}_{\arc,\arcone_1}=\{(s_1,s_{l+1},b(d^{\triangle',l}_{\arc,\arcone_1}),x)\} \ \text{for} \ 1\leq l\leq m+1\},
$$
$$
\text{and} \ \badintersection^{\triangle',m+1+r}_{\arc,\arcone_1}=\badintersection^{\triangle',r}_{\arc,\arcone_2}=
\badintersection^{\triangle',r}_{\arc,\arcone_3}=
\badintersection^{\triangle',r}_{\arc,\arcone_4}=\badintersection^{\triangle',r}_{\arc,\arcone_5}=
$$
$$
=\badintersection^{\triangle'',r}_{\arc,\arcone_1}=\badintersection^{\triangle'',r}_{\arc,\arcone_2}=\badintersection^{\triangle'',r}_{\arc,\arcone_3}=
\badintersection^{\triangle'',r}_{\arc,\arcone_4}=\badintersection^{\triangle'',r}_{\arc,\arcone_5}=\varnothing \ \text{for} \ r\geq 1.
$$

The relevant detour matrices are therefore defined as follows. The matrices $D_{\arc,\arcone_1}^{\triangle''}$, $D^{\triangle''}_{\arc,\arcone_2}$, $D^{\triangle'}_{\arc,\arcone_3}$, $D^{\triangle'}_{\arc,\arcone_4}$ and $D^{\triangle''}_{\arc,\arcone_5}$ are identities (of the corresponding sizes), whereas
$$
D_{\arc,\arcone_1}^{\triangle'}=
{\tiny\left[\begin{array}{ccccc}
1 & & \mathbf{0}_{1\times (m+1)} &  \\
-x                    &  &  &  \\
xw                    &  &  & & \\
\vdots                &   & \text{{\Huge{\textbf{1}}$_{(m+1)\times (m+1)}$}} &  \\
(-1)^{m}x^{\lfloor\frac{m+1}{2}\rfloor}w^{\lfloor\frac{m}{2}\rfloor}   &  &  &  \\
(-1)^{m+1}x^{\lfloor\frac{m+2}{2}\rfloor}w^{\lfloor\frac{m+1}{2}\rfloor}  &  &  &  \end{array}\right]}.
$$
Hence the arrows $\beta^*$, $[\beta\eta]$, $\eta^*$, $\gamma^*$, $\varepsilon^*$ and $[\varepsilon\gamma]$ act on $\Msigmaarc$ according to the following linear maps:
$$
\Msigmaarc_{\beta^*}=(D_{\arc,\arcone_1}^{\triangle'})(\msigmaarc_{\beta^*})=
{\tiny\left[\begin{array}{c}
1 \\
-x \\
xw \\
\vdots \\
(-1)^{m}x^{\lfloor\frac{m+1}{2}\rfloor}w^{\lfloor\frac{m}{2}\rfloor} \\
(-1)^{m+1}x^{\lfloor\frac{m+2}{2}\rfloor}w^{\lfloor\frac{m+1}{2}\rfloor} \end{array}\right]}:\field\rightarrow\field^{m+2},
$$
$$
\Msigmaarc_{[\beta\eta]}=
(D_{\arc,\arcone_3}^{\triangle'})(\msigmaarc_{[\beta\eta]})=
\zero:\field^{m+1}\rightarrow \field,
$$
$$
\Msigmaarc_{\eta^*}=(D_{\arc,\arcone_4}^{\triangle'})(\msigmaarc_{\eta^*})=
\left[\begin{array}{cc}
\mathbf{0}_{(m+1)\times 1} & \mathbf{1}_{(m+1)\times(m+1)}\end{array}\right]:
\field^{m+2}\rightarrow \field^{m+1},
$$
$$
\Msigmaarc_{\gamma^*}=(D_{\arc,\arcone_2}^{\triangle''})(\msigmaarc_{\gamma^*})=
\left[\begin{array}{cc}
\mathbf{1}_{(m+1)\times(m+1)} & \mathbf{0}_{(m+1)\times 1}\end{array}\right]:
\field^{m+2}\rightarrow \field^{m+1},
$$
$$
\Msigmaarc_{\varepsilon^*}=(D_{\arc,\arcone_1}^{\triangle''})(\msigmaarc_{\varepsilon^*})=
\zero:
\zero\rightarrow \field^{m+2},
$$
$$
\Msigmaarc_{[\varepsilon\gamma]}=
(D_{\arc,\arcone_5}^{\triangle''})(\msigmaarc_{[\varepsilon\gamma]})=
\zero:\field^{m+1}\rightarrow \zero.
$$

We have thus computed the spaces and linear maps of $\calMsigmaarc$ relevant to the flip of the arc $\arcone_1$. Now we have to compare it to $\mu_{\arcone_1}(\calMtauarc)$. The triple $\Phi=(\varphi,\psi,\eta)$ is a right-equivalence between these QP-representations, where
\begin{itemize}\item $\varphi:(\overline{Q(\tau)},\lambda(\tildestau)-\alpha[\beta\gamma]-\delta[\varepsilon\eta])\rightarrow \qssigma$ is the right-equivalence whose action on the arrows is given by
$$
\beta^*\mapsto-\beta^*, \ \eta^*\mapsto-\eta^*,
$$
and the identity in the rest of the arrows;
\item $\psi:\overline{\Mtauarc}\rightarrow\Msigmaarc$ is the vector space isomorphism given by the identity $\id:\overline{\Mtauarc}_{\arctwo}\rightarrow\Msigmaarc_{\arctwo}$ for $\arctwo\neq\arcone_1$, and the matrix
    $$
    \psi_{\arcone_{1}}=
{\tiny\left[\begin{array}{ccc}
1 & \text{{\large{\textbf{0}}$_{1\times m}$}} & 0 \\%\text{{\Large{\textbf{0}}$_{1\times m}$}} & 0 \\
-x & & \\
wx & & \\
\vdots & \text{{\Huge{\textbf{1}}$_{m\times m}$}} & \text{{\Large{\textbf{0}}$_{m\times 1}$}}  \\
(-1)^{m}x^{\lfloor\frac{m+1}{2}\rfloor}w^{\lfloor\frac{m}{2}\rfloor} & & \\
(-1)^{m+1}x^{\lfloor\frac{m+2}{2}\rfloor}w^{\lfloor\frac{m+1}{2}\rfloor} & \text{{\large{\textbf{0}}$_{1\times m}$}} & -1 \\
\end{array}\right]}
:\overline{\Mtauarc}_{\arcone_1}\rightarrow\Msigmaarc_{\arcone_1};
    $$
\item $\eta$ is the zero map (of the zero space).
\end{itemize}

This finishes the proof of Theorem \ref{thm:flip<->mut} for configuration 2 of Figure \ref{Fig:component1}. Furthermore, ff $[\beta\eta]b$ and $[\varepsilon\gamma]c$ are not 2-cycles, the case just analyzed deals also with configuration 5 of that Figure.
\end{case}

\begin{case} This is configuration 1 from Figure \ref{Fig:component3punctmonogon}. Assume that, around the arc $\arcone_1$ to be flipped, $\tau$ and $\arc$ look as shown in Figure \ref{Fig:flippingi'nonnegative}.
\begin{figure}[!h]
                \caption{}\label{Fig:flippingi'nonnegative}
                \centering
                \includegraphics[scale=.5]{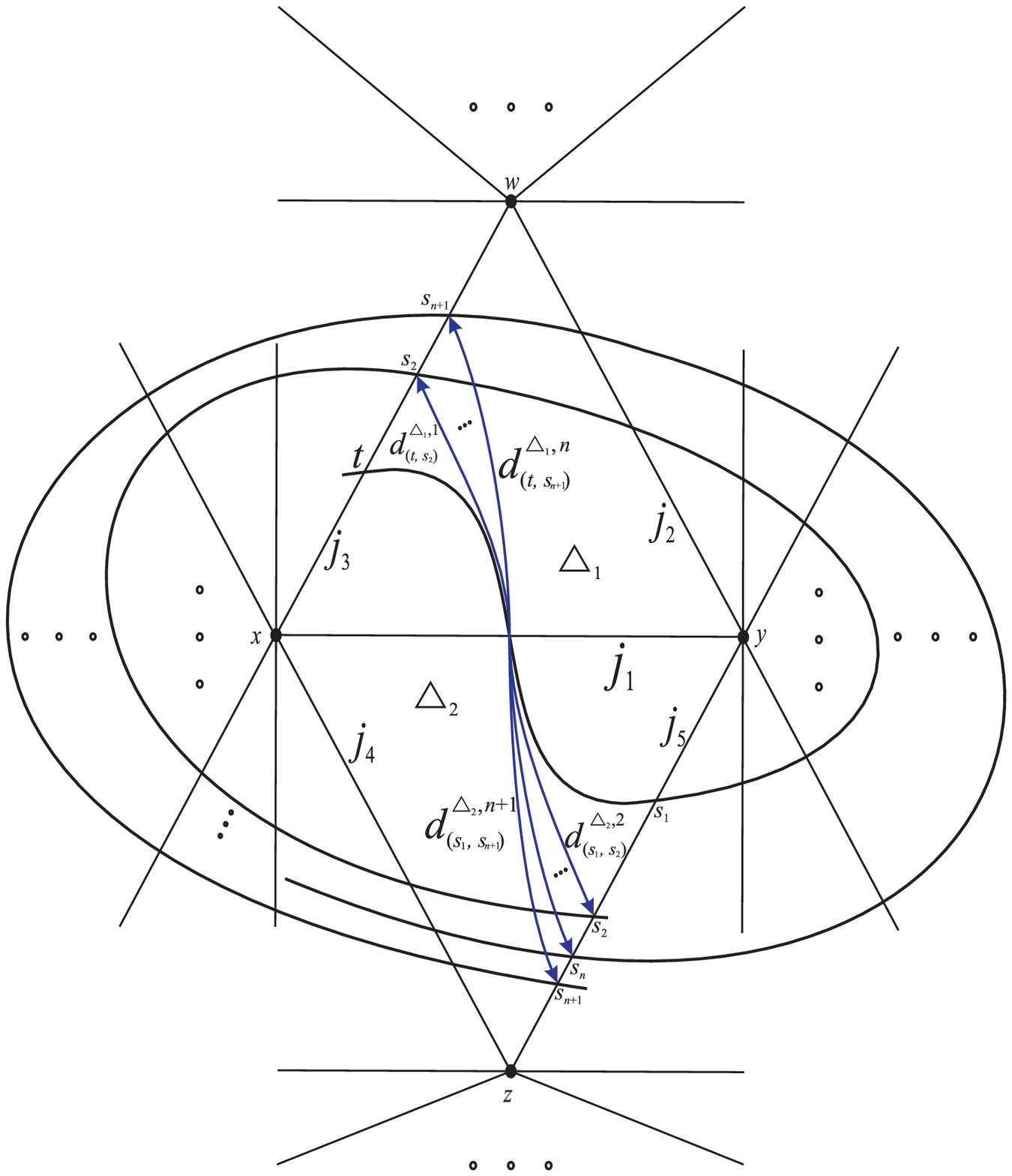}
        \end{figure}

The relevant vector spaces assigned in $\Mtauarc$ to the vertices of $Q(\tau)$ are
$$
M_{\arcone_{1}}=\Mtauarc_{\arcone_{1}}=\field, \ M_{\arcone_{2}}=\Mtauarc_{\arcone_{2}}=\field^{n},
$$
$$
M_{\arcone_{3}}=\Mtauarc_{\arcone_{3}}=\field^n, \ M_{\arcone_{4}}=\Mtauarc_{\arcone_{4}}=\field^{n},
$$
$$
\text{and} \ M_{\arcone_{5}}=\Mtauarc_{\arcone_{5}}=\field^{n+1}.
$$

We also have
$$
\badintersection^{\triangle_1,l}_{\arc,\arcone_1}=\badintersection^{\triangle_2,l}_{\arc,\arcone_1}=
\badintersection^{\triangle_1,l}_{\arc,\arcone_2}=\badintersection^{\triangle_2,l}_{\arc,\arcone_4}=
\badintersection^{\triangle_1,2l}_{\arc,\arcone_3}=
\badintersection^{\triangle_2,2l-1}_{\arc,\arcone_5}=
\varnothing \ \text{for} \ l\geq 1, \ \ \text{and}
$$
$$
\badintersection^{\triangle_1,2l-1}_{\arc,\arcone_3}=\{(t,s_{l+1},b(d^{\triangle_1,2l-1}_{\arc,\arcone_2}),y)\} \ \text{for} \ 1\leq l\leq n \ \ \text{and} \ \
\badintersection^{\triangle_2,2l}_{\arc,\arcone_5}=\{(s_1,s_{l+1},b(d^{\triangle_2,2l}_{\arc,\arcone_2}),x)\} \ \text{for} \ 2\leq l\leq n+1.
$$

The relevant detour matrices are therefore defined as follows. The matrices $D^{\triangle_1}_{\arc,\arcone_1}$, $D^{\triangle_2}_{\arc,\arcone_1}$, $D^{\triangle_1}_{\arc,\arcone_2}$ and $D^{\triangle_2}_{\arc,\arcone_4}$ are identities (of the corresponding sizes), whereas
$$
D^{\triangle_1}_{\arc,\arcone_3}=
{\tiny\left[\begin{array}{ccccc}
1 & & \mathbf{0}_{1\times n} &  \\
-y                    &  &  &  \\
-y^2x                    &  &  & & \\
\vdots                &   &       & \\
-y^{l}x^{l-1} &  & \text{{\Huge{\textbf{1}}$_{n\times n}$}} &  \\
\vdots                &   &        &   &  \\
-y^{n}x^{n-1}   &  &  &  \end{array}\right]}, \
D^{\triangle_2}_{\arc,\arcone_5}=
{\tiny\left[\begin{array}{ccccc}
1 & & \mathbf{0}_{1\times n} &  \\
xy                    &  &  &  \\
x^2y^2                    &  &  & & \\
\vdots                &   &       & \\
x^{l}y^{l} &  & \text{{\Huge{\textbf{1}}$_{n\times n}$}} &  \\
\vdots                &   &        &   &  \\
x^{n+1}y^{n+1}   &  &  &  \end{array}\right]}.
$$
Hence the arrows $\alpha$, $\beta$, $\gamma$, $\delta$, $\varepsilon$ and $\eta$ act on $\Mtauarc$ according to the following linear maps:
$$
\Mtauarc_\alpha=(D_{\arc,\arcone_2}^{\triangle_1})(\mtauarc_\alpha)
=\id:\field^n\rightarrow\field^{n},
$$
$$
\Mtauarc_\beta=\pi(D_{\arc,\arcone_3}^{\triangle_1})(\mtauarc_\beta)
={\tiny\left[\begin{array}{c}
-y \\
-y^2x \\
\vdots \\
-y^{l}x^{l-1} \\
\vdots \\
-y^{n}x^{n-1}\end{array}\right]}
:\field\rightarrow \field^n,
$$
$$
\Mtauarc_\gamma=(D_{\arc,\arcone_1}^{\triangle_1})(\mtauarc_\gamma)=
\zero:\field^{n}\rightarrow \field,
$$
$$
\Mtauarc_\delta=(D_{\arc,\arcone_4}^{\triangle_2})(\mtauarc_\delta)=[\zero_{n\times 1} \ \id_{n\times n}]:\field^{n+1}\rightarrow\field^{n},
$$
$$
\Mtauarc_\varepsilon=
(D_{\arc,\arcone_5}^{\triangle_2})(\mtauarc_\varepsilon)=
{\tiny\left[\begin{array}{c}
1 \\
xy \\
x^2y^2 \\
\vdots \\
x^{l}y^{l} \\
\vdots \\
x^{n}y^{n}\end{array}\right]}
:\field\rightarrow \field^{n+1},
$$
$$
\text{and} \
\Mtauarc_\eta=(D_{\arc,\arcone_1}^{\triangle_2})(\mtauarc_\eta)=
\zero:\field^{n}\rightarrow\field.
$$

Let us investigate the effect of the $\arcone_1^{\operatorname{th}}$ QP-mutation on $\calMtauarc$. An easy check using the information about $\calMtauarc$ we have collected thus far yields
$$
M_{\oin}=M_{\arcone_2}\oplus M_{\arcone_4}=\field^{n}\oplus\field^{n},
$$
$$
M_{\oout}=M_{\arcone_3}\oplus M_{\arcone_5}=\field^n\oplus\field^{n+1}.
$$
$$
\al=\zero:M_{\oin}=\field^{n}\oplus\field^{n}\rightarrow \field=M_{\arcone_1},
$$
$$
\be={\tiny\left[\begin{array}{c}
-y \\
-y^2x \\
\vdots \\
-y^{n}x^{n-1}\\
1 \\
xy \\
x^2y^2 \\
\vdots\\
x^{n}y^{n}
\end{array}\right]}:M_{\arcone_1}=\field\rightarrow\field^{n}\oplus \field^{n+1}=M_{\oout},
$$
$$
\ga={\tiny\left[\begin{array}{c|c}
\text{{\Huge{\textbf{1}}$_{n\times n}$}} & \text{{\Huge $y${\textbf{1}}$_{n\times n}$}} \ \text{{\large{\textbf{0}}$_{n\times 1}$}} \\
\hline
\text{{\Huge $x${\textbf{1}}$_{n\times n}$}} & \text{{\large{\textbf{0}}$_{n\times 1}$}} \ \ \  \text{{\Huge{\textbf{1}}$_{n\times n}$}}
\end{array}\right]}:
M_{\oout}=\field^n\oplus\field^{n+1}\rightarrow \field^{n}\oplus\field^{n}=M_{\oin}.
$$

It is trivially seen that $\be$ is injective and $\ker\al=\field{n}\oplus\field{n}$. Moreover, a straightforward computation shows that $\ker\ga=\{(u_1,\ldots,u_n,v_1,\ldots,v_{n+1})\in\field^{2n+1}\suchthat u_l=-yv_l \ \text{and} \ xu_l=v_{l+1} \ \text{for} \ 1\leq l\leq n\}=\{(-yv,xy^2v,\ldots,(-1)^nx^{n-1}y^nv,v,-xyv,\ldots,(-1)^nx^ny^nv)\}$, which is isomorphic to $\field$ under the linear map $\ell:(-yv,xy^2v,\ldots,(-1)^nx^{n-1}y^nv,v,-xyv,\ldots,(-1)^nx^ny^nv)\mapsto
v$. The image of $\image\be$ under $\ell$ is $\field$ and hence,
$\frac{\ker\ga}{\image\be}\cong 0$. Also, a standard dimension count yields surjectivity of $\ga$.
We deduce that
\begin{displaymath}
\overline{M}_{\arcone_1}=0\oplus(\field^n\oplus\field^n)\oplus0\oplus 0 \ \ \text{and} \ \
\overline{V}_{\arcone_1}=0.
\end{displaymath}

Now, from the fact that $\gamma$, and $\eta$ act as zero on $\Mtauarc$, we conclude that the arrows $[\beta\gamma]$, $[\varepsilon\gamma]$, $[\beta\eta]$ and $[\varepsilon\eta]$ of $\widetilde{\mu}_{\arcone_1}(\qtau)$ act as zero on $\overline{\Mtauarc}$. Since the arrows of $\widetilde{\mu}_{\arcone_1}(\qtau)$ not incident to $\arcone_1$ act on $\overline{\Mtauarc}$ in the exact same way they act on $\Mtauarc$, we just have to find out how the arrows $\beta^*$, $\gamma^*$, $\varepsilon^*$ and $\eta^*$ of $\tildeqtau$ act on $\overline{\Mtauarc}$. To this end, we choose the zero section $\si:\frac{\ker\al}{\image\ga}=0\rightarrow\ker\al$ and any retraction $\rh:M_{\oout}\rightarrow\ker\ga$.
A trivial check yields $\io\si=\mathbf{0}:0\rightarrow\field^n\oplus\field^n$ and $-\pipi\rh=\zero:\field^{n}\oplus\field{n+1}\rightarrow 0$
From all these pieces of information we deduce that the action of $\beta^*$ and $\varepsilon^*$ is encoded by the matrix
$$
[\beta^* \ \varepsilon^*]={\tiny\left[\begin{array}{c|c}- & -\\
\text{{\Huge $-${\textbf{1}}$_{n\times n}$}} & \text{{\Huge $-y${\textbf{1}}$_{n\times n}$}} \ \text{{\large{\textbf{0}}$_{n\times 1}$}} \\
\text{{\Huge $-x${\textbf{1}}$_{n\times n}$}} & \text{{\large{\textbf{0}}$_{n\times 1}$}} \ \ \  \text{{\Huge $-${\textbf{1}}$_{n\times n}$}} \\
- & -\\
- & - \end{array}\right]}:\field^n\oplus\field^{n+1}\rightarrow 0\oplus(\field^n\oplus\field^n)\oplus0\oplus 0,
$$
whereas the arrows $\gamma^*$ and $\eta^*$ act according to the matrix
$$
\left[\begin{array}{c}\gamma^*\\
                      \eta^*\end{array}\right]=\left[\begin{array}{ccccc}
- & \id_{n\times n} & \zero_{n\times n} & - & -\\
\hline
- & \zero_{n\times n} & \id_{n\times n} & - & -
\end{array}\right]
:0\oplus(\field^n\oplus\field^n)\oplus 0\oplus 0\rightarrow \field^{n}\oplus\field^{n}.
$$

This completes the computation of the action of the arrows of $\widetilde{\mu}_{\arcone_1}(Q(\tau))$ on $\overline{\Mtauarc}$.
We have thus computed the premutation $\widetilde{\mu}_{\arcone_1}(\calMtauarc)=(\widetilde{\mu}_{\arcone_1}(Q(\tau)),\tildestau,$ $\overline{\Mtauarc},0)$.

On the other hand, if we flip the arc $\arcone_1$ of $\tau$ we obtain the ideal triangulation $\sigma=f_{\arcone_1}(\tau)$ sketched in Figure \ref{Fig:flippingi'nonnegativeflipped}  (in a clear abuse of notation, we are using the same symbol $\arcone_1$ in both $\tau$ and $\sigma$).
\begin{figure}[!h]
                \caption{}\label{Fig:flippingi'nonnegativeflipped}
                \centering
                \includegraphics[scale=.50]{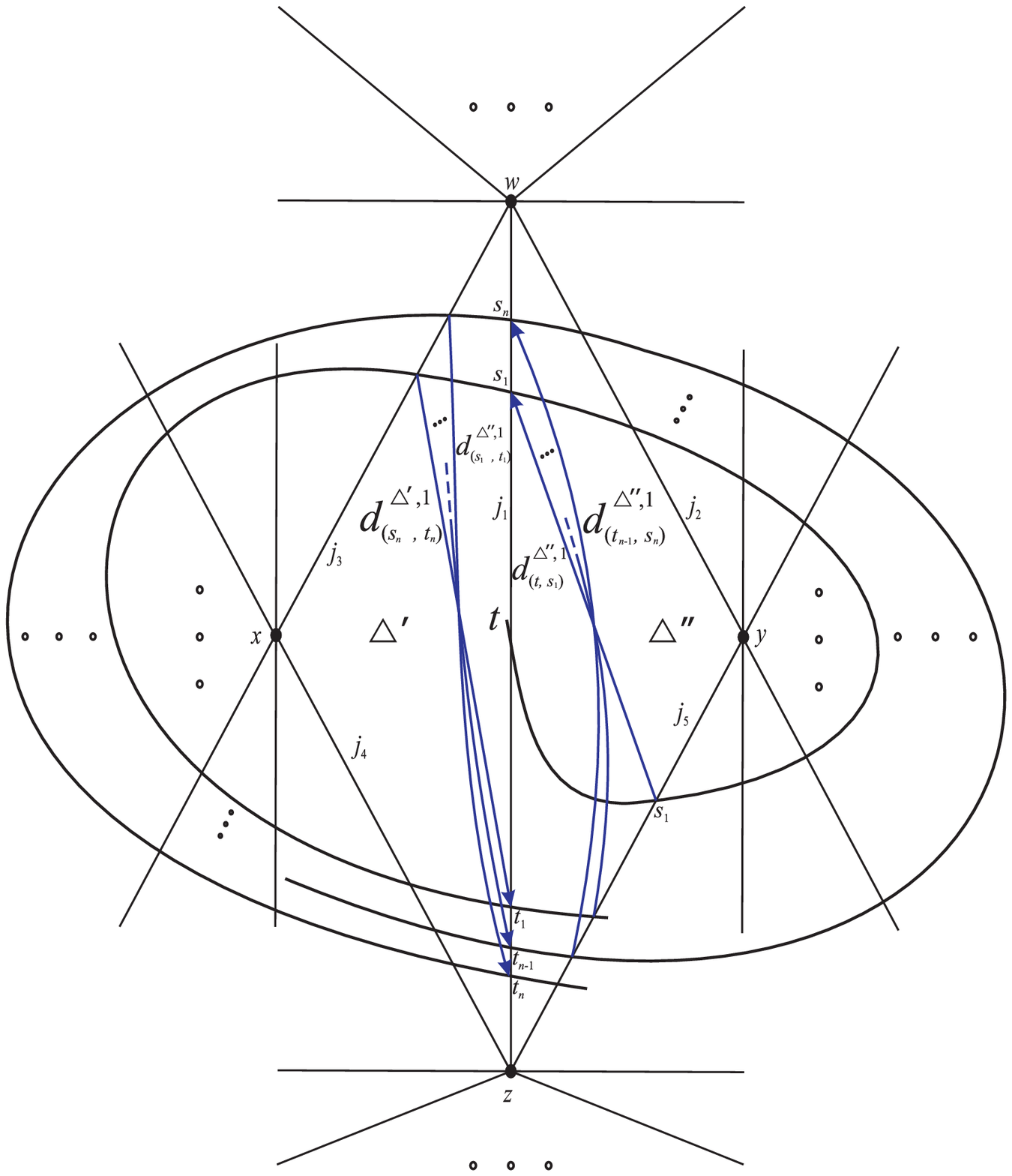}
        \end{figure}\\
The relevant vector spaces attached to the vertices of $Q(\sigma)$ are
$$
N_{\arcone_{1}}=\Msigmaarc_{\arcone_{1}}=\field^{2n}, \ N_{\arcone_{2}}=\Msigmaarc_{\arcone_2}=\field^{n},
$$
$$
N_{\arcone_{3}}=\Msigmaarc_{\arcone_{3}}=\field^n, \ N_{\arcone_{4}}=\Msigmaarc_{\arcone_4}=\field^{n},
$$
$$
\text{and} \ N_{\arcone_5}=\Msigmaarc_{\arcone_5}=\field^{n+1}.
$$
We also have
$$
\badintersection^{\triangle',1}_{\arc,\arcone_1}=\{(s_l,t_{l},b(d^{\triangle',1}_{s_l,t_{l}}),x) \suchthat 1\leq l\leq n\},
\ \badintersection^{\triangle'',1}_{\arc,\arcone_1}=\{(t,s_1,b(d^{\triangle'',1}_{t,s_1}),y)\}\cup
\{(t_l,s_{l+1},b(d^{\triangle'',1}_{t_l,s_{l+1}}),y) \suchthat 1\leq l\leq n-1\}
$$
$$
\text{and} \ \badintersection^{\triangle',r+1}_{\arc,\arcone_1}=\badintersection^{\triangle'',r+1}_{\arc,\arcone_1}=
\badintersection^{\triangle'',r}_{\arc,\arcone_2}=
\badintersection^{\triangle',r}_{\arc,\arcone_3}=
\badintersection^{\triangle',r}_{\arc,\arcone_4}=\badintersection^{\triangle'',r}_{\arc,\arcone_5}=\varnothing \ \text{for} \ r\geq 1.
$$

The relevant detour matrices are therefore defined as follows. The matrices $D^{\triangle''}_{\arc,\arcone_2}$, $D^{\triangle'}_{\arc,\arcone_3}$, $D^{\triangle'}_{\arc,\arcone_4}$ and $D^{\triangle''}_{\arc,\arcone_5}$ are identities (of the corresponding sizes), whereas
$$
D_{\arc,\arcone_1}^{\triangle'}=
{\tiny\left[\begin{array}{c}
\text{{\LARGE\textbf{1}$_{n\times n}$}}\\
\text{{\large\textbf{0}$_{1\times n}$}}\\
\text{{\LARGE$-x$\textbf{1}$_{n\times n}$}}
\end{array}
\begin{array}
{c}
\text{{\LARGE\textbf{0}$_{n\times (n+1)}$}}\\
\text{{\Huge\textbf{1}$_{(n+1)\times (n+1)}$}}
\end{array}\right]} \ \ \text{and}
$$
$$
D_{\arc,\arcone_1}^{\triangle''}=
{\tiny\left[\begin{array}{c}
\text{{\Huge\textbf{1}$_{n\times n}$}}\\
\text{{\Huge\textbf{0}$_{(n+1)\times n}$}}
\end{array}
\begin{array}{c}
% \\
\text{{\Huge$-y$\textbf{1}$_{n\times n}$}} \ \ \  \text{{\large\textbf{0}$_{n\times 1}$}}\\
\text{{\Huge\textbf{1}$_{(n+1)\times (n+1)}$}}
\end{array}
\right]}
$$
(here, the order in which the elements of $\arc\cap\arcone_1$ are taken is $(s_1,\ldots,s_n,t,t_1,\ldots,t_n)$)
Hence the arrows $\beta^*$, $[\beta\eta]$, $\eta^*$, $\gamma^*$, $\varepsilon^*$ and $[\varepsilon\gamma]$ act on $\Msigmaarc$ according to the following linear maps:
$$
\Msigmaarc_{\beta^*}=(\pi)(D_{\arc,\arcone_1}^{\triangle'})(\msigmaarc_{\beta^*})=
{\tiny\left[\begin{array}{c}
\id_{n\times n}\\
-x\id_{n\times n}
\end{array}\right]}:\field^n\rightarrow\field^{2n},
$$
$$
\Msigmaarc_{[\beta\eta]}=
(D_{\arc,\arcone_3}^{\triangle'})(\msigmaarc_{[\beta\eta]})=
\zero:\field^n\rightarrow \field^n,
$$
$$
\Msigmaarc_{\eta^*}=(D_{\arc,\arcone_4}^{\triangle'})(\msigmaarc_{\eta^*})(\ell)=
[\mathbf{0}_{n\times n} \ \mathbf{1}_{n\times n}]:
\field^{2n}\rightarrow \field^{n},
$$
$$
\Msigmaarc_{\gamma^*}=(D_{\arc,\arcone_2}^{\triangle''})(\msigmaarc_{\gamma^*})(\ell)=
[\id_{n\times n} \ \zero_{n\times n}]:
\field^{2n}\rightarrow \field^{n},
$$
$$
\Msigmaarc_{\varepsilon^*}=(\pi)(D_{\arc,\arcone_1}^{\triangle''})(\msigmaarc_{\varepsilon^*})=
{\tiny\left[\begin{array}{c}
\text{{\Huge$-y$\textbf{1}$_{n\times n}$}} \ \ \  \text{{\large\textbf{0}$_{n\times 1}$}}\\
\text{{\large\textbf{0}$_{n\times 1}$}} \ \ \ \text{{\Huge\textbf{1}$_{n\times n}$}}
\end{array}\right]}:
\field^{n+1}\rightarrow \field^{2n},
$$
$$
\Msigmaarc_{[\varepsilon\gamma]}=
(D_{\arc,\arcone_5}^{\triangle''})(\msigmaarc_{[\varepsilon\gamma]})=
\zero:\field^{n}\rightarrow \field^{n+1}.
$$

We have thus computed the spaces and linear maps of $\calMsigmaarc$ relevant to the flip of the arc $\arcone_1$. Now we have to compare it to $\mu_{\arcone_1}(\calMtauarc)$. The triple $\Phi=(\varphi,\psi,\eta)$ is a right-equivalence between these QP-representations, where
\begin{itemize}\item $\varphi:(\overline{Q(\tau)},\lambda(\tildestau)-\alpha[\beta\gamma]-\delta[\varepsilon\eta])\rightarrow \qssigma$ is the right-equivalence whose action on the arrows is given by
$$
\beta^*\mapsto-\beta^*, \ \eta^*\mapsto-\eta^*,
$$
and the identity in the rest of the arrows;
\item $\psi:\overline{\Mtauarc}\rightarrow\Msigmaarc$ is the vector space isomorphism given by the identity $\id:\overline{\Mtauarc}_{\arctwo}\rightarrow\Msigmaarc_{\arctwo}$ for $\arctwo\neq\arcone_1$, and the matrix
    $$
    \psi_{\arcone_{1}}=
\left[\begin{array}{cc}
\mathbf{1}_{n\times n} & \mathbf{0}_{n\times n}\\
\zero_{n\times n} & -\id_{n\times n}
\end{array}\right]
:\overline{\Mtauarc}_{\arcone_1}\rightarrow\Msigmaarc_{\arcone_1};
    $$
\item $\eta$ is the zero map (of the zero space).
\end{itemize}

This finishes the proof of Theorem \ref{thm:flip<->mut} for configuration 1 of Figure \ref{Fig:component3punctmonogon}.
\end{case}

\begin{case} This is going to be configuration 10 of Figure \ref{Fig:component2}. Assume that, around the arc $\arcone_1$ to be flipped, $\tau$ and $\arc$ look as shown in Figure \ref{Fig:punctmonogonnodetours}.
\begin{figure}[!h]
                \caption{}\label{Fig:punctmonogonnodetours}
                \centering
                \includegraphics[scale=.3]{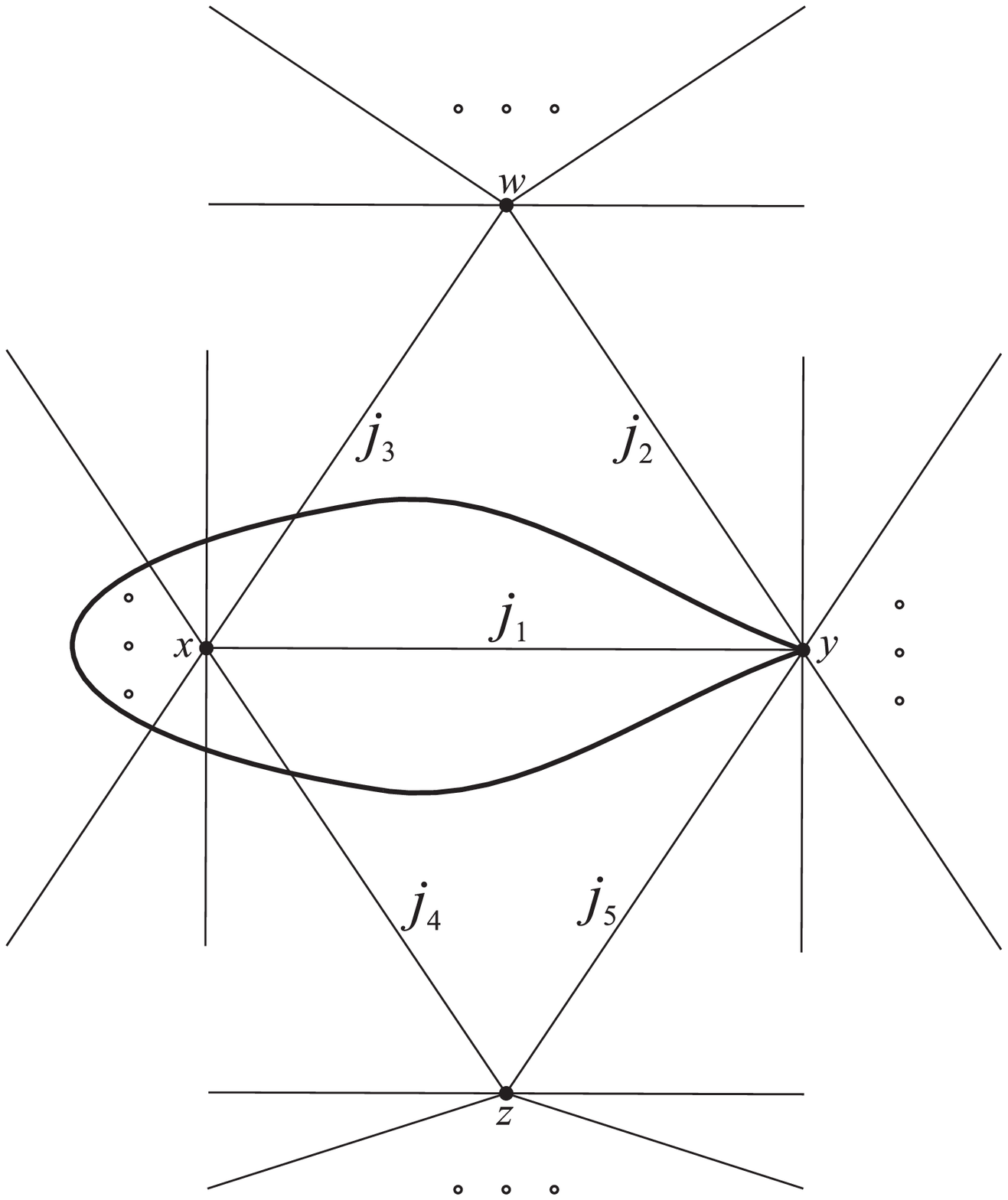}
        \end{figure}

The relevant vector spaces assigned in $\Mtauarc$ to the vertices of $Q(\tau)$ are
$$
M_{\arcone_{1}}=\Mtauarc_{\arcone_{1}}=0, \ M_{\arcone_{2}}=\Mtauarc_{\arcone_{2}}=0,
$$
$$
M_{\arcone_{3}}=\Mtauarc_{\arcone_{3}}=\field, \ M_{\arcone_{4}}=\Mtauarc_{\arcone_{4}}=\field,
$$
$$
\text{and} \ M_{\arcone_{5}}=\Mtauarc_{\arcone_{5}}=0.
$$

All relevant detour matrices are identities (of corresponding sizes)therefore defined as follows.
Hence the arrows $\alpha$, $\beta$, $\gamma$, $\delta$, $\varepsilon$ and $\eta$ act on $\Mtauarc$ according to the following linear maps:
$$
\Mtauarc_\alpha=(D_{\arc,\arcone_2}^{\triangle_1})(\mtauarc_\alpha)=
\zero:\field\rightarrow 0,
$$
$$
\Mtauarc_\beta=(D_{\arc,\arcone_3}^{\triangle_1})(\mtauarc_\beta)=
\zero:0\rightarrow \field,
$$
$$
\Mtauarc_\gamma=(D_{\arc,\arcone_1}^{\triangle_1})(\mtauarc_\gamma)=
\zero:0\rightarrow 0,
$$
$$
\Mtauarc_\delta=(D_{\arc,\arcone_4}^{\triangle_2})(\mtauarc_\delta)=
\zero:0\rightarrow\field,
$$
$$
\Mtauarc_\varepsilon=
(D_{\arc,\arcone_5}^{\triangle_2})(\mtauarc_\varepsilon)=
\zero:0\rightarrow 0,
$$
$$
\text{and} \
\Mtauarc_\eta=(D_{\arc,\arcone_1}^{\triangle_2})(\mtauarc_\eta)=
\zero:\field\rightarrow 0.
$$

Let us investigate the effect of the $\arcone_1^{\operatorname{th}}$ QP-mutation on $\calMtauarc$. An easy check using the information about $\calMtauarc$ we have collected thus far yields
$$
M_{\oin}=M_{\arcone_2}\oplus M_{\arcone_4}=0\oplus\field,
$$
$$
M_{\oout}=M_{\arcone_3}\oplus M_{\arcone_5}=\field\oplus 0.
$$
$$
\al=\zero:M_{\oin}=0\oplus\field\rightarrow 0=M_{\arcone_1},
$$
$$
\be=\zero:M_{\arcone_1}=0\rightarrow\field\oplus 0M_{\oout},
$$
$$
\ga=\left[\begin{array}{c|c}
- & -\\
\hline
x & -
\end{array}\right]:
M_{\oout}=\field\oplus 0\rightarrow 0\oplus\field=M_{\oin}.
$$

It is trivially seen that $\al$ and $\ga$ are surjective and $\be$ and $\ga$ are injective. We deduce that
\begin{displaymath}
\overline{M}_{\arcone_1}=0\oplus\field\oplus0\oplus 0 \ \ \text{and} \ \
\overline{V}_{\arcone_1}=0.
\end{displaymath}

Now, from the fact that $\beta$, $\gamma$, $\varepsilon$ and $\eta$ act as zero on $\Mtauarc$, we conclude that the arrows $[\beta\gamma]$, $[\varepsilon\gamma]$, $[\beta\eta]$ and $[\varepsilon\eta]$ of $\tildeqtau$ act as zero on $\overline{\Mtauarc}$. Since the arrows of $\widetilde{\mu}_{\arcone_1}(\qtau)$ not incident to $\arcone_1$ act on $\overline{\Mtauarc}$ in the exact same way they act on $\Mtauarc$, we just have to find out how the arrows $\beta^*$, $\gamma^*$, $\varepsilon^*$ and $\eta^*$ of $\tildeqtau$ act on $\overline{\Mtauarc}$. To this end, we choose the zero retraction $\rh:M_{\oout}\rightarrow\ker\ga=0$ and the zero section $\si:\frac{\ker\al}{\image\ga}\rightarrow\ker\al$. A straightforward check yields that the action of $\beta^*$ and $\varepsilon^*$ is encoded by the matrix
$$
[\beta^* \ \varepsilon^*]=\left[\begin{array}{c|c}- & -\\
-1 & 0 \\
- & -\\
- & - \end{array}\right]:\field\oplus0\rightarrow 0\oplus\field\oplus0\oplus 0,
$$
whereas the arrows $\gamma^*$ and $\eta^*$ act according to the matrix
$$
\left[\begin{array}{c}\gamma^*\\
                      \eta^*\end{array}\right]=
\left[\begin{array}{cccc}
- & - & - & - \\
\hline
- & x & - & -
\end{array}\right]
:0\oplus\field\oplus0\oplus 0\rightarrow 0\oplus\field.
$$

This completes the computation of the action of the arrows of $\widetilde{\mu}_{\arcone_1}(Q(\tau))$ on $\overline{\Mtauarc}$.
We have thus computed the premutation $\widetilde{\mu}_{\arcone_1}(\calMtauarc)=(\widetilde{\mu}_{\arcone_1}(Q(\tau)),\tildestau,$ $\overline{\Mtauarc},0)$.

On the other hand, if we flip the arc $\arcone_1$ of $\tau$ we obtain the ideal triangulation $\sigma=f_{\arcone_1}(\tau)$ sketched in Figure \ref{Fig:punctmonogonnodetoursflipped}  (in a clear abuse of notation, we are using the same symbol $\arcone_1$ in both $\tau$ and $\sigma$).
\begin{figure}[!h]
                \caption{}\label{Fig:punctmonogonnodetoursflipped}
                \centering
                \includegraphics[scale=.3]{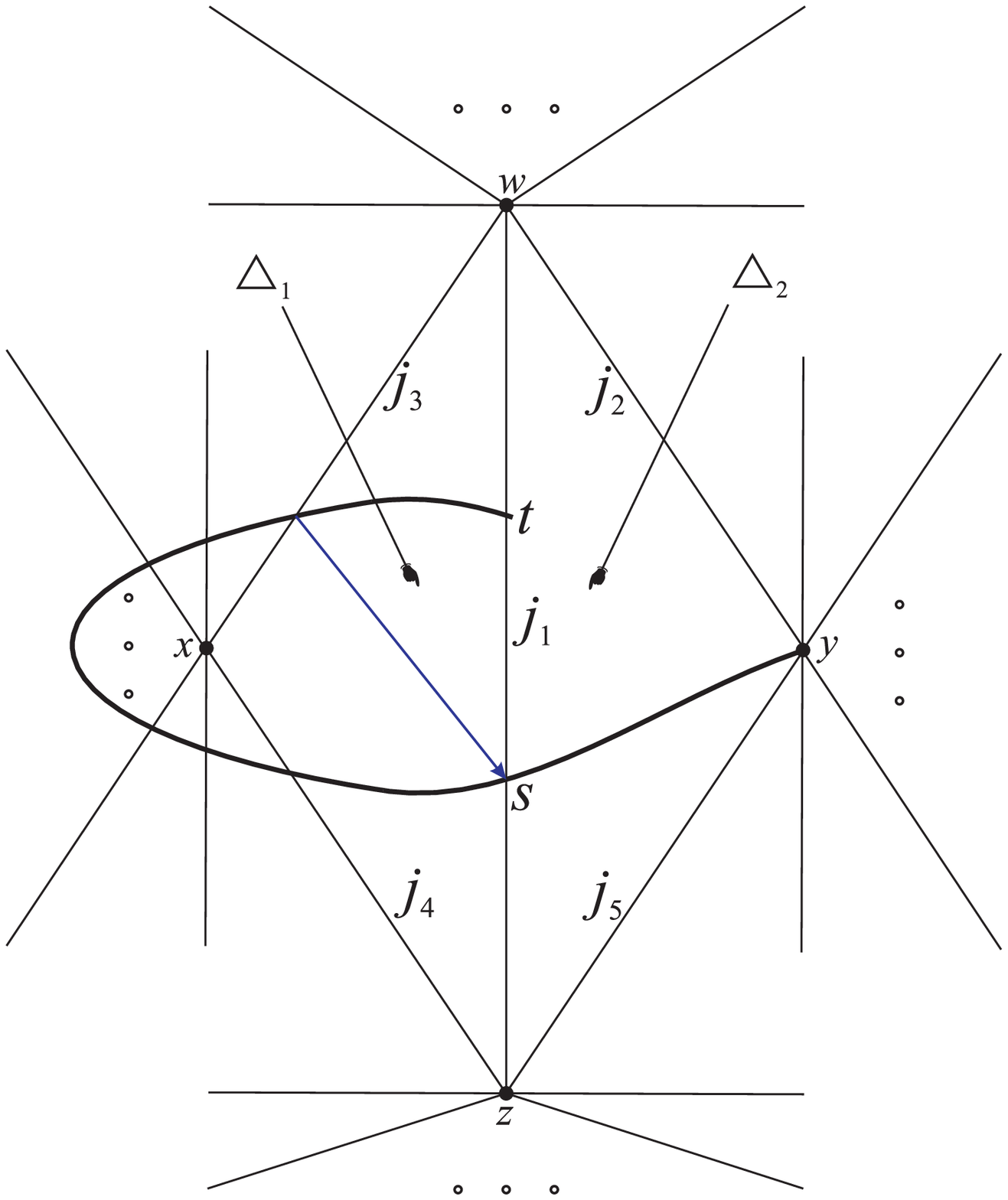}
        \end{figure}\\
The relevant vector spaces attached to the vertices of $Q(\sigma)$ are
$$
N_{\arcone_{1}}=\Msigmaarc_{\arcone_{1}}=\field, \ N_{\arcone_{2}}=\Msigmaarc_{\arcone_2}=0,
$$
$$
N_{\arcone_{3}}=\Msigmaarc_{\arcone_{3}}=\field, \ N_{\arcone_{4}}=\Msigmaarc_{\arcone_4}=\field,
$$
$$
\text{and} \ N_{\arcone_5}=\Msigmaarc_{\arcone_5}=0.
$$
We also have
$$
\badintersection^{\triangle_1,1}_{\arc,\arcone_1}=\{(t,s,b(d^{\triangle_1,1}_{\arc,\arcone_1}),x)\} \ \text{and}
$$
$$
\badintersection^{\triangle_1,r+1}_{\arc,\arcone_1}=\badintersection^{\triangle_2,r}_{\arc,\arcone_2}=
\badintersection^{\triangle_1,r}_{\arc,\arcone_3}=
\badintersection^{\triangle_1,r}_{\arc,\arcone_4}=\badintersection^{\triangle_2,r}_{\arc,\arcone_5}=
\varnothing \ \text{for} \ r\geq 1.
$$

Thus all the relevant detour matrices are hence identities (of the corresponding sizes), except $D^{\triangle_1}_{\arc,\arcone_1}$, which is
$$
D_{\arc,\arcone_1}^{\triangle_1}=
\left[\begin{array}{cc}
1 & 0\\
-x & 1\end{array}\right]
$$
Hence the arrows $\beta^*$, $[\beta\eta]$, $\eta^*$, $\gamma^*$, $\varepsilon^*$ and $[\varepsilon\gamma]$ act on $\Msigmaarc$ according to the following linear maps:
$$
\Msigmaarc_{\beta^*}=(\pi)(D_{\arc,\arcone_1}^{\triangle_1})(\msigmaarc_{\beta^*})=
-x\mathbf{1}:\field\rightarrow\field,
$$
$$
\Msigmaarc_{[\beta\eta]}=
(D_{\arc,\arcone_3}^{\triangle_1})(\msigmaarc_{[\beta\eta]})=
\zero:\field\rightarrow \field,
$$
$$
\Msigmaarc_{\eta^*}=(D_{\arc,\arcone_4}^{\triangle_1})(\msigmaarc_{\eta^*})(\ell)=
\mathbf{1}:
\field\rightarrow \field,
$$
$$
\Msigmaarc_{\gamma^*}=(D_{\arc,\arcone_2}^{\triangle_2})(\msigmaarc_{\gamma^*})(\ell)=
\zero:
\field\rightarrow 0,
$$
$$
\Msigmaarc_{\varepsilon^*}=(\pi)(D_{\arc,\arcone_1}^{\triangle_2})(\msigmaarc_{\varepsilon^*})=
\zero:
0\rightarrow \field,
$$
$$
\Msigmaarc_{[\varepsilon\gamma]}=
(D_{\arc,\arcone_5}^{\triangle_2})(\msigmaarc_{[\varepsilon\gamma]})=
\zero:0\rightarrow 0.
$$

The triple $\Phi=(\varphi,\psi,\eta)$ is a right-equivalence between $\mu_{\arcone_1}(\calMtauarc)$ and $\calMsigmaarc$, where
\begin{itemize}\item $\varphi:(\overline{Q(\tau)},\lambda(\tildestau)-\alpha[\beta\gamma]-\delta[\varepsilon\eta])\rightarrow \qssigma$ is the right-equivalence whose action on the arrows is given by
$$
\beta^*\mapsto-\beta^*, \ \eta^*\mapsto-\eta^*,
$$
and the identity in the rest of the arrows;
\item $\psi:\overline{\Mtauarc}\rightarrow\Msigmaarc$ is the vector space isomorphism given by $\psi_{\arctwo}=\id:\overline{\Mtauarc}_{\arctwo}\rightarrow\Msigmaarc_{\arctwo}$ for $\arctwo\neq\arcone_1$, and
    $\psi_{\arcone_{1}}=-x\id:\overline{\Mtauarc}_{\arcone_1}\rightarrow\Msigmaarc_{\arcone_1}$;
\item $\eta$ is the zero map (of the zero space).
\end{itemize}

This finishes the proof of Theorem \ref{thm:flip<->mut} for configuration 10 of Figure \ref{Fig:component2}.
\end{case}

The rest of the cases are quite similar, and are left to the reader. \label{page:endofproof}
\end{proof}

\begin{coro} The decorated arc representations $\calMtauarc$ are mutation-equivalent to negative simples. More precisely, given any ideal triangulation (without self-folded triangles) $\sigma$ such that $\arc\in\sigma$, then $\calMtauarc$ is mutation-equivalent to the negative simple representation $S^-_\arc(\qsigma,\ssigma)$. Consequently, the Euler-Poincar\'e characteristics of $\Mtauarc$ are the coefficients of the $F$-polynomial that calculates the Laurent expansion of the cluster variable $\arc$ with respect to the cluster $\tau$.
\end{coro}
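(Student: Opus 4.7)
The plan is to reduce both assertions to results already established: Theorem \ref{thm:flip<->mut} for the first, and the main computational theorem of \cite{DWZ2} for the second.

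By \cite{FST}, any two ideal triangulations of $\surf$ are connected by a finite sequence of flips, and one can arrange (at least when $\surf$ admits enough punctures/boundary, the only relevant case since otherwise triangulations without self-folded triangles are essentially unique up to flips) that the sequence passes only through triangulations without self-folded triangles. First I would fix such a sequence
\[
\tau = \tau_0 \xrightarrow{f_{j_1}} \tau_1 \xrightarrow{f_{j_2}} \cdots \xrightarrow{f_{j_N}} \tau_N = \sigma,
\]
and, by replacing $\arc$ with an isotopic representative at each stage if needed, arrange that $\arc$ meets every $\tau_k$ transversally and with a minimal number of intersection points, so that the hypotheses (\ref{transversally}), (\ref{minimalintersection}), (\ref{sigmatransversally}), (\ref{sigmaminimalintersection}) of Theorem \ref{thm:flip<->mut} hold at each step.

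Next I would apply Theorem \ref{thm:flip<->mut} inductively along the sequence: at the $k^{\operatorname{th}}$ step the theorem yields a right-equivalence between $\mu_{j_k}(\mathcal{M}(\tau_{k-1},\arc))$ and $\mathcal{M}(\tau_k,\arc)$. Since mutations of decorated representations are compatible with right-equivalence (see Subsection \ref{backgroundrepresentations}), composing these right-equivalences gives that $\mu_{j_N}\circ\cdots\circ\mu_{j_1}(\calMtauarc)$ is right-equivalent to $\mathcal{M}(\sigma,\arc)$. Since $\arc\in\sigma$, the definition of the decorated arc representation produces $\mathcal{M}(\sigma,\arc)=\mathcal{S}^-_\arc(\qsigma,\ssigma)$, the $\arc^{\operatorname{th}}$ negative simple QP-representation of $\qssigma$. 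This establishes the first assertion.

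For the second assertion, I would invoke \cite{DWZ2}: the authors show that for any non-degenerate QP, the Euler-Poincar\'e characteristics of the quiver Grassmannians of a QP-representation mutation-equivalent to a negative simple compute the coefficients of the $F$-polynomial of the associated cluster variable in any cluster algebra whose exchange matrix is given by the underlying quiver. Combined with the identification of $\arc$ with a cluster variable in the surface cluster algebra associated to $\qtau$ (from \cite{FST} and \cite{FT}), the first assertion then yields the interpretation of the Euler-Poincar\'e characteristics of $\Mtauarc$ as the coefficients of the $F$-polynomial expanding the cluster variable $\arc$ in the cluster $\tau$. The main obstacle in carrying out this plan is the technicality of connecting $\tau$ to $\sigma$ through ideal triangulations without self-folded triangles while keeping $\arc$ in minimal position at every stage; for surfaces where such a flip sequence cannot be arranged, one would need to pass to the tagged setting of \cite{FST} or to extend Theorem \ref{thm:flip<->mut} to flips involving sides of self-folded triangles — an extension whose absence the paper explicitly acknowledges as an open problem.
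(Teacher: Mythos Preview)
Your proposal is correct and follows essentially the same route as the paper: apply Theorem \ref{thm:flip<->mut} inductively along a flip sequence from $\tau$ to $\sigma$ that stays within ideal triangulations without self-folded triangles, landing at the negative simple since $\arc\in\sigma$, and then invoke \cite{DWZ2} for the $F$-polynomial interpretation.

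The one place where the paper goes further than you is precisely the step you flag as the main obstacle. Rather than citing \cite{FST} or leaving it conditional, the paper gives a direct argument that any two ideal triangulations without self-folded triangles are connected by flips through such triangulations: define a function $f$ from all ideal triangulations to those without self-folded triangles by simultaneously flipping every loop that encloses a self-folded triangle. Then $f(\tau)=\tau$ whenever $\tau$ already has no self-folded triangles, and whenever $\tau$ and $\sigma$ differ by a single flip, $f(\tau)$ and $f(\sigma)$ are connected by a sequence of flips none of whose intermediate triangulations has self-folded triangles. Applying $f$ to an arbitrary flip sequence (whose existence is classical, e.g.\ \cite{Mosher}) between two triangulations without self-folded triangles therefore produces the desired sequence. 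With this in hand, your worry about needing the tagged setting or an extension of Theorem \ref{thm:flip<->mut} disappears.
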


\begin{proof} One way of proving this corollary is by exhibiting the representation $M(\omega,\arc)$ for each triangulation $\omega$ with self-folded triangles, and showing that Theorem \ref{thm:flip<->mut} remains valid when at least one of the triangulations involved has self-folded triangles. Another way is by showing that
\begin{equation}\label{eq:relatedbyflips}
\text{any two ideal triangulations without self-folded triangles are related by a sequence of flips}
\end{equation}
\begin{center}
in such a way that every triangulation arising in the sequence is an ideal triangulation
\end{center}
\begin{center}
without self-folded triangles.
\end{center}
The fact that any two ideal triangulations are related by a sequence of flips is well-known and has many different proofs (see for example \cite{Mosher}, where an elementary proof is given). Now, it is possible to define a function $f$ from the set of all ideal triangulations to the set of ideal triangulations without self-folded triangles, with the following properties:
\begin{itemize}
\item $f(\tau)=\tau$ whenever the ideal triangulation $\tau$ does not have self-folded triangles;
\item for any two ideal triangulations $\tau$ and $\sigma$, if they are related by a flip, then $f(\tau)$ and $f(\sigma)$ are related by a sequence of flips none of whose intermediate triangulations has self-folded triangles.
\end{itemize}
(Such function can be defined by simultaneously flipping all loops enclosing self-folded triangles.) In this way, if we start with any sequence of flips starting and ending at triangulations without self-folded triangles, by applying $f$ to all the intermediate triangulations of the sequence we get a sequence of flips not involving self-folded triangles at all.
\end{proof}

\subsection{Summary}\label{subsec:summary}

We believe that the reader may profit from this summary of both processes of definition of arc representations and proof of flip $\leftrightarrow$ mutation compatibility.

If it happens that $\arc\in\tau$, then the definition of the decorated arc representation $\calMtauarc$ is trivial: it is the $\arc^{\operatorname{th}}$ negative simple representation of $\qstau$. Otherwise there are two cases, the second one of which is when $\arc$ is a loop cutting out a once-punctured monogon, situation where sometimes we cut off a segment of $\arc$. In any case, there always is a conserved segment $\iota$.

Having made the necessary cuts (if any), we draw the detours of $(\tau,\arc)$ in a recursive fashion: first the 1-detours, then the 2-detours, etc. Each detour $d^n$ detours one puncture $p$, which falls within a disk surrounded by a curve formed by a segment of $\arc$, a segment of an arc in $\tau$ opposite to $p$ and one or two detours (one in the first step of the recursion, two in the later steps), one of which is $d^n$.

For every arc $\arcone$ in the triangulation $\tau$ we define two detour matrices, attaching one to each triangle of $\tau$ containing $\arcone$. These are square matrices (of the same size) whose entries are defined in terms of the detours ending at $\arcone$ and the punctures detoured by these detours.

The segment representation $\mtauarc$ is obtained by applying the simple idea of placing a copy of $\field$ at each intersection point of $\iota$ with $\tau$ and putting the identity map between two such copies if we can go from one to the other using a segment of $\iota$ entirely contained in a triangle of $\tau$.

Using the detour matrices, we ``twist" the action of the arrows of $\qtau$ on $\mtauarc$. The ``twist" is done by first acting with the arrow $a$ and then applying the detour matrix attached to the triangle that contains the head of $h(a)$. The resulting representation is the arc representation $\Mtauarc$, which we decorate with the zero space.

These are the construction ingredients. Now let us turn to the strategy of the proof. There are two simplifying tools: The operation of restriction of a QP or a QP-representation to a vertex subset, which, thanks to the fact that it commutes with right-equivalences, reductions and premutations, narrows the proof of the main theorem to the case of surfaces with empty boundary, since the QP and arc representations of any triangulation of a surface (with boundary) are restrictions of QPs and arc representations of a surface without boundary.

The other simplifying tool is the ``local decomposability" of representations, which, due to the somewhat simple nature of the potential $\stau$, helps us to narrow the configurations which need to be analyzed in the proof of the main theorem. Unfortunately, these tools were not powerful enough to give a uniform such proof.

\section{An application: $\mathbf{g}$-vectors for the positive stratum}\label{Section:gvectors}

This section is devoted to a small application of our arc representations in the cluster algebra context. We assume that the reader is familiar with the \emph{tagged arc} $\leftrightarrow$ \emph{cluster variable} identification established in \cite{FST}.

For each ideal triangulation $\tau$ of a surface $\surf$ let $B(\tau)=(b_{ij}^\tau)_{ij}$ denote its signed-adjacency matrix (cf. \cite{FST} or \cite{Lqps}). Remember the relation between $B(\tau)$ and $\qtau$: the vertices of $\qtau$ are the arcs in $\tau$, with $b_{ij}^\tau$ arrows from $i$ to $j$ whenever $b_{ij}^\tau>0$.

Let $n$ be the \emph{rank} of $\surf$, that is, the number of arcs in any ideal triangulation of $\surf$ (cf. \cite{FST}). Fix an ``initial" ideal triangulation $\tau=\tau^0=\{\arcone_1,\ldots,\arcone_n\}$ of a surface $\surf$. In \cite{FZ4}, S. Fomin and A. Zelevinsky introduce a $\Z^n$-grading for $\Z[\arcone_1^{\pm1},\ldots,\arcone_n^{\pm1},y_1,\ldots,y_n]$ defined by the formulas
$$
\deg(\arcone_l)=\e_l, \ \ \text{and} \ \ \deg(y_l)=-\mathbf{b}_l,
$$
where $\e_1,\ldots,\e_n$ are the standard basis (column) vectors in $\Z^n$, and $\mathbf{b}_l=\sum_kb^{\tau}_{kl}\e_k$ is the $l^{\operatorname{th}}$ column of $B(\tau)=B(\tau^0)$. Under this $\Z^n$-grading, the principal coefficient cluster algebra $\mathcal{A}_{\bullet}(B(\tau))$ is a $\Z^n$-graded subalgebra of $\Z[\arcone_1^{\pm1},\ldots,\arcone_n^{\pm1},y_1,\ldots,y_n]$ and all cluster variables in $\mathcal{A}_{\bullet}(B(\tau))$ are homogeneous elements (cf. \cite{FZ4}, Proposition 6.1 and Corollary 6.2). By definition, the \emph{$\g$-vector} $\g_\arc^{\tau}$ of a cluster variable $\arc\in \mathcal{A}_{\bullet}(B(\tau))$ with respect to the ``initial" triangulation $\tau=\tau^0$ is its multi-degree with respect to the $\Z^n$-grading just defined. Fomin-Zelevinsky have shown in \cite{FZ4} that the mutation dynamics inside cluster algebras are controlled to an amazing extent by $\g$-vectors and \emph{$F$-polynomials}.

In \cite{DWZ2}, H. Derksen, J. Weyman and A. Zelevinsky have given a representation-theoretic interpretation of $\g$-vectors using the mutation theory of quivers with potentials as follows. Let $\arc$ be an (ordinary) arc on $\surf$. As seen in Section \ref{backgroundrepresentations}, for each arc $\arcone\in\tau=\tau^0$ the decorated representation $\mathcal{M}(\tau,\arc)=(\qtau,\stau,\Mtauarc,V(\tau,\arc))$ induces a linear map $\ga_{\arcone}:M(\tau,\arc)_{\arcone,\oout}\rightarrow M(\tau,\arc)_{\arcone,\oin}$ (in Section \ref{backgroundrepresentations} we did not use the subscript $\arcone$). In Theorem 31 of \cite{Lqps} it is proved that if $\partial\surfnoM\neq\varnothing$, then the QP $\qstau$ is non-degenerate. Therefore, combining Theorem \ref{thm:flip<->mut} above with Equations (1.13), (5.2), and Theorem 5.1 of \cite{DWZ2}, we see that the $\arcone^{\operatorname{th}}$ entry of the $\g$-vector $\g_\arc^{\tau}$ is
\begin{equation}
\g_{\arc,\arcone}^{\tau}=\dim\ker\ga_{\arcone}-\dim M(\tau,\arc)_{\arcone}+\dim V(\tau,\arc)_{\arcone}
\end{equation}
provided the underlying surface $\surfnoM$ has non-empty boundary. (According to Conjecture 33 of \cite{Lqps}, the assumption $\partial\surfnoM\neq\varnothing$ is superfluous.)

Assume that $\arc$ is not a loop cutting out a once-punctured monogon. Let $\diamondsuit$ be the quadrilateral of $\tau$ whose diagonal is $\arcone$. The connected components of the intersection $\arc\cap\diamondsuit$ are segments of $\arc$, each of which falls within one of the types described in Figure \ref{Fig:typesofsegmentsforgvectors}.
\begin{figure}[!h]
                \caption{}\label{Fig:typesofsegmentsforgvectors}
                \centering
                \includegraphics[scale=.1]{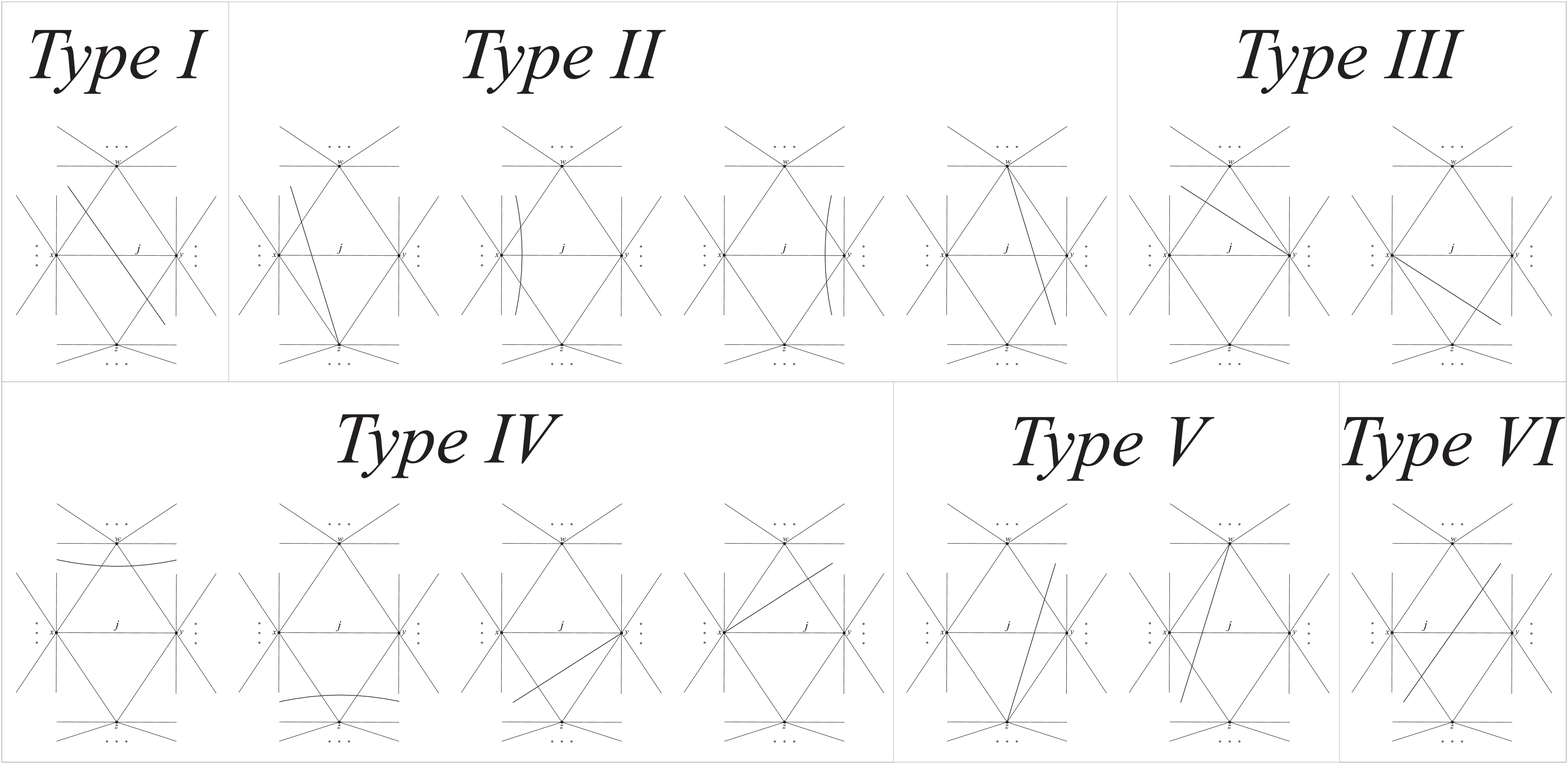}
        \end{figure}
Let $\s(\arc,\arcone)$ (resp. $\rr(\arc,\arcone)$, $\ttt(\arc,\arcone)$, $\vv(\arc,\arcone)$, $\z(\arc,\arcone)$) be the number of components of $\arc\cap\diamondsuit$ that fall within type I (resp. II, III, V, VI) in Figure \ref{Fig:typesofsegmentsforgvectors}.

\begin{thm} Under the assumptions and notation just stated, the $\arcone^{\operatorname{th}}$ entry of the $\g$-vector $\g_\arc^{\tau}$ is
$\g_{\arc,\arcone}^{\tau}=\s(\arc,\arcone)+\ttt(\arc,\arcone)-\vv(\arc,\arcone)-\z(\arc,\arcone)+\delta_{\arc\arcone}$ (the \emph{Kronecker delta}).
\end{thm}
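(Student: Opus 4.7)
The plan is to derive the formula from the Derksen--Weyman--Zelevinsky interpretation of $\g$-vectors already recalled at the beginning of this section. Combining the non-degeneracy of $\qstau$ (Theorem~31 of \cite{Lqps}, which applies because $\partial\surfnoM\neq\varnothing$) with Theorem \ref{thm:flip<->mut} and with equations (1.13), (5.2) and Theorem 5.1 of \cite{DWZ2}, one has
$$
\g_{\arc,\arcone}^{\tau}=\dim\ker\ga_{\arcone}-\dim M(\tau,\arc)_{\arcone}+\dim V(\tau,\arc)_{\arcone}.
$$
Since $V(\tau,\arc)_{\arcone}=\delta_{\arc,\arcone}\field$ by definition of $\calMtauarc$, the decoration term accounts for the Kronecker summand in the statement. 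When $\arc\in\tau$ we have $\Mtauarc=0$, $\ga_{\arcone}=0$, and the formula collapses to $\delta_{\arc,\arcone}$, which matches the statement because $\s=\ttt=\vv=\z=0$ in this case. We may therefore assume $\arc\notin\tau$ and are reduced to establishing
$$
\dim\ker\ga_{\arcone}-\dim M(\tau,\arc)_{\arcone}=\s(\arc,\arcone)+\ttt(\arc,\arcone)-\vv(\arc,\arcone)-\z(\arc,\arcone).
$$

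The key step is to apply Proposition \ref{prop:localdirectsum} at the vertex $\arcone$, together with the enumeration of connected components of the graph $G(\partial)$ given at the end of Subsection \ref{subsec:localdecompositions}. This produces a direct-sum decomposition $\Mtauarc(\partial)=N^{1}\oplus\cdots\oplus N^{t}$ in which each summand $N^{s}$ is in canonical bijection with one connected component of $\arc\cap\diamondsuit$ (where $\diamondsuit$ is the quadrilateral of $\tau$ whose diagonal is $\arcone$), possibly augmented by a combined detour as in item (2) of that classification. Because the triple $(\al,\be,\ga)$ then splits block-diagonally along this decomposition, both $\dim M(\tau,\arc)_{\arcone}$ and $\dim\ker\ga_{\arcone}$ split additively and the identity to prove becomes additive over segments. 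It therefore suffices to show that the local contribution $\dim\ker\ga^{(s)}-\dim N^{s}_{\arcone}$ equals $+1$, $0$, $+1$, $-1$, $-1$ exactly when the corresponding segment is of type I, II, III, V, VI, respectively, in Figure \ref{Fig:typesofsegmentsforgvectors}.

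This segment-by-segment check is a short linear-algebra computation carried out using the explicit forms of the maps $\Mtauarc_{a}$ and of the detour matrices recorded in Section \ref{repsforarcsspecial}, which were already employed throughout the proof of Theorem \ref{thm:flip<->mut}, in particular in Cases \ref{casenodetours}, \ref{casesomedetours} and \ref{casemanydetours}. For example, for a segment crossing $\arcone$ exactly once and with no associated detour (type I) the local $\ga^{(s)}$ is identically zero while $\dim N^{s}_{\arcone}=1$, giving the contribution $+1$; for a segment contained in one of the two triangles forming $\diamondsuit$ and not crossing $\arcone$ (type II) the local $\ga^{(s)}$ is an isomorphism between one-dimensional spaces, giving contribution $0$; the remaining types are analyzed analogously. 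Should one wish to dispense with the assumption $\partial\surfnoM\neq\varnothing$ implicit in the DWZ formula, one invokes Lemma \ref{lemma:allarerestrictions1.1} together with Theorem \ref{thm:resmut=mutres1.1} to realize $(\tau,\arc)$ as a restriction of an analogous pair on a larger surface with non-empty boundary, and both sides of the identity are manifestly preserved by restriction.

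The main technical obstacle is the bookkeeping in the presence of detours around punctures: for a connected component of $G(\partial)$ containing a detour, the blocks of $\al$, $\be$ and $\ga$ are modified by the lower-triangular detour matrices of Section \ref{repsforarcsspecial}. One must verify that this modification preserves $\dim\ker\ga^{(s)}$; since the detour matrices have $1$'s on the diagonal and are therefore invertible, this is an honest basis change and the combinatorial count of the statement still captures the local contribution correctly.
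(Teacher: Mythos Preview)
Your overall strategy---invoke the DWZ formula, split off the Kronecker term from the decoration, and then compute $\dim\ker\ga_{\arcone}-\dim M(\tau,\arc)_{\arcone}$ by decomposing $M(\partial)$ locally as in Subsection~\ref{subsec:localdecompositions}---is exactly the paper's approach. The paper phrases the reduction slightly differently, proving the two identities
\[
\dim\ker\ga_{\arcone}=2\s+\rr+\ttt,\qquad \dim M(\tau,\arc)_{\arcone}=\s+\rr+\vv+\z,
\]
and then checking both on each possible summand of $M(\partial)$ from Figures~\ref{Fig:component1}--\ref{Fig:component3punctmonogon}; subtracting gives your version.

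There is, however, a genuine misstep in your argument. You assert that ``each summand $N^{s}$ is in canonical bijection with one connected component of $\arc\cap\diamondsuit$''. This is false: the summands $N^{s}$ correspond to the connected components of the graph $G(\partial)$, and a single such component can contain \emph{many} segments of $\arc\cap\diamondsuit$, of several different types at once (this is precisely the content of the Remark immediately following the theorem in the paper). Consequently your ``segment-by-segment'' check is not justified by the decomposition you invoke: you cannot isolate a single type-I (or type-V, etc.) segment as a direct summand of $M(\partial)$ in general. The correct procedure is to run through the finite list of possible $G(\partial)$-components displayed in Figures~\ref{Fig:component1}--\ref{Fig:component3punctmonogon}, count within each configuration how many segments of each type it contains, and verify the two identities above for that configuration; this is what the paper does (and calls ``straightforward'').

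A minor aside: your closing remark about removing the hypothesis $\partial\surfnoM\neq\varnothing$ via Lemma~\ref{lemma:allarerestrictions1.1} has the direction reversed---that lemma embeds a surface into one with \emph{empty} boundary, not the other way around.
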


\begin{proof} Clearly, the theorem follows if we establish the equalities
$$
\dim\ker\ga_{\arcone}=2\s(\arc,\arcone)+\rr(\arc,\arcone)+\ttt(\arc,\arcone), \ \
\dim\Mtauarc_\arcone=\s(\arc,\arcone)+\rr(\arc,\arcone)+\vv(\arc,\arcone)+\z(\arc,\arcone).
$$
By the observations made in Subsection \ref{subsec:localdecompositions}, it is enough to check this identities for each of the possible direct summands of the representation $M(\partial)$, which are displayed in the configurations shown in Figures \ref{Fig:component1}, \ref{Fig:component1punctmonogon}, \ref{Fig:component2}, \ref{Fig:component3} and \ref{Fig:component3punctmonogon}. This is straightforward.
\end{proof}

\begin{remark} We warn the reader not to confuse the segments in Figure \ref{Fig:typesofsegmentsforgvectors} with the connected components of the graph $G(\partial)$ introduced in Subsection \ref{subsec:localdecompositions}: In general, many different segments from Figure \ref{Fig:typesofsegmentsforgvectors}, even of different types, can belong to the same connected component $H_l$ of $G(\delta)$.
\end{remark}

\begin{ex} With respect to the triangulations of Figures \ref{curvepuncthexagonmotivating} and \ref{curvepuncthexagonmotivatingwithdetour}, the arc $\arc$ shown there has the following $\g$-vectors:
\begin{displaymath}
\g_{\arc}^\tau=\left[\begin{array}{ccc}
 & 0 & \\
0 &  & 1\\
0 &  & 0\\
 & -1 &
\end{array}\right], \ \ \
\g_{\arc}^\sigma=\left[\begin{array}{cccc}
  & 1 &   &   \\
0 &   &   &   \\
  &   & -1 & 1 \\
0 &   &   &   \\
  & -1 &   &
\end{array}\right]
\end{displaymath}
\end{ex}

\section{Some problems}\label{Section:problems}

There are some problems whose solution the author thinks would help to have a full application of Derksen-Weyman-Zelevinsky's representation-theoretic approach to cluster algebras.

\begin{problem}\label{Problem:findim?} In the case of surfaces with no boundary, determine whether the Jacobian algebras of its triangulations are finite-dimensional or not.
\end{problem}

To ``ilustrate" Problem \ref{Problem:findim?} we include a curious example that came up in discussions with Jerzy Weyman.

\begin{ex}[cf. \cite{Lqps}, Example 35]  Consider the ``canonical" triangulation $\tau$ of the once-punctured torus shown in Figure \ref{torusqp}.
% TORUS
        \begin{figure}[!h]
                \caption{}\label{torusqp}
                \centering
                \includegraphics[scale=.65]{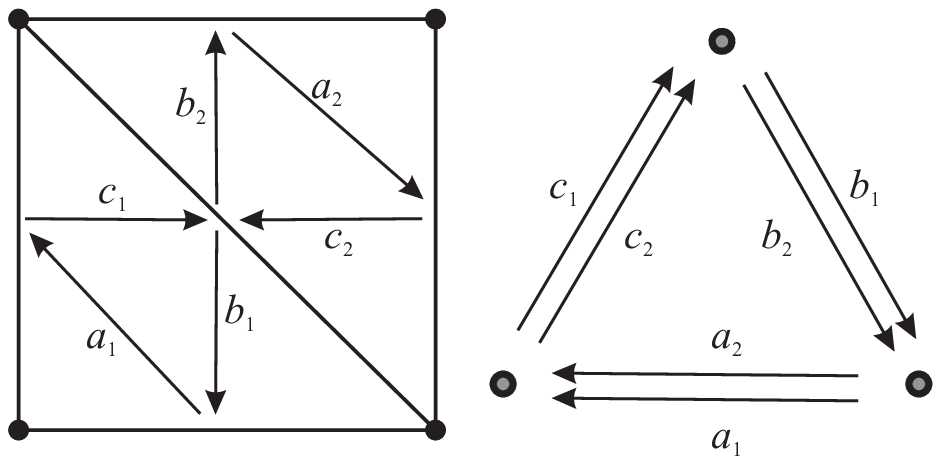}
        \end{figure}
We have $\stau=a_1b_1c_1+a_2b_2c_2+xa_1b_2c_1a_2b_1c_2$. It is readily seen that flips of (ideal) triangulations of this surface are compatible with QP-mutations and that the potential is therefore non-degenerate (it is not possible to obtain a tagged non-ideal triangulation from an ideal one by flips). Also, it is easy to see that the quotient $R\langle Q(\tau)\rangle/I$ is infinite-dimensional, where $I$ is the ideal generated by the cyclic derivatives of $S(\tau)$. However, in contrast to Example 11.3 of \cite{DWZ}, where the non-degenerate potential $a_1b_1c_1+a_2b_2c_2$ has infinite-dimensional Jacobian algebra, the Jacobian algebra $\mathcal{P}(\qtau,\stau)$ is finite-dimensional, as the following basic calculation shows.

First, notice that the paths $a_1b_2c_1a_2b_1c_2$ and $a_2b_1c_2a_1b_2c_1$ represent the same element in $\mathcal{P}(\qtau,\stau)$:
$$
(a_1b_2c_1a_2b_1)c_2\equiv -x^{-1}a_2b_2c_2    \equiv a_2(b_1c_2a_1b_2c_1) \ \operatorname{mod} \ J,
$$
where $J$ is the two-sided ideal of $R\langle\langle\qtau\rangle\rangle$ generated by the cyclic derivatives of $\stau$ (hence the Jacobian ideal $J(\stau)$ is the topological closure of $J$). Notice also that any path in $\qtau$
can be represented in $\mathcal{P}(\qtau,\stau)$ by (a scalar multiple of) an \emph{alternating path}, that is, a path whose conforming arrows have subindices that alternate between the numbers 1 and 2. Here is the calculation for paths of length 2:
$$
a_1b_1\equiv -xa_2b_1c_2a_1b_2 \ \operatorname{mod} \ J, \ \ \text{and similarly for the rest of the paths} \ b_1c_1, \ c_1a_1, \ a_2b_2, \ b_2c_2, \ c_2a_2.
$$

Now the calculation for paths of length 3. For the paths $a_1b_1c_1$, $a_2c_2b_2$ and their rotations it is essentially shown above, whereas for the paths of the form $A_1B_2C_2$ we have
$$
A_1B_2C_2\equiv -xA_1B_1C_2A_1B_2C_1
$$
$$
\equiv x^2 A_2B_1C_2A_1(B_2C_2)A_1B_2C_1
$$
$$
\equiv -x^3A_2B_1C_2(A_1B_1)C_2A_1B_2C_1A_1B_2C_1\cong \ldots \ \operatorname{mod} J,
$$
from what we see that $A_1B_2C_2\in J(\stau)$. Similarly, $A_1B_1C_2,A_2B_1C_1,A_2B_2C_1\in J(\stau)$.

In length 4 it already happens that the only paths that are not zero in $\mathcal{P}(\qtau,\stau)$ are the alternating ones:
$A_1B_1C_1A_1\equiv x(A_2B_1C_2A_1B_2)C_1A_1\in J(\stau)$ and the rest is an easy check.

Now we claim that all the paths of length 7 are zero in $\mathcal{P}(\qtau,\stau)$. After all the above calculations it is clear that we only need to check that the alternating paths of length 7 belong to $J(\stau)$. But
$$
(A_1B_2C_1A_2B_1C_2)A_1 \equiv A_2B_1C_2A_1B_2(C_1A_1)
$$
$$
\equiv -xA_2B_1C_2A_1(B_2C_2)A_1B_2C_1A_2
$$
$$
\equiv x^2 A_2B_1C_2(A_1B_1)C_2A_1B_2(C_1 A_1)B_2C_1A_2\equiv\ldots \ \operatorname{mod} J,
$$
and hence $A_1B_2C_1A_2B_1C_2A_1\in J(\stau)$. Therefore, all paths of length greater than 6 belong to $J(\stau)$, which implies the finite-dimensionality of $\mathcal{P}(\qtau,\stau)$.
\end{ex}

\begin{problem} Prove or disprove that, in the case of surfaces with empty boundary, the potentials defined in \cite{Lqps} for ideal triangulations are non-degenerate.
\end{problem}

\begin{problem}\label{Problem:potsforgeneraltriang} Extend the combinatorial recipe for $\stau$ given in \cite{Lqps} when $\tau$ is an ideal triangulation to the general situation of tagged triangulations.
\end{problem}

Notice that Problem \ref{Problem:potsforgeneraltriang} is still open even when the underlying surface has non-empty boundary (and at least one puncture), despite the fact that in such situation the non-degeneracy of the potentials is already proved.

\begin{problem} Extend the definition of $\Mtauarc$ to the general situation where $\tau$ is a tagged triangulation and $\arc$ is a tagged arc.
\end{problem}

\begin{problem} Give a cell or CW decomposition of the quiver Grassmannians of the arc representations defined above.
\end{problem}

It would be interesting to find some sort of ``dictionary" between Musiker-Schiffler-Williams' framework and ours, and use it, for example, to refine their calculation of Euler-Poincar\'e characteristics of the quiver Grassmannians of arc representations by giving a cell or CW decomposition of these varieties, or to give a combinatorial recipe for the potentials and representations of triangulations in the negative strata.

Finally, let us state a couple of challenging problems that have motivated \cite{Lqps} and the present work:

\begin{problem} Give a combinatorial recipe to calculate non-degenerate potentials for arbitrary quivers. If possible, in such a way that given two mutation-equivalent quivers, the potentials calculated by the recipe are QP-mutation-equivalent.
\end{problem}

\begin{problem} Give a combinatorial recipe that calculates the decorated representations of arbitrary non-degenerate QPs that are mutation-equivalent to negative simples, without performing any mutation.
\end{problem}

\begin{remark} In \cite{DWZ2}, Derksen-Weyman-Zelevinsky have proved several conjectures from \cite{FZ4} without needing to give such combinatorial recipes. In that same paper, a conjectural characterization of representations mutation-equivalent to negative simples is given in terms of the vanishing of the \emph{E-invariant}.
\end{remark}

%\section{Further directions}\label{furtherdevelopments}

%\section{Appendix: Topological considerations}

\section*{Acknowledgments}

This paper is part of my PhD thesis, which I am writing under the supervision of Andrei Zelevinsky. I am sincerely grateful to him for his guidance and permanent availability. To Jerzy Weyman as well, for raising many interesting questions that have helped me to improve my understanding of Jacobian algebras.

I also thank Michael Barot, Jos\'e Antonio de la Pe\~na, Belen Fragoso, Christof Geiss, Martha Takane and Manuel Vargas, for their valuable support during my graduate studies.

% FIGURES USED
%selffoldedtriang
%curvearoundcounterclock
%nplus1detour
%curvepuncthexagonmotivating
%curvepuncthexagonmotivatingwithdetour
%nicecurveonhexagon
%nicecurveonhexagonflipped
%aroundp
%deletingsegment
%punctmonogononhexagon
%punctmonogononhexagonflipped
%paper2case1
%component1
%component1punctmonogon
%component2
%component3
%component3punctmonogon
%alpha1and2
%detoursgeneralposition
%bothparallel2detour
%bothparallelperipheraltriangle
%adjtriangs
%detmatricesnochange1
%detmatricesnochange2
%curvecase1nodetours
%curvecase1nodetoursflipped
%curvecase1var1
%curvecase1var1flipped
%curvecase1var2
%curvecase1var2flipped
%flippingipnonnegative
%flippingipnonnegativeflipped
%punctmonogonnodetours
%punctmonogonnodetoursflipped
%typesofsegmentsforgvectors
%torusqp

\end{document}